\newcommand{\alp}{{\alpha}}
\newcommand{\bet}{{\beta}}
\newcommand{\bi}{\boldsymbol{i}}
\newcommand{\bnu}{\boldsymbol{\nu}}
\newcommand{\bo}{\boldsymbol{0}}
\newcommand{\bphi}{\boldsymbol{\phi}}
\newcommand{\bt}{\boldsymbol{t}}
\newcommand{\bx}{\boldsymbol{x}}
\newcommand{\del}{{\delta}}
\newcommand{\Del}{{\Delta}}
\newcommand{\eps}{{\varepsilon}}
\newcommand{\lra}{\longrightarrow}
\newcommand{\pt}{\text{pt}}
\newcommand{\sig}{{\sigma}}
\DeclareMathOperator{\id}{id}
\DeclareMathOperator{\codim}{codim}
\DeclareMathOperator{\coker}{coker}
\newtheorem{definition}{Definition}[section]
\newtheorem{theorem}[definition]{Theorem}
\newtheorem{remark}[definition]{Remark}
\newtheorem{proposition}[definition]{Proposition }
\newtheorem{corollary}[definition]{Corollary}
\newtheorem{conjecture}[definition]{Conjecture}
\newtheorem{problem}[definition]{Problem}
\newtheorem{question}[definition]{Question}
\newtheorem{example}[definition]{Example}
\newtheorem{Proof}[definition]{Proof}
\newtheorem{Proof*}{Proof}
\newtheorem{lemma}[definition]{Lemma}
\newcommand{\Z}{{\mathbb Z}}
\newcommand{\C}{{\mathbb C}}
\renewcommand{\P}{{\mathbb P}}
\newcommand{\kd}{{\mathcal D}}
\newcommand{\ki}{{\mathcal I}}
\newcommand{\km}{{\mathcal M}}
\newcommand{\kn}{{\mathcal N}}
\newcommand{\ko}{{\mathcal O}}
\newcommand{\ks}{{\mathcal S}}
\newcommand{\fm}{\mathfrak{m}}
\DeclareMathAlphabet{\mathsc}{U}{rsfs}{m}{n}
\newcommand{\sC}{\mathsc{C}}
\newcommand{\sH}{\mathsc{H}}
\newcommand{\sS}{\mathsc{S}}
\newcommand{\sX}{\mathsc{X}}
\newcommand{\sY}{\mathsc{Y}}
\DeclareMathOperator{\Sing}{Sing}
\DeclareMathOperator{\Def}{\kd\!\!\:\it{ef}}
\newcommand{\uDef}{\underline{\Def}\!\,}
\DeclareMathOperator{\edim}{edim}
\DeclareMathOperator{\Ext}{Ext}
\DeclareMathOperator{\Hom}{Hom}
\DeclareMathOperator{\im}{Im}
\DeclareMathOperator{\Ker}{Ker}
\DeclareMathOperator{\kDer}{\text{$\kd$\it{er}}\,}
\DeclareMathOperator{\kHom}{\text{$\sH$\!\!\:\it{om}}}
\newcommand{\Kos}{\text{Kos}}
\renewcommand{\mod}{\text{mod } }
\DeclareMathOperator{\Mor}{Mor}
\DeclareMathOperator{\ob}{ob}
\newcommand{\Rel}{\text{Rel}}
\DeclareMathOperator{\Sets}{\ks\it{ets}}
\newcommand{\Teps}{T_{\varepsilon}}
\begin{document}

\title{Deformation and Smoothing of Singularities}
\author{Gert-Martin Greuel}


\maketitle

\begin{abstract}
We give a survey on some aspects of deformations of isolated singularities. In addition to the presentation of the general theory, we report on the question of the smoothability of a singularity and on relations between different invariants, such as the Milnor number, the Tjurina number, and the dimension of a smoothing component.
\end{abstract}

\renewcommand{\contentsname}{Table of Contents}
\tableofcontents




\section*{Introduction}
\vspace{1.0cm}


This is a survey on some aspects of deformations of isolated singularities. In addition to the presentation of the general theory, we report on the question of the smoothability of a singularity and on relations between different invariants, such as e.g. the Milnor number, the Tjurina number, and the dimension of a smoothing component.

In the first chapter we give an overview on the theory of deformations of complex space germs.  Although we use the language of functors for precise statements, we provide also explicit descriptions in terms of the defining equations. 
We give almost no proofs but for every statement we give precise references to sources that contain more details, including proofs, for further reading. Since we confine ourselves to the deformation theory of isolated singularities we avoid the almost unmanageable field of far-reaching generalizations. Thus we give a compact presentation that nevertheless contains all essential fundamental results.

The second chapter is devoted to the study of the nearby fiber, also called Milnor fibre, of a small deformation and provides a short overview of the historically most relevant results on rigidity and smoothability. Moreover, we discuss known results and conjectures about the relationship between the dimension of a smoothing component and the topology of the Milnor fibre over this component.
An important question is which invariants of a smoothing are independent of this and depend only on the singulariy.
The main results concern complete intersections, as well as curve and surface singularities, which we treat separately in different subsections. In addition to 
known results, we also discuss open problems and conjectures.
As a rule, we give no proofs, but we sketch them in cases where the method is particularly interesting. For all results we give exact references.
\medskip


{\bf Acknowlegment:} I would like to thank Helmut Hamm and Toni Iarrobino for useful comments.
\section{Deformation Theory}

\subsection{Deformations of Complex Germs}
\setcounter{equation}{0}

We give an overview of the deformation theory of isolated singularities of complex
space germs. The concepts and theorems for this case may serve  
as a prototype for deformations of other objects, such as deformations of mappings or,
more general, of deformations of diagrams.
For the theory of complex spaces \index{complex space} and their morphisms, also called holomorphic or analytic maps, we refer to \cite{GR84} and to \cite{GLS07}. Good  references for deformations of algebraic schemes are the books
\cite{Ha10} and \cite{Se06}.
\medskip

A \index{complex space!pointed} {\em pointed complex space} is a pair $(X,x)$ consisting of a complex space $X$ and a point $x\in X$. A morphism $f :(X,x) \to (Y,y)$ of pointed complex spaces is a morphism $f : X \to Y$ of complex spaces such that $f(x) = y$. The structure sheaf of $X$ is denoted by $\ko_X$, the (analytic) local ring by  $\ko_{X,x}$ with maxiaml ideal $\fm$, and the induced map of local rings 
by $f^\sharp: \ko_{Y,y} \to \ko_{X,x}$.
A {\em complex (space) germ} is a pointed complex spaces where the morphisms are equivalence classes of morphisms of pointed complex spaces. 
Two morphisms 
$f$ resp. $g$ \ from $(X,x)$ to $(Y,y)$ defined in some open neighbourhood  $U$ resp. $V$ of $x$ are called equivalent, if they coincide on a neighbourhood $W \subset U\cap V$ of $x$. A {\em singularity} \index{singularity} is nothing but a complex space germ.

If $U \subset X$ is an open subset of the complex space $X$, then 
$\Gamma(U,\ko_X)$ denotes the $\C$-algebra of holomorphic functions on $X$.
If $I = \langle f_1,..., f_k\rangle $ is an ideal in $\Gamma(U,\ko_X)$, generated by $f_1,..., f_k$, we denote by $V(I) = V(f_1,..., f_k) $ the (closed) complex subspace of $U$, being as topological space $\{x \in U | f_1(x) =...= f_k = 0\}$ with 
structure sheaf $\ko_U / I \ko_U$. 
 \medskip

 \begin{definition}\label{def:deform}
Let $(X,x)$ and $(S,s)$ be complex space germs. A {\em deformation of
$(X,x)$ over $(S,s)$}\index{deformation!of complex
germs}\index{germ!deformation}\index{deformation} consists of a
flat\index{flat}\index{flatness} morphism 
$\phi\colon({\sX},x)\to(S,s)$ of complex germs together with an
isomorphism $(X,x)\xrightarrow{\text {$\cong$}} (\sX_s,x)$. 
$(\sX,x)$ is called the {\em total space}, $(S,s)$ the {\em base
space}\index{deformation!total
space of}\index{deformation!base
space of}\index{base!space}\index{total!space}, and
$(\sX_s,x):=(\phi^{-1}(s),x)$ or $(X,x)$ the {\em special fibre}\index{special  fibre}\index{fibre!special} of the deformation. 
\end{definition}

We can write a deformation as a Cartesian diagram
\begin{equation*}
\xymatrix@C=16pt@R=15pt@M=5pt{
(X,x) \ar@{}[dr]|{\Box}\ar@{^{(}->}[r]^-{i} \ar[d] & (\sX,x)
\ar[d]^-{\phi\text{ flat}}\\
\{ \pt\} \ar@{^{(}->}[r] & (S,s)}, 
\end{equation*}
where $i$ is a closed embedding mapping $(X,x)$ isomorphically onto
\mbox{$(\sX_s,x)$} and $\{\pt\}$ the reduced point considered as a
complex space germ with local ring $\C$.
We denote a deformation by
$$ (i,\phi)\colon(X,x)\stackrel{i}{\hookrightarrow}
(\sX,x)\stackrel{\phi}{\to}(S,s)\,,$$
or simply by \mbox{$\phi:(\sX,x)\to(S,s)$} in order to shorten notation.
Note that the closed embedding $i$ is part of the data and identifies $(\sX_s,x)$ and $(X,x)$. Thus, if $(\sX'\!,x)\to (S,s)$ is another deformation of $(X,x)$,
 then there is a
{\em unique\/} isomorphism of germs $(\sX_s,x)\cong (\sX'_s,x)$.
\medskip

The essential point here is that $\phi$ is {\em flat at $x$}\index{flat at a point},
 that is, $\ko_{\sX,x}$ is a flat $\ko_{S,s}$-module via the induced morphism
\mbox{$\phi_x^\sharp:\ko_{S,s}\to \ko_{\sX,x}$}.
A well known theorem of Frisch (cf. \cite{Fr67})\index{Frisch's theorem} 
says that for a morphism
\mbox{$\phi:\sX\to S$} of complex spaces the set of points in $\sX$
where $\phi$ is flat is analytically open.
Hence, a sufficiently small representative $\phi:\sX\to S$ of the germ
$\phi$ is everywhere flat and, since flatness implies 
$\dim(\sX_s,x)=\dim(\sX,x)-\dim(S,s)$,
we have $\dim(\sX_t,y)=\dim(\sX_s,x)$ for all
$t\in S$ and all $y\in \sX_t$ if $\sX$ and $S$
are pure dimensional. Another important theorem is due to Douady \cite{Do66}, saying that every flat morphism $\phi:\sX\to S$ of complex spaces is
  open, that is, it maps open sets in $\sX$ to open sets in $S$.
 An important example of flat morphisms are projections: If $X, T$ are 
  complex spaces then the projection $X \times T \to T$ is flat (c.f. \cite[Corollary I.1.88]{GLS07}).


\smallskip
A typical example of a non-flat morphism is the projection $(\C^2,\bo)\supset V(xy)\to(\C,0), (x,y) \mapsto x$, since the fibre-dimension jumps (the dimension of the special fibre is 1 and of the other fibres is 0).
\medskip

The following  theoretically and computationally useful criterion for flatness is due to Grothendieck (for a proof see e.g. \cite[Proposition I.1.91]{GLS07}).

\begin{proposition}[Flatness by relations]\label{prop:flatbyrel}\index{flatness!criterion!by relations} 
Let \mbox{$I=\langle f_1,\dots,f_k\rangle\subset\ko_{\C^n\!,\bo}$} be an ideal,
\mbox{$(S,s)$} a complex space germ and \mbox{$\widetilde{I}=\langle
  F_1,\dots,F_k\rangle\subset\ko_{\C^n\!\times S,(\bo,s)}$} a lifting of $I$,
i.e., $F_i$ is a preimage of $f_i$ under the surjection
$$ \ko_{\C^n\!\times S,(\bo,s)}\twoheadrightarrow\ko_{\C^n\!\times
  S,(\bo,s)}\otimes_{\ko_{S,s}}\C =\ko_{\C^n\!,\bo}\,. $$
Then the following are equivalent:
\begin{enumerate}[leftmargin=*]
\itemsep3pt
\item[(a)] $\ko_{\C^n\!\times S,(\bo,s)}/\widetilde{I}$ is
  $\ko_{S,s}$-flat;
\item[(b)] any relation $(r_1,...,r_k)$ among $f_1,...,f_k$
  lifts to a relation $(R_1,...,R_k)$ among $F_1,...,F_k$. That is, for each $(r_1,...,r_k)$ satisfying 
  $$ \sum_{i=1}^k r_if_i = 0\,,\quad r_i\in\ko_{\C^n\!,\bo}\,, $$
 there exists $(R_1,...,R_k)$ such that
  $$ \sum_{i=1}^k R_iF_i = 0\,,\ \text{with }\: R_i\in\ko_{\C^n\!\times
    S,(\bo,s)} $$ 
  and the image of $R_i$ in \mbox{$\ko_{\C^n\!,\bo}$} is $r_i$;
\item[(c)] any free resolution of \mbox{$\ko_{\C^n\!,\bo}/I$}
  $$
    \ldots\to\ko_{\C^n\!,\bo}^{p_2}\to\ko_{\C^n\!,\bo}^{p_1}\to\ko_{\C^n\!,\bo}
    \to\ko_{\C^n\!,\bo}/I\to 0
  $$
  lifts to a free resolution of $\ko_{\C^n\!\times S,(\bo,s)}/\widetilde{I}$,
  $$
    \ldots\to\ko_{\C^n\!\times S,(\bo,s)}^{p_2}\to\ko_{\C^n\!\times
      S,(\bo,s)}^{p_1}\to\ko_{\C^n\!\times S,(\bo,s)}
    \to\ko_{\C^n\!\times S,(\bo,s)}/\widetilde{I}\to 0\,.
  $$
  That is, the latter sequence tensored with $\otimes_{\ko_{S,s}}\C$
  yields the first sequence.
\end{enumerate}
\end{proposition}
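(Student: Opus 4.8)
The plan is to reduce everything to the local criterion of flatness together with a single $\operatorname{Tor}$ computation, exploiting that the projection germ is flat. Write $A=\ko_{S,s}$ with maximal ideal $\fm_S$, $R=\ko_{\C^n\times S,(\bo,s)}$, and $P=\ko_{\C^n\!,\bo}=R\otimes_A\C$; set $M=R/\widetilde I$, so that $M\otimes_A\C=P/I$. Since $\C^n\times S\to S$ is a projection, $R$ is a flat $A$-module, hence $\operatorname{Tor}_i^A(R,\C)=0$ for all $i\ge 1$. The one external input I would invoke is the local criterion of flatness for a finite module over a Noetherian local analytic algebra: $M$ is $A$-flat if and only if $\operatorname{Tor}_1^A(M,\C)=0$. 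This converts statement (a) into the vanishing of a single $\operatorname{Tor}$ group, which I then evaluate in two different ways.

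For (a)$\Leftrightarrow$(b), I would apply $-\otimes_A\C$ to the short exact sequence $0\to\widetilde I\to R\to M\to 0$. Using $\operatorname{Tor}_1^A(R,\C)=0$ and the fact that the image of $\widetilde I\otimes_A\C\to R\otimes_A\C=P$ is exactly $I$, the long exact sequence identifies $\operatorname{Tor}_1^A(M,\C)$ with $\ker(\widetilde I\otimes_A\C\to I)$. Next I would feed in the presentation $R^k\to\widetilde I$, $e_i\mapsto F_i$, whose kernel is the relation module $\Rel(F)$; tensoring the exact sequence $0\to\Rel(F)\to R^k\to\widetilde I\to 0$ with $\C$ shows that the kernel of $P^k\to\widetilde I\otimes_A\C$ is precisely the image of $\Rel(F)\otimes_A\C$ in $P^k$, i.e. the reductions modulo $\fm_S$ of the relations among the $F_i$. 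Chasing the resulting diagram, the map $\widetilde I\otimes_A\C\to I$ is injective if and only if every relation in $\Rel(f)=\ker(P^k\to P)$ arises as such a reduction. Since the inclusion of reduced relations into $\Rel(f)$ is automatic, injectivity is equivalent to the surjectivity asserted in (b), giving (a)$\Leftrightarrow$(b).

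For (a)$\Leftrightarrow$(c), the key observation is that every free $R$-module is $A$-flat, because $R$ is. Hence a lifted complex $\cdots\to R^{p_1}\to R\to M\to 0$ that is exact (as required in (c)) is a resolution of $M$ by $A$-flat modules, and flat resolutions compute $\operatorname{Tor}$; reducing modulo $\fm_S$ returns the original exact $P$-resolution, so $\operatorname{Tor}_i^A(M,\C)=0$ for all $i\ge 1$, in particular (a). Conversely, assuming (a), I would lift the matrices of the given $P$-resolution arbitrarily and prove exactness of the lifted complex by induction on the syzygies: flatness of $M$ propagates to flatness of each successive cokernel, and the mechanism established in (a)$\Leftrightarrow$(b), applied stage by stage, guarantees that kernels lift and that the sequence stays exact both after and before reduction modulo $\fm_S$. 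The main obstacle I anticipate is this inductive step, since it requires setting up the syzygy modules carefully and re-running the local criterion at each level; the subtler supporting points are justifying that a flat resolution (not merely a projective one) computes $\operatorname{Tor}$ and pinning down that $\ker(P^k\to\widetilde I\otimes_A\C)$ is exactly the reduced relation module, both of which hinge on the flatness of $R$ over $A$.
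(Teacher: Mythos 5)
The paper does not prove this proposition; it defers entirely to \cite[Proposition I.1.91]{GLS07}. Your argument is correct and is essentially the standard proof given there: the local criterion of flatness reduces (a) to $\operatorname{Tor}_1^{\ko_{S,s}}(\ko_{\C^n\times S,(\bo,s)}/\widetilde I,\C)=0$, the long exact Tor sequence together with the presentation by the $F_i$ identifies this vanishing with the liftability of relations in (b), and (c) follows by lifting the resolution step by step (respectively, by observing that a lifted free resolution is an $\ko_{S,s}$-flat resolution computing Tor). The only place to be slightly more careful in a full write-up is the step (a)$\Rightarrow$(c): the matrices cannot be lifted completely arbitrarily, since the composites must remain zero — each column of the next matrix must be lifted \emph{as a relation} among the columns of the previous lifted matrix, which is exactly what your stage-by-stage application of the relation-lifting mechanism (applied to the successive syzygy modules, each of which is $\ko_{S,s}$-flat) provides.
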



\begin{remark}\label{ex.flat}
Let us recall some geometric consequences
  of flatness, for an algebraic proof see e.g. \cite[Theorem B.8.11 and B.8.13]{GLS07} and \cite[Theorem 15.1]{Ma86}.
\begin{enumerate}[leftmargin=*]
\item  $\phi=(\phi_1,\dots,\phi_k):(\sX,x)\to (\C^k\!,\bo)$ is
  flat iff $\phi_1,\dots,\phi_k$ is a regular sequence.
\item If $(\sX,x)$ is Cohen-Macaulay, then
$\phi_1,\dots,\phi_k\in \fm\subset \ko_{\sX,x}$ is a
  regular sequence iff $\dim
    \ko_{\sX,x}/\langle \phi_1,\dots,\phi_k\rangle
     = \dim  (\sX,x)-k$. 
\item In particular, \mbox{$\phi:(\C^m\!,\bo)\to
    (\C^k\!,\bo)$} is flat iff $\dim\bigl(\phi^{-1}(\bo),\bo\bigr)=m-k$.
    If this holds $(X,\bo):=(f^{-1}(\bo),\bo)$ is called a {\em complete
intersection}\index{complete!intersection!singularity} and
$(i,f):(X,\bo)\subset(\C^m\!,\bo)\to(\C^k\!,\bo)$ is a deformation of
$(X,\bo)$ over \mbox{$(\C^k\!,\bo)$}. If $k=1$ then $(X,\bo)$ is called a
{\em hypersurface singularity}. \index{hypersurface singularity}
\end{enumerate}
\end{remark}

Note that smooth germs, {\em hypersurface} and {\em complete intersection singularities}\,\footnote{$(X,x) \subset (\C^N,x)$ is a complete intersection if the minimal number of generators of its ideal $I(X,x) \subset \ko_{\C^N,x}$ is $N-\dim(X,x).$ $(X,x)$ is a hypersurface singularity if $\dim(X,x)=N-1$.}
\index{singularity!hypersurface}\index{singularity!complete intersection}\index{complete!intersection}
reduced curve singularities and normal surface singularities are Cohen-Macaulay.

\begin{definition}
\label{def:morph of def}
Given two deformations $(i,\phi) \colon
  (X,x)\hookrightarrow(\sX,x)\to(S,s)$ and $(i',\phi')
  \colon (X,x)\hookrightarrow(\sX',x')\to(S',s')$, of $(X,x)$ over
$(S,s)$ and $(S',s')$, respectively. A {\em morphism
  of deformations}\index{morphism!of deformations}\index{deformation!morphism
  of} 
from $(i,\phi)$ to $(i',\phi')$ is a morphism of the diagram after Definition \ref{def:deform}
being the 
identity on $(X,x)\to \{\pt\}$. Hence, it consists of two morphisms
\mbox{$(\psi,\varphi)$} such 
that the following diagram commutes
$$\xymatrix@C=18pt@R=15pt@M=4pt{
& (X,x) \ar@{_{(}->}[dl]_-{i'} \ar@{^{(}->}[dr]^-{i} \\
(\sX',x') \ar[rr]^{\psi} \ar[d]_-{\phi'} & & (\sX,x)
\ar[d]^-{\phi}\\
(S',s')\ar[rr]^-{\varphi} & & (S,s)}. $$
Two deformations over the {\em same} base space $(S,s)$ are {\em
isomorphic}\index{isomorphic!deformations}\index{deformation!isomorphic} 
if there exists a morphism \mbox{$(\psi,\varphi)$} with $\psi$ an
isomorphism and \mbox{$\varphi$} the identity map.
\end{definition}

It is easy to see that deformations of $(X,x)$ form a
category. Usually one considers the (non-full) subcategory of
deformations of $(X,x)$ over a fixed base space $(S,s)$ and morphisms
$(\psi,\varphi)$ with $\varphi=\id_{(S,s)}$. 
The following lemma implies that this category is a {\em groupoid}, i.e., all morphisms are automatically isomorphims (see e.g. \cite[Lemma I.1.86]{GLS07} for a proof).

\begin{lemma} \label{lem:deformation:1}
Let
$$\xymatrix@C=16pt@R=15pt@M=2pt{
(X,x) \ar[rr]^-{f}\ar[dr]_-{\phi} & & (Y,y)
\ar[dl]^-{\psi}\\ & (S,s)}$$
be a commutative diagram of complex germs with $\phi$ flat. Then $f$ is an
isomorphism iff $f$ induces an isomorphism of the special fibres,
$ f\colon
(\phi^{-1}(s),x)\stackrel{\cong}{\longrightarrow}(\psi^{-1}(s),y)\,.
$
\end{lemma}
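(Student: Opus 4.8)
The statement is a local-algebraic fact in disguise, and the plan is to translate it into commutative algebra and then dispose of the two halves---surjectivity and injectivity of the comorphism---separately. The direction ``$f$ an isomorphism $\Rightarrow$ $f$ an isomorphism on special fibres'' is immediate, since restricting an isomorphism of germs over $(S,s)$ to the fibre over $s$ gives an isomorphism; so only the converse needs work. Writing $A=\ko_{S,s}$, $B=\ko_{X,x}$, $C=\ko_{Y,y}$, the morphisms $\phi,\psi,f$ correspond to local homomorphisms of analytic $\C$-algebras $\psi^\sharp\colon A\to C$, $\phi^\sharp\colon A\to B$ and $f^\sharp\colon C\to B$ with $f^\sharp\circ\psi^\sharp=\phi^\sharp$. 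Flatness of $\phi$ means that $B$ is $A$-flat, and the special fibre rings are $\bar C:=C/\fm_A C$ and $\bar B:=B/\fm_A B$, where $\fm_A$ is the maximal ideal of $A$. The hypothesis is exactly that the induced map $\bar f\colon\bar C\to\bar B$ is an isomorphism, and the goal is to prove that $f^\sharp$ itself is an isomorphism.

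For surjectivity I would first observe that the fibre of $f$ over $y$ is a single reduced point. Since $\psi^\sharp(\fm_A)\subseteq\fm_C$ we have $\fm_A B\subseteq\fm_C B$, so there is a surjection $\bar B\twoheadrightarrow B/\fm_C B$; chasing the isomorphism $\bar f$ (which carries the maximal ideal $\fm_{\bar C}$ onto $\fm_{\bar B}$) shows that its kernel is all of $\fm_{\bar B}$, whence $B/\fm_C B\cong\C$ is one-dimensional over $\C$. By the Weierstrass--Grauert finiteness theorem for analytic algebras (see \cite{GR84,GLS07}) this forces $B$ to be a \emph{finite} $C$-module. Now Nakayama applies: the cokernel $N:=\coker(f^\sharp)=B/f^\sharp(C)$ is a finite $C$-module satisfying $N=\fm_A N\subseteq\fm_C N$ (because $\bar f$ is surjective, i.e.\ $B=f^\sharp(C)+\fm_A B$), so $N=0$ and $f^\sharp$ is surjective.

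Injectivity is then the formal consequence of flatness. Put $K:=\ker(f^\sharp)$ and consider the short exact sequence $0\to K\to C\to B\to 0$ of $A$-modules. Applying $-\otimes_A\C$ and using $\operatorname{Tor}_1^A(B,\C)=0$ (which holds because $B$ is $A$-flat) yields exactness of $0\to K/\fm_A K\to\bar C\xrightarrow{\ \bar f\ }\bar B\to 0$. Since $\bar f$ is injective, $K/\fm_A K=0$, i.e.\ $K=\fm_A K\subseteq\fm_C K$; as $K$ is an ideal of the Noetherian local ring $C$ it is finitely generated, so Nakayama gives $K=0$. Hence $f^\sharp$ is bijective and therefore an isomorphism of local rings, which means $f$ is an isomorphism of germs.

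I expect the one genuinely non-formal step to be the finiteness of $B$ over $C$, which is what makes Nakayama available for the surjectivity argument; everything else (the reduction to algebra, the $\operatorname{Tor}$ computation, and the two applications of Nakayama) is routine once one notes that $\fm_A$ sits inside the Jacobson radicals $\fm_B$ and $\fm_C$ through the local maps. It is worth emphasizing that flatness of $\phi$ enters only in the injectivity step, via the vanishing of $\operatorname{Tor}_1^A(B,\C)$; the finiteness input, rather than flatness, is the crux of surjectivity.
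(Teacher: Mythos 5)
Your proof is correct, and it is essentially the argument of the source the paper cites for this lemma (\cite[Lemma I.1.86]{GLS07}): the paper itself gives no proof, and the standard one proceeds exactly as you do — reduce to the comorphisms, note that $\dim_\C \ko_{X,x}/\fm_{Y,y}\ko_{X,x}=1$ so that the finiteness theorem for analytic algebras makes $\ko_{X,x}$ a finite $\ko_{Y,y}$-module, get surjectivity by Nakayama, and then get injectivity by tensoring $0\to K\to \ko_{Y,y}\to\ko_{X,x}\to 0$ with $\C$ over $\ko_{S,s}$, using flatness to kill the first Tor and Nakayama once more. Your closing remark correctly locates where flatness versus finiteness is used, so there is nothing to add.
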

\medskip

\noindent
We introduce now the concept of induced
deformations, which give rise, in a natural way, to  
morphisms between deformations over different base spaces.

Let \mbox{$(X,x)\hookrightarrow(\sX,x)\stackrel{\phi}{\to}(S,s)$} be a
deformation of the complex space germ $(X,x)$ and
\mbox{$\varphi\colon(T,t)\to(S,s)$} a 
morphism of germs. Then the fibre product 
is the following commutative diagram of germs
$$
\xymatrix@!C=40pt@R=15pt@M=5pt{
& (X,x) \ar@{_{(}->}[dl]_-{\varphi^{\ast}i} \ar@{^{(}->}[dr]^-{i} \\
(\sX,x)\times_{(S,s)}(T,t) \ar[rr]^-{\widetilde{\varphi}}
\ar[d]_-{\varphi^{\ast}\phi} & &  (\sX,x)
\ar[d]^-{\phi}\\
(T,t)\ar[rr]^-{\varphi} & & (S,s)} $$
where $\varphi^{\ast}\phi$, resp.\ $\widetilde{\varphi}$, are induced by
the second, resp.\ first, projection, and
$ \varphi^{\ast}i=\big(\widetilde{\varphi}\big|_
{(\varphi^{\ast}\phi)^{-1}(t)}\big)^{-1}\circ i\,. $

\begin{definition}
\label{def:induced def}
We denote \mbox{$(\sX,x)\times_{(S,s)}(T,t)$} by
\mbox{$\varphi^{\ast}(\sX,x)$}\index{$phistar$@$\varphi^{\ast}(\sX,x)$}
and call
$$
\varphi^{\ast}(i,\phi):= (\varphi^{\ast}i,\varphi^{\ast}\phi)\colon
(X,x)\stackrel{\varphi^{\ast}i}{\hookrightarrow}
\varphi^{\ast}(\sX,x)\stackrel{\varphi^{\ast}\phi}{\longrightarrow}
(T,t)$$
the {\em deformation induced by $\varphi$ from
  $(i,\phi)$}\index{induced deformation}\index{deformation!induced},
or just the {\em induced deformation} or {\em
  pull-back\/}\index{pull-back}; $\varphi$ is called the {\em base 
  change map}.\index{base!change}
\end{definition}

Since flatness is preserved under base change (c.f. \cite [Proposition I.187]{GLS07}), $\varphi^{\ast}\phi$ is
flat. Hence, $\varphi^{\ast}(i,\phi)$ is indeed a
deformation of $(X,x)$ over $(T,t)$, and
\mbox{$(\widetilde{\varphi},\varphi)$} is a morphism from
\mbox{$(i,\phi)$} to \mbox{$(\varphi^{\ast}i,\varphi^{\ast}\phi)$},
and $\varphi^{\ast}$ preserves isomorphisms.
A typical example of an induced deformation is the restriction to a subspace in
the parameter space $(S,s)$.

\medskip

We introduce the following notations.

\begin{definition}\label{def:Def_Xx}
  Let $(X,x)$ be a complex space germ.
  \begin{enumerate}[leftmargin=*]
  \itemsep3pt
  \item[(1)] $\Def_{(X,x)}$\index{$defxx$@$\Def_{(X,x)}$}
  denotes the {\em category of
  deformations of $(X,x)$}, with morphisms
as defined in Definition \ref{def:morph of def}.
  \item[(2)] $\Def_{(X,x)}(S,s)$
  denotes the {\em category of
  deformations of $(X,x)$ over $(S,s)$}, whose morphisms satisfy
  \mbox{$\varphi=\id_{(S,s)}$}. 
  \item[(3)] $\uDef_{(X,x)}(S,s)$
  \index{$defxx2$@$\uDef_{(X,x)}$} denotes the set of
  {\em isomorphism classes of deformations $(i,\phi)$ of $(X,x)$ over
  $(S,s)$}. 

\noindent   
   For a morphism of complex germs $\varphi\colon(T,t)\to(S,s)$, the
  pull-back $\varphi^{\ast}(i,\phi)$ is a deformation of $(X,x)$ over $(T,t)$,
  inducing  a map $\uDef_{(X,x)}(S,s) \to \uDef_{(X,x)}(T,t)$.
%
 It follows that
 $$ \uDef_{(X,x)}\colon \text{({\it complex
    germs})}\longrightarrow \Sets\,,$$ 
$(S,s)\mapsto\uDef_{(X,x)}(S,s) $
  is a functor, the {\em deformation
    functor\/}\index{deformation!functor} or the {\em
    functor of isomorphism classes of deformations\/} of $(X,x)$.
\end{enumerate}
\end{definition}
\bigskip

\subsection{Embedded
  Deformations and Unfoldings}\index{embedded!deformation}\index{deformation!embedded} \index{unfolding}
\setcounter{equation}{0}

\noindent
This section aims at describing the somewhat abstract definitions of
the preceding section in more concrete terms, that is, in terms of
defining equations and relations. Moreover, we derive a
characterization of flatness via lifting of relations.
\medskip

Let us first recall the notion of unfoldings of a hypersurface singularities 
and explain its relation to deformations.
Given $f\in\C\{x_1,...,x_n\}$, $f(\bo)=0$, an {\em
  unfolding\/}\index{unfolding} of $f$ is a power series
$F\in\C\{x_1,...,x_n,t_1,...,t_k\}$ with
$F(\bx,\bo)=f(\bx)$, that is, 
$$F(\bx,\bt)=f(\bx)+\sum_{|\bnu|\geq 1} g_{\bnu}(\bx)\bt^{\bnu}\,.$$
We identify the power series $f$ and $F$ with the holomorphic map
germs $$f\colon(\C^n\!,\bo)\to(\C,0)\,,\quad
F\colon(\C^n\!\times\C^k,\bo)\to(\C,0)\,.$$ 
Then $F$ induces a deformation of the hypersurface singularity $(X,\bo)=(f^{-1}(0),\bo)$ over $\C^k$ in the
following way 
$$\xymatrix@C=16pt@R=15pt@M=5pt{ 
(X,\bo) \ar[d]\ar@{^{(}->}[r]^-{i} & (\sX,\bo)\ar[d]^{\phi=pr_2|_{(\sX,\bo)}}
& \hspace*{-20pt}:=(F^{-1}(0),\bo) & 
\hspace*{-20pt}\subset(\C^n\!\times\C^k,\bo)\\
\{ 0\} \ar@{^{(}->}[r] & (\C^k,\bo)} $$
where $i$ is the inclusion and $\phi$ the restriction of the second
projection.
By Remark \ref{ex.flat} $(i,\phi)$ is
a deformation of $(X,\bo)$.  Indeed, each deformation of
\mbox{$(X,\bo)=(f^{-1}(0),\bo)$} over some \mbox{$(\C^k\!,\bo)$} is induced by
an unfolding of $f$. This follows from Corollary \ref{cor:defs can be embedded}
below.

More generally, we have the following important result.

\begin{proposition}[Embedding of a morphism]\label{prop:emb def}
 Given a Cartesian diagram of complex space germs
$$\xymatrix@C=24pt@R=15pt@M=5pt{
(X_0,x)
\ar@{^{(}->}[r]\ar[d]_-{f_0}\ar@{}[dr]|{\Box}
& (X,x) \ar[d]^-{f} \\
(S_0,s) \ar@{^{(}->}[r] & (S,s) \, ,} $$
where the horizontal maps are closed embeddings. Assume that $f_0$
factors as
$$ (X_0,x)\stackrel{i_0}{\hookrightarrow}(\C^n\!,\bo)\times(S_0,s)
\stackrel{p_0}{\to}(S_0,s) $$
with $i_0$ a closed embedding and $p_0$ the second projection.\footnote{In this
  situation, we call $f_0$ an {\em embedding over 
    $(S_0,s)$}.}  Then
there exists a Cartesian diagram
\begin{eqnarray}
\label{eq.1}
\xymatrix@R=20pt@M=5pt{
(X_0,x)
\ar@{^{(}->}[r]\ar@{^{(}->}[d]_-{i_0}\ar@/_2pc/@<-4ex>[dd]_{f_0}\ar@{}[dr]|{\Box} &
(X,x)\ar@{^{(}->}[d]^i\ar@/^2pc/@<4ex>[dd]^f \\
(\C^n\!,\bo)\times(S_0,s) \ar@{^{(}->}[r]\ar[d]_-{p_0}\ar@{}[dr]|{\Box} &
(\C^n\!,\bo)\times(S,s)\ar[d]^-{p}\\ 
(S_0,s) \ar@{^{(}->}[r] & (S,s)}
\end{eqnarray}
with $i$ a closed embedding and $p$ the second projection. That is,
the embedding of $f_0$ over $(S_0,s)$ extends to an embedding of $f$
over $(S,s)$.
\end{proposition}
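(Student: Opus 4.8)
The plan is to translate everything into analytic local rings and to build the embedding $i$ by prescribing coordinate functions. Write $A=\ko_{S,s}$, $A_0=\ko_{S_0,s}$, $R=\ko_{X,x}$, $R_0=\ko_{X_0,x}$, and let $J=\ker(A\twoheadrightarrow A_0)$ be the ideal of the closed embedding $(S_0,s)\hookrightarrow(S,s)$, so $J\subseteq\fm$ in $A$. Since the given square is Cartesian, $R_0=R\otimes_A A_0=R/JR$. The hypothesis that $f_0$ is an embedding over $(S_0,s)$ means that $i_0$ corresponds to a surjection $i_0^\sharp\colon A_0\{x_1,\dots,x_n\}\twoheadrightarrow R_0$ of analytic $A_0$-algebras with $i_0^\sharp|_{A_0}=f_0^\sharp$, and this map is determined by the elements $\xi_j^{(0)}:=i_0^\sharp(x_j)$, which lie in the maximal ideal of $R_0$ because $i_0$ sends the marked point to $(\bo,s)$.

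First I would construct $i$. Choose lifts $\xi_j$ in the maximal ideal of $R$ of the $\xi_j^{(0)}$ under the surjection $R\twoheadrightarrow R_0$, and define an $A$-algebra homomorphism $i^\sharp\colon A\{x_1,\dots,x_n\}\to R$ by $i^\sharp|_A=f^\sharp$ and $x_j\mapsto\xi_j$; this substitution is legitimate since $A\{x\}$ is the free analytic $A$-algebra on $n$ generators and the $\xi_j$ lie in $\fm$. By construction $i^\sharp\circ p^\sharp=f^\sharp$, where $p^\sharp\colon A\hookrightarrow A\{x\}$, which says $p\circ i=f$; and reduction modulo $J$ recovers $i_0^\sharp$, using $A\{x\}/JA\{x\}=A_0\{x\}$ and $R/JR=R_0$, so $i$ restricts to $i_0$ over $(S_0,s)$. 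Granting that $i$ is a closed embedding, the two lower squares of \eqref{eq.1} are just the base change of $(S_0,s)\hookrightarrow(S,s)$ along the projections and are manifestly Cartesian, while the top square is Cartesian because $R\otimes_{A\{x\}}A_0\{x\}=R/JR=R_0$.

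The heart of the matter is showing that $i^\sharp$ is surjective, i.e.\ that $i$ is a closed embedding. The crucial observation is that the special fibres of $f_0$ and of $f$ coincide: base-changing the original Cartesian square to the reduced point $s$ gives $(X_0)_s\cong X_s$, and the restriction of the closed embedding $i_0$ to this fibre is a closed embedding $(X_s,x)\hookrightarrow(\C^n\!,\bo)$ which, by our construction, agrees with the restriction of $i$. In ring terms the induced map $\C\{x\}\to R/\fm R=\ko_{X_s,x}$ is surjective. Setting $\fm_{A\{x\}}=\fm\cdot A\{x\}+\langle x_1,\dots,x_n\rangle$ for the maximal ideal of $A\{x\}$, I would then compute $R/\fm_{A\{x\}}R=\C$, since killing $\fm$ produces $\ko_{X_s,x}$ and further killing the images of the $x_j$ collapses this to $\C$.

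Finally, by the finiteness criterion for analytic algebras (a morphism $B\to R$ is finite precisely when $R/\fm_B R$ is a finite-dimensional $\C$-vector space, cf.\ \cite{GLS07}), the homomorphism $i^\sharp$ makes $R$ a finite $A\{x\}$-module. Hence $C:=\coker(i^\sharp)$ is finitely generated over the local ring $A\{x\}$, and right-exactness of $\otimes_{A\{x\}}\C$ gives $C/\fm_{A\{x\}}C=\coker\bigl(\C\to R/\fm_{A\{x\}}R\bigr)=\coker(\C\to\C)=0$; Nakayama's lemma then forces $C=0$, so $i^\sharp$ is onto and $i$ is a closed embedding. I expect the only genuinely delicate points to be the legitimacy of the substitution homomorphism and the invocation of the analytic finiteness theorem (which replaces the naive, and false, hope of applying Nakayama directly over $A$); everything else is a formal diagram chase.
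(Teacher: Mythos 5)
Your proof is correct. The paper itself gives no argument for this proposition---it only points to \cite{GLS07}---and your proof is precisely the standard one found there: lift the coordinate functions $\xi_j^{(0)}$ to $\fm_R$ to define $i^\sharp$ by the substitution (lifting) property of $A\{x_1,\dots,x_n\}$, observe that the special fibres of $f$ and $f_0$ agree so that $R/\fm_{A\{x\}}R=\C$, and conclude surjectivity of $i^\sharp$ from the Weierstrass finiteness criterion together with Nakayama's lemma.
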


\noindent
Note that we do not require that $f_0$ or $f$ are flat. The proof is not difficult, see \cite[Proposition]{GLS07}.\\

Applying Proposition \ref{prop:emb def} to a deformation of $(X,x)$ 
we get
\begin{corollary}[Embedding of a deformation] 
\label{cor:defs can be embedded}
 Let $(X,\bo)\subset(\C^n\!,\bo)$ be a closed subgerm. Then any
deformation of $(X,\bo)$,
$$(i,\phi) : (X,\bo){\hookrightarrow}(\sX,x){\to}(S,s),$$
can be embedded. That is, there exists a Cartesian diagram
$$\xymatrix@C=10pt@R=7pt@M=5pt{
(X,\bo)\ \ar@{}[ddrr]|{\Box}\ar@{^{(}->}[rr]^-{i}\ar@{^{(}->}[dd] && (\sX,x)\ar@{^{(}->}[dd]^-{J}
\ar@/^3pc/@<3ex>[dddd]^{\phi}\\
\\
(\C^n\!,\bo)\ \ar@{}[ddrr]|{\Box}\ar@{^{(}->}[rr]^-{j}\ar[dd] && (\C^n,\bo)\times(S,s)\ar[dd]^-{p}\\
\\
\{ s\} \ar@{^{(}->}[rr] && (S,s) }$$
where $J$ is a closed embedding, $p$ is the second projection and $j$
the first inclusion. 

In particular, the embedding
dimension\index{embedding!dimension!is semicontinuous} is semicontinuous under
deformations, that is, \mbox{$\edim 
  \bigl(\phi^{-1}(\phi(y)),y\bigr) \leq 
  \edim (X,\bo)$}, for all $y$ in $\sX$ sufficiently close to $x$.
\end{corollary}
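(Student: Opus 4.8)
The plan is to deduce Corollary \ref{cor:defs can be embedded} directly from Proposition \ref{prop:emb def} by applying it to the morphism $\phi$ itself. First I would set up the data so that Proposition \ref{prop:emb def} applies: take $f=\phi\colon(\sX,x)\to(S,s)$ as the morphism to be embedded, and take the special-fibre map $f_0\colon(X,\bo)\to\{s\}$ (the structure morphism of the germ $(X,\bo)$ to the reduced point). The Cartesian square for Proposition \ref{prop:emb def} is then the defining square of the deformation, with $(S_0,s)=\{s\}$ the reduced point, $(X_0,x)=(X,\bo)$, and the bottom horizontal map the closed embedding $\{s\}\hookrightarrow(S,s)$. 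The hypothesis I must verify is that $f_0$ is an embedding over $(S_0,s)=\{s\}$, i.e.\ that $f_0$ factors as $(X,\bo)\stackrel{i_0}{\hookrightarrow}(\C^n\!,\bo)\times\{s\}\to\{s\}$ with $i_0$ a closed embedding. But this is exactly the given hypothesis $(X,\bo)\subset(\C^n\!,\bo)$, since over the reduced point $(\C^n\!,\bo)\times\{s\}=(\C^n\!,\bo)$ and $i_0$ is the given closed embedding.

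Once the hypotheses are checked, Proposition \ref{prop:emb def} produces the Cartesian diagram \eqref{eq.1}, which in this case is precisely the asserted diagram of the corollary: the map $i$ it produces is the closed embedding $J\colon(\sX,x)\hookrightarrow(\C^n\!,\bo)\times(S,s)$, the map $p$ is the second projection, and the factorization $\phi=p\circ J$ holds by construction. The bottom square of \eqref{eq.1} degenerates to the inclusion $\{s\}\hookrightarrow(S,s)$ together with the first inclusion $j\colon(\C^n\!,\bo)\hookrightarrow(\C^n\!,\bo)\times(S,s)$ over $\{s\}$, matching the lower half of the corollary's diagram. Thus the embedding of the deformation is obtained with no further work.

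For the semicontinuity of the embedding dimension, I would argue as follows. After embedding, $(\sX,x)$ sits as a closed subgerm of $(\C^n\!,\bo)\times(S,s)$, and for each point $y\in\sX$ near $x$ the fibre $(\phi^{-1}(\phi(y)),y)$ is a closed subgerm of $(\C^n\!,\bo)\times\{\phi(y)\}\cong(\C^n\!,\bo)$. Hence every nearby fibre germ admits a closed embedding into $(\C^n\!,\bo)$, which gives at once $\edim\bigl(\phi^{-1}(\phi(y)),y\bigr)\le n=\edim(X,\bo)$, since the special fibre $(X,\bo)$ was embedded into $(\C^n\!,\bo)$ with $n$ minimal by assumption (or at least $n\ge\edim(X,\bo)$; taking the minimal embedding gives equality $n=\edim(X,\bo)$).

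The step I expect to require the most care is this last semicontinuity claim, rather than the embedding itself. The embedding is a formal consequence of Proposition \ref{prop:emb def}, but to get the sharp inequality $\edim\bigl(\phi^{-1}(\phi(y)),y\bigr)\le\edim(X,\bo)$ one should start from a \emph{minimal} embedding of the special fibre, so that $n=\edim(X,\bo)$; then the uniform bound by $n$ on all nearby fibres yields exactly the stated inequality. The genuinely subtle point is that the embedding $J$ is only guaranteed near $x$, so the conclusion holds only for $y$ sufficiently close to $x$, which is why the statement is phrased with that proviso; I would make sure to invoke a small enough representative of the germ $(\sX,x)$ so that the single chart $(\C^n\!,\bo)\times(S,s)$ contains all the relevant nearby fibre germs.
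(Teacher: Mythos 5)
Your proposal is correct and follows exactly the paper's route: the paper obtains the corollary precisely by applying Proposition \ref{prop:emb def} to the deformation with $(S_0,s)$ the reduced point, and the semicontinuity claim is deduced, as you do, by taking a minimal embedding $(X,\bo)\subset(\C^n\!,\bo)$ with $n=\edim(X,\bo)$ so that all nearby fibre germs embed in $(\C^n\!,\bo)$. Your remarks about shrinking to a small enough representative are the right precautions but introduce nothing beyond the paper's argument.
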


\begin{remark}\label{rm.explicit}
We get the following {\em explicit description of a deformation:}

Any deformation $(i,\phi)\colon (X,\bo)\hookrightarrow(\sX,x)\to(S,s)$ of 
$(X,\bo)$ can be assumed to be given as follows: Let
\mbox{$I_{X,\bo}=\langle f_1,\dots,f_k\rangle \subset \ko_{\C^n\!,\bo}$}
be the ideal of \mbox{$(X,\bo)\subset (\C^n\!,\bo)$}. The embedding 
of the total space of the deformation of $(X,\bo)$ is given as
  $$(\sX,x)=V(F_1,\dots,F_k)\xhookrightarrow{\text{J}}\C^n\!\times S,(\bo,s))\,,$$ 
with $\ko_{\sX,x}=\ko_{\C^n\!\times S,(\bo,s)}/I_{\sX,x}$, 
$ I_{\sX,x}=\langle F_1,\dots,F_k\rangle\subset\ko_{\C^n\!\times
  S,(\bo,s)}$ and $f_i$ being the image of $F_i$ in $\ko_{\C^n\!\times
S,(\bo,s)}/\fm_{S,s}=\ko_{\C^n\!,\bo}$. 
Then $(X,0) = V(f_1,...,f_k) \xhookrightarrow{\it{i'}} (\sX,x)$ and setting $\phi' = p\circ J$, $p$ the second projection,  we get the deformation $(i' \phi')$, which coincides $(i,\phi)$ up to isomorphism.

Furthermore, let \mbox{$(S,s)\subset(\C^r\!,\bo)$} and denote the coordinates
of $\C^n$ by \mbox{$\bx=(x_1,\dots,x_n)$} and those of $\C^r$ by
\mbox{$\bt=(t_1,\dots,t_r)$}. Then \mbox{$f_i=F_i|_{(\C^n,\bo)}$} and,
hence, $F_i$ is 
of the form\,\footnote{That a system of generators for $I_{\sX,x}$ can be written in
this form follows from the fact that \mbox{$\fm_{S,s}I_{\sX,x} =
  \fm_{S,s}\ko_{\C^n\!\times S,(\bo,s)}\cap I_{\sX,x}$}, which is a
consequence of flatness.}
\begin{equation*}
F_i(\bx,\bt)=f_i(\bx)+\sum_{j=1}^r t_jg_{ij}(\bx,\bt)\,, \quad
g_{ij}\in\ko_{\C^n\!\times\C^r,\bo}\,,
\end{equation*}
that is, $F_i$ is an unfolding of
$f_i$. 
\end{remark}

In general, the $F_i$ as above do not define a deformation, since the flatness condition is not fulfilled. However, if $(X,\bo)$ is  an $(n-k)$-dimensional complete intersection, flatness is automatic.

\begin{proposition}
\label{prop:unf vs def}
Let \mbox{$(X,\bo)\subset(\C^n\!,\bo)$} be a complete intersection, and
let $f_1,\dots,f_k$ be a minimal set of generators of the ideal of $(X,\bo)$ in
$\ko_{\C^n,\bo}$. Then, for any complex germ $(S,s)$ and any lifting
\mbox{$F_i\in\ko_{\C^n\!\times S,(\bo,s)}$} of $f_i$, $i=1,\dots,k$ (i.e., $F_i$ is 
of the form as in Remark (\ref{rm.explicit})), the diagram
$$ (X,\bo)\hookrightarrow(\sX,x)\stackrel{p}{\to}(S,s) $$
with $(\sX,x)=V(F_1,...,F_k )\subset(\C^n\!\times S,(\bo,s))$ and $p$ the second projection, is a deformation of $(X,\bo)$ over $(S,s)$.
\end{proposition}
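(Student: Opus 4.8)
The plan is to reduce everything to the flatness criterion of Proposition \ref{prop:flatbyrel}, since the only point not immediate from the construction is the flatness of $p$. The special fibre is correct by design: by the explicit form of the $F_i$ recalled in Remark \ref{rm.explicit}, the image of $I_{\sX,x}=\langle F_1,\dots,F_k\rangle$ under $\ko_{\C^n\!\times S,(\bo,s)}\twoheadrightarrow\ko_{\C^n\!,\bo}$ is exactly $\langle f_1,\dots,f_k\rangle = I_{X,\bo}$, so $(\sX_s,x)\cong(X,\bo)$. Thus it remains to verify condition (b) of Proposition \ref{prop:flatbyrel}: every relation among $f_1,\dots,f_k$ lifts to a relation among $F_1,\dots,F_k$.

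First I would record that $f_1,\dots,f_k$ is a regular sequence in $\ko_{\C^n\!,\bo}$. Since $(X,\bo)$ is an $(n-k)$-dimensional complete intersection minimally generated by the $f_i$, we have $\dim\ko_{\C^n\!,\bo}/\langle f_1,\dots,f_k\rangle = n-k = \dim(\C^n\!,\bo)-k$; as $\ko_{\C^n\!,\bo}$ is regular, hence Cohen-Macaulay, Remark \ref{ex.flat}(2) gives that $f_1,\dots,f_k$ is a regular sequence.

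The key step, and the one nontrivial input, is that the module of first syzygies of a regular sequence is generated by the trivial Koszul relations. Concretely, for $1\le i<j\le k$ the tuple $r_{ij}$ having $f_j$ in position $i$, $-f_i$ in position $j$, and zero elsewhere satisfies $\sum_\ell (r_{ij})_\ell f_\ell = f_jf_i - f_if_j = 0$, and for a regular sequence the Koszul complex $K_\bullet(f_1,\dots,f_k)$ resolves $\ko_{\C^n\!,\bo}/\langle f_1,\dots,f_k\rangle$, so every relation $(r_1,\dots,r_k)$ is an $\ko_{\C^n\!,\bo}$-linear combination of the $r_{ij}$. This is the heart of the argument.

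The conclusion is then routine. Each Koszul relation lifts trivially: define $R_{ij}$ with $F_j$ in position $i$, $-F_i$ in position $j$, and zero elsewhere; then $\sum_\ell (R_{ij})_\ell F_\ell = F_jF_i - F_iF_j = 0$, and $R_{ij}$ maps to $r_{ij}$ under reduction modulo $\fm_{S,s}$. Given an arbitrary relation written as $(r_1,\dots,r_k)=\sum_{i<j} a_{ij}r_{ij}$, I would choose arbitrary preimages $A_{ij}\in\ko_{\C^n\!\times S,(\bo,s)}$ of the $a_{ij}$ and set $(R_1,\dots,R_k)=\sum_{i<j} A_{ij}R_{ij}$; this is a relation among $F_1,\dots,F_k$ reducing to $(r_1,\dots,r_k)$, so condition (b) holds and $p$ is flat. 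Hence the diagram is a deformation of $(X,\bo)$ over $(S,s)$. Alternatively one could invoke condition (c) directly, lifting the Koszul resolution $K_\bullet(f_1,\dots,f_k)$ to $K_\bullet(F_1,\dots,F_k)$, but then one must argue separately that the lifted complex is again exact; the relation-lifting route via (b) sidesteps this and is the cleaner path.
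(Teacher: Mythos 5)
Your proof is correct and follows essentially the same route as the paper: reduce to the relation-lifting criterion of Proposition \ref{prop:flatbyrel}(b), observe that the relations of the regular sequence $f_1,\dots,f_k$ are generated by the Koszul relations (via vanishing of $H_1$ of the Koszul complex), and lift those to the corresponding Koszul relations among the $F_i$. You supply a few details the paper leaves implicit (the verification that the $f_i$ form a regular sequence, and the explicit lift of a general relation by choosing preimages of the coefficients), but the argument is the same.
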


\begin{proof}
Since $f_1,\dots,f_k$ is a regular sequence, any relation among
$f_1, \dots, f_k$ can be generated by the {\em trivial
  relations}\index{trivial!relations} (also called the {\em Koszul
  relations}\index{Koszul!relations}) 
$$ (0,\dots,0,-f_j,0,\dots,0,f_i,0\dots,0) $$
with $-f_j$ at place $i$ and $f_i$ at place $j$. This can be easily
shown by induction on $k$. Another way to see this is to use the
Koszul complex of \mbox{$\boldsymbol{f}=(f_1,\dots,f_k)$}: we  
have
$$ H_1(f,\ko_{\C^n,\bo})=\{\text{relations between }
f_1,\dots,f_k\}/\{\text{trivial relations}\}\,, $$
and $H_1(f,\ko_{\C^n,\bo})=0$ if
$f_1,\dots,f_k$ is a regular sequence (\cite[Theorem B.6.3]{GLS07}). 
Since the trivial relations can obviously be lifted, the result follows
from Proposition \ref{prop:flatbyrel}.
\end{proof}

Let us finish this section with two concrete examples. 

\begin{example}\label{ex.sing1}
{\em (1) Let $(X,\bo)\subset(\C^3\!,\bo)$ be the curve singularity given by
$f_1=xy$, $f_2=xz$, $f_3=yz$. Consider the unfolding of $(f_1,f_2,f_3)$ 
over $(\C,0)$ given by $F_1=xy-t\,,\ F_2=xz\,,\ F_3=yz. $
It is not difficult to check that the sequence 
$$
 0 \longleftarrow \ko_{X,\bo} \longleftarrow  \ko_{\C^3\!,\bo} \xleftarrow{(xy,xz,yz)} 
 \ko_{\C^3\!,\bo}^3 \xleftarrow{\Big(\begin{smallmatrix}0 & -z\\ -y & y\\ x &
 0 \end{smallmatrix}\Big)}\ko_{\C^3\!,\bo}^2 \longleftarrow 0\,, $$
is exact and, hence, a free resolution
of $\ko_{X,\bo}=\ko_{\C^3\!,\bo}/\langle  f_1,f_2,f_3\rangle$. 
That is, \mbox{$(0,-y,x)$} and \mbox{$(-z,y,0)$} generate the
\mbox{$\ko_{\C^3\!,\bo}$}-module of relations between $xy,xz,yz$.

Similarly, we find that \mbox{$(0,-y,x)$}, \mbox{$(yz,-y^2,t)$},
\mbox{$(xz,t-xy,0)$} generate the \mbox{$\ko_{\C^3\!,\bo}$}-module of relations
of $F_1,F_2,F_3$. The liftable relations for $f_1,f_2,f_3$ are obtained from
these by setting \mbox{$t=0$}, which
shows that the relation \mbox{$(-z,y,0)$} cannot be lifted. Hence,
\mbox{$\ko_{\C^3\times\C,\bo}/\langle F_1,F_2,F_3\rangle$} is not
$\ko_{\C,0}$-flat and, therefore, the above unfolding does not define a
deformation of $(X,\bo)$.
We check this in the following {\sc Singular} session:

\begin{small}
\begin{verbatim}
ring R = 0,(x,y,z,t),ds;
ideal f = xy,xz,yz;
ideal F = xy-t,xz,yz;
module Sf = syz(f);      // the module of relations of f
print(Sf);               // shows the matrix of Sf
//-> 0, -z,
//-> -y,y, 
//-> x, 0 
syz(Sf);                 // is 0 iff the matrix of Sf injective
//-> _[1]=0
module SF = syz(F);
print(SF);
//-> 0, yz, xz,  
//-> -y,-y2,t-xy,
//-> x, t,  0    
\end{verbatim}
\end{small}

\noindent
To show that the relation $(-z,y,0)$ in \texttt{Sf} cannot be lifted to
\texttt{SF}, we substitute \texttt{t} by zero in \texttt{SF} and show that
\texttt{Sf} is not contained in the module obtained 
(\texttt{reduce(Sf,std(subst(SF,t,0)))} does produce zero in  {\sc Singular}). 


\medskip\noindent
(2) However, if we consider the unfolding
$ F_1=xy-tx\,,\ F_2=xz\,,\ F_3=yz $ of $(f_1,f_2,f_3)$, we obtain $(-z,-t,x)$,
\mbox{$(-z,y-t,0)$} as generators of the relations 
among $F_1,F_2,F_3$. 

Since \mbox{$(0,-y,x)=(-z,0,x)-(-z,y,0)$}, it
follows that any relation among $f_1,f_2,f_3$ can be lifted. Hence,
\mbox{$\ko_{\C^3\times\C,\bo}/\langle F_1,F_2,F_3\rangle$} is
$\ko_{\C,\bo}$-flat and the diagram
$$\xymatrix@C=16pt@R=12pt@M=5pt{
(X,\bo)\ \ar@{^{(}->}[r]\ar[d] & V(F_1,F_2,F_3) \ar[d] &
\hspace*{-20pt}\subset(\C^3\!\times\C,(\bo,0))\\
\{ 0\}\ \ar@{^{(}->}[r] & (\C,0)}. $$
defines a deformation of $(X,0)$.}
\end{example}

\bigskip 

\subsection{Versal Deformations}
\setcounter{equation}{0}

\noindent
A versal deformation of a complex space germ is a deformation which
contains basically all information about any possible deformation of
this germ. A semiuniversal deformation is minimal versal deformation. 
It is one of the fundamental facts of singularity theory
that any isolated singularity $(X,x)$ has a semiuniversal deformation. 

In a little less informal way we say that a deformation $(i,\phi)$ of
$(X,x)$ over $(S,s)$ is {\em versal} if any other deformation of $(X,x)$
over some base space $(T,t)$ can be induced from $(i,\phi)$ by some
base change $\varphi\colon(T,t)\to(S,s)$. Moreover, if a
deformation of $(X,x)$ over some subgerm $(T'\!,t)\subset(T,t)$
is given and induced by some base change
$\varphi'\colon(T'\!,t)\to(S,s)$, then $\varphi$ can be chosen in
such a way that it extends $\varphi'$. This fact is important, though
it seems a bit technical, as it allows us to construct versal
deformations by successively extending over bigger and bigger spaces
in a formal manner (see \cite{GLS07}, Appendix C for
general fundamental facts about formal deformations, in particular, 
Theorem C.1.6, and the sketch of its proof).  
\medskip

The formal definition of a (semiuni-) versal deformation is as follows.
\begin{definition} \label{def:versal}
\begin{enumerate}[leftmargin=*]
\itemsep3pt
\item[(1)] A deformation
$(X,x)\stackrel{i}{\hookrightarrow}(\sX,x)
  \stackrel{\phi}{\to}(S,s)$ of $(X,x)$ is called {\em
    complete}\index{complete!deformation}\index{deformation!complete} 
  if, for any deformation
$(j,\psi)\colon (X,x)\hookrightarrow(\sY,y)\to(T,t)$
  of $(X,x)$, there exists a morphism
$\varphi\colon(T,t)\to(S,s)$ such that $(j,\psi)$ is
  isomorphic to the induced deformation
$(\varphi^{\ast}i,\varphi^{\ast}\phi)$. 
\item[(2)] The deformation $(i,\phi)$ is called {\em
    versal\/}\index{versal!deformation}\index{deformation!versal} (respectively
{\em formally versal}\index{formally!versal}\index{deformation!formally
  versal}\index{versal!formally}) 
if, for a given deformation $(j,\psi)$ as above the following holds:
for any closed embedding $k\colon(T'\!,t)\hookrightarrow(T,t)$ of 
complex germs (respectively of Artinian complex germs\,\footnote{A complex germ consisting of one point with local ring an Artinian local ring. It is also called
a {\em fat point}.})\index{Artinian complex germ}\index{complex germ!Artinian}
\index{fat point}
and any morphism $\varphi'\colon(T'\!,t)\to(S,s)$ such that
$(\varphi'\,^{\ast}i,\varphi'\,^{\ast}\phi)$ is isomorphic to
$(k^{\ast}j,k^{\ast}\psi)$ there exists a morphism
$\varphi\colon(T,t)\to(S,s)$ satisfying
\begin{enumerate}
\item[(i)] \ $\varphi\circ k=\varphi'$, and
\item[(ii)] \ $(j,\psi)\cong(\varphi^{\ast}i,\varphi^{\ast}\phi)$.
\end{enumerate}
That is, there exists a commutative diagram with Cartesian squares
$$\xymatrix@C=25pt@R=18pt@M=6pt
{
  & (X,x)\ar@{_{(}->}[dl]_{k^{\ast}j}\ar@{^{(}->}[d]^j
  \ar@{^{(}->}[dr]^i \\  
  k^{\ast}(\sY,y) \ar@{}[dr]|{\Box}\ar@{^{(}->}[r]\ar[d]_{k^{\ast}\psi} & 
  (\sY,y)\ar[d]_{\psi} \ar@{-->}[r]\ar@{}[dr]|{\Box} &  (\sX,x)\ar[d]^{\phi}\\
  (T'\!,t) \ar@/_1pc/@<-2ex>[rr]_{\varphi'}\ar@{^{(}->}[r]_k  & (T,t)
  \ar@{-->}[r]_{\varphi} &  (S,s)\ .} $$
\item[{(3)}] A (formally) versal deformation is called {\em
    semiuniversal} if, with the notations of (2), the 
  Zariski tangent map $ T_{(T,t)}\to T_{(S,s)} $ of $\varphi$
  is uniquely determined by $(i,\phi)$ and $(j,\psi)$.
\end{enumerate}
\end{definition}

\noindent
A semiuniversal deformation is also called
 {\em miniversal}\index{deformation!semiuniversal}\index{deformation!miniversal}\index{semiuniversal!deformation}\index{miniversal deformation} because the Zariski tangent space of its base space has the smallest possible dimension among all versal deformations.
 Note that we do not consider {\em universal} deformations (i.e., $\varphi$ in (3) itself is uniquely determined) as this would be too restrictive. 

A versal deformation is
complete (take as \mbox{$(T'\!,t)$} the reduced point $\{ s\}$), but the
converse is not true in general. In the literature the distinction
between complete and versal deformations is not always sharp, some
authors call complete deformations (in our sense) versal. However, the
full strength of versal (and, hence, semiuniversal) deformations comes from
the property requested in (2).

If we know a versal deformation of $(X,x)$, we know, at least in
principle, all other deformations (up to the knowledge of the base
change map $\varphi$). In particular, we know all nearby fibres and,
hence, all nearby singularities which can appear for an arbitrary deformation
of $(X,x)$. 

An arbitrary complex space germ may not have a versal deformation. It
is a fundamental theorem of Grauert \cite{Gr72} that for isolated
singularities a semi\-uni\-versal deformation exists. 
\medskip

Recall that $(X,x)$
has an {\em isolated singularity}\index{singularity!isolated}\index{isolated singularity}, if there exists a representative $X$ with 
$X \setminus \{x\}$ nonsingular. A point $y$ of $X$ is called {\em nonsingular}
\index{nonsingular} or {\em smooth}\index{smooth} 
if $X$ is a complex manifold in a neighbourhood of $y$ 
(equivalently, the local ring $\ko_{X,y}$ is not regular),  otherwise $y$ is called a {\em singular point} \index{singular point} of $X$.

\begin{theorem}[Grauert, 1972]\index{Grauert}
\label{thm:grauert}
Any complex space germ $(X,x)$ with isolated singularity\,\footnote{More
  generally, a semiuniversal deformation exists if
  \mbox{$\dim_{\C}T^1_{(X,x)}<\infty$} (see Definition \ref{def:inf def:1}).} has a semiuniversal
deformation\index{semiuniversal!deformation} 
$$
(X,x)\stackrel{i}{\hookrightarrow}(\sX,x)\stackrel{\phi}{\to}(S,s)\,.
$$
\end{theorem}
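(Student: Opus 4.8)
The plan is to split the construction into a \emph{formal} and an \emph{analytic} part: first produce a formal semiuniversal deformation by the standard order-by-order obstruction calculus, and then prove that this formal object is the completion of an honest analytic deformation. The first part is pure formal deformation theory and works for any functor with finite-dimensional tangent space; the second part is the genuinely hard convergence statement and is the substance of Grauert's theorem.

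First I would fix the infinitesimal framework. Write $T^1_{(X,x)}=\uDef_{(X,x)}(\Teps)$ for the set of isomorphism classes of deformations over the dual numbers $\Teps$ (the fat point with local ring $\C[\varepsilon]/(\varepsilon^2)$); this is a finite-dimensional $\C$-vector space classifying first-order deformations, and let $T^2_{(X,x)}$ be the associated obstruction space, i.e.\ the next term of the cotangent cohomology. The one place where the hypothesis is used is here: since $X\setminus\{x\}$ is smooth, the cotangent sheaves $\mathcal{T}^i_{X}$ for $i\geq 1$ are coherent and supported at the single point $x$, so their stalks $T^i_{(X,x)}$ are finite-dimensional. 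Set $\tau=\dim_\C T^1_{(X,x)}$.

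Next I would construct the formal versal deformation inductively. Over $\C[t_1,\dots,t_\tau]/\fm^2$ there is a tautological first-order deformation whose Kodaira--Spencer map is the identity on $T^1_{(X,x)}$. Suppose inductively a deformation has been built over an Artinian quotient $A_n$ of $\C[[t_1,\dots,t_\tau]]$; for a small extension $A_{n+1}\twoheadrightarrow A_n$ the obstruction to lifting lies in $T^2_{(X,x)}$, and the vanishing of these obstructions is recorded by finitely many power series $g_1,\dots,g_s\in\C[[t_1,\dots,t_\tau]]$. Passing to the limit yields a formal deformation over the complete local ring $R=\C[[t_1,\dots,t_\tau]]/\langle g_1,\dots,g_s\rangle$. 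That this formal family is versal is Schlessinger's criterion: conditions (H1) and (H2) hold for the deformation functor by the gluing properties of flat families over fibre products of Artinian rings, and (H3) is exactly the finiteness $\tau<\infty$; minimality of the tangent direction forces the comparison map $\varphi$ to be determined to first order, giving semiuniversality.

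The hard part will be convergence. The output so far is purely formal: both the base ring $R$ and the lifted generators $F_i$ of the ideal of the total space are only power series, whereas a deformation of complex germs requires \emph{convergent} data. One way to close the gap is to encode versality as the solvability of a system of analytic equations---the lifting-of-relations condition of Proposition \ref{prop:flatbyrel} for the $F_i$, together with the equations expressing that every nearby deformation is induced---for which a formal solution has just been exhibited; Artin's approximation theorem then supplies a convergent solution agreeing with the formal one to high order in $\fm$, and since versality is detected on the first infinitesimal neighbourhood it is inherited by the convergent approximation. Grauert's original argument instead runs the inductive construction inside a category of Banach-analytic spaces, deriving a priori estimates on the coefficients from a privileged free resolution so that convergence of the limit is guaranteed; see \cite{Gr72} and the account in \cite[Appendix~C]{GLS07}. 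In either approach, the finiteness of $T^1_{(X,x)}$ is what makes $(S,s)$ a genuine finite-dimensional germ and completes the proof.
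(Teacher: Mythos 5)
The paper itself offers no proof of this theorem: it is quoted from Grauert's paper \cite{Gr72}, and the surrounding text only points to the explicit complete-intersection case (Theorem \ref{thm:exis vers def}), to the order-by-order obstruction calculus sketched at the beginning of Section \ref{sec:obs}, and to \cite[Appendix C]{GLS07} for the formal theory. Your outline --- finiteness of $T^1_{(X,x)}$ and $T^2_{(X,x)}$ from the isolatedness of the singularity, Schlessinger's conditions plus inductive lifting over small extensions to produce a formal semiuniversal deformation, and then a convergence step via Artin approximation or Grauert's original a priori estimates --- is exactly the standard route that the paper gestures at, so there is no divergence of method to report; the formal half of your argument reproduces what the paper describes in Section \ref{sec:obs}.

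The one claim I would not let stand is that ``versality is detected on the first infinitesimal neighbourhood [and] is inherited by the convergent approximation.'' The first infinitesimal neighbourhood only detects bijectivity of the Kodaira--Spencer map $T_{S,s}\to T^1_{(X,x)}$, and that is far from versality: the restriction of a semiuniversal deformation to the fat point $V(\fm_{S,s}^2)$ has bijective Kodaira--Spencer map but is not versal. The actual bridge is the theorem that a formally versal deformation is versal (the paper's Theorem \ref{thm:flenner versal}, due to Flenner), combined with a genuine argument that the convergent Artin approximation is still \emph{formally} versal --- i.e.\ that agreement with the formal solution to sufficiently high order, together with the functorial behaviour of the obstruction theory under small extensions, forces formal versality of the approximation. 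That transfer is where most of the remaining work in the modern proof sits, and it deserves to be named as such rather than dismissed as automatic; with that correction your sketch is the accepted strategy.
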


\noindent
In Theorem \ref{thm:exis vers def} we describe the semiuniversal deformation explicitly if $(X,x)$ is an isolated complete intersection. For the 
procedure to construct a formal semiuniversal deformation in general by induction, see the beginning of Section \ref{sec:obs}. For an equivariant weighted homogeneous version see  (the proof of) Theorem \ref{thm:pinkham}.

\medskip
Even knowing that a semiuniversal deformation of an
isolated singularity $(X,x)$ exists,  in general we cannot say anything in advance about its
structure. For instance, we can say nothing
about the dimension of the base space of the semiuniversal
deformation, which we shortly call the {\em semiuniversal base
space}\index{semiuniversal!base space}. It is unknown (but believed),
if any complex space germ can occur as a semiuniversal base of an
isolated singularity. Further questions are wether $(X,x)$ is 
{\em smoothable}\index{smoothable}, i.e., if there are nearby fibres 
that are smooth, or if $(X,x)$ is {\em rigid}, i.e., if it cannot be deformed at all
(cf. Section  \ref{sec:rigsmo} for details).

The following Lemma is an easy consequence of the inverse function theorem.
\begin{lemma}
If a semiuniversal deformation of a complex space germ $(X,x)$ exists,
then it is uniquely determined up to (non unique) isomorphism.
\end{lemma}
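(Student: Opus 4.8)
The plan is to play the versality of each of two given semiuniversal deformations against the other to obtain base-change maps in both directions, then use the minimality clause (3) of Definition \ref{def:versal} to force the tangent maps of the two self-composites to be the identity, and finally invoke the inverse function theorem to upgrade this infinitesimal statement to an honest isomorphism of germs.

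Concretely, I would start from two semiuniversal deformations of $(X,x)$,
$$(i,\phi)\colon(X,x)\stackrel{i}{\hookrightarrow}(\sX,x)\stackrel{\phi}{\to}(S,s),\qquad
(i',\phi')\colon(X,x)\stackrel{i'}{\hookrightarrow}(\sX',x')\stackrel{\phi'}{\to}(S',s').$$
Both are versal, hence complete. Completeness of $(i,\phi)$ applied to $(i',\phi')$ produces a base change $\alpha\colon(S',s')\to(S,s)$ with $(i',\phi')\cong\alpha^{\ast}(i,\phi)$, and, symmetrically, completeness of $(i',\phi')$ produces $\beta\colon(S,s)\to(S',s')$ with $(i,\phi)\cong\beta^{\ast}(i',\phi')$. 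That these are genuine isomorphisms of deformations, and not merely morphisms, is guaranteed by the groupoid property recorded in Lemma \ref{lem:deformation:1}, since both induced deformations have special fibre $(X,x)$.

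Next I would form the self-composite $\alpha\circ\beta\colon(S,s)\to(S,s)$. By functoriality of the pull-back,
$$(\alpha\circ\beta)^{\ast}(i,\phi)\cong\beta^{\ast}\alpha^{\ast}(i,\phi)\cong\beta^{\ast}(i',\phi')\cong(i,\phi),$$
so $\alpha\circ\beta$ induces $(i,\phi)$ from itself, and the identity $\id_{(S,s)}$ does the same. This is exactly the situation governed by clause (3) of Definition \ref{def:versal}, which asserts that the Zariski tangent map of such an inducing base change is uniquely determined by the two deformations; taking the deformation to be induced to be $(i,\phi)$ itself, both $\alpha\circ\beta$ and $\id_{(S,s)}$ qualify, so their tangent maps agree and $T_{(S,s)}(\alpha\circ\beta)=\id$. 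The symmetric computation on $(S',s')$ gives $T_{(S',s')}(\beta\circ\alpha)=\id$.

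The final, and main, step is the assertion that an endomorphism of a germ whose Zariski tangent map is bijective is already an automorphism; this is the promised easy consequence of the inverse function theorem. I would argue it by choosing a minimal embedding $(S,s)\hookrightarrow(\C^N\!,\bo)$ with $N=\edim(S,s)$ and lifting the endomorphism to a germ $\Phi\colon(\C^N\!,\bo)\to(\C^N\!,\bo)$: invertibility of the tangent map means $\Phi$ has invertible differential at $\bo$, so by the classical inverse function theorem $\Phi$ is a biholomorphic automorphism of $(\C^N\!,\bo)$, which one checks restricts to an automorphism of $(S,s)$. Applying this to $\alpha\circ\beta$ and $\beta\circ\alpha$ shows both composites are isomorphisms, whence a formal cancellation argument makes $\alpha$ and $\beta$ mutually inverse isomorphisms. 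In particular $\alpha$ is an isomorphism and $(i',\phi')\cong\alpha^{\ast}(i,\phi)$ exhibits the two semiuniversal deformations as isomorphic. The isomorphism is non-canonical precisely because clause (3) determines only the tangent map of $\alpha$ and not $\alpha$ itself, which is the content of the parenthetical ``(non unique)''; the one place where real work is hidden is this last upgrade from the infinitesimal isomorphism to an isomorphism of the possibly singular base germs.
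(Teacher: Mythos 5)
Your proposal is correct and is exactly the standard argument the paper has in mind: the paper gives no proof, remarking only that the lemma is ``an easy consequence of the inverse function theorem,'' and your route --- versality in both directions, uniqueness of the tangent map from clause (3) of Definition \ref{def:versal} to force the composites to have identity differential, and the inverse function theorem on a minimal embedding to upgrade this to an automorphism of the (possibly singular) base germ --- is precisely that consequence. The only point you leave implicit, that the resulting automorphism of $(\C^N\!,\bo)$ restricts to an automorphism of $(S,s)$, follows from the standard fact that a surjective endomorphism of a Noetherian local ring is injective.
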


%

We mention some properties of versal deformations, which  hold in a much
more general deformation theoretic context (see Remark \cite[C.1.5.1]{GLS07}).

\begin{theorem}\label{thm:flenner versal}
If a versal deformation of $(X,x)$ exists then there exists
also a semiuniversal deformation, and every deformation of $(X,x)$ which is 
formally versal is also versal. 
\end{theorem}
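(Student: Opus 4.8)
The plan is to prove the two assertions separately, the first by a minimization (decomposition) argument and the second by passing from the formal to the convergent category via Artin approximation. These are general deformation-theoretic facts, so throughout I work with the \emph{characteristic map} (Kodaira--Spencer map) $\rho_\phi\colon T_{(S,s)}\to T^1_{(X,x)}$ attached to a deformation $(i,\phi)$ over $(S,s)$, where $T^1_{(X,x)}=\uDef_{(X,x)}\big(\Spec\C[\eps]/(\eps^2)\big)$ classifies first-order deformations. The two structural facts I would use are: versality of $(i,\phi)$ forces $\rho_\phi$ to be surjective, and semiuniversality is equivalent to versality together with $\rho_\phi$ being bijective (the bijectivity being exactly the uniqueness of the tangent map demanded in Definition \ref{def:versal}(3)).

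For the first assertion, suppose $(i,\phi)$ is versal over $(S,s)$ and set $m=\dim_\C\ker\rho_\phi$. I would prove a \emph{decomposition lemma}: there is an isomorphism of germs $(S,s)\cong(S',s')\times(\C^m,\bo)$ under which $(i,\phi)$ becomes the pull-back of its restriction $(i',\phi'):=(i,\phi)|_{(S',s')\times\{\bo\}}$ along the first projection. To prove it, one realizes $\ker\rho_\phi\subset T_{(S,s)}$ as the tangent space of a smooth factor $(\C^m,\bo)$ split off from $(S,s)$: since every direction in $\ker\rho_\phi$ yields the trivial first-order deformation of $(X,x)$, the family $\phi$ is trivial along this factor to first order, and the full lifting property of versality (Definition \ref{def:versal}(2)) lets one integrate this triviality order by order into an isomorphism exhibiting $\phi$ as a pull-back along the projection that collapses $(\C^m,\bo)$. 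Granting the lemma, $(i',\phi')$ is again versal, because any base change into $(S,s)$ composed with the projection produces the required base change into $(S',s')$; and by construction $\rho_{\phi'}$ is bijective, being $\rho_\phi$ with its kernel removed and still surjective. Versality together with bijectivity of $\rho_{\phi'}$ is precisely semiuniversality.

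For the second assertion, let $(i,\phi)$ be formally versal and let $(j,\psi)$ be an arbitrary deformation over a complex germ $(T,t)$, together with a closed embedding $k\colon(T',t)\hookrightarrow(T,t)$ and a base change $\varphi'\colon(T',t)\to(S,s)$ with $(\varphi'\,^\ast i,\varphi'\,^\ast\phi)\cong(k^\ast j,k^\ast\psi)$. I would first produce a \emph{formal} solution: applying formal versality to the Artinian thickenings $T_n=\Spec(\ko_{T,t}/\fm_{T,t}^{\,n+1})$ successively and passing to the inverse limit yields a continuous homomorphism $\widehat\varphi\colon\widehat\ko_{S,s}\to\widehat\ko_{T,t}$ restricting to $\varphi'$ on $(T',t)$ and making $\widehat\varphi^\ast\phi$ isomorphic to $\psi$ over $\widehat\ko_{T,t}$. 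Expressed in generators, $\widehat\varphi$ together with this formal isomorphism is a formal power series solution of an analytic system of equations: those cutting out $(S,s)$, those demanding that the pulled-back ideal $\langle F_i\circ\widehat\varphi\rangle$ agree with the ideal of $\psi$ up to an invertible matrix and an ambient coordinate change, and those fixing the restriction to $(T',t)$. Artin's approximation theorem then furnishes a \emph{convergent} solution agreeing with the formal one to high order, and its relative (nested) form keeps the restriction to $(T',t)$ equal to $\varphi'$. Reading off the convergent base change $\varphi\colon(T,t)\to(S,s)$ and the convergent isomorphism gives both $\varphi\circ k=\varphi'$ and $(j,\psi)\cong\varphi^\ast(i,\phi)$, so $(i,\phi)$ is versal.

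The main obstacle is in the second assertion: Artin approximation only guarantees agreement with the formal data to finite order, whereas Definition \ref{def:versal}(2) demands that $\varphi$ restrict \emph{exactly} to $\varphi'$ and that $(j,\psi)$ be \emph{exactly} isomorphic to $\varphi^\ast(i,\phi)$. Meeting both at once requires encoding the isomorphism of deformations—not merely the equality of induced ideals—as unknowns in the approximated system and invoking the theorem in its form relative to the subgerm $(T',t)$, so that the two Cartesian squares of Definition \ref{def:versal}(2) commute on the nose. Verifying that the resulting convergent isomorphism is genuinely compatible with the embedding $k$ is the delicate point around which the whole argument turns.
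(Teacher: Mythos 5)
The paper itself gives no proof of this theorem: it cites Flenner [Fl81, Satz 5.2] and remarks only that the argument rests on the smooth-factor decomposition of Proposition \ref{prop:versal -- semiuniversal}. Your outline is essentially the route taken in that source and in the standard literature --- split off a maximal smooth factor along $\ker\rho_\phi$ to obtain the semiuniversal object, and pass from formal to convergent versality by Artin approximation --- and your identification of the delicate point in the second half (encoding the isomorphism of deformations as unknowns and forcing exact agreement over $(T',t)$) is exactly right.

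Two places in your sketch conceal real work. First, in the decomposition lemma, ``integrating the first-order triviality order by order'' produces only a \emph{formal} isomorphism $(S,s)\cong(S',s')\times(\C^m,\bo)$ together with a formal trivialization of $\phi$ along the second factor; a subspace of the Zariski tangent space of a possibly singular germ does not by itself correspond to an analytic product factor, so converting the formal splitting into a convergent one is the very same formal-to-convergent difficulty you face in the second assertion. The usual remedies are either to establish the approximation mechanism of the second assertion first and then deduce the decomposition, or to compare $(S,s)$ and $(S',s')\times(\C^m,\bo)$ as two versal deformations via mutually inducing base changes and show their tangent maps are bijective. Second, in the limit construction over the Artinian truncations $T_n$, Definition \ref{def:versal}(2) lets you choose the base-change maps $\varphi_n$ compatibly (condition (i)), but it does not automatically make the isomorphisms $\varphi_n^\ast\phi\cong\psi|_{T_n}$ restrict to one another; without arranging this compatibility (using that $\Def_{(X,x)}(T_n)$ is a groupoid, so isomorphisms can be adjusted at each stage) you do not obtain a single formal solution to feed into Artin approximation. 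Neither point is fatal --- both are standard --- but as written they are asserted rather than proved, and they are precisely where the content of Flenner's theorem lives.
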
 

For the proof see \cite[Satz 5.2]{Fl81}. It is based on the following useful
result (c.f. \cite[Proposition I.1.14]{GLS07}): 

\begin{proposition}\label{prop:versal -- semiuniversal}
Every versal deformation of $(X,x)$ differs from the semiuniversal
deformation by a smooth factor.

More precisely, let
\mbox{$\phi:({\sX},x)\to(S,s)$}
be the semiuniversal deformation and
\mbox{$\psi:({\sY},y)\to(T,t)$}
a versal deformation of $(X,x)$. Then there exists a \mbox{$p\geq 0$} and an
isomorphism 
$$
\varphi\colon(T,t)\stackrel{\cong}{\longrightarrow}(S,s)\times (\C^p\!,\bo)
$$
such that \mbox{$\psi\cong(\pi\circ\varphi)^{\ast}\phi$} where
\mbox{$\pi\colon (S,s)\times(\C^p\!,\bo)\to(S,s)$} is the projection on
the first factor.  
\end{proposition}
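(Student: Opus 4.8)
The plan is to produce a single base-change map $a\colon(T,t)\to(S,s)$ that induces $\psi$ from $\phi$, to show that $a$ is a \emph{smooth} morphism of germs admitting a section, and finally to read off from the structure of smooth germ-morphisms that $(T,t)\cong(S,s)\times(\C^p,\bo)$ over $(S,s)$, with $a$ corresponding to the first projection $\pi$. First I would invoke completeness in both directions: since $\phi$ is semiuniversal, hence versal, hence complete, there is $a\colon(T,t)\to(S,s)$ with $a^{\ast}\phi\cong\psi$, and since $\psi$ is versal, hence complete, there is $b\colon(S,s)\to(T,t)$ with $b^{\ast}\psi\cong\phi$.

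Next I would pin down the tangent behaviour of $a$. Composing, $(a\circ b)^{\ast}\phi\cong b^{\ast}a^{\ast}\phi\cong b^{\ast}\psi\cong\phi$, so both $a\circ b$ and $\id_{(S,s)}$ induce $\phi$ from $\phi$; by the defining property of a semiuniversal deformation the Zariski tangent map of the inducing morphism is unique, whence $d(a\circ b)=d(\id_{(S,s)})=\id$ on $T_{(S,s)}$. A local homomorphism of analytic local rings inducing the identity on $\fm/\fm^2$ is surjective by Nakayama, and a surjective endomorphism of a Noetherian ring is bijective; hence $a\circ b$ is an automorphism of $(S,s)$. In particular $da\colon T_{(T,t)}\to T_{(S,s)}$ is split surjective, so I set $p:=\dim_{\C}T_{(T,t)}-\dim_{\C}T_{(S,s)}=\dim_{\C}\ker(da)\ge 0$, and, replacing $b$ by $b\circ(a\circ b)^{-1}$, I may assume $a\circ b=\id_{(S,s)}$, i.e. $b$ is a section of $a$.

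The main step, and the principal obstacle, is to show that $a$ is smooth, for which I would verify the infinitesimal lifting criterion. Given a small extension $A'\hookrightarrow A$ of Artinian germs together with $\alpha'\colon A'\to(T,t)$ and $\beta\colon A\to(S,s)$ satisfying $a\circ\alpha'=\beta|_{A'}$, consider the deformation $\beta^{\ast}\phi$ over $A$; its restriction to $A'$ is $(\beta|_{A'})^{\ast}\phi=\alpha'^{\ast}(a^{\ast}\phi)\cong\alpha'^{\ast}\psi$. Applying the versality of $\psi$ to the embedding $A'\hookrightarrow A$ and the morphism $\alpha'$ yields an extension $\alpha\colon A\to(T,t)$ with $\alpha|_{A'}=\alpha'$ and $\alpha^{\ast}\psi\cong\beta^{\ast}\phi$, so that $(a\circ\alpha)^{\ast}\phi\cong\beta^{\ast}\phi$ and $a\circ\alpha$ agrees with $\beta$ on $A'$. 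The delicate point is to upgrade this isomorphism of deformations to the equality $a\circ\alpha=\beta$ of base maps: two morphisms $A\to(S,s)$ that agree on $A'$ differ by a tangent vector in $T_{(S,s)}$, and inducing isomorphic deformations from the \emph{semiuniversal} $\phi$ forces this vector to vanish once the isomorphism is normalized to restrict to the identity over $A'$. This normalization, achieved by correcting $\alpha$ by an automorphism of the deformation over $A$ that is trivial over $A'$, is exactly where the uniqueness clause in the semiuniversality of $\phi$ enters. Granting this, $\alpha$ is the required filler and $a$ is smooth.

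Finally, a smooth morphism of complex germs of relative dimension $p$ that possesses a section $b$ is, up to isomorphism over the base, the projection $(S,s)\times(\C^p,\bo)\to(S,s)$: choosing $u_1,\dots,u_p\in\fm_{T,t}$ whose differentials restrict to a basis of the dual of $\ker(da)$ and which vanish along $b$ gives an isomorphism $\varphi\colon(T,t)\xrightarrow{\cong}(S,s)\times(\C^p,\bo)$ with $\pi\circ\varphi=a$. Then $\psi\cong a^{\ast}\phi=(\pi\circ\varphi)^{\ast}\phi$, which is the assertion. I expect the only genuine work to lie in the lifting step above; the remaining ingredients are the formal properties of completeness and semiuniversality already recorded, Nakayama's lemma, and the local structure theorem for smooth morphisms of analytic germs.
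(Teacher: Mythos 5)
The paper gives no proof of this proposition (it cites \cite[Proposition I.1.14]{GLS07} and Flenner), so your argument has to stand on its own. Your architecture --- obtain $a\colon(T,t)\to(S,s)$ with $a^{\ast}\phi\cong\psi$, use the tangent-map uniqueness of semiuniversality to make $a\circ b$ an automorphism and hence $b$ a section of $a$, prove $a$ formally smooth by the infinitesimal lifting criterion, and read off the product structure --- is the standard and correct route. But the justification of the decisive step is wrong. You claim that if $a\circ\alpha$ and $\beta$ agree on $A'$ and induce isomorphic deformations from the semiuniversal $\phi$, with the isomorphism normalized to be the identity over $A'$, then $a\circ\alpha=\beta$. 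The uniqueness clause in Definition \ref{def:versal}(3) concerns only the Zariski tangent map, i.e.\ first-order data over the reduced point; it gives no uniqueness of liftings over a small extension $A\to A'$, and $\phi$ is in general not universal. Concretely, for $(X,0)=V(x^2)$ with semiuniversal deformation $x^2+t$ over $(\C,0)$, the two maps $\C\{t\}\to\C[\eta]/\langle\eta^3\rangle$ sending $t$ to $c\eta$ and to $c\eta+c'\eta^2$ (with $c\neq 0$, $c'\neq 0$) agree modulo $\eta^2$ and induce isomorphic deformations via an isomorphism restricting to the identity modulo $\eta^2$; so the difference vector you want to kill need not vanish.

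The gap is repairable with an ingredient you already have, namely the surjectivity of $da$. For a small extension with kernel $J$ (so $\fm_A J=0$), the set of morphisms $A\to(T,t)$ extending $\alpha'$ is, once nonempty, a torsor under $\Der_{\C}(\ko_{T,t},J)\cong T_{(T,t)}\otimes_{\C}J$, likewise for $(S,s)$, and postcomposition with $a$ is equivariant via $da\otimes\id_J$. Versality of $\psi$ produces one lifting $\alpha$ of $\alpha'$ (this is genuinely where versality is needed, since liftings into a singular $(T,t)$ need not exist); then translate $\alpha$ by any element of $T_{(T,t)}\otimes J$ mapping under $da\otimes\id_J$ to the difference $a\circ\alpha-\beta\in T_{(S,s)}\otimes J$. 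The corrected $\alpha$ satisfies $a\circ\alpha=\beta$ and $\alpha|_{A'}=\alpha'$; it no longer need satisfy $\alpha^{\ast}\psi\cong\beta^{\ast}\phi$, but the lifting criterion for smoothness does not require that. Finally, your last step deserves one more sentence: formal smoothness gives $\widehat{\ko}_{T,t}\cong\widehat{\ko}_{S,s}[[u_1,\dots,u_p]]$, and the map $c=(a,u_1,\dots,u_p)\colon(T,t)\to(S,s)\times(\C^p\!,\bo)$ is then surjective on analytic local rings (isomorphism on cotangent spaces) and an isomorphism on completions, hence an isomorphism by faithful flatness of completion.
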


\begin{remark}
(1) A formula for the extra smooth factor in Propositoin \ref{prop:versal -- semiuniversal} is given in Corollary  \ref{cor:extrafactor}.

(2) The statements of \ref{thm:grauert}\,--\,\ref{prop:versal -- semiuniversal}  
hold also for 
{\em multigerms\/}\index{multigerm}
$(X,x)=\coprod_{\ell=1}^r (X_\ell,x_\ell)\,,$
that is, for the disjoint union of finitely many germs (the existence as in Theorem
\ref{thm:grauert} is assured if all germs $(X_\ell,x_\ell)$ have
isolated singular points). A {\em versal}, resp.\
 {\em semiuniversal,}\index{semiuniversal!deformation!of multigerm}
deformation of the multigerm \mbox{$(X,x)$} is a multigerm
\mbox{$(\bi,\bphi)=\coprod_{\ell=1}^r (i_\ell,\phi_\ell)$} such that,
for each \mbox{$\ell=1,\dots,r$},
$(i_\ell,\phi_\ell)$ is a versal, resp.\ semiuniversal, deformation of
$(X_\ell,x_\ell)$ over  $(S_\ell,s_\ell)$, and the base space of $(\bi,\bphi)$ is the cartesian product of base spaces $(S_\ell,s_\ell)$.
\end{remark}

A semiuniversal deformation is also called miniversal, since it has the
minimal dimension among all versal deformations 
(by Proposition \ref{prop:versal -- semiuniversal}). It has  also another
minimality property, due to Teissier \cite[Theorem 4.8.4]{Te78}.

\begin{theorem}[Economy of the semiuniversal deformation]\label{economy}
\index{semiuniversal!deformation!economy of}
Let $\phi:(\sX,x)\to (S,s)$ be the semiuniversal deformation of
 an isolated singularity $(X,x$). Then, for  any $y \neq x$ sufficiently close to $x$ no fibre $(\sX_{\phi(y)},y)$ is isomorphic to $(X,x)$.
\end{theorem}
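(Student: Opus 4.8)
The plan is to exploit the semiuniversal (miniversal) property directly together with the economy of the base change map. Let $\phi\colon(\sX,x)\to(S,s)$ be the semiuniversal deformation of the isolated singularity $(X,x)$, and suppose for contradiction that there are points $y\neq x$ arbitrarily close to $x$ with $(\sX_{\phi(y)},y)\cong(X,x)$. Fix a small representative. For each such $y$, setting $s'=\phi(y)$, the germ $(\sX_{s'},y)$ is isomorphic to the special fibre. The key observation is that the restriction of $\phi$ over a neighbourhood of $s'$ is itself a deformation of $(\sX_{s'},y)\cong(X,x)$, so by completeness of $\phi$ it must be induced from $\phi$ by \emph{some} base change $\varphi\colon(S,s')\to(S,s)$ with $\varphi(s')=s$. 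First I would analyze what the semiuniversal property forces on the Zariski tangent map $d\varphi\colon T_{(S,s')}\to T_{(S,s)}$.

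The heart of the argument is a Kodaira--Spencer / tangent-space computation. Since $\phi$ is semiuniversal at \emph{every} point of $S$ close to $s$ (the semiuniversal deformation of $(X,x)$ restricts, near a point where the fibre singularity is isomorphic to $(X,x)$, to a versal deformation of that fibre, because $T^1$ is upper semicontinuous and has the same dimension there), the Kodaira--Spencer map $\rho_{s'}\colon T_{s'}S\to T^1_{(\sX_{s'},y)}$ is an isomorphism, exactly as $\rho_s\colon T_sS\to T^1_{(X,x)}$ is. Under the identification $(\sX_{s'},y)\cong(X,x)$, these two target spaces $T^1$ are canonically isomorphic. The base change $\varphi$ inducing the family near $s'$ must then have $d\varphi$ equal (under $\rho_{s'}$ and $\rho_s$) to this canonical isomorphism of $T^1$'s, hence $d\varphi$ is itself an isomorphism. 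By the inverse function theorem $\varphi\colon(S,s')\to(S,s)$ is therefore a local biholomorphism sending $s'\mapsto s$.

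Next I would derive the contradiction from the existence of a whole sequence $y_n\to x$ of such points. Each $y_n$ yields a local automorphism-type base change $\varphi_n$ carrying a neighbourhood of $s'_n=\phi(y_n)$ isomorphically onto a neighbourhood of $s$, compatibly with $\phi$; in particular the fibres of $\phi$ near $s'_n$ and near $s$ match up isomorphically. This says the singular fibre $(X,x)$ is reproduced along a positive-dimensional locus accumulating at $s$, contradicting the rigidity forced by semiuniversality: the point $s$ is the \emph{unique} point of $S$ (in a neighbourhood) whose Kodaira--Spencer map and fibre data reproduce $(X,x)$ with $d\varphi=\mathrm{id}$ on $T^1$. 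Concretely, consider the ``diagonal'' locus $\{s'\in S:(\sX_{s'},y)\cong(X,x)\text{ for some }y\in\sX_{s'}\}$; the tangent computation shows this locus is cut out, to first order, by the condition that $\rho_{s'}$ agree with $\rho_s$ under a fibre isomorphism, which is a nondegeneracy condition forcing $s'=s$ locally.

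The step I expect to be the main obstacle is making precise and rigorous the claim that $\phi$ restricts to a \emph{versal} (indeed semiuniversal) deformation of the nearby fibre $(\sX_{s'},y)$, and that the induced base change $\varphi$ must have invertible differential. This requires the openness of versality (the fact that a semiuniversal deformation of $(X,x)$ is versal for every sufficiently close fibre germ, using upper semicontinuity of $\dim_{\C}T^1$ and the fact that it cannot drop when the fibre is isomorphic to $(X,x)$), together with a careful identification of the two Kodaira--Spencer maps under the fibre isomorphism so that $d\varphi$ is genuinely forced to be the identity on $T^1$ rather than merely an injection. Once that tangent-level statement is in hand, the inverse function theorem and the accumulation argument close the proof quickly; I would cite Teissier's original treatment in \cite[Theorem 4.8.4]{Te78} for the technical openness-of-versality input.
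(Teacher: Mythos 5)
There is a genuine gap, and it sits exactly where your write-up becomes vague: the derivation of the contradiction. Your first steps are essentially sound and can be made rigorous with the tools in the paper: by openness of versality (Theorem \ref{thm:openness of versality}) and semicontinuity of the embedding dimension of $(S,\cdot)$, at any point $y$ with $(\sX_{\phi(y)},y)\cong(X,x)$ the family is again a \emph{semiuniversal} deformation of $(X,x)$, and the inducing base change $\varphi\colon(S,\phi(y))\to(S,s)$ has bijective tangent map. (Beware, though, that the ``inverse function theorem'' is not enough for singular base germs: a morphism of germs with bijective tangent map is only a closed embedding, cf.\ $(V(xy),0)\subset(\C^2,0)$; you must run the two-sided argument with a map back and the uniqueness of tangent maps to conclude that $\varphi$ is an isomorphism.) The real problem is the last step: ``the point $s$ is the unique point of $S$ whose \dots fibre data reproduce $(X,x)$'' is precisely the statement you are trying to prove, so the argument is circular. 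The existence, for each $y_n$, of an isomorphism $(S,\phi(y_n))\xrightarrow{\ \sim\ }(S,s)$ compatible with $\phi$ is not by itself contradictory, and your ``diagonal locus'' is neither shown to be analytic nor actually cut out by any tangent-level condition you have established.

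The two missing ingredients are what the paper supplies by reducing to the Hauser--M\"uller trivial-locus theorem (Theorem \ref{trivial locus}): (i) the locus $Y$ of points $y$ with $(\sX,y)\cong(\sX,x)$ is an \emph{analytic} germ, $\phi|_Y\colon Y\to T$ is smooth onto the trivial locus $T\subset S$, and $X\cong (\phi|_Y)^{-1}(s)\times Z$; and (ii) the actual contradiction mechanism: over $(T,s)$ the family is trivial, hence induced from $\phi$ both by the inclusion $(T,s)\hookrightarrow(S,s)$ and by the constant map to $s$, so uniqueness of the tangent map in Definition \ref{def:versal}(3) forces $T_{T,s}=0$, i.e.\ $T=\{s\}$; then $Y$ is a smooth factor of the special fibre $(X,x)$, which contradicts the isolatedness of the (non-smooth) singularity unless $Y_{\mathrm{red}}=\{x\}$. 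Your steps establishing that fibre-germ isomorphism forces total-space-germ isomorphism are in fact the bridge needed to apply that theorem, so the work is not wasted; but without the analyticity statement and the $T_{T,s}=0$ / product-decomposition argument no contradiction has been reached. A final small point: you cite \cite[Theorem 4.8.4]{Te78} as the ``openness-of-versality input,'' but that reference is Teissier's original proof of the economy theorem itself; openness of versality is Flenner's theorem \cite{Fl78,Fl81}.
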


This theorem can easily be deduced from the following general result about the trivial locus of a morphism
due to Hauser and M\"uller \cite{HM89}, with special cases proved before in 
\cite[Lemma 1.4]{GKa89} and \cite[Corollary 0.2]{FK87}. Recall that a morphism 
$f:(X,x) \to (S,s)$ of complex germs is called {\em trivial}  if
$(X,x) \cong (f^{-1}(s),x) \times (S,s)$ over $(S,s)$. $f$ is called {\em smooth}
if it is trivial with $(f^{-1}(s),x)$ smooth.
Let $Z_{red}$ denote the reduction of the complex space $Z$.

\begin{theorem}\label{trivial locus}
For any morphism  $f:(X,x) \to (S,s)$ of complex germs there exist complex germs $(Y,x) \subset (X,x)$ and $(T,s)\subset (S,s)$ with the following property
for sufficiently small representatives.
\begin{enumerate}[leftmargin=*]
\item 
$Y_{red} = \{y \in X \,| \,(X,y) \cong (X,x)\}$ and $T_{red}=f(Y).$
\item The restriction $f_Y: Y\to T$ is a smooth morphism.
\item $f^{-1}(s) \cong f_Y^{-1}(s) \times Z$ for some complex space $Z$.
\item If $\varphi: S' \to S$ is a morphism (of germs), then $\varphi^*(f): X \times_{S} S' \to S'$
is trivial iff $\varphi$ factors through $T$.
\end{enumerate}
\end{theorem}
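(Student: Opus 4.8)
The plan is to reduce the statement to a study of \emph{$f$-liftable vector fields} and their integration, since triviality of a morphism is exactly the condition that it can be trivialized by flowing along vector fields covering the base directions. Concretely, I would embed $(X,x)\hookrightarrow(\C^N\!,\bo)$ and $(S,s)\hookrightarrow(\C^r\!,\bo)$, fix coordinates $t=(t_1,\dots,t_r)$ on $(S,s)$, and consider the $\ko_{X,x}$-module of holomorphic vector fields $\Theta_{X,x}=\Der_{\C}(\ko_{X,x})$ together with the differential $df\colon\Theta_{X,x}\to f^{*}\Theta_{S}$ induced by $f^{\sharp}$. A vector field $\delta\in\Theta_{X,x}$ lifts a derivation $\eta$ on $(S,s)$ when $\delta\circ f^{\sharp}=f^{\sharp}\circ\eta$; flowing by such a pair $(\delta,\eta)$ produces a one-parameter family of germ isomorphisms of $X$ covering the flow of $\eta$ on $S$. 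The first observation is the \emph{infinitesimal criterion}: $f$ is trivial precisely when the coordinate directions $\partial/\partial t_1,\dots,\partial/\partial t_r$ can be lifted to vector fields on $X$ generating a simultaneously integrable distribution, and the orbit of $x$ under all such flows is exactly the locus along which the total-space germ of $X$ does not change.

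Next I would construct the two germs. The obstruction to lifting the base directions is a cokernel of $df$ (restricted to the liftable part), which is a coherent / finitely generated module by the coherence theorems of Grauert and Frisch (cf. \cite{Fr67}); its scheme-theoretic support, refined to record simultaneous integrability, defines the subgerm $(T,s)\subset(S,s)$, while $(Y,x)\subset(X,x)$ is obtained as the flow-out (orbit) of $x$ under the integrable liftable vector fields. Since the flow of any vector field on a representative of $X$ is an automorphism, every point of this orbit has germ isomorphic to $(X,x)$; the reverse inclusion, that the isomorphism locus is no larger than the orbit, follows from the coherence of the construction, yielding the description $Y_{red}=\{y\in X\mid(X,y)\cong(X,x)\}$ and $T_{red}=f(Y)$ in (1). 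As an orbit of the pseudogroup of $f$-liftable flows, $Y$ fibres over $T$ with homogeneous, hence smooth, fibre $f_{Y}^{-1}(s)$, so $f_{Y}\colon Y\to T$ is trivial with smooth special fibre, i.e. smooth in the sense recalled before the theorem; this is (2). Splitting the smooth orbit directions inside the special fibre off a transverse slice $Z$ then gives the product decomposition $f^{-1}(s)\cong f_{Y}^{-1}(s)\times Z$ of (3).

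The crux of the argument is the passage from this infinitesimal data to an actual \emph{analytic} trivialization, i.e. the integration step. Having chosen liftings $\delta_{1},\dots,\delta_{d}$ of a basis of $T_{s}T$, I would integrate their joint flow over $T$ to produce an isomorphism $f^{-1}(T)\cong f^{-1}(s)\times T$ over $T$, exhibiting $f$ as trivial over $T$; that this trivialization \emph{converges}, rather than being merely formal, is the delicate point, and is where one must invoke the integrability estimates of Hauser and M\"uller \cite{HM89}, or alternatively an Artin-approximation argument descending from a formal to a convergent trivialization. Finally, for the universal property (4), the implication ``$\varphi$ factors through $T$ $\Rightarrow$ $\varphi^{*}(f)$ trivial'' is immediate from functoriality of the pull-back once $f$ is known to be trivial over $T$; the converse requires that $(T,s)$ carry the correct, possibly \emph{non-reduced}, structure defined by the full obstruction ideal, so that triviality of $\varphi^{*}(f)$, tested in particular against Artinian (fat-point) base germs $S'$, forces $\varphi$ scheme-theoretically into $T$. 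The main obstacles are therefore twofold: proving convergence in the integration step, and pinning down the scheme structure of $T$ finely enough that (4) holds for all base changes and not merely set-theoretically.
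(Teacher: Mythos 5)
You should first be aware that the paper does not prove this statement at all: it is quoted verbatim as a theorem of Hauser and M\"uller, with the bare citation \cite{HM89} (and pointers to the special cases in \cite{GKa89} and \cite{FK87}). So there is no in-paper proof to compare against, and your attempt has to be judged on its own. Judged that way, it is a program rather than a proof, and the two issues you yourself flag as ``obstacles'' are not loose ends but the entire mathematical content of the theorem. Invoking ``the integrability estimates of Hauser and M\"uller \cite{HM89}'' to close the convergence gap is circular, since the statement you are proving \emph{is} the main theorem of that paper; and the alternative you mention, Artin approximation, does not by itself produce a trivialization over a germ $(T,s)$ of positive dimension together with the representing property (4) for \emph{all} base changes, including Artinian ones. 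The actual Hauser--M\"uller argument does not proceed by naively integrating a distribution of liftable vector fields; it rests on a rank theorem for analytic maps between power series spaces (equivalently, for the action of the relevant automorphism group on the space defining the family), and it is that rank theorem which simultaneously yields the analytic subgerm $(T,s)$ with its correct, generally non-reduced, structure and the universal property (4).

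A second concrete gap is the reverse inclusion in (1). That every point of the flow-out orbit of $x$ has total-space germ isomorphic to $(X,x)$ is clear, but the converse --- that every $y$ with $(X,y)\cong (X,x)$ lies in that orbit, and that this locus is the reduction of an analytic subgerm $(Y,x)$ --- does not follow from ``coherence of the construction.'' An abstract isomorphism of germs at nearby points need not a priori be realized by the time-one map of an $f$-liftable flow, and the analyticity of the isomorphism locus $\{y\in X\mid (X,y)\cong(X,x)\}$ is itself a nontrivial theorem (it is part of what \cite{HM89}, and before that \cite{GKa89} and \cite{FK87} in special cases, establish). Until you can derive the rank theorem, or some equivalent finite-determinacy and division argument in the category of convergent power series, the proposal remains an accurate description of what must be proved rather than a proof.
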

\noindent
The universal property (4) implies that $(T,s)$ is uniquely determined,
while $(Y,x)$ is only determined up to isomorphism over $(T,s)$.
\medskip

For the proof of the following theorem, we refer to \cite{Fl78,Fl81}.

\begin{theorem}[Openness of versality]\index{openness of
    versality}\label{thm:openness of versality} 
Let \mbox{$f:X\to S$} be a flat morphism of complex spaces such that
$\Sing(f)$ is finite over $S$. Then the set of points \mbox{$s\in S$}
such that $f$ induces a versal deformation of the multigerm
\mbox{$\bigl(X,\Sing(f^{-1}(s))\bigr)$} is analytically open in $S$. 
\end{theorem}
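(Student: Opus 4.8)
The engine of the argument is an infinitesimal criterion detecting versality at a point $s$ from the cotangent cohomology of the fibre multigerm $(X_s,\Sigma_s)$, where $\Sigma_s=\Sing(f^{-1}(s))$. By the finiteness hypothesis each fibre multigerm has only isolated singularities, so by Grauert's Theorem~\ref{thm:grauert} it admits a semiuniversal deformation, and by Theorem~\ref{thm:flenner versal} versality of $f$ at $s$ is equivalent to \emph{formal} versality. The latter is tested over Artinian bases and is therefore governed by the first cotangent cohomology $T^1_{(X_s,\Sigma_s)}$ of infinitesimal deformations together with the obstruction space $T^2_{(X_s,\Sigma_s)}$. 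The plan is to show that $f$ is versal at $s$ precisely when the Kodaira--Spencer map $\rho_s\colon T_sS\to T^1_{(X_s,\Sigma_s)}$ is surjective and the associated obstruction map is compatible with the obstruction theory of $(S,s)$ (this extra obstruction condition is what upgrades mere completeness to versality when $(S,s)$ is singular), and then to prove that this condition is open in $s$.

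First I would sheafify the cotangent cohomology relatively. One constructs the coherent relative cotangent cohomology sheaves $\mathcal{T}^i_{X/S}$ of $f$ (for $i=1,2$), whose stalks compute infinitesimal deformations and obstructions of the fibres and which, for $i\geq 1$, are supported on $\Sing(f)$. Since $\Sing(f)$ is finite over $S$, the direct images $f_\ast\mathcal{T}^i_{X/S}$ are coherent $\ko_S$-modules, finite morphisms preserving coherence. Next I would globalize the Kodaira--Spencer construction to an $\ko_S$-linear homomorphism $\Theta_S\to f_\ast\mathcal{T}^1_{X/S}$ out of the tangent sheaf of $S$, together with the analogous global obstruction homomorphism; specializing at $s$ is designed to recover, up to the base-change comparison discussed below, the pointwise maps appearing in the criterion.

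Openness is then read off from coherence: the locus of points at which a homomorphism of coherent sheaves is surjective onto its fibre is analytically open (Nakayama's lemma), and upper semicontinuity of the relevant cokernels controls the obstruction part, so the set of $s$ satisfying the criterion is open. The hard part will be \emph{base change}: the functors $T^i$ do not commute with base change, so the fibre $\bigl(f_\ast\mathcal{T}^i_{X/S}\bigr)\otimes_{\ko_{S,s}}\C$ need not coincide with the intrinsic $T^i_{(X_s,\Sigma_s)}$. Bridging this gap requires a semicontinuity and base-change theorem for relative cotangent cohomology, in the spirit of Grauert's direct image theorem, and it is exactly here that the hypothesis ``$\Sing(f)$ finite over $S$'' is indispensable, guaranteeing both the coherence of the direct images and the good behaviour of the comparison maps. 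Once this comparison identifies fibrewise surjectivity of the sheaf map with surjectivity of $\rho_s$ (and likewise for the obstruction datum), the Nakayama argument closes the proof; the complete details are carried out in \cite{Fl78,Fl81}.
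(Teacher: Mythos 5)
The paper gives no proof of this theorem at all---it simply refers the reader to \cite{Fl78,Fl81}---and your sketch follows exactly the strategy of those cited references: coherent relative cotangent cohomology sheaves supported on $\Sing(f)$, finiteness giving coherent direct images, a globalized Kodaira--Spencer/obstruction homomorphism, and a Nakayama-type openness argument. So this is the same approach rather than a different one; the only caveat is that, as written, the genuinely hard step (the base-change and semicontinuity theorem for the relative $\mathcal{T}^i_{X/S}$ and the precise infinitesimal criterion equating versality with surjectivity plus the obstruction condition) is itself still deferred to the very references the paper cites, so your text is an outline of Flenner's proof rather than a self-contained argument.
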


Hence, if {$\phi:(\sX,x)\to (S,s)$ is 
a versal deformation of $\bigl(\phi^{-1}(s),x\bigr)$
then, for a sufficiently small representative \mbox{$\phi:\sX\to S$},
the multigerm \mbox{$\phi: \coprod_{x'\in \phi^{-1}(t)} (\sX,x')\to
    (S,t)$}, \mbox{$t\in S$}, 
is a versal deformation of its fibre, the multigerm $\coprod_{x'\in \phi^{-1}(t)}
\bigl(\phi^{-1}(t),x'\bigr)$. The nearby fibre have only isolated singularities, 
since $\Sing(f)\cap  f^{-1}(s)=\Sing(f^{-1}(s))$ is a finite set by assumption.
Note that an analogous statement does not hold for ``semiuniversal''
in place of ``versal''. \\

Although we cannot say anything specific about the semiuniversal
deformation of an arbitrary singularity, the situation is different
for special classes of singularities. For example, hypersurface
singularities or, more generally, complete
intersection\index{complete!intersection} singularities 
are never rigid and we can compute explicitly the semiuniversal
deformation as in the following theorem (for a proof see \cite{KS72} or
 \cite[Theorem I.1.16]{GLS07}).

\begin{theorem}\label{thm:exis vers def}
Let \mbox{$(X,\bo)\subset(\C^n\!,\bo)$} be an isolated complete
intersection\index{complete!intersection} singularity, and let
\mbox{$f:=(f_1,\dots,f_k)$} be a minimal set of generators for the 
ideal of $(X,\bo)$.
Let \mbox{$g_1,\dots,g_{\tau}\in\ko_{\C^n,\bo}^k$},
\mbox{$g_i=(g_i^1,\dots,g_i^k)$},  represent a basis
(respectively a system of generators) for the
finite dimensional $\C$-vector space\,\footnote{The vector space
  \mbox{$T^1_{(X,x)}$} will be defined for arbitrary complex space germs
  $(X,x)$ in Definition \ref{def:inf def:1}. For a definition of $T^1$ in a
  general deformation theoretic context see \cite[Appendix C]{GLS07}.}  
$$ T^1_{(X,\bo)}\index{$t1xx$@$T^1_{(X,x)}$}
:=\ko_{\C^n,\bo}^k\big/\bigl(Df\cdot\ko_{\C^n,\bo}^n+\langle
f_1,\dots,f_k\rangle\ko_{\C^n,\bo}^k\bigr)\,, $$
and set \mbox{$F=(F_1,\dots,F_k)$}, 
\begin{eqnarray*}
F_1(\bx,\bt) & = & f_1(\bx)+\sum_{j=1}^{\tau} t_jg_j^1(\bx)\,,\\[-0.6em]
\vdots \quad & & \qquad \vdots\\
F_k(\bx,\bt) & = & f_k(\bx)+\sum_{j=1}^{\tau} t_jg_j^k(\bx)\,,\\
(\sX,\bo) & := & V(F_1,\dots,F_k)\subset(\C^n\!\times\C^{\tau},\bo)\,. 
\end{eqnarray*}
Then \mbox{$(X,\bo)\stackrel{i}{\hookrightarrow}(\sX,\bo)
\stackrel{\phi}{\to}(\C^{\tau},\bo)$}
with $i,\phi$ being induced by the inclusion
\mbox{$(\C^n\!,\bo)\subset(\C^n\!\times\C^{\tau},\bo)$},
resp. the projection
\mbox{$(\C^n\!\times\C^{\tau},\bo)\to(\C^{\tau},\bo)$}, 
 is a semiuniversal (respectively versal) deformation of
$(X,\bo)$. 
\end{theorem}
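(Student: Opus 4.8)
The plan is to reduce the statement to the verification of \emph{formal} versality and then invoke Theorem \ref{thm:flenner versal} (formally versal $\Rightarrow$ versal, a versal deformation existing by Grauert's Theorem \ref{thm:grauert}) to obtain genuine versality. That $F$ defines a deformation at all is immediate from Proposition \ref{prop:unf vs def}, since $(X,\bo)$ is a complete intersection and each $F_i$ is a lifting of $f_i$, so flatness is automatic. Separately, the isolated-singularity hypothesis guarantees $\dim_\C T^1_{(X,\bo)}=\tau<\infty$, so the base $(\C^\tau,\bo)$ is finite-dimensional and the sum defining $F$ is finite.

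For formal versality I would test the lifting criterion of Definition \ref{def:versal}(2) on Artinian germs and factor an arbitrary closed embedding of Artinian germs $(T',t)\hookrightarrow(T,t)$ into a chain of \emph{small extensions}, i.e.\ surjections $B\twoheadrightarrow B'$ of Artinian $\C$-algebras whose kernel $(\epsilon)$ is one-dimensional and annihilated by $\fm_B$. It then suffices to treat a single small extension. By Corollary \ref{cor:defs can be embedded} together with Proposition \ref{prop:unf vs def}, the given deformation $\psi$ over $B$ is represented by $k$ liftings $\tilde F_i=f_i+(\text{terms in }\fm_B)$ of the $f_i$. Lifting the coordinate change that realizes ${\varphi'}^{\ast}F\cong k^{\ast}\psi$ from $B'$ to $B$ (possible as $B\twoheadrightarrow B'$ is surjective), I may assume $\tilde F_i\equiv f_i+\sum_j{\varphi'}^{\sharp}(t_j)\,g_j^i\pmod{\epsilon}$, hence $\tilde F_i=f_i+\sum_j a_j g_j^i+\epsilon\,h_i$ with $a_j\in B$ lifting ${\varphi'}^{\sharp}(t_j)$ and $h_i\in\ko_{\C^n,\bo}$.

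The heart of the argument is the identification of trivial first-order deformations with the module defining $T^1_{(X,\bo)}$. I would set $\varphi^{\sharp}(t_j):=a_j+\epsilon c_j$ for constants $c_j\in\C$ to be chosen, so that $\varphi^{\ast}F_i-\tilde F_i=\epsilon\big(\sum_j c_j g_j^i-h_i\big)$. Since $\fm_B\,\epsilon=0$, two liftings to $B$ of the same deformation over $B'$ are isomorphic precisely when the $\epsilon$-component of their difference lies in $Df\cdot\ko_{\C^n,\bo}^n+\langle f_1,\dots,f_k\rangle\,\ko_{\C^n,\bo}^k$: an infinitesimal coordinate change $\bx\mapsto\bx+\epsilon\,\delta(\bx)$ alters $(\tilde F_i)$ by $\epsilon\,Df\cdot\delta$, while a change of generators by $I+\epsilon M$ (with $M$ a $k\times k$ matrix over $\ko_{\C^n,\bo}$) alters it by $\epsilon\,(\sum_l M_{il}f_l)_i$, and these two moves generate exactly that submodule. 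Thus the difference of $\tilde F$ and $\varphi^{\ast}F$ has well-defined class $[h]-\sum_j c_j[g_j]\in T^1_{(X,\bo)}$. Because the $[g_j]$ span $T^1_{(X,\bo)}$, I can solve $[h]=\sum_j c_j[g_j]$, making the difference trivial; the resulting isomorphism $\psi|_B\cong\varphi^{\ast}F$ reduces to the identity modulo $\epsilon$, and $\varphi^{\sharp}(t_j)\equiv a_j\pmod{\epsilon}$ shows $\varphi$ extends $\varphi'$. This proves formal versality. When the $[g_j]$ form a \emph{basis}, the $c_j$ are uniquely determined at the first step (on $\fm_T/\fm_T^2$), which pins down the Zariski tangent map of $\varphi$ and gives semiuniversality.

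The main obstacle I anticipate lies in the bookkeeping of this last step: one must verify that infinitesimal coordinate and generator changes are the \emph{only} sources of isomorphism over a small extension, so that triviality is detected exactly by the module in the definition of $T^1_{(X,\bo)}$; that all corrections are $\equiv\mathrm{id}\pmod{\epsilon}$, so the matching already arranged over $B'$ is undisturbed; and that the liftings $\tilde F_i$ genuinely exist with $k$ generators and carry a flat structure. This last point is precisely where the complete-intersection hypothesis is indispensable, since it forces the relations among $f_1,\dots,f_k$ to be Koszul, so that $H_1$ vanishes and all obstructions disappear, exactly as in the proof of Proposition \ref{prop:unf vs def}.
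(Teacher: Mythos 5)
Your argument is correct and is essentially the standard proof that the paper itself delegates to \cite{KS72} and \cite[Theorem I.1.16]{GLS07}: flatness via the Koszul relations (Proposition \ref{prop:unf vs def}), reduction of formal versality to a single small extension where the liftings form a torsor under $T^1_{(X,\bo)}\otimes_\C(\epsilon)$ with trivial classes generated exactly by infinitesimal coordinate and generator changes, surjectivity (resp.\ bijectivity) of the Kodaira--Spencer map because the $[g_j]$ span (resp.\ form a basis of) $T^1_{(X,\bo)}$, and finally the passage from formal to analytic versality via Grauert's Theorem \ref{thm:grauert} together with Theorem \ref{thm:flenner versal}. I see no gap; the one delicate point you flag (that ambient coordinate changes and generator changes exhaust the isomorphisms over a small extension) is exactly the content of Remark \ref{rmk:t1xx}(2)--(3) and is handled correctly.
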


\noindent
Here, $Df$ denotes the Jacobian matrix of $f$,
$$ (Df)=\Big(\frac{\partial f_i}{\partial
x_j}\Big)\colon\ko_{\C^n,\bo}^n\longrightarrow\ko_{\C^n,\bo}^k\,, $$
that is, \mbox{$(Df)\cdot\ko_{\C^n,\bo}^n$} is the submodule of
$\ko_{\C^n,\bo}^k$ 
spanned by the columns of the Jacobian matrix of $f$. 

Note that $T^1_{(X,\bo)}$ is an $\ko_{X,\bo}$-module, called the {\em Tjurina
  module\/}\index{Tjurina!module!of complete intersection} of the
complete intersection $(X,\bo)$. If $(X,\bo)$ is a hypersurface, then
$T^1_{(X,\bo)}$ is an algebra and called the {\em Tjurina algebra
  of\/}\index{Tjurina!algebra} $(X,\bo)$.   The number
    $$ \tau(X,x) := \dim_\C T^1_{(X,\bo)}$$
  is called the {\em Tjurina number} of $(X,x)$.\index{Tjurina!number}

\medskip
Since the hypersurface case is of special importance we state it
explicitly.
\begin{corollary}\label{cor:hypersurface case T1}
  Let \mbox{$(X,\bo)\subset(\C^n\!,\bo)$} be an isolated singularity
  defined by \mbox{$f\in\ko_{\C^n,\bo}$} and
  \mbox{$g_1,\dots,g_{\tau}\in\ko_{\C^n,\bo}$} a $\C$-basis of the
  Tjurina algebra
$$ T^1_{(X,\bo)}=\ko_{\C^n,\bo}/\textstyle\big\langle f,\frac{\partial 
f}{\partial x_1},\dots,\frac{\partial f}{\partial
x_n}\big\rangle\,. $$
If we set
$$
F(\bx,\bt) := f(\bx)+\sum_{j=1}^{\tau}t_jg_j(\bx)\,,\quad
(\sX,\bo) := V(F)\subset (\C^n\!\times\C^{\tau},\bo)\,,
$$
then
\mbox{$(X,\bo)\hookrightarrow(\sX,\bo)\xrightarrow{\phi}(\C^{\tau}\!,\bo)$},
with $\phi$ the second projection, is a semiuniversal deformation of
$(X,\bo)$. 
\end{corollary}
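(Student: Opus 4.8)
The plan is to deduce the statement directly from Theorem~\ref{thm:exis vers def} by specializing to the case $k=1$, so the only real work is to check that a hypersurface singularity fits the hypotheses there and that the data match up after this specialization.

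First I would observe that an isolated hypersurface singularity $(X,\bo)\subset(\C^n\!,\bo)$ defined by a single non-unit $f\in\ko_{\C^n,\bo}$ is in particular an isolated complete intersection singularity with $k=1$: since $\langle f\rangle$ is a nonzero ideal, $\{f\}$ is automatically a minimal system of generators, so $k=1=N-\dim(X,\bo)$ with $N=n$ and $\dim(X,\bo)=n-1$. Thus Theorem~\ref{thm:exis vers def} applies verbatim with the generating tuple $f=(f_1)=(f)$. Next I would match the Tjurina data. For $k=1$ the Jacobian matrix $Df=\bigl(\partial f/\partial x_1,\dots,\partial f/\partial x_n\bigr)\colon\ko_{\C^n,\bo}^n\to\ko_{\C^n,\bo}$ has image the Jacobian ideal $\langle\partial f/\partial x_1,\dots,\partial f/\partial x_n\rangle$, so the Tjurina module of the theorem becomes
\[ T^1_{(X,\bo)}=\ko_{\C^n,\bo}\big/\bigl\langle f,\partial f/\partial x_1,\dots,\partial f/\partial x_n\bigr\rangle, \]
which is precisely the Tjurina algebra in the statement. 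With $k=1$ the vectors $g_i=(g_i^1,\dots,g_i^k)$ of the theorem are just scalars $g_i\in\ko_{\C^n,\bo}$, and the perturbation $F=(F_1)$ reduces to the single power series $F(\bx,\bt)=f(\bx)+\sum_{j=1}^{\tau}t_jg_j(\bx)$ of the corollary, with $(\sX,\bo)=V(F)$ the specialization of $V(F_1,\dots,F_k)$.

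Finally, since all hypotheses and explicit formulas of Theorem~\ref{thm:exis vers def} specialize exactly to those of the corollary, its conclusion gives that $(X,\bo)\hookrightarrow(\sX,\bo)\xrightarrow{\phi}(\C^{\tau}\!,\bo)$ with $\phi$ the second projection is a semiuniversal deformation of $(X,\bo)$. I do not expect a genuine obstacle here; the one point that requires a small argument rather than pure bookkeeping is the finiteness of $\tau$, i.e.\ that an isolated hypersurface singularity has finite-dimensional Tjurina algebra. This is exactly the Jacobian-criterion statement that the singular locus of $V(f)$ being the single point $\bo$ forces $\langle f,\partial f/\partial x_1,\dots,\partial f/\partial x_n\rangle$ to be $\fm$-primary, whence $\tau=\dim_\C T^1_{(X,\bo)}<\infty$ and a finite $\C$-basis $g_1,\dots,g_\tau$ exists, guaranteeing that the base $(\C^{\tau}\!,\bo)$ is well defined.
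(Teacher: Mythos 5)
Your proposal is correct and matches the paper's treatment: the corollary is stated there as the immediate specialization of Theorem~\ref{thm:exis vers def} to the case $k=1$, with no separate argument given, and your bookkeeping (identifying the Tjurina module with the Tjurina algebra via the $1\times n$ Jacobian, and noting finiteness of $\tau$ from the isolatedness of the singularity) is exactly what that specialization requires.
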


\begin{remark}
\label{rmk:vers def}
Using the notation of Theorem \ref{thm:exis vers def}, we can choose the 
basis $g_1,\dots,g_{\tau}\in\ko_{\C^n,0}^k$ of $T^1_{(X,\bo)}$ such that 
$g_i=-e_i$, \mbox{$e_i=(0,\dots,1,\dots,0)$} the $i$-th canonical
generator of $\ko_{\C^n,\bo}^k$, for $i=1,\dots,k$ (assuming that
\mbox{$f_i\in\fm_{\C^n,\bo}^2$}). Then
$$ F_i=f_i-t_i+\sum_{j=k+1}^{\tau}t_jg_j^i\,, $$
and we can eliminate $t_1,\dots,t_k$ from \mbox{$F_1=\ldots
=F_k=0$}. Hence, the semiuniversal deformation of $(X,\bo)$ is given by
$$
\psi: (\C^n\!\times\C^{\tau-k},\bo)
\to(\C^k\times \C^{\tau-k},\bo)=(\C^{\tau}\!,\bo)
$$
with \mbox{$\psi(\bx,t_{k+1},\dots,t_{\tau})  = 
(G_1(\bx,\bt),\dots,G_k(\bx,\bt),t_{k+1},\dots,t_{\tau})$}, 
$$
G_i(\bx,\bt)  =  f_i(\bx)+\sum_{j=k+1}^{\tau}t_jg_j(\bx)\,,
$$
where \mbox{$g_j=(g_j^1,\dots,g_j^k)$}, \mbox{$j=k+1,\dots,\tau$},
is a basis of the $\C$-vector space
$$\bigl(\fm\cdot
\ko_{\C^n\!,\bo}^k\bigr)\big/\bigl((Df)\cdot\ko_{\C^n\!,\bo}^n+\langle
f_1,\dots,f_k\rangle\ko_{\C^n\!,\bo}^k\bigr)\,,$$ 
assuming \mbox{$f_1,\dots,f_k\in\fm_{\C^n\!,\bo}^2$}.

In particular, if \mbox{$f\in\fm_{\C^n\!,\bo}^2$} and if
$1,h_1,\dots,h_{\tau-1}$ is a basis of the Tjurina 
algebra $T_{f}$, then (setting \mbox{$\bt:=(t_1,\dots t_{\tau-1}$})
$$ F\colon(\C^n\!\times\C^{\tau-1}\!,\bo)\longrightarrow(\C^{\tau},\bo)\,,\
\textstyle(\bx,\bt)\mapsto
\Bigl(f(\bx)+\sum\limits_{i=1}^{\tau-1}t_ih_i,\,\bt\Bigr) $$
is a semiuniversal deformation of the hypersurface singularity
\mbox{$(f^{-1}(0),\bo)$}.
\end{remark}

\medskip

\begin{example}
{\em (1)\: Let \mbox{$(X,\bo)\subset (\C^3\!,\bo)$} be the isolated complete
intersection curve singularity 
defined by the vanishing of 
$f_1(\bx)=x_1^2+x_2^3$ and of
$f_2(\bx)=x_3^2+x_2^3$. Then the Tjurina module is
$ T^1_{(X,\bo)}= \C\{\bx\}^2/M$, where $M\subset
      \C\{\bx\}^2$ is generated by
$\tbinom{x_1}{0},\tbinom{x_2^2}{x_2^2},
    \tbinom{0}{x_3},\tbinom{f_1}{0},\tbinom{0}{f_1},\tbinom{f_2}{0},
    \tbinom{0}{f_2}$. 
    We have $\tau=9$ and a $\C$-basis for $T^1_{(X,\bo)}$ is given by
$\tbinom{1}{0},$ $\tbinom{0}{1},$  $\tbinom{x_2}{0},$ 
    $\tbinom{x_3}{0},$ $\tbinom{x_2x_3}{0},$ $\tbinom{x_2^2}{0},$ 
    $\tbinom{0}{x_1},$  $\tbinom{0}{x_2},$  $ \tbinom{0}{x_1x_2}$. 
Again by Remark \ref{rmk:vers def},
    it follows that a semiuniversal deformation of $(X,\bo)$ is given by
    \mbox{$\psi \colon (\C^{10}\!,\bo)\to(\C^9\!,\bo)$}, 
$$
(\bx,\bt)\longmapsto\bigl(f_1(\bx)+t_1x_2+t_2x_3+t_3x_2x_3+t_4x_2^2,\,
f_2(\bx)+t_5x_1+t_6x_2+t_7x_1x_2,\bt\bigr)\,.
$$
This can easiiy verified by a computation in {\sc Singular}.}
\end{example}

\bigskip

\subsection{Infinitesimal Deformations}\index{infinitesimal!deformation}
\setcounter{equation}{0}
\label{sec:inf def and obs}

In this section we develop infinitesimal deformation theory for
{\em arbitrary singularities}. In particular, we introduce in this generality
the vector spaces 
$T^1_{(X,x)}$ of {\em first order
  deformations}
  that is, the linearization of 
the deformations of $(X,x)$ and show how it can be computed. In the next section
we describe the obstructions for lifting an infinitesimal deformation
of a given order to higher order. This and the next section can be
considered as a concrete special case of the general theory 
described in \cite[Appendix C]{GLS07}.

\medskip
Infinitesimal deformation theory of first order is the deformation over
the complex space $T_{\eps}$, a ``point with one tangent direction''.

\begin{definition} \label{def:inf def:1}
\begin{enumerate}[leftmargin=*]
 \item[(1)] The complex space germ
  $T_{\eps}=(${\em\{pt\}},$\C[\eps])$\index{$t$@$\Teps$} consists of one point
  with local ring $\C[\eps]=\C[t]/\langle t^2\rangle$. 
  \item[(2)] For any complex space germ $(X,x)$ we set
       $$ T^1_{(X,x)}\index{$t1xx$@$T^1_{(X,x)}$} :=\uDef_{(X,x)}(T_{\eps})\,,$$
  the set of isomorphism
  classes of deformations of $(X,x)$ over $T_{\eps}$. Objects of 
\mbox{$\Def_{(X,x)}(T_{\eps})$} are called 
 {\em infinitesimal} or  {\em  first order deformations} of $(X,x)$.
\index{first order deformation}\index{deformation!first
  order}\index{deformation!infinitesimal}\index{infinitesimal!deformation!first order}  

\item[(3)] We shall see in Proposition \ref{prop:t1xx}  that $T^1_{(X,x)}$ 
  carries the structure of a complex vector space. We call $T^1_{(X,x)}$ the {\em Tjurina module}\index{Tjurina!module}, and
$$ \tau(X,x) := \dim_{\C}T^1_{(X,x)}$$
 the {\em Tjurina  number}\index{Tjurina!number}\index{$tau$@$\tau(X,x)$} of $(X,x)$.
  \end{enumerate}
\end{definition}

Any singularity $(X,x)$ with \mbox{$\tau(X,x)<\infty$} has
a semiuniversal deformation (see \cite{Gr72,St03}); it is
not difficult to see that isolated singularities have finite
Tjurina number. 
\medskip

The following lemma shows that $T^1_{(X,x)}$ can be identified with
the Zariski tangent space to the semiuniversal base of $(X,x)$ (if it
exists).

\begin{lemma}\label{lem:Kodaira-Spencer map}
Let $(X,x)$ be a complex space germ and $\phi:(\sX,x)\to(S,s)$ 
a deformation of $(X,x)$. Then 
there exists a linear map\,\footnote{$T_{S,s}$ denotes the Zariski
  tangent space to $S$ at $s$.} 
$$ T_{S,s} \lra T^1_{(X,x)}\,, $$
called the {\em Kodaira-Spencer map}\index{Kodaira-Spencer map}, which 
is surjective if $\phi$ is versal and bijective if $\phi$ is
semiuniversal. 

Moreover, if $(X,x)$ admits a semiuniversal deformation
with smooth base space, then $\phi$ is semiuniversal iff $(S,s)$ is smooth and
the Kodaira-Spencer map is an isomorphism.
\end{lemma}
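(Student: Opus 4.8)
The plan is to construct the Kodaira--Spencer map explicitly and then deduce its properties from the universal properties of versal and semiuniversal deformations established earlier. First I would define the map. A tangent vector $v \in T_{S,s}$ corresponds precisely to a morphism of germs $\varphi_v \colon T_\eps \to (S,s)$ (since $T_{S,s} = \Hom(\fm_{S,s}/\fm_{S,s}^2, \C)$ and a morphism out of the fat point $T_\eps$ is the same datum). Given such a $\varphi_v$, pulling back the deformation $(i,\phi)$ along $\varphi_v$ yields an induced deformation $\varphi_v^\ast(i,\phi)$ of $(X,x)$ over $T_\eps$, whose isomorphism class is by definition an element of $\uDef_{(X,x)}(T_\eps) = T^1_{(X,x)}$. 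I would set the Kodaira--Spencer map to be $v \mapsto [\varphi_v^\ast(i,\phi)]$. Linearity of this assignment is the first thing to check; it follows from the vector space structure on $T^1_{(X,x)}$ promised in Proposition~\ref{prop:t1xx}, together with compatibility of pull-back with the addition of tangent vectors.

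Next I would prove surjectivity when $\phi$ is versal. Take any class $[\eta] \in T^1_{(X,x)}$, represented by a deformation $(j,\psi)$ of $(X,x)$ over $T_\eps$. By completeness of the versal deformation (versal $\Rightarrow$ complete, as noted after Definition~\ref{def:versal}), there is a base change $\varphi \colon T_\eps \to (S,s)$ with $(j,\psi) \cong \varphi^\ast(i,\phi)$. The morphism $\varphi$ determines a tangent vector $v \in T_{S,s}$, and by construction the Kodaira--Spencer map sends $v$ to $[\varphi^\ast(i,\phi)] = [\eta]$. Hence the map is onto.

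For bijectivity in the semiuniversal case I would invoke the defining minimality in Definition~\ref{def:versal}(3): semiuniversality says precisely that the Zariski tangent map $T_{(T,t)} \to T_{(S,s)}$ of the base-change morphism is uniquely determined. Applied with $(T,t) = T_\eps$, this forces the tangent vector $v$ realizing a given $[\eta]$ to be unique, which is exactly injectivity of the Kodaira--Spencer map on top of the surjectivity already shown. The main obstacle, and the step deserving the most care, is verifying that the abstract minimality condition in (3) translates cleanly into injectivity of \emph{this} concrete map: one must check that two tangent vectors inducing isomorphic deformations over $T_\eps$ are forced to coincide, i.e. that the ambiguity allowed in (3) is trivial when the base is the fat point. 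I would handle this by noting that the tangent map of $\varphi \colon T_\eps \to (S,s)$ \emph{is} the tangent vector $v$, so "uniqueness of the tangent map" is verbatim "uniqueness of $v$."

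Finally I would treat the addendum about smooth base spaces. Suppose $(X,x)$ admits a semiuniversal deformation over a smooth base $(S_0,s_0)$, and let $\phi \colon (\sX,x) \to (S,s)$ be any deformation. If $\phi$ is semiuniversal, then by the uniqueness of the semiuniversal deformation (stated in the Lemma preceding Theorem~\ref{thm:flenner versal}) its base is isomorphic to $(S_0,s_0)$, hence smooth, and the Kodaira--Spencer map is an isomorphism by the bijectivity just proved. Conversely, if $(S,s)$ is smooth and the Kodaira--Spencer map $T_{S,s} \to T^1_{(X,x)}$ is an isomorphism, I would compare $\phi$ with the semiuniversal deformation over $(S_0,s_0)$: versality of the latter gives a base change $\varphi \colon (S,s) \to (S_0,s_0)$ inducing $\phi$, whose tangent map at $s$ is identified, via the two Kodaira--Spencer maps, with the given isomorphism. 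Thus $d\varphi_s$ is an isomorphism between tangent spaces of two smooth germs, so by the inverse function theorem $\varphi$ is an isomorphism of germs, and therefore $\phi$, being induced from a semiuniversal deformation by an isomorphism, is itself semiuniversal. The inverse function theorem is the essential input here, and the smoothness hypothesis on both germs is exactly what makes bijectivity of the differential suffice.
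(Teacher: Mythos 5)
Your proposal is correct and follows essentially the same route as the paper's proof: the map is defined via $T_{S,s}=\Mor(T_\eps,(S,s))$ and pull-back, surjectivity comes from completeness, injectivity from the uniqueness of the tangent map in the definition of semiuniversality (using that a morphism out of $T_\eps$ is determined by its tangent map), and the smooth-base addendum is settled by comparing with the semiuniversal deformation and applying the inverse function theorem. The only difference is cosmetic: you spell out slightly more of the identification of $d\varphi$ with the tangent vector, while the paper phrases it via $\varphi^\sharp(\fm_{S,s}^2)=0$; both leave linearity as a routine check.
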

\begin{proof}
For any complex space germ $(S,s)$ we have
$T_{S,s}=\Mor\big(T_{\eps},(S,s)\big)$. Define a map
\begin{eqnarray*}
  \alp\colon\Mor\big(T_{\eps},(S,s)\big) & \longrightarrow &
  T^1_{(X,x)}\,,\\
  \varphi & \mapsto & \big[\varphi^{\ast}\phi\big]\,.
\end{eqnarray*}
Let us see that $\alp$ is surjective if $\phi$ is versal: given a class
\mbox{$[\psi]\in T^1_{(X,x)}$} represented by
\mbox{$\psi:(\sY,x)\rightarrow T_{\eps}$}, the
versality of $\phi$ implies the existence 
 of a map \mbox{$\varphi\colon T_{\eps}\to(S,s)$} such that
\mbox{$\varphi^{\ast}\phi\cong \psi$}. Hence,
\mbox{$[\psi]=\alp(\varphi)$}, and $\alp$ is surjective.

If $\phi$ is semiuniversal, the tangent map $T\varphi$ of 
$\varphi\colon T_{\eps}\to(S,s)$ is uniquely determined by
$\psi$. 
Since $\varphi$ is uniquely determined by its algebra map 
$ \varphi^{\sharp}\colon\ko_{S,s}\to\ko_{T_{\eps}}=\C[t]/\langle
t^2\rangle $
and, since $\varphi^{\sharp}$ is local, we obtain
\mbox{$\varphi^{\sharp}(\fm_{S,s}^2)=0$}. That is, $\varphi$ is
uniquely determined by
$\underline{\varphi}^{\sharp}\colon\fm_{S,s}/\fm_{S,s}^2
\longrightarrow\langle t\rangle/\langle t^2\rangle $
and hence by the dual map
$\big(\underline{\varphi}^{\sharp}\big)^{\ast}=T\varphi$. Thus,
$\alp$ is bijective. We leave the linearity of $\alpha$ as an exercise.

If $(T,t)$ is the smooth base space of a semiuniversal deformation of
$(X,x)$ then there is a morphism \mbox{$\varphi:(S,s)\to (T,t)$}
inducing the map 
$\alpha:T_{S,s}\to T_{T,t}\cong T^1_{(X,x)}$
constructed above. 
Since $(S,s)$ is smooth, $\varphi$ is an isomorphism iff $\alpha$ is (by the
inverse function theorem).
\end{proof}

\begin{remark}\label{ex.sudef}
Lemma \ref{lem:Kodaira-Spencer map} shows that $\dim_\C T^1_{(X,x)}<\infty$ if $(X,x)$ admits a semiuniversal deformation. Together with Theorem \ref{thm:grauert} this shows that $\dim_\C T^1_{(X,x)}<\infty$ is necessary and sufficient  for the 
existence of a semiuniversal deformation of $(X,x)$. If $(X,x)$ has 
an isolated singularity, then $\dim_\C T^1_{(X,x)}<\infty$ by Corollary \ref{T1.iso}
but the converse does not hold (see Example \ref{exa:semiuni def}, below).
\end{remark}

We want to describe now $T^1_{(X,x)}$ in terms of the defining
ideal of $(X,x)$ if  $(X,x)$ is embedded in some $(\C^n\!,\bo)$, 
without knowing a semiuniversal deformation of $(X,x)$. 
To do this, we need embedded deformations, that is, deformations of the inclusion map
\mbox{$(X,x)\hookrightarrow 
  (\C^n\!,\bo)$}. 
  
  Slightly more general, we define deformations of  
a morphism, not necessarily an embedding.
\begin{definition}
\label{def:var defs}
  Let $f\colon(X,x)\to(S,s)$ be a morphism of complex germs.
  \begin{enumerate}[leftmargin=*]
  \item[(1)] A {\em deformation of $f$}, or a {\em deformation of
      \mbox{$(X,x)\to(S,s)$}}\index{deformation!of maps}\index{deformation}, 
  over a germ $(T,t)$ is a Cartesian diagram
  
  $$\xymatrix@C=16pt@R=15pt@M=2pt
  {
  (X,x) \ar@{}[dr]|{\Box}\ar[d]_f\ar@{^{(}->}[r]^-{i} & (\sX,x) \ar[d]^F
    \ar@/^2pc/@<2ex>[dd]^{\phi}\\
  (S,s) \ar@{}[dr]|{\Box}\ar[d]\ar@{^{(}->}[r]^-{j} & (\sS,s)\ar[d]^p\\
  \{pt\} \ar@{^{(}->}[r] & (T,t)} $$
  such that $i$ and $j$ are closed embeddings, and $p$ and $\phi$ are
  flat (hence deformations of $(X,x)$ and of $(S,s)$ over $(T,t)$, but
  $F$ is not supposed to be flat). We denote such a deformation by
  $(i,j,F,p)$ or just by $(F,p)$.
  
\smallskip\noindent
  A {\em morphism}\index{morphism!of deformations} between two deformations
  $(i,j,F,p)$ and 
  $(i',j',F',p')$ of $f$ is a morphism of diagrams, denoted by $(\psi_1,\psi_2,\varphi)$:
  
  $$\xymatrix@C=25pt@R=10pt@M=2pt
  {
  & (X,x)\ar@{_{(}->}[dl]_{i}\ar@{^{(}->}[dr]^{i'}\ar@{-}[d] \\
  (\sX,x)\ar[rr]^(.37){\psi_1}\ar[dd]_F &\ar[d] & (\sX',x')\ar[dd]^{F'} \\
  & (S,s)\ar@{_{(}->}[dl]_{j}\ar@{^{(}->}[dr]^{j'}\ar@{-}[d] \\
  (\sS,s)\ar[rr]^(.37){\psi_2}\ar[dd]_p &\ar[d] & (\sS',s')\ar[dd]^{p'} \\
  & \{pt\}\ar@{_{(}->}[dl]\ar@{^{(}->}[dr] \\
  (T,t)\ar[rr]^{\varphi} & & (T'\!,t')} $$
  
  \noindent
  If $\psi_1,\psi_2,\varphi$ are isomorphisms, then
  $(\psi_1,\psi_2,\varphi)$ is an isomorphism of deformations of $f$.
  
  \smallskip\noindent 
  We denote by
  \mbox{$\Def\!_f=\Def_{(X,x)\to(S,s)}$}\index{$deff$@$\Def_f$}\index{$defxxtss$@$\Def_{(X,x)\to(S,s)}$}
  the category of deformations of $f$, by
  \mbox{$\Def\!_f(T,t)=\Def_{(X,x)\to(S,s)}(T,t)$} the (non-full)
  subcategory of deformations of $f$ over $(T,t)$ with morphisms as
  above and $\varphi$ the identity on $(T,t)$.
  Furthermore, we write\index{$deff1$@$\uDef_f$}
  $$ \uDef_f(T,t)=\uDef_{(X,x)\to(S,s)}(T,t) $$
  for the set of isomorphism
  classes of such deformations. 
   \smallskip\noindent 
\item[(2)] A deformation $(i,j,F,p)$ of \mbox{$(X,x)\to(S,s)$}
 inducing the trivial deformation of $(S,s)$ 
 is called a {\em deformation of
    $(X,x)/(S,s)$ over $(T,t)$}\index{deformation!of
    $(X,x)/(S,s)$}\index{deformation} 
  and denoted by $(i,F)$ or just by $F$.  A morphism 
  is a morphism as in (1) of the form
  $(\psi,\id_{S,s}\!\!\:\times\varphi,\varphi)$; it is
  denoted by $(\psi,\varphi)$.
  
\noindent
  $\Def_{(X,x)/(S,s)}$\index{$defxxss1$@$\Def_{(X,x)/(S,s)}$} denotes
  the category of deformations of $(X,x)/(S,s)$, 
  $\Def_{(X,x)/(S,s)}(T,t)$ the subcategory of
  deformations of \mbox{$(X,x)/(S,s)$} over $(T,t)$ with morphisms
  being the identity on $(T,t)$. 
  
\noindent
   $\uDef_{(X,x)/(S,s)}(T,t)$ denotes
  the set of isomorphism classes of such deformations.
  \end{enumerate}
\end{definition}

The difference between (1) and (2) is that in (1) we deform $(X,x)$,
$(S,s)$ and $f$, while in (2) we only deform $(X,x)$ and $f$ but not
$(S,s)$. Note that $\Def_{(X,x)/\pt}=\Def_{(X,x)}.$
\medskip

The following easy lemma shows that embedded deformations are a special
case of Definition \ref{def:var defs}\,(2).
\begin{lemma}
  Let \mbox{$f\colon(X,x)\to(S,s)$} be a closed embedding of complex
  space germs and let 
$ (\sX,x)\xrightarrow{F} (\sS,s)\xrightarrow{p}
    (T,t)$
be a deformation of $f$. Then
  \mbox{$F\colon(\sX,x)\to(\sS,s)$} is a closed embedding, too. 
\end{lemma}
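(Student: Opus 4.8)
The plan is to show that $F$ is a closed embedding by working on the level of local rings and exploiting the flatness of $p$. Recall the setup: we have a deformation of $f$, i.e.\ a commutative diagram in which $(X,x) \hookrightarrow (\sX,x)$ and $(S,s) \hookrightarrow (\sS,s)$ are closed embeddings, the maps $p$ and $\phi = p \circ F$ are flat, and the two squares are Cartesian, so that the special fibres recover the original morphism $f$. Since $f$ is a closed embedding, the induced map $f^\sharp \colon \ko_{S,s} \to \ko_{X,x}$ is surjective. We want to conclude that $F^\sharp \colon \ko_{\sS,s} \to \ko_{\sX,x}$ is surjective as well; this is precisely the statement that $F$ is a closed embedding of germs.

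First I would set up the commutative square of local rings induced by the diagram, writing $A = \ko_{T,t}$, $B = \ko_{\sS,s}$, $C = \ko_{\sX,x}$, so that $F^\sharp \colon B \to C$ is an $A$-algebra homomorphism with $B$ and $C$ both flat over $A$ (because $p$ and $\phi$ are flat). Because the squares are Cartesian, reduction modulo $\fm_{T,t}$ recovers the special fibres: $C \otimes_A \C = \ko_{X,x}$, $B \otimes_A \C = \ko_{S,s}$, and the induced map $B \otimes_A \C \to C \otimes_A \C$ is exactly $f^\sharp$, which is surjective. So the heart of the matter is the following purely algebraic assertion: if $B \to C$ is a homomorphism of local $A$-algebras with $C$ flat over $A$ and with $B \otimes_A \C \to C \otimes_A \C$ surjective, then $B \to C$ is surjective.

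The key step is to prove this surjectivity by a Nakayama-type argument, and the main obstacle is handling the fact that these are local rings of germs rather than finitely generated modules in an obvious way; the cleanest route is via the completion or via a graded/order argument. Concretely, let $M = \coker(B \to C)$, viewed as a $C$-module, hence as a $B$-module. Surjectivity of $f^\sharp$ says $M \otimes_A \C = M / \fm_{T,t} M = 0$, i.e.\ $M = \fm_{T,t}\, M$. One cannot immediately apply Nakayama over $A$ because $M$ need not be finitely generated over $A$. Instead I would pass to a suitable filtration: both $B$ and $C$ are analytic local $\C$-algebras, and after choosing embeddings (using Corollary~\ref{cor:defs can be embedded}, which lets us embed the deformation so that $\ko_{\sX,x}$ is a quotient of $\ko_{\C^n \times T,(\bo,t)}$) the map $F^\sharp$ is a map of such algebras over $A$. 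The flatness of $C$ over $A$ means that a $\C$-basis of $C \otimes_A \C = \ko_{X,x}$ lifts to a set of generators of $C$ as an $A$-module (this is the standard consequence of flatness plus the local criterion / the open mapping behaviour recorded in Remark~\ref{ex.flat}). Since $f^\sharp$ is surjective, these generators already lie in the image of $B$, whence $B \to C$ is surjective.

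Thus the argument reduces to the statement that for a flat $A$-algebra $C$, lifting a $\C$-generating set of the special fibre $C \otimes_A \C$ produces an $A$-module generating set of $C$, combined with the surjectivity on the special fibre. I expect the genuinely delicate point to be justifying that lifting: over an Artinian or complete local base $A$ it is the completed Nakayama lemma applied to the $\fm_{T,t}$-adic filtration, and in the analytic category one reduces to this case by Frisch's theorem and the fact that flatness is detected on completions. Once surjectivity of $F^\sharp$ is established, it is immediate that $F$ is a closed immersion of germs, since a surjection of analytic local rings $\ko_{\sS,s} \twoheadrightarrow \ko_{\sX,x}$ is exactly the defining property of a closed embedding $(\sX,x) \hookrightarrow (\sS,s)$. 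I would close by remarking that, alternatively, the whole statement follows from Lemma~\ref{lem:deformation:1}: applying it with the flat map $p \colon (\sS,s)\to(T,t)$ and the morphism $F$ over $(T,t)$, one checks that $F$ restricts to the closed embedding $f$ on special fibres, and then a germ-level version of that lemma upgrades the fibrewise embedding to an embedding of total spaces.
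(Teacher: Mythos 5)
The paper offers no proof of this lemma at all (it is introduced as ``the following easy lemma'' and left to the reader), so there is nothing of the author's to compare against; I can only assess your argument on its own terms. Your reduction is the right one --- everything comes down to showing that $F^\sharp\colon B=\ko_{\sS,s}\to C=\ko_{\sX,x}$ is surjective, knowing that it is surjective after $\otimes_A\C$ with $A=\ko_{T,t}$ --- but the route you take through flatness over $A$ has a genuine gap. The assertion that a $\C$-spanning set of $C\otimes_A\C$ lifts to a set of $A$-module generators of $C$ is false in the analytic category whenever the special fibre is positive-dimensional: for $A=\C\{t\}$ and $C=\C\{x,t\}$ (which is $A$-flat), lifts of the monomials $x^i$, which span a dense subspace of $C\otimes_A\C=\C\{x\}$, generate over $\C\{t\}$ only the series that are polynomial in $x$. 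Flatness does not repair this; it is exactly the failure of Nakayama for non-finite modules that you yourself flag, and passing to completions only proves surjectivity of the completed map, which you would then still have to descend. Moreover, even if such a generating set existed, your next sentence (``these generators already lie in the image of $B$'') only holds modulo $\fm_{T,t}C$, so you would be thrown back onto the same unavailable Nakayama argument over $A$.

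The gap closes if you change the base of the Nakayama argument from $A$ to $B$; then flatness is not needed at all. Since the top square of the deformation diagram is Cartesian, $C\otimes_B\ko_{S,s}=\ko_{X,x}$, and since $f^\sharp$ is surjective, $\fm_{S,s}$ generates $\fm_{X,x}$; hence $C/\fm_B C=\ko_{X,x}/\fm_{S,s}\ko_{X,x}=\C$. By the Weierstrass finiteness theorem for analytic algebras, $C$ is therefore a \emph{finite} $B$-module, so $M=\coker(F^\sharp)$ is a finite $B$-module with $M=\fm_B M$, and ordinary Nakayama gives $M=0$, i.e.\ $F^\sharp$ is surjective and $F$ is a closed embedding. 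This is also the correct way to make precise your closing remark about Lemma~\ref{lem:deformation:1}: the surjectivity half of that lemma's proof is exactly this finiteness-plus-Nakayama step (flatness there is only needed for the injectivity half, which is irrelevant here).
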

\medskip

\begin{definition}
\label{def:embedded defs}
  \begin{enumerate}[leftmargin=*]
  \itemsep3pt
  \item[(1)] Let \mbox{$(X,x)\hookrightarrow(S,s)$} be a closed
  embedding. The objects of \mbox{$\Def_{(X,x)/(S,s)}$} are called
  {\em embedded deformations}\index{embedded!deformation}
  \index{deformation!embedded} of $(X,x)$ (in $(S,s)$).
  \item[(2)] For an arbitrary morphism \mbox{$f\colon(X,x)\to(S,s)$}  we
    define\index{$t1xxss$@$T^1_{(X,x)\to(S,s)}$}\index{$t1xxss1$@$T^1_{(X,x)/(S,s)}$}  
  $$ 
  T^1_{(X,x)\to(S,s)}:=\uDef_{(X,x)\to(S,s)}(T_{\eps})\,, 
  $$
  respectively
  $$
  T^1_{(X,x)/(S,s)}:=\uDef_{(X,x)/(S,s)}(T_{\eps})\,,
  $$
  and call its elements the isomorphism classes of {\em (first order)
    infinitesimal deformations of $(X,x)\to(S,s)$}\index{deformation!of
    $(X,x)\to(S,s)$!infinitesimal}, respectively of
  $(X,x)/(S,s)$\index{deformation!of
    $(X,x)/(S,s)$!infinitesimal}\index{deformation}.  
  \end{enumerate}
\end{definition}

\noindent
The vector space structure on $T^1_{(X,\bo)/(\C^n\!,\bo)}$ and
$T^1_{(X,\bo)}$ is 
given by the isomorphisms in Proposition \ref{prop:t1xx}.  
We are going to describe $T^1_{(X,\bo)/(\C^n\!,\bo)}$ and
$T^1_{(X,0)}$ in terms of the equations defining \mbox{$(X,0)\subset
 (\C^n\!,\bo)$}. 
First, we need some preparations. 
 
\begin{definition}
Let $S$ be a smooth $n$-dimensional complex
manifold and \mbox{$X\subset S$} a complex subspace given by the coherent
ideal sheaf \mbox{$\ki\subset\ko_S$}.
  \begin{enumerate}[leftmargin=*]
\itemsep3pt
  \item[(1)] The sheaf \mbox{$(\ki/\ki^2)\big|_X$} is called the {\em
      conormal sheaf\/}\index{sheaf!conormal}\index{conormal sheaf} and
    its dual\index{$n_xs$@$\kn_{X/S}$} 
  $$ \kn_{X/S}:=
    \kHom_{\ko_X}\bigl((\ki/\ki^2)\big|_X,\ko_X\bigr) $$
  is called the {\em normal
    sheaf\/}\index{sheaf!normal}\index{normal!sheaf} 
  of the embedding \mbox{$X\subset S$}.
\item[(2)] Let
  \mbox{$\Omega^1_X=\bigl(\Omega^1_S/(\ki \cdot \Omega^1_S+d\ki\cdot\ko_S)\bigr)\big|_X$}\index{$omega1X$@$\Omega^1_X$}
  be the sheaf of holomorphic $1$-forms on $X$. 
  The dual sheaf\index{$thetaX$@$\Theta_X$}
  \mbox{$ \Theta_X:=\kHom_{\ko_X}(\Omega^1_X,\ko_X) $}
  is called the sheaf of {\em holomorphic vector fields}
  \index{vector fields}\index{sheaf!of holomorphic vector fields}
  \index{tangent!sheaf} on $X$.  
  \end{enumerate}
\end{definition}

\noindent
Recall that, for each coherent
$\ko_X$-sheaf $\km$, there is a canonical isomorphism of
$\ko_X$-modules 
$$\kHom_{\ko_X}(\Omega^1_X,\km)\stackrel{\cong}{\longrightarrow}
\kDer_{\C}(\ko_X,\km)\,,\quad \varphi\longmapsto\varphi\circ
  d\,,  $$
where \mbox{$d\colon\ko_X\to\Omega^1_X$} is the exterior
derivation and where
$\kDer_{\C}(\ko_X,\km)$\index{$dercom$@$\kDer_{\C}(\ko_X,\km)$} is the
{\em sheaf of $\C$-derivations of $\ko_X$ with values in
  $\km$}\index{sheaf!of derivations}. In particular, we have
$$ \Theta_X\cong\kDer_{\C}(\ko_X,\ko_X)\,. $$
Moreover, recall that the sheaf
$\Omega^1_S$ is 
locally free with
$ \Omega^1_{S,s} = \bigoplus_{i=1}^n\ko_{S,s}dx_i $
(where $x_1,\dots,x_n$ are local coordinates of $S$ with center $s$). As a
consequence we have that $\Theta_S$ is locally free of rank $n$ and
$$ \Theta_{S,s} =
   \bigoplus\limits_{i=1}^n\,
\ko_{S,s}\cdot\frac{\partial}{\partial x_i} $$
where $\frac{\partial}{\partial x_1},\dots,\frac{\partial}{\partial
x_n}$ is the dual basis of $dx_1,\dots,dx_n$.

Let \mbox{$f\in \ko_S$} then, in local coordinates, we have
\mbox{$df=\sum_{i=1}^n\frac{\partial 
f}{\partial x_i}dx_i$}. In particular, we can define an $\ko_S$-linear
map \mbox{$\alpha\colon\ki\to \Omega^1_S$}, \mbox{$f\mapsto df$}. Due
to the Leibniz rule, $\alpha$ induces a map
\mbox{$\alpha:\ki/\ki^2\to \Omega^1_S\otimes_{\ko_S}
\ko_X$} yielding the following exact {\em Zariski-Jacobi}\index{Zariski-Jacobi sequence} sequence
\begin{equation*}
\ki/\ki^2\stackrel{\alp}{\longrightarrow}\Omega^1_S\otimes_{\ko_S}
\ko_X\longrightarrow\Omega^1_X\longrightarrow 0\,.
\end{equation*}
By $\ko_X$-dualizing , we obtain the exact sequence
\begin{equation*}
0\longrightarrow\Theta_X\longrightarrow\Theta_S\otimes_{\ko_S}
\ko_X\stackrel{\bet}{\longrightarrow}\kn_{X/S} = \kHom_{\ko_X} (\ki/\ki^2,\ko_X)\,,
\end{equation*}
where $\beta$ is the dual of $\alpha$. In local coordinates, we have for each
\mbox{$x\in X$} 
$$ \Theta_{S,s}\otimes_{\ko_{S,s}}\ko_{X,x}=\bigoplus_{i=1}^n
\ko_{X,x}\cdot\tfrac{\partial}{\partial x_i} $$
and the image
$ \bet\big(\tfrac{\partial}{\partial x_i}\big)\in
\Hom_{\ko_{X,x}}(\ki_{x}/\ki_{x}^2,\ko_{X,x})=
\Hom_{\ko_{X,x}}(\ki_{x},\ko_{X,x}) $
sends a residue class \mbox{$[h]\in\ki_x/\ki_x^2$} to
\mbox{$\big[\frac{\partial h} {\partial x_i}\big]\in\ko_{X,x}$}.
Using these notations we can describe the vector space structure of
\mbox{$T^1_{(X,\bo)/(\C^n\!,\bo)}$} and of  \mbox{$T^1_{(X,\bo)}$}:

\begin{proposition}\label{prop:t1xx}
  Let \mbox{$(X,\bo)\subset(\C^n\!,\bo)$} be a complex space germ and let
  \mbox{$\ko_{X,\bo}=\ko_{\C^n,\bo}/I$}. Then
  \begin{enumerate}[leftmargin=*]
  \itemsep3pt
  \item \mbox{$T^1_{(X,\bo)/(\C^n\!,\bo)}\cong
  \kn_{X/\C^n\!,\bo}\cong\Hom_{\ko_{\C^n\!,\bo}}(I,\ko_{X,\bo})\,.
  $}\index{$t1xxss1$@$T^1_{(X,x)/(S,s)}$} 
  \item
    \mbox{$T^1_{(X,\bo)}\cong\coker(\bet)$}\index{$t1xx$@$T^1_{(X,x)}$},
    that is, we have an  exact sequence
  $$ 0\longrightarrow\Theta_{X,\bo}\longrightarrow\Theta_{\C^n,\bo}
  \otimes_{\ko_{\C^n,\bo}}\ko_{X,\bo}\stackrel{\bet}{\longrightarrow}
  \kn_{X/\C^n,\bo}\longrightarrow T^1_{(X,\bo)}\longrightarrow 0\,, $$
  where \mbox{$\beta\bigl(\frac{\partial}{\partial x_i}\bigr)\in
    \Hom(I,\ko_{X,\bo})$} sends \mbox{$h\in I$} to the class of
  \mbox{$\frac{\partial h}{\partial x_i}$} in $\ko_{X,\bo}$.
  \item If $(X,\bo)$ is reduced then 
  $T^1_{(X,\bo)}\cong \Ext^1_{\ko_{X,x}}(\Omega^1_{X,x},\ko_{X,x})$.
  \end{enumerate}
\end{proposition}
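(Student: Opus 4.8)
The plan is to prove the three parts in order, bootstrapping from the concrete embedded description of (1) to the intrinsic statements (2) and (3). \emph{For (1)}, I would compute $\uDef_{(X,\bo)/(\C^n,\bo)}(\Teps)$ directly via the flatness criterion. An embedded first-order deformation is an ideal in $\ko_{\C^n,\bo}[\eps]/\langle\eps^2\rangle$ reducing to $I$ modulo $\eps$, hence of the form $\langle f_i+\eps g_i\rangle$ with $g_i\in\ko_{\C^n,\bo}$. By Proposition~\ref{prop:flatbyrel} flatness means every relation $\sum r_if_i=0$ lifts; writing a lift as $R_i=r_i+\eps s_i$ and expanding modulo $\eps^2$ collapses this to the single requirement $\sum r_ig_i\in I$. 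That is exactly the statement that $f_i\mapsto\overline{g_i}\in\ko_{X,\bo}$ respects all relations, i.e.\ defines an $\ko_{\C^n,\bo}$-homomorphism $I\to\ko_{X,\bo}$. Since over $\Teps$ the only admissible isomorphisms of embedded deformations are the identity on $\C^n\times\Teps$, isomorphic deformations have \emph{equal} ideals, so $[\langle f_i+\eps g_i\rangle]\mapsto(f_i\mapsto\overline{g_i})$ is a bijection. Every such homomorphism annihilates $I^2$ (as $I\cdot\ko_{X,\bo}=0$), hence factors through $I/I^2$, giving $\Hom_{\ko_{\C^n,\bo}}(I,\ko_{X,\bo})=\Hom_{\ko_{X,\bo}}(I/I^2,\ko_{X,\bo})=\kn_{X/\C^n,\bo}$.

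\emph{For (2)}, I would study the forgetful map $T^1_{(X,\bo)/(\C^n,\bo)}\to T^1_{(X,\bo)}$. It is surjective because every abstract deformation over $\Teps$ can be embedded, by Corollary~\ref{cor:defs can be embedded}. The heart of the matter is to identify its kernel with $\im(\bet)$: two embedded deformations give isomorphic \emph{abstract} deformations exactly when they differ by an infinitesimal ambient automorphism over $\Teps$, a change $x_i\mapsto x_i+\eps\delta_i$ with $\delta_i\in\ko_{\C^n,\bo}$ encoding a field $\theta=\sum_j\delta_j\,\partial/\partial x_j$. Applying this change to $f_i+\eps g_i$ and expanding modulo $\eps^2$ alters $g_i$ by $\theta(f_i)=\sum_j\delta_j\,\partial f_i/\partial x_j$, whose class modulo $I$ is precisely $\bet(\theta)$ on the generators; only the image of $\theta$ in $\Theta_{\C^n,\bo}\otimes\ko_{X,\bo}$ matters, since coefficients in $I$ produce no change. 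Hence the kernel is $\im(\bet)$ and $T^1_{(X,\bo)}\cong\coker(\bet)$, extending the dual Zariski--Jacobi sequence stated before the proposition by one term on the right. I expect the genuinely delicate point to be the backward direction: that an abstract isomorphism of two embedded first-order deformations is induced by an \emph{ambient} automorphism. This is exactly where the smoothness of $(\C^n,\bo)$ is invoked, lifting the map of structure sheaves along the formally smooth $\ko_{\C^n,\bo}$ through the small extension $\Teps$.

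\emph{For (3)}, assuming $(X,\bo)$ reduced, I would dualise the Zariski--Jacobi sequence with more care. Factor $\alpha\colon I/I^2\to\Omega^1_{\C^n}\otimes\ko_{X,\bo}$ as $I/I^2\twoheadrightarrow K\hookrightarrow\Omega^1_{\C^n}\otimes\ko_{X,\bo}$ with $K=\im\alpha$. Applying $\Hom_{\ko_{X,\bo}}(-,\ko_{X,\bo})$ to $0\to K\to\Omega^1_{\C^n}\otimes\ko_{X,\bo}\to\Omega^1_{X,\bo}\to0$ and using that the middle term is free (so its $\Ext^1$ vanishes) gives
\[0\to\Theta_{X,\bo}\to\Theta_{\C^n,\bo}\otimes\ko_{X,\bo}\to\Hom_{\ko_{X,\bo}}(K,\ko_{X,\bo})\to\Ext^1_{\ko_{X,\bo}}(\Omega^1_{X,\bo},\ko_{X,\bo})\to0.\]
Comparing with the four-term sequence of part (2), it remains to identify $\Hom_{\ko_{X,\bo}}(K,\ko_{X,\bo})$ with $\kn_{X/\C^n,\bo}=\Hom_{\ko_{X,\bo}}(I/I^2,\ko_{X,\bo})$, i.e.\ to show $\Hom_{\ko_{X,\bo}}(\ker\alpha,\ko_{X,\bo})=0$. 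Reducedness enters here decisively: $X$ is generically smooth, so $\alpha$ is injective away from the nowhere-dense singular locus, whence $\ker\alpha$ is supported on $\Sing(X)$ and is a torsion $\ko_{X,\bo}$-module. As $\ko_{X,\bo}$ is reduced, every torsion element is killed by a non-zerodivisor, forcing any homomorphism to $\ko_{X,\bo}$ to vanish; this yields the identification and hence $T^1_{(X,\bo)}\cong\Ext^1_{\ko_{X,\bo}}(\Omega^1_{X,\bo},\ko_{X,\bo})$. Across the whole argument, the principal obstacles are this torsion-vanishing step and, above all, the ambient-lifting of isomorphisms underlying the kernel computation in (2).
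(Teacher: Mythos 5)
Your proposal is correct and follows essentially the route the paper takes: the paper defers (1) and (2) to \cite{GLS07} and \cite{St03}, but the explicit computations it records in Remark \ref{rmk:t1xx} (flatness via lifting relations forcing $\sum r_ig_i\in I$, isomorphism classes given by the $g_i$ modulo $I$, and triviality as an abstract deformation detected by an ambient vector field $\partial$ with $g_i\equiv\partial(f_i)$ mod $I$) are exactly your arguments for (1) and (2), including the lifting of an abstract isomorphism to an ambient automorphism via smoothness of $(\C^n,\bo)$. For (3) your argument coincides with the paper's: $\ker(\alpha)$ is concentrated on the singular locus, hence torsion since $X$ is reduced, so dualizing kills it and the dual of $\ki/\ki^2$ agrees with the dual of its image.
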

\noindent
For the proof of (1) and (2) we refer to \cite[Proposition I.1.25]{GLS07} or \cite{St03}. To see $(3)$ note that $\ki/\ki^2$ is free on the regular locus of $X$ and hence $ker(\alpha)$ is concentrated on the singular locus of $X$ and hence torsion since $X$ is reduced. It follows that the dual of
$\ki/\ki^2$ coincides with the dual of $(\ki/\ki^2)/ker(\alpha)$, which implies the claim.
\medskip

\begin{corollary} \label{T1.iso}
$\dim_\C T^1_{(X,x)}<\infty$ if $(X,x)$ is an isolated singularity.
 \end {corollary}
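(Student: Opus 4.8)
The plan is to read off the finiteness from the four-term exact sequence of Proposition \ref{prop:t1xx}(2), which exhibits $T^1_{(X,\bo)}$ as the cokernel of
$\bet\colon\Theta_{\C^n,\bo}\otimes_{\ko_{\C^n,\bo}}\ko_{X,\bo}\to\kn_{X/\C^n,\bo}$.
Both the source and the target are finitely generated $\ko_{X,\bo}$-modules, so $T^1_{(X,\bo)}=\coker(\bet)$ is a finitely generated $\ko_{X,\bo}$-module. Since $\ko_{X,\bo}$ is Noetherian local with residue field $\C$, it then suffices to prove that this module has finite length, equivalently that its support is contained in the closed point.

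To control the support I would globalize the construction: on a sufficiently small representative $X\subset U\subset\C^n$ the Zariski--Jacobi sequence and its $\ko_X$-dual are sequences of coherent $\ko_X$-sheaves, and $\coker(\bet)$ is a coherent sheaf $\calt^1_X$ whose stalk at a point $y\in X$ is the local $T^1$ given by the same sequence at $y$. The key local computation is that $\calt^1_X$ vanishes at every smooth point $y\neq\bo$. Indeed, at a smooth point the conormal sheaf $(\ki/\ki^2)|_X$ is locally free and the Zariski--Jacobi sequence is short exact with $\Omega^1_X$ locally free; hence $\Ext^1_{\ko_{X,y}}(\Omega^1_{X,y},\ko_{X,y})=0$, dualizing keeps the sequence exact on the right, $\bet$ is surjective at $y$, and $\coker(\bet)_y=0$. (Conceptually this is just the rigidity of smooth germs, $T^1_{(X,y)}=0$.)

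Because $(X,\bo)$ has an isolated singularity, a small representative is smooth away from $\bo$, so by the previous paragraph $\Supp(\calt^1_X)\subseteq\{\bo\}$. A coherent sheaf supported at a single point has a stalk of finite length over the corresponding local ring; therefore $T^1_{(X,\bo)}=(\calt^1_X)_{\bo}$ is of finite length over $\ko_{X,\bo}$ and hence finite dimensional over $\C=\ko_{X,\bo}/\fm$, which is the assertion.

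The main obstacle is precisely the local computation on the smooth locus: one must justify carefully that passing to the $\ko_X$-dual preserves surjectivity there. This is exactly the point where the local freeness of $\Omega^1_X$ at smooth points (equivalently the vanishing of the relevant $\Ext^1$, equivalently rigidity of smooth germs) is used. Everything else is the standard fact that a finitely generated module over a Noetherian local ring with residue field $\C$ has finite $\C$-dimension precisely when it is supported at the maximal ideal.
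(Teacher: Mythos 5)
Your argument is correct and is precisely the one the paper intends: the corollary is stated without proof as an immediate consequence of Proposition \ref{prop:t1xx}, the point being that $T^1$ sheafifies to a coherent sheaf which vanishes at smooth points (split exactness of the Zariski--Jacobi sequence there, i.e.\ rigidity of smooth germs), hence is supported in $\{x\}$ and of finite length when the singularity is isolated. All the steps you supply --- finite generation of $\coker(\bet)$, surjectivity of $\bet$ on the smooth locus, and the finite-length criterion for modules supported at the closed point --- are sound.
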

 
\medskip

\begin{remark}\label{rmk:t1xx} The proof of Proposition \ref{prop:t1xx} shows the following:
  \begin{enumerate}[leftmargin=*]
 \item[(1)]   If \mbox{$\ko_{X,\bo}=\ko_{\C^n,\bo}/I$}, \mbox{$I=\langle
      f_1,\dots,f_k\rangle$}, 
  then any embedded deformation of $(X,\bo) \subset (\C^n\!,\bo)$ over $T_{\eps}$ 
  is given by  $f_i+\eps g_i$, $i=1,\dots,k$, $g_i\in \ko_{\C^n,\bo}$, 
which we identify with $(g_1,\dots,g_k)$. We  define a map
  \begin{eqnarray*}
    \gamma\colon T^1_{(X,\bo)/(\C^n\!,\bo)} & \longrightarrow &
    \kn_{X/\C^n\!,\bo}\cong\Hom_{\ko_{\C^n,\bo}}(I,\ko_{X,\bo})\,,\\
    (g_1,\dots,g_k) & \longmapsto & \Bigl(\varphi\colon
    \textstyle\sum\limits_{i=1}^k a_if_i\mapsto \sum\limits_{i=1}^k
    \big[a_ig_i\big]\Bigr)\,,
  \end{eqnarray*}
 which is well-defined, since any relation $\sum_{i=1}^k r_if_i=0$ lifts to a relation 
 $ \sum_{i=1}^k (r_i+\eps s_i)(f_i+\eps g_i)=0$ (by flatness, cf. Proposition \ref{prop:flatbyrel}) and hence $\sum_i r_ig_i\in I$. 
  \item[(2)]  Let $F=(F_1,\dots,F_k)$ be an embedded deformation of $(X,\bo)$ over $T_{\eps}$ given by $F_i=f_i+\eps g_i, \, i=1,\dots,k,$ as in (1)
  such that \mbox{$\sum_i   r_ig_i\in I$} for each relation $(r_1,\dots,r_k)$ among
  $f_1,\dots,f_k$. 
  Then  $F$ and $F'=(F'_1,\dots,F'_k)$, $F'_i=f_i+\eps g'_i$,
  define isomorphic embedded deformations over $T_{\eps}$ iff
  \mbox{$g_i-g'_i\in I$}. The vector space structure on the space 
  of embedded deformations is given by
  \begin{eqnarray*}
    F+F' & = &
    \big(f_1+\eps(g_1+g'_1),\dots,f_k+\eps(g_k+g'_k)\big)\,,\\
    \lambda F & = & (f_1+\eps\lambda g_1,\dots,f_k+\eps\lambda g_k)\,,
    \quad\lambda\in\C\,.
  \end{eqnarray*}
  \item[(3)] The embedded deformation defined by $F$ as above is trivial as
  abstract deformation iff there is a vector field
  \mbox{$\partial=\sum_{j=1}^n\delta_j\frac{\partial}{\partial
    x_j}\in\Theta_{\C^n,\bo}$} such that
  $$ g_i=\partial(f_i)\ \mod I\,, \quad i=1,\dots,k\,. $$
In particular, if \mbox{$I=\langle f\rangle$} defines a hypersurface
singularity, then \mbox{$f+\varepsilon g$} is trivial as abstract deformation
iff $g\in \langle f,\frac{\partial f}{\partial x_j}\mid
  j=1,\dots,n\rangle$. 
  \item[(4]) By (2) and (3) the map $\gamma$ of (1) is an isomorphism. Using
  $\gamma^{-1}$, the morphism $\beta$ from Proposition \ref{prop:t1xx}(2) 
  maps $\frac{\partial}{\partial x_j}$ to 
  $(\frac{\partial f_1}{\partial x_j},...,\frac{\partial f_k}{\partial x_j})$
  since $\beta(\frac{\partial}{\partial x_j})(\sum_{i=1}^k a_if_i) 
   = \sum_{i=1}^k \big[a_i\frac{\partial f_i}{\partial x_j}\big]$. 

  \end{enumerate}
\end{remark}



Proposition \ref{prop:t1xx} provides an algorithm for computing
$T^1_{(X,\bo)}$. This algorithm is implemented in the {\sc
  Singular} library
\texttt{sing.lib}\index{sing.lib@\texttt{sing.lib}}. The {\sc Singular}
procedure 
\texttt{T\_1} computes all relevant information about first order
deformations. For details we refer to \cite[Section I.1.4]{GLS07}. 

\medskip

Infinitesimal deformations are the first step in formal deformation
theory as developed by  Schlessinger\index{Schlessinger} in a very
general context (see \cite[Appendix C]{GLS07} for a short
overview). Schlessinger introduced what is nowadays called  
the {\em Schlessinger conditions}\index{Schlessinger!conditions}
(H$_0$)\,--\,(H$_4$) in \cite{Sc68}.
One can verify that \mbox{$\uDef_{(X,x)}$} satisfies
conditions (H$_0$)\,--\,(H$_3$) and, therefore, has a formal versal
deformation. Moreover, for every deformation functor satisfying the
Schlessinger conditions, the corresponding infinitesimal deformations
carry a natural vector space structure. For $T^1_{(X,x)}$ this
structure coincides with the one defined above.  A
survey of deformations of complex spaces is given in \cite{Pa90}, some aspects
of deformations of singularities are covered by \cite{St03}.
\bigskip

\subsection{Obstructions}\label{sec:obs}
\setcounter{equation}{0}

\noindent
We have seen in Remark \ref{ex.sudef} that  $\dim_\C T^1_{(X,x)}<\infty$ is a necessary and sufficient condition for $(X,x)$ to admit a semiuniversal deformation. However, the existence says nothing about the semiuniversal base space. Some information is contained in
the vector space $T^2_{(X,x)}$, which we describe below. This vector space contains the obstructions to extend a given deformation of $(X,x)$ over a fat point to a bigger one. 
\medskip

\noindent
The construction of a semiuniversal deformation for a complex germ
$(X,x)$ with $\dim_\C T^1_{(X,x)}<\infty$ can be carried out as
follows (for a $\C^*$-equivariant version see the proof of Theorem \ref{thm:pinkham}): 
\begin{itemize}[leftmargin=*]
\item We start with {\em first order deformations\/} and try to {\em lift
    these to 
  second order\/}\index{deformation!second order}\index{second order
  deformation}\index{infinitesimal!deformation!second order}
deformations. In other words, we are looking for possible 
  liftings of a deformation $(i,\phi)$,  \mbox{$[(i,\phi)]\in
    \uDef_{(X,x)}(T_{\eps})= T^1_{(X,x)}$}, to a
  deformation over the fat point point \mbox{$(T'\!,\bo)$} containing $\Teps$,
  for example to the fat point with local ring
  $\C[\eta]/\langle \eta^3\rangle$. Or, if we assume the deformations to be
  embedded (Corollary \ref{cor:defs can be embedded}), this means 
  that we are looking for a lifting of the first order deformation
  \mbox{$f_i+\varepsilon g_i$}, \mbox{$\eps^2=0$}, to a second order
  deformation \mbox{$f_i+\eta g_i+\eta^2 g'_i$}, \mbox{$\eta^3=0$},
  \mbox{$i=1,\dots,k$}. 
\item This is exactly what we did when we constructed the
  semiuniversal deformation of a complete intersection singularity. By 
  induction we showed the existence of a lifting to arbitrarily high
  order. In general, however, this is not always possible, there are
  {\em obstructions\/} against lifting. Indeed, there is an
  $\ko_{X,x}$-module
  $T^2_{(X,x)}$\index{$t2xx$@$T^2_{(X,x)}$}\index{obstruction!module} and, for
  each small extension of $\Teps$, an 
  {\em obstruction map\/}\index{$ob$@$\ob$}\index{obstruction!map}
$$ \ob:  T^1_{(X,x)}  \lra
T^2_{(X,x)} $$
such that the vanishing of \mbox{$\ob\bigl([(i,\phi)]\bigr)$} is equivalent to
the existence of a lifting of $(i,\phi)$ to the small extension, e.g.\ to
second order as above.  
\item Assuming that the obstruction is zero, we choose a lifting to
  second order (which is, in general, not unique) and try to lift this 
  to {\em third order}, that is, to a deformation over the fat
  point with local ring $\C[t]/\langle t^4\rangle$.
 Again, there is an obstruction map, and the
 lifting is possible iff it maps the deformation class to zero. 
\item Continuing in this manner, in each step, the preimage of $0$
  under the obstruction map defines homogeneous relations in terms of
  the elements 
  $t_1,\dots,t_\tau$ of a basis of $(T^1_{(X,x)})^\ast$, of a given
  order, which in the limit yield formal power series in
  \mbox{$\C[[\bt]]=\C[[t_1,\dots,t_\tau]]$}. If $J$ denotes the ideal in
  $\C[[\bt]]$ defined by these power series, the quotient
$\C[[\bt]]/J$ is the local ring of the base space of the (formal)
versal deformation. Then \mbox{$T^1_{(X,x)}=(\langle
  \bt\rangle/\langle \bt\rangle^2 )^\ast$} is the Zariski tangent
space to this base space. 
\end{itemize}
This method works for very general deformation functors having an obstruction
theory. A collection of methods and results from general obstruction theory 
can be found in \cite[Appendix C.2]{GLS07}.

\medskip
We give now a concrete descripti ofon the module $T^2_{(X,x)}$, containing the obstructions to lift a deformation  
from a fat point $(T,\bo)$ to an infinitesimally bigger one
\mbox{$(T'\!,\bo)$}.

Let \mbox{$\ko_{X,x}=\ko_{\C^n\!,\bo}/I$}, with \mbox{$I=\langle f_1,\dots,
    f_k\rangle$}. Consider a presentation of $I$,
$$ 0 \longleftarrow I \stackrel{\alpha}{\longleftarrow} \ko_{\C^n\!,\bo}^k
\stackrel{\beta}{\longleftarrow}  \ko_{\C^n\!,\bo}^\ell\,,\qquad \alpha(e_i)=f_i\,.$$
\mbox{$\Ker(\alpha)=\im(\beta)$} is the module of relations for
\mbox{$f_1,\dots,f_k$}, which contains the
\mbox{$\ko_{\C^n\!,\bo}$}-module of {\em Koszul 
  relations}\index{Koszul!relations}\index{Koszul}
$$ \Kos := \langle f_i e_j - f_je_i \mid 1\leq i < j \leq k\rangle\,, $$
$e_1,\dots,e_k$ denoting the standard unit vectors in $\ko_{\C^n\!,\bo}^k$.
We set $\Rel:=\Ker(\alpha)$ and note that $\Rel/\Kos$ is an
$\ko_{X,x}$-module: let \mbox{$\sum_{i} r_ie_i\in \Rel$}, then 
$$ f_j \cdot  \sum_{i=1}^k r_ie_i = f_j  \cdot  \sum_{i=1}^k r_ie_i -
\sum_{i=1}^k r_if_i e_j =  \sum_{i=1}^k r_i \cdot (f_je_i-f_ie_j) \in
\Kos\,. $$
Since \mbox{$\Kos\subset I\ko_{\C^n\!,\bo}^k$}, the inclusion \mbox{$\Rel\subset
  \ko_{\C^n\!,\bo}^k$} induces an $\ko_{X,x}$-linear map
$$ \Rel/\Kos \lra \ko_{\C^n\!,\bo}^k/I\ko_{\C^n\!,\bo}^k = \ko_{X,x}^k\,.$$

\begin{definition}
We define $T^2_{(X,x)}$\index{$t2xx$@$T^2_{(X,x)}$} to be the cokernel 
of $\Phi$, the $\ko_{X,x}$-dual of the latter map, that is, we have a defining
exact sequence for $T^2_{(X,x)}$:
$$
\Hom_{\ko_{X,x}}(\ko_{X,x}^k,\ko_{X,x}) \xrightarrow{\Phi}
\Hom_{\ko_{X,x}}(\Rel/\Kos,\ko_{X,x}) \to T^2_{(X,x)} \to 0\,.
$$
\end{definition}

The following proposition is proved in \cite[Proposition II.1.29]{GLS07} and \cite[Chapter 3]{St03}.
\begin{proposition}\label{prop:t2xx} Let $(X,x)$ be a complex space germ.
  \begin{enumerate}[leftmargin=*]
\itemsep2pt
  \item[(1)] Let \mbox{$j:(T,\bo)\hookrightarrow (T'\!,\bo)$}
    be an inclusion of fat points, and let $J$ be the kernel of the
    corresponding map of local rings
    \mbox{$\ko_{T'\!,\bo}\twoheadrightarrow \ko_{T,\bo}$}. 
Then there is a map, called the {\em
  obstruction map},\index{$ob$@$\ob$}\index{obstruction!map}
$$ \ob: \uDef_{(X,x)}(T,\bo)\lra  T^2_{(X,x)} \otimes_\C
J\,, $$ 
satisfying: a deformation
\mbox{$(i,\phi):(X,x)\hookrightarrow(\sX,x) 
  \to(T,\bo)$} admits a lifting
\mbox{$(i',\phi'):(X,x)\hookrightarrow(\sX',x) 
  \to (T'\!,\bo)$} (i.e.,
\mbox{$j^\ast(i',\phi')=(i,\phi)$}) iff
\mbox{$\ob\bigl([(i,\phi)]\bigr)=0$}. 

\item[(2)] If  $T^1_{(X,x)}$ is a finite dimensional $\C$-vector space 
    and if \mbox{$T^2_{(X,x)}=0$}, then the semiuniversal deformation
      of $(X,x)$ exists and has a smooth base
      space (of dimension \mbox{$ = \dim_\C
        T^1_{(X,x)}$}).\index{semiuniversal!deformation!base space} 
  \end{enumerate}
\end{proposition}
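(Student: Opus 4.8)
The plan is to establish (1) by an explicit obstruction computation in the embedded picture, and then to feed it into the inductive construction of the formal versal deformation recalled just before the statement in order to obtain (2). Write $A=\ko_{\C^n,\bo}$ and $R=\ko_{X,x}=A/I$. By Corollary \ref{cor:defs can be embedded} I may assume every deformation over a fat point is embedded, so a class in $\uDef_{(X,x)}(T,\bo)$ is represented by liftings $F_i\in A\otimes_\C\ko_{T,\bo}$ of the $f_i$; by Proposition \ref{prop:flatbyrel}, flatness over $(T,\bo)$ is equivalent to lifting the whole presentation, i.e.\ to maps $\alpha_T,\beta_T$ over $\ko_{T,\bo}$ lifting $\alpha,\beta$ with $\alpha_T(e_i)=F_i$ and $\alpha_T\circ\beta_T=0$.

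For the obstruction itself I would fix such $\alpha_T,\beta_T$ and choose arbitrary set-theoretic liftings $\alpha_{T'},\beta_{T'}$ over $\ko_{T',\bo}$. The composite $\alpha_{T'}\circ\beta_{T'}$ reduces to zero over $\ko_{T,\bo}$, hence lands in $A\otimes_\C J$; reducing modulo $I$ and reading the columns of $\beta_{T'}$ as lifts of generators of $\Rel=\im\beta$, this produces an assignment $\Rel\to R\otimes_\C J$. I then check it represents an $R$-linear map on $\Rel/\Kos$: the Koszul generators $f_ie_j-f_je_i$ admit the on-the-nose liftings $\tilde F_ie_j-\tilde F_je_i$, whose image under $\alpha_{T'}$ is $\tilde F_i\tilde F_j-\tilde F_j\tilde F_i=0$, so the assignment annihilates $\Kos$; and, by the computation preceding the definition of $T^2_{(X,x)}$, multiplying a relation by $f_j$ lands in $\Kos$, which forces $R$-linearity. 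Thus I obtain an element of $\Hom_R(\Rel/\Kos,R)\otimes_\C J$ and define $\ob\bigl([(i,\phi)]\bigr)$ to be its class in $T^2_{(X,x)}\otimes_\C J$.

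The heart of the matter, and the step I expect to be the main obstacle, is to show this class is well defined and detects liftability. Independence of the chosen $F_i$ is the key calculation: replacing $\tilde F_i$ by $\tilde F_i+c_i$ with $c_i\in A\otimes_\C J$ changes the defect on a relation $r=(r_i)$ by $\sum_i r_i\bar c_i$, which is precisely the image of $(\bar c_i)\in R^k=\Hom_R(R^k,R)$ under the map $\Phi$ defining $T^2_{(X,x)}$; so the class in $T^2_{(X,x)}\otimes_\C J$ is unchanged. Analogous but more tedious bookkeeping handles the choice of $\beta_T$, the well-definedness on all of $\Rel$, and the passage to an isomorphic deformation. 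For the criterion, if $\ob=0$ then after correcting the $\tilde F_i$ by a suitable $c_i\in A\otimes_\C J$ one arranges $\alpha_{T'}\circ\beta_{T'}=0$, i.e.\ all relations lift over $\ko_{T',\bo}$; Proposition \ref{prop:flatbyrel} then makes the corrected $\tilde F_i$ define a flat, hence deforming, lift over $(T',\bo)$. The converse is immediate, since an actual lift makes the defect vanish by construction.

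Finally, (2) follows by running the order-by-order construction recalled before the statement. Starting from the identification $T^1_{(X,x)}=(\langle\bt\rangle/\langle\bt\rangle^2)^\ast$ with $\tau=\dim_\C T^1_{(X,x)}<\infty$, I lift the first-order deformation successively over $\C[\bt]/\langle\bt\rangle^{m}$. By (1), the obstruction to passing from order $m$ to order $m+1$ lies in $T^2_{(X,x)}\otimes_\C J$ for the relevant $J$; since $T^2_{(X,x)}=0$ it vanishes at every stage, so a lifting always exists and no relations are imposed on the parameters $t_1,\dots,t_\tau$. The formal base ring is therefore the full power series ring $\C[[t_1,\dots,t_\tau]]$, smooth of dimension $\tau$ with Zariski tangent space $T^1_{(X,x)}$. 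Finiteness of $T^1_{(X,x)}$ together with Grauert's Theorem \ref{thm:grauert} (convergence of the formal versal deformation) then upgrades this to an analytic semiuniversal deformation with smooth base space of dimension $\dim_\C T^1_{(X,x)}$.
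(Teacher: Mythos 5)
Your argument is correct and is exactly the one the paper has in mind: the paper itself gives no proof, deferring to \cite[Proposition II.1.29]{GLS07} and \cite[Chapter 3]{St03}, but its entire setup --- $T^2_{(X,x)}$ as $\coker(\Phi)$ on $\Hom(\Rel/\Kos,\ko_{X,x})$, the observation that $f_j\cdot\Rel\subset\Kos$, flatness via lifting of relations (Proposition \ref{prop:flatbyrel}), and the order-by-order procedure of Section \ref{sec:obs} --- is precisely the scaffolding for your relation-lifting obstruction computation and the subsequent induction, capped off by Grauert's Theorem \ref{thm:grauert} and uniqueness of the formal semiuniversal base. The only point to make explicit is that your computations (e.g.\ that changing $\tilde F_i$ by $c_i$ alters the defect by $\sum_i \bar r_i\bar c_i$) use $\fm_{T'}\!\cdot J=0$, so the construction as written is for small extensions, with the general case of (1) obtained by factoring $j$ into small extensions --- consistent with how the paper introduces the obstruction map ``for each small extension'' in Section \ref{sec:obs}.
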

\noindent
Note that the obstruction map $\ob$\index{$ob$@$\ob$}\index{obstruction!map} is
a map between sets (without further 
structure) as $\uDef_{(X,x)}(T,\bo)$ is just a set.

\begin{definition}
We call $(X,x)$ {\em unobstructed} \index{unobstructed} if it has a semiuniversal  deformation with smooth base space.
\end{definition}
\noindent
Hence an isolated singularity is unobstructed if $T^2_{(X,x)}=0$, but $(X,x)$ may be unobstructed even if $T^2_{(X,x)} \neq 0$.
\medskip

If $(X,x)$ is a hypersurface or, more general, a complete intersection, then the Koszul relations are the only existing relations. Hence, in this case \mbox{$\Rel=\Kos$} and
\mbox{$T^2_{(X,x)}=0$}. In particular, isolated complete intersection singularities are unobstructed. 

\smallskip
Statement (2) of Proposition \ref{prop:t2xx} can be generalized by applying 
 Laudal's theorem (\cite[Theorem 4.2]{La79}), which relates the
 base of a 
formal semiuniversal deformation of $(X,x)$ with the fibre of a formal power
series map:

\begin{theorem}[Laudal]\index{Laudal}
 Let $(X,x)$ be a complex space germ such that \mbox{$T^1_{(X,x)}$},
 \mbox{$T^2_{(X,x)}$} are finite dimensional complex vector spaces. Then there
 exists a formal power series map  
$$ \Psi: T^1_{(X,x)} \lra  T^2_{(X,x)} $$
such that the fibre \mbox{$\Psi^{-1}(0)$} is the base of a formal semiuniversal
deformation of $(X,x)$.
\end{theorem}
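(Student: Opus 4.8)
The plan is to refine the order-by-order lifting procedure sketched after Proposition \ref{prop:t2xx}, reading off $\Psi$ as the total obstruction accumulated degree by degree. First I would fix a $\C$-basis of $T^1_{(X,x)}$ and set $A := \C[[t_1,\dots,t_\tau]]$ with $\tau = \dim_\C T^1_{(X,x)}$, so that $\fm_A/\fm_A^2 \cong (T^1_{(X,x)})^\ast$ and $\Spec A$ is the formal completion of $T^1_{(X,x)}$ at the origin. A formal power series map $\Psi\colon T^1_{(X,x)} \to T^2_{(X,x)}$ is then the same datum as an element of $T^2_{(X,x)} \otimes_\C A$ whose components $\Psi_1,\dots,\Psi_r$ (with respect to a basis of $T^2_{(X,x)}$, $r=\dim_\C T^2_{(X,x)}$) lie in $\fm_A^2$; its fibre $\Psi^{-1}(0)$ is the formal germ $\Spec A/(\Psi_1,\dots,\Psi_r)$. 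The goal is to produce such a $\Psi$ whose zero fibre carries a formal semiuniversal deformation.

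Next I would construct, by induction on $n$, a truncation $\Psi^{(n)} \in T^2_{(X,x)} \otimes_\C (A/\fm_A^{n+1})$ together with a deformation $\xi_n$ of $(X,x)$ over $R_n := A/(\Psi^{(n)},\fm_A^{n+1})$, compatible with $\xi_{n-1}$ under the natural surjection. The base case is the tautological first-order deformation $\xi_1$ over $A/\fm_A^2$, corresponding to $\id \in \Hom_\C(T^1_{(X,x)},T^1_{(X,x)}) = \uDef_{(X,x)}(A/\fm_A^2)$. For the inductive step, the obstruction map of Proposition \ref{prop:t2xx} sends the class of $\xi_{n-1}$ across the relevant small extension to an element of $T^2_{(X,x)}$ tensored with the degree-$n$ graded piece $\fm_A^n/\fm_A^{n+1}$; I would declare exactly these degree-$n$ components to be the new terms of $\Psi$, enlarging the ideal accordingly. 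Over the resulting quotient $R_n$ the obstruction vanishes by construction, so a lift $\xi_n$ exists; the residual freedom in this lift is an element of $T^1_{(X,x)}$ and amounts to a formal change of coordinates on $T^1_{(X,x)}$.

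Passing to the inverse limit, the $\Psi^{(n)}$ stabilise in each degree to a formal map $\Psi\colon T^1_{(X,x)} \to T^2_{(X,x)}$, and the compatible system $\{\xi_n\}$ defines a formal deformation $\hat\xi$ over $R := A/(\Psi_1,\dots,\Psi_r)$, whose base is exactly $\Psi^{-1}(0)$. To finish, I would check that $\hat\xi$ is formally semiuniversal. Since every $\Psi_j \in \fm_A^2$, the Kodaira--Spencer map of $\hat\xi$ is an isomorphism onto $T^1_{(X,x)}$ (Lemma \ref{lem:Kodaira-Spencer map}), giving the minimal tangent space; formal versality then follows from the lifting criterion of Definition \ref{def:versal}, verified order by order, because at each order we have imposed precisely the relations needed to annihilate the obstruction to extending a comparison base change. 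Alternatively, one invokes that $\uDef_{(X,x)}$ satisfies the Schlessinger conditions (H$_0$)--(H$_3$), so it has a hull, and identifies $R$ with that hull; semiuniversality is the uniqueness of the induced tangent map, which holds since $J=(\Psi_1,\dots,\Psi_r)\subset\fm_A^2$.

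The main obstacle is the \emph{coherence} of the obstruction classes: one must show that the obstructions met at successive orders genuinely assemble into the well-defined components of a single formal map valued in the fixed vector space $T^2_{(X,x)}$, independently of the auxiliary choices of the lifts $\xi_n$. This is where Laudal's machinery does the real work — the obstruction must be shown to be quadratic-plus-higher in the first-order data (governed by the Massey products, equivalently the $L_\infty$-structure on the cotangent complex of $(X,x)$), and the ambiguity in each lift, an element of $T^1_{(X,x)}$, must be shown to alter $\Psi$ only by a formal automorphism of $T^1_{(X,x)}$, so that $\Psi^{-1}(0)$ is well defined up to isomorphism. Once this compatibility and the convergence of the formal construction are established, versality is a formal consequence of the Schlessinger criteria already known to hold for $\uDef_{(X,x)}$, and by Theorem \ref{thm:flenner versal} the formal object is genuinely versal whenever a versal deformation exists.
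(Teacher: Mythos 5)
The paper itself offers no proof of this statement: it is quoted directly from Laudal with the citation \cite[Theorem 4.2]{La79}, so there is no internal argument to compare yours against. Measured on its own terms, your outline follows the standard order-by-order obstruction calculus --- the same procedure the paper sketches informally at the start of Section \ref{sec:obs} --- and the skeleton (tautological first-order deformation over $A/\fm_A^2$, inductive killing of obstructions, passage to the limit, Schlessinger's (H$_0$)--(H$_3$) plus Theorem \ref{thm:flenner versal} for versality) is the right one.

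However, there is a genuine gap, and you have in fact located it yourself: the entire content of the theorem is that the successive obstruction classes assemble into the truncations of a \emph{single} formal map $\Psi$ with values in the fixed vector space $T^2_{(X,x)}$, so that the hull's defining ideal is generated by $r=\dim_\C T^2_{(X,x)}$ power series $\Psi_1,\dots,\Psi_r$. Your inductive step does not establish this. Concretely, the obstruction of Proposition \ref{prop:t2xx} for passing from $R_{n-1}=A/\bigl((\Psi^{(n-1)})+\fm_A^{n}\bigr)$ to the next order lives in $T^2_{(X,x)}\otimes_\C J_n$ with $J_n=\bigl((\Psi^{(n-1)})+\fm_A^{n}\bigr)/\bigl((\Psi^{(n-1)})+\fm_A^{n+1}\bigr)$, which is a \emph{quotient} of $\fm_A^n/\fm_A^{n+1}$, not equal to it; ``declaring these the new terms of $\Psi$'' requires lifting the class through this quotient, and one must then show that the liftings chosen at different orders, and the $T^1$-worth of ambiguity in each deformation lift $\xi_n$, change the resulting system only by a formal automorphism of $A$. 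Asserting that ``this is where Laudal's machinery does the real work'' names the difficulty but does not discharge it: without the Massey-product (or cotangent-complex) bookkeeping that controls how obstructions at order $n$ depend on the choices made at orders $<n$, the construction only yields the weaker, standard statement that a hull exists whose ideal needs \emph{at most} $\dim_\C T^2_{(X,x)}$ generators order by order, not the asserted presentation as the fibre $\Psi^{-1}(0)$ of a well-defined formal map. So the proposal reduces the theorem to its own key assertion rather than proving it; for the actual argument one must go to \cite[Theorem 4.2]{La79}.
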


\begin{corollary}\label{cor:T2xx}
 Let $(X,x)$ be a complex space germ such that \mbox{$T^1_{(X,x)}$},
 \mbox{$T^2_{(X,x)}$} are finite dimensional complex vector spaces, and let $(S,s)$
 be the base space of the semiuniversal deformation. Then
$$ \dim_\C T^1_{(X,x)}\geq \dim (S,s) \geq  \dim_\C T^1_{(X,x)} - \dim_\C
T^2_{(X,x)}\,,$$
and \mbox{$\dim (S,s)=\dim_\C T^1_{(X,x)}$} iff $(S,s)$ is
smooth. 
\end{corollary}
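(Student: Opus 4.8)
The plan is to deduce Corollary \ref{cor:T2xx} directly from Laudal's theorem, which supplies a formal power series map $\Psi\colon T^1_{(X,x)}\to T^2_{(X,x)}$ whose zero fibre $\Psi^{-1}(0)$ is the base $(S,s)$ of the formal semiuniversal deformation. First I would set $\tau:=\dim_\C T^1_{(X,x)}$ and $t:=\dim_\C T^2_{(X,x)}$, so that $\Psi$ is a map of germs $(\C^\tau,\bo)\to(\C^t,\bo)$ and $(S,s)=\Psi^{-1}(0)=V(\psi_1,\dots,\psi_t)$, where $\psi_1,\dots,\psi_t$ are the component functions of $\Psi$. The whole statement is then a matter of bounding the dimension of the zero locus of $t$ power series inside $\C^\tau$.

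The key fact is the standard dimension estimate for fibres: cutting $\C^\tau$ by one equation drops the dimension by at most $1$, so cutting by $t$ equations gives $\dim(S,s)\geq \tau - t$, which is exactly the lower bound $\dim_\C T^1_{(X,x)} - \dim_\C T^2_{(X,x)}$. For the upper bound, I would invoke the Kodaira--Spencer description from Lemma \ref{lem:Kodaira-Spencer map}, or equivalently the final bullet of the construction in Section \ref{sec:obs}: the Zariski tangent space of $(S,s)$ at $s$ is canonically $T^1_{(X,x)}$, hence $\dim(S,s)\leq \dim_\C \fm_{S,s}/\fm_{S,s}^2 = \dim_\C T^1_{(X,x)}=\tau$, giving the upper bound. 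Combining the two inequalities yields the displayed chain.

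For the final equivalence, the forward direction is immediate: if $(S,s)$ is smooth then its dimension equals the dimension of its Zariski tangent space, which is $\tau$. For the converse I would argue that $\dim(S,s)=\tau=\dim_\C \fm_{S,s}/\fm_{S,s}^2$ forces the local ring $\ko_{S,s}$ to be regular, since a Noetherian local ring whose Krull dimension equals its embedding dimension is regular by definition, and regularity of the analytic local ring is equivalent to smoothness of the germ.

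The main obstacle is purely expository rather than mathematical: I must be careful that the formal power series map $\Psi$ furnished by Laudal gives the \emph{formal} semiuniversal base, so strictly speaking the dimension and regularity statements are read off $\C[[\bt]]/J$ with $J=\langle\psi_1,\dots,\psi_t\rangle$; one then uses that dimension and regularity of a Noetherian local ring are preserved under completion, so the conclusions transfer to the analytic germ $(S,s)$ guaranteed by Grauert's Theorem \ref{thm:grauert}. I would flag this passage from the formal to the analytic setting explicitly, as it is the only non-formal point in an otherwise routine dimension count.
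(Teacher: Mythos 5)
Your proposal is correct and follows essentially the same route the paper intends: the corollary is presented precisely as a consequence of Laudal's theorem, with the lower bound coming from the Krull-type estimate for the fibre $\Psi^{-1}(0)$ of the map $T^1_{(X,x)}\to T^2_{(X,x)}$, the upper bound from the identification of $T^1_{(X,x)}$ with the Zariski tangent space of $(S,s)$, and the smoothness criterion from the equality of Krull dimension and embedding dimension. Your explicit care about passing from the formal base $\C[[\bt]]/J$ to the analytic germ is a welcome addition that the paper leaves implicit.
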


\noindent 
This corollary holds in a general deformation theoretic context 
(see \cite[Proposition C.2.3]{GLS07}).

The $\ko_{X,x}$-module $T^2_{(X,x)}$ contains the obstructions against 
smoothness of the base space of the semiuniversal deformation (if it
exists), but it may be strictly bigger. That is, in Corollary
\ref{cor:T2xx}, the dimension of $(S,s)$ may be strictly larger than
the difference \mbox{$\dim_\C T^1_{(X,x)} -\dim_\C T^2_{(X,x)}$},
as in the following example.

%
%
%
%
%
%

\begin{example}\label{ex:ratnorm}
{\em  Let us compute the full semiuniversal deformation of the cone
  \mbox{$(X,\bo)\subset (\C^5\!,\bo)$} over the rational normal curve of
  degree $4$, using {\sc Singular}.  We get \mbox{$\dim_\C T^1_{(X,\bo)}=4$} and
  \mbox{$\dim_\C T^2_{(X,\bo)}=3$} and that the semiuniversal base space has dimension 3.
  The total space of the semiuniversal
  deformation has $4$ additional
  variables $A,B,C,D$ (in the ring \texttt{Px}), the unfolding of the
  $6$ defining equations of $(X,\bo)$ is given by the ideal \texttt{Fs}
  and the base space, which is given by the ideal \texttt{Js} in
  $\C\{A,B,C,D\}$, is the union of the $3$-plane \mbox{$\{D=0\}$} and
  the line \mbox{$\{B=C=D-A=0\}$} in \mbox{$(\C^4\!,\bo)$}:

\begin{small}
\begin{verbatim}
LIB "deform.lib";
ring R = 0,(x,y,z,u,v),ds;
matrix M[2][4] = x,y,z,u,y,z,u,v;
ideal I = minor(M,2);   // rational normal curve in P^4
vdim(T_1(I));
//-> 4
vdim(T_2(I));
//-> 3

list L = versal(I);     // compute semiuniversal deformation
//-> // ready: T_1 and T_2
//-> // start computation in degree 2.
//-> .... (further output skipped) .....

def Px=L[1];
show(Px);
//-> // ring: (0),(A,B,C,D,x,y,z,u,v),(ds(4),ds(5),C);
//-> // minpoly = 0
//-> // objects belonging to this ring:
//-> // Rs                   [0]  matrix 6 x 8
//-> // Fs                   [0]  matrix 1 x 6
//-> // Js                   [0]  matrix 1 x 3

setring Px;
Fs;                     // equations of total space
//-> Fs[1,1]=-u2+zv+Bu+Dv
//-> Fs[1,2]=-zu+yv-Au+Du
//-> Fs[1,3]=-yu+xv+Cu+Dz
//-> Fs[1,4]=z2-yu+Az+By
//-> Fs[1,5]=yz-xu+Bx-Cz
//-> Fs[1,6]=-y2+xz+Ax+Cy
Js;                     // equations of base space
//-> Js[1,1]=BD
//-> Js[1,2]=-AD+D2
//-> Js[1,3]=-CD
\end{verbatim}
\end{small}

\noindent
Hence, the semiuniversal deformation of $(X,\bo)$ is given by \mbox{$(\sX,\bo)
  \to (S,\bo)$}, induced by the projection
onto the first factor of \mbox{$(\C^4\!,\bo)\times (\C^5\!,\bo)$},
$$ (\C^4\!,\bo)\times (\C^5\!,\bo)\supset V(\texttt{Fs})=(\sX,\bo) \to
(S,\bo)=V(\texttt{Js})\subset (\C^4\!,\bo)\,.$$
Note that the procedure \texttt{versal} proceeds
by lifting infinitesimal deformations to higher and higher order (as described
in the proof of Proposition \ref{prop:t2xx}). In general, this process may be infinite
(but \texttt{versal} stops at a predefined order). However, in many examples,
it is finite (as in the example above).

We can further analyse the base space of the semiuniversal deformation by
decomposing it into its irreducible components.
\begin{small}
\begin{verbatim}
ring P = 0,(A,B,C,D),dp;
ideal Js = imap(Px,Js);
minAssGTZ(Js);
//-> [1]:
//->    _[1]=D
//-> [2]:
//->    _[1]=C
//->    _[2]=B
//->    _[3]=A-D
\end{verbatim}
\end{small}

\noindent
The output shows that the base space is reduced (the primary and prime
components coincide) and that it has two components: a hyperplane and a
transversal line.}
\end{example}

Furter developments:
Abstract deformation theory, basically governed by the Schlessinger's conditions, has been further developed towards ``Derived Deformation Theory'' (cf.\cite{Lu09})
following the general trend in agebra and algebraic geometry to make everything ``derived''. While derived algebraic geometry has already become part of the mainstream, this does not yet apply to ``Derived Singularity Theory''.
 \bigskip

\section{Smoothing of Singularities}
\setcounter{equation}{0}
\label{sec:smoo}

\subsection{Rigidity and Smoothability } \label{sec:rigsmo}
We give a brief review of some well-known results on the question of the smoothability and rigidity of singularities. Recall that a complex space germ 
$(X, x)$ with isolated singularity is not obstructed iff the semiuniversal deformation base space is smooth.

\begin{definition}
(1) A singularity $(X,x)$ is called {\em
  rigid\/}\index{rigid!singularity}\index{singularity!rigid}  if any
deformation of $(X,x)$ over some base space $(S,s)$ is 
{\em trivial}\index{deformation!trivial}\index{trivial!deformation},
that is, isomorphic to the {\em product deformation}
$$
(X,x)\stackrel{i}{\hookrightarrow}(X,x)\times(S,s)\stackrel{p}{\to}(S,s)
$$
with $i$ the canonical inclusion and $p$ the second projection.  

(2) $(X,x)$ is is called {\em smoothable \/}\index{smoothable!singularity}\index{singularity!smoothable}if 
 there exists a 1-parametric deformation $\phi:(\sX,x) \to (\C,0)$
of $(X,x)$ such that for $t\in \C \setminus \{0\}$ sufficiently close to $0$ the fibre $\sX_t = \phi^{-1}(t)$ is smooth.
\end{definition}

\noindent
Rigid singularities are unobstructed. Smooth germs are rigid (by the implicit function theorem) and smoothable. For non-smooth singularities the notions are opposite to each other. If $(X,x)$ has an isolated singularity then $(X,x)$ is rigid iff the semiuniversal base is a reduced point while $(X,x)$ is smoothable iff the semiuniversal base has a positive dimensional irreducible component over which the generic fibre is smooth.  Such a component is called a {\em smoothing component}.\index{smoothing!component}

\begin{proposition}
A complex space germ is rigid\index{rigid!singularity} iff
\mbox{$T^1_{(X,x)}=0$}. 
\end{proposition}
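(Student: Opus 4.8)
The plan is to prove the two implications separately, treating ``rigid $\Rightarrow T^1_{(X,x)}=0$'' as essentially definitional and concentrating the real work on the converse. For the forward direction, recall that by definition $T^1_{(X,x)}=\uDef_{(X,x)}(T_\eps)$ is the set of isomorphism classes of deformations of $(X,x)$ over $T_\eps$, and that under the vector space structure of Proposition \ref{prop:t1xx} its zero element is the class of the product (trivial) deformation. If $(X,x)$ is rigid, then in particular every deformation over $T_\eps$ is isomorphic to the product deformation, so $\uDef_{(X,x)}(T_\eps)$ consists of the single element $0$. Hence $T^1_{(X,x)}=0$.

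For the converse, the strategy is to identify the semiuniversal base as a reduced point and then invoke completeness. First I would note that $\dim_\C T^1_{(X,x)}=0<\infty$, so by Grauert's Theorem \ref{thm:grauert} (cf.\ Remark \ref{ex.sudef}) a semiuniversal deformation $\phi:(\sX,x)\to(S,s)$ exists. Applying Lemma \ref{lem:Kodaira-Spencer map}, the Kodaira--Spencer map $T_{S,s}\to T^1_{(X,x)}$ is bijective, whence $T_{S,s}=0$, i.e.\ $\fm_{S,s}/\fm_{S,s}^2=0$. Since the maximal ideal of an analytic local ring is finitely generated, Nakayama's Lemma forces $\fm_{S,s}=0$, so $\ko_{S,s}=\C$ and $(S,s)$ is the reduced point $\{s\}$.

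It then remains to transport this to arbitrary base spaces. A semiuniversal deformation is complete (Definition \ref{def:versal}(1)), so any deformation $(j,\psi)$ of $(X,x)$ over an arbitrary germ $(T,t)$ is isomorphic to $\varphi^\ast(i,\phi)$ for some base-change morphism $\varphi:(T,t)\to(S,s)=\{s\}$. But there is only one morphism to the reduced point, and the pull-back of the deformation over $\{s\}$ along it is precisely the product deformation $(X,x)\times(T,t)\to(T,t)$. Hence every deformation of $(X,x)$ is trivial, so $(X,x)$ is rigid.

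The hard part is exactly this last passage in the converse: from the purely infinitesimal vanishing $T^1_{(X,x)}=0$ to triviality over \emph{all} (non-Artinian) base germs. The hypothesis only directly controls first-order deformations, and upgrading this to arbitrary bases genuinely requires the existence and completeness of a convergent semiuniversal deformation, i.e.\ Grauert's theorem together with the Kodaira--Spencer isomorphism. One could instead argue inductively, using the obstruction theory of Section \ref{sec:obs}, that $T^1_{(X,x)}=0$ makes every deformation over a fat point trivial; but extending from fat points to analytic base germs still needs the convergent construction, so the route through Grauert's theorem is the most economical.
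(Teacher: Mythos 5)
Your proof is correct and follows essentially the same route as the paper: the paper's two-line argument ("rigid iff the semiuniversal deformation exists and is a single reduced point, which by the Kodaira--Spencer lemma and the existence result for finite-dimensional $T^1$ is equivalent to $T^1_{(X,x)}=0$") is exactly what you spell out, via Grauert's theorem, the bijectivity of the Kodaira--Spencer map, Nakayama, and completeness. Your version merely fills in the details the paper leaves implicit (and gets the forward direction slightly more directly from the definition of $T^1_{(X,x)}$ as $\uDef_{(X,x)}(T_\eps)$), so no substantive difference.
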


\begin{proof}
$(X,x)$ is rigid iff the semiuniversal deformation exists and
consists of a single, reduced point. By Lemma \ref{lem:Kodaira-Spencer
  map}, together with 
the existence of a semiuniversal deformation for germs with
\mbox{$\dim_\C T^1_{(X,x)}<\infty$}, this is equivalent to
\mbox{$T^1_{(X,x)}=0$}.  
\end{proof}

   The existence of rigid singularities in small dimension 
  is still an open problem.  Assuming that the singularities are not smooth one may conjecture:

\begin{conjecture}
There exist no rigid rigid fat points, no rigid reduced curve singularities and no rigid
normal surface singularities.   
\end{conjecture}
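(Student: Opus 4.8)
The statement is equivalent, by the Proposition preceding the Conjecture, to the assertion that $T^1_{(X,x)}\neq 0$ (equivalently $\tau(X,x)>0$) for every non-smooth germ in each of the three classes. So the plan is to prove strict positivity of the Tjurina number class by class, exploiting the explicit description from Proposition~\ref{prop:t1xx} of $T^1_{(X,x)}$ as the cokernel of
$$\bet:\Theta_{\C^n,\bo}\otimes_{\ko_{\C^n,\bo}}\ko_{X,\bo}\lra\kn_{X/\C^n,\bo}.$$
Since rigid isolated singularities \emph{do} exist in dimension $\geq 3$ (for instance suitable quotient singularities, by Schlessinger), no purely formal argument can work; each case must genuinely use the low dimension.

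For reduced curve singularities I would reduce non-rigidity to smoothability. A reduced curve germ that is not smooth has positive $\delta$-invariant, and every reduced curve singularity over $\C$ admits a smoothing: concretely, one perturbs a parametrization of the branches so that the generic nearby curve is an immersion with only nodes, and then smooths the nodes. Such a smoothing is a non-trivial $1$-parameter deformation, so $T^1_{(X,x)}\neq 0$ and the germ is not rigid. This case therefore rests on the classical smoothability of reduced curve singularities rather than on a direct computation of $\bet$.

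For fat points the cleanest route is to force $T^1_{(X,x)}\neq 0$ directly, since non-rigidity is strictly weaker than smoothability (non-smoothable $0$-dimensional germs already appear in embedding dimension $\geq 3$). In embedding dimension $1$ the germ is the hypersurface $\C\{t\}/\langle t^m\rangle$ with $\tau=m-1>0$; in embedding dimension $2$ one may invoke the smoothness and irreducibility of the Hilbert scheme of points on a smooth surface, which smooths every such germ. In higher embedding dimension I would attempt a rank estimate in the four-term sequence
$$0\to\Theta_{X,\bo}\to\Theta_{\C^n,\bo}\otimes_{\ko_{\C^n,\bo}}\ko_{X,\bo}\xrightarrow{\bet}\kn_{X/\C^n,\bo}\to T^1_{(X,x)}\to 0,$$
comparing $\dim_\C\kn_{X/\C^n,\bo}$ with $n\dim_\C\ko_{X,\bo}-\dim_\C\Theta_{X,\bo}$ so as to force $\coker\bet\neq 0$; whether this alternating count is always positive is itself the delicate point here.

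The normal surface case is where I expect the real difficulty. Rational double points are hypersurfaces and hence have $\tau>0$, and rational singularities more generally are accessible through the resolution; but there is no uniform mechanism in sight that forces $T^1_{(X,x)}\neq 0$ for an \emph{arbitrary} non-smooth normal surface germ. I would try to combine the structure of the minimal good resolution with the long exact sequence relating $T^1_{(X,x)}$ to cohomology on the resolution, aiming to extract a nonzero first-order deformation from the exceptional configuration. Ruling out rigidity for all non-smooth normal surface germs by such an argument is the main obstacle, and is precisely the part that is currently open.
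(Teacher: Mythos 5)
The statement you are trying to prove is presented in the paper as an open \emph{conjecture}: the text immediately before it says that the existence of rigid singularities in small dimension is still an open problem, and later that ``to date, no examples of rigid curve singularities are known; it is conjectured that they do not exist.'' The paper offers no proof, so there is nothing to compare your argument against except the known partial results it surveys — and measured against those, your proposal contains one outright false step and otherwise (as you yourself concede for fat points of embedding dimension $\geq 3$ and for normal surfaces) does not close the problem.

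The false step is the curve case. You claim that every reduced curve singularity is smoothable by perturbing the parametrization to an immersion with nodes and then smoothing the nodes. This is exactly the trap the paper warns about at the start of its curve section: a deformation of the \emph{parametrization} does not in general give a flat deformation of $(X,x)$, because the special fibre of the resulting family acquires embedded components and only its reduction is $(X,x)$ (cf.\ \cite{Ha74}, 1.2). In fact non-smoothable reduced curve singularities exist: Mumford \cite{Mu75} produced irreducible examples, and Pinkham \cite{Pi74} produced reducible ones ($r$ generic lines through the origin in $\C^n$), both discussed at length in the paper. So smoothability cannot be the mechanism that forces $T^1_{(X,x)}\neq 0$ for all reduced curves, and the curve case of the conjecture remains open (the known results — Herzog for almost complete intersections, Rim--Vitulli and Buchweitz for monomial curves — cover only special classes). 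Your fat-point discussion is sound in embedding dimensions $1$ and $2$ (hypersurfaces, and Brian\c{c}on's smoothability of fat points in $\C^2$), but the proposed ``alternating rank count'' in higher embedding dimension is not an argument, and your honest admission that the normal surface case is the real obstacle is correct. In short: this is a conjecture, and your proposal neither proves it nor correctly isolates which parts are genuinely known.
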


\begin{example}\label{exa:semiuni def}
{\em  (1) The simplest known example of an equidimensional
    (non-smooth) rigid singularity $(X,\bo)$ is the union of two planes
    in \mbox{$(\C^4\!,\bo)$}, meeting in one point  and defined by \mbox{$\langle
  x,y\rangle\cap\langle z,w\rangle$} 
    (given by the ideal
    \texttt{I} in the ring \texttt{R} below). 
    
\noindent    
(2)  The product
    \mbox{$(X,\bo)\times (\C,0)\subset (\C^5\!,\bo)$} (given by the ideal
    \texttt{I} in the ring \texttt{R1 below}) has a non-isolated
    singularity but is also rigid (hence, has a semiuniversal
    deformation). We prove these statements using  {\sc Singular}
    (\cite{DGPS18}):
\begin{small}
\begin{verbatim}
LIB "deform.lib";
ring R = 0,(x,y,u,v),ds;
ideal I = intersect(ideal(x,y),ideal(u,v));
vdim(T_1(I));         // result is 0 iff V(I) is rigid
//-> 0
ring R1 = 0,(x,y,u,v,w),ds;
ideal I = imap(R,I); 
dim_slocus(I);        // dimension of singular locus of V(I)
//-> 1
vdim(T_1(I));
//-> 0
\end{verbatim}
\end{small}
(3) An even simpler (but not equidimensional) rigid singularity 
  is the union of the plane \mbox{$\{x=0\}$} and the line
  \mbox{$\{y=z=0\}$} in \mbox{$(\C^3\!,\bo)$}. This can be checked either
  by using {\sc Singular} as above, or, without computer, by
  showing that the map $\beta$ in Proposition \ref{prop:t1xx} is
  surjective.   }
\end{example}
 
Let us first recall some known results on rigidity. In 1964 Grauert and Kerner \cite{GK64} generalized Thom's example (see below) 
 and showed that the Segre cone over 
$\P^r \times \P^1$ in $\P^{2r + 1} (r \geq 2)$ is rigid and gave
thus the first example of a (non-smooth) rigid singularity. Further examples of rigid singularities are due to Schlessinger (isolated quotient singularities of dimension 
$\geq 3$ \cite{Sc71}, and to Rim (e.g. the one-point union of two copies of $(\C^n, 0)$ in $(\C^{2n},0$) for $n \geq 2$, \cite{Ri72}). Examples
of singularities that are not deformable into rigid singularities (so-called ``generic singularities'') are due to Schlessinger \cite{Sc73} ($\dim \geq 3$) and Mumford 
\cite{Mu73}  ($\dim \geq 2$) (c.f. also \cite{Pi74}).  
By Herzog \cite{He79} one-dimensional, almost-complete intersections are not rigid.  It is also known that monomial (i.e., irreducible, quasihomogenous)
curves are not rigid (\cite [5.12]{RV77}, \cite[3.1.2]{Bu80}). To date, no examples of rigid curve singularities are known; it is conjectured that they do not exist.
A detailed discussion of rigid and smoothable singularities together with references up to 1973 can be found in Hartshorne \cite{Ha74}, where also topological conditions for smoothability are derived.
\medskip

 The  question  whether a singularity $(X,x)$ is smoothable is among others interesting because the smooth nearby fibre is an important topological object associated to the singularity that has been (and still is) a continuous subject of research (see section \ref{sec:top}).
Classically, it was even suspected that all singularities are smoothable. 
In 1909 Severi postulated that each algebraic variety with arbitrary singularities
should be the limit of a family of nonsingular algebraic manifolds (\cite[p.45]{Sev09} and in  \cite[p.355]{Sev21} for curves). 
In fact he conjectured that an irreducible curve can be smoothed in a family of curves with constant degree and arithmetic genus, i.e., in a flat family. 
It was a guiding principle of Severi in \cite{Sev21} to obtain statements about singular curves from their smoothing. 

Of course, hypersurfaces (e.g. plane curves) are smoothable but Severi's general postulate turned out to be wrong.  The first example of a non-smoothable singularity, the cone apex of the Segre embedding of 
$\P^2 \times \P^1$ in $\P^5$, was found by R. Thom in 1957. Thom gave topological reasons for non-smoothability; his argument was reproduced and worked out in 1974 in \cite{Ha74} (for a strengthening see Theorem \ref{thm:topsmcond}). 
The first rigorous and pure
algebraic proof was published anonymously in \cite{XXX57} in 1957 (according to Thom, the author is A. Weil).
The author shows that the projective closure of Thom's example in $\P^6$ can not be smoothed in $\P^6$ (which is however weaker than abstract non-smoothability, cf. \cite[2.12]{Pi78}). Rees and Thomas \cite{RT78a, RT78b} developed Thom's idea further and found refined cobordism invariants of the neighborhood boundary of an isolated singularity $(X,x)$ as a necessary condition for smoothability. They gave also
further examples of non-smoothable singularities. 
Other conditions have been found by Sommese \cite{So79}.

The conjecture of Severi that every irreducible projective curve is smoothable  has not been doubted for a long time (cf. \cite{Ri72} and \cite{De73}) until Mumford's, and later Pinkham's examples appeared in 1973.
Mumford showed in \cite{Mu75} that non-smoothable irreducible curve singularities exist. Reducible examples, related to $r$ straight lines in $\C^n$ through $0$ in general position, were first found by Pinkham \cite{Pi74}. We consider these examples and generalizations in  section \ref{sec:curves}. 
\medskip

Further results on smoothability:
\begin{enumerate} 
\item Complete intersections are smoothable (by Sard's theorem) and 
non-obstructed (\cite{Tj69}).
\item A {\em determinantal singularity}\index{determinantal singularity} $(X,x)$ is given by the $t \times t$ minors of an $r \times s$ matrix with entries holomorphic functions in an open subset $U \subset \C^N$, such that $(X,x)$ has codimension $(r-t+1)(s-t+1)$ in $\C^N$. If $(X,x)$ is an isolated determinantal singularity and
$2 \leq t \leq r \leq s$, then $(X,x)$ is smoothable if $\dim(X,x)<s+r- 2t+3$ (\cite[6.2]{Wa81}). 
\item  In particular, if $(X,x)$ is Cohen-Macaulay of codimension{\footnote { the codimension of $(X,x)$  is $\codim (X, x) = \edim(X,x) - \dim(X,x)$ with $\edim(X,x)  = \dim_\C \fm/\fm^2$ the embedding dimension.}} 2
(and hence determinantal) then $(X,x)$ is smoothable provided $\dim(X,x) \leq 3$ (Schaps, \cite{Sch77}). 
Note that the semiuniversal base is smooth for Cohen-Macaulay singularities in codimension  2 without any restriction on the dimension (Schlessinger, thesis, and \cite{Sch77}).
\item Moreover, an isolated {\em Pfaffian singularity}\index{Pfaffian singularity}
 $(X,x)$, defined by the $2m \times 2m$ Pfaffians of a skew-symmetric 
 $(2n + 1) \times (2n + 1)$ matrix of holomorphic functions is smoothable if $\dim(X,x) < 4(n - m) +7$ (\cite[6.3]{Wa81}). 
\item  An irreducible Gorenstein singularity of $\codim (X, x) \leq 3$ 
(which is Pfaffian) is 
smoothable if $\dim (X, x) \leq 6$ and has a smooth semiuniversal base (Waldi, \cite{Wal79}). 
\end{enumerate}

Let us look at dimensions $\leq 2$: 
\begin{enumerate}
\item  The first examples of nonsmoothable normal surface singularities were found by Mumford and later by Pinkham, see section \ref{sec:surfaces} where the case of surface singularities is treated in more detail. 
%
%
\item For  curves the following is known. In 1975 Mumford gave the first example of non-smoothable curve singularities, using similar ideas as Iarrobino for his examples of non-smoothable fat points.
\item Since reduced curve singularities are Cohen-Macaulay, we get that reduced curves
in $(\C^n, 0)$, $n \leq 3$, and reduced, irreducible Gorenstein curves
in $(\C^n, 0)$, $n \leq 4$, are smoothable and not obstructed.  In \cite{RV77}
it is shown that negatively graded monomial curves are smoothable. For reduced quasihomogeneous curves this is not true by Pinkham’s examples.
Explicit examples of non-smoothable monomial curves were first found by Buchweitz in \cite{Bu80}.

 It is interesting to note that the curves of Mumford and Pinkham are not smoothable, since the dimension of the base space of the semi-universal deformation is ``too large''. There is no curve singularity known whose semi-universal base has a smaller dimension than it would have by Deligne's formula 
(see section \ref{sec:curves}) if it were smoothable.

\item Fat points in $\C^2$ are smoothable (c.f. \cite{Bri77}). 
\item First examples of non-smoothable points in  $\C^n, n\geq 3,$ were found by Iarrobino \cite{Ia72}. For an overview on the Hilbert scheme of points, i.e. the deformation theory of a collection of fat points in some projective space (until 1987), see  \cite{Ia87}. 
\item Since then quite some work concerning smoothability of Artin algebras was done, in particular more examples and methods that show non-smoothability have been found.  E.g. Shafarevich proves in \cite{Sh90} that a large number of fat points are in fact non-smoothable (as it had been expected).
\end{enumerate}
\bigskip

\subsection{Smoothing of Surface Singularities}
\setcounter{equation}{0}
\label{sec:surfaces}
Smoothability has been an important part of the deformation theory of normal surface singularities. For any smoothing one has a smooth Milnor fibre, a key topological object that has been intensively studied. 
It started with Pinkham's examples in \cite{Pi74, Pi78} and was continued by Wahl \cite{Wa81, Wa82}, both studied deformations and possible smoothings of weighted homogeneous normal surface singularities. 
\medskip

An arbitrary singularity $(X,x)$ is  {\em weighted homogeneous}\index{weighted homogeneous}  if $\ko_{X,x}$ is a graded algebra $\ko_{X,x} = \C\{x_1,...,x_n\}/I$, where the $x_i$ have   positive weights, $wt(x_i)= a_i >0$, and
$I$ is a graded ideal, which is generated by (weighted) homogeneous polynomials $f_i$.   Then $(X,x)$ admits a good $\C^*$-action 
$\lambda \cdot (x_1,...,x_n) = (\lambda^{a_1}x_1,...,\lambda^{a_n}x_n)$. 
We call $(X,x)$ {\em quasihomogeneous}\index{quasihomogeneous} if it is analytically isomorphic to a weighted homogeneous singularity.

The following result, a complement to Grauert's Theorem \ref{thm:grauert}, was proved by Pinkham in \cite{Pi74, Pi78}.

\begin{theorem}[Pinkham]\index{Pinkham} \label{thm:pinkham}
\begin {enumerate}
\item A weighted homogeneous isolated singularity $(X,x)$ 
admits a semiuniversal deformation 
$\phi: (\sX,x) \to (S,s)$ such that the $\C^*$-action extends to $(\sX,x)$ and 
$(S,s)$ with $\phi$ equivariant. 
\item Any equivariant deformation 
$(\sY,y) \to (T,t)$ of $(X,x)$ can be induced from $(\sX,x) \to (S,s)$ via an equivariant base change morphism $\varphi : (T,t) \to (S,s)$. 
\item For any equivariant deformation $(\sY,y) \to (T,t)$ choose homogeneous generators $t_j$ of the maximal ideal of $\ko_{T,t}$ and set 
$(T^-,t) = V\{t_j \,| \,wt(t_j) < 0 \}$ (resp. $(T^0,t) = V\{t_j \,| \,wt(t_j) \neq 0 \}$,
resp.  $(T^+,t) = V\{t_j \,| \,wt(t_j) > 0 \}$).

Then the equivariant morphism $\varphi : (T,t) \to (S,s)$ of (2) restricts to $\varphi^-: (T^-,t) \to (S^-,s)$ (resp. $\varphi^0: (T^0,t) \to (S^0,s)$, resp. $\varphi^+: (T^+,t) \to (S^+,s)$). 

\end {enumerate}
\end{theorem}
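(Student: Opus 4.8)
The plan is to derive everything from the single structural fact that the good $\C^*$-action on $(X,x)$ induces a grading on all the objects governing the deformation theory, and then to run the order-by-order construction of Section~\ref{sec:obs} equivariantly. For part~(1), write $\ko_{X,x}=\C\{x_1,\dots,x_n\}/I$ with $I=\langle f_1,\dots,f_k\rangle$ generated by weighted homogeneous $f_i$. Since the $x_i$ carry weights $\operatorname{wt}(x_i)=a_i>0$, the free module $\ko_{\C^n,\bo}^k$ is naturally graded, hence so are $\Rel$, $\Kos$, and therefore $T^1_{(X,x)}$ and $T^2_{(X,x)}$. By Corollary~\ref{T1.iso}, $T^1_{(X,x)}$ is finite-dimensional, so I would choose a weighted homogeneous $\C$-basis $g_1,\dots,g_\tau$, with $g_j=(g_j^1,\dots,g_j^k)$, and assign to the deformation parameters the weights $\operatorname{wt}(t_j):=\operatorname{wt}(f_i)-\operatorname{wt}(g_j^i)$, which is well defined (independent of $i$) precisely because $g_j$ is homogeneous. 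With these weights the first-order family $F_i=f_i+\sum_j t_j g_j^i$ is weighted homogeneous. The crucial point is that the inductive lifting of Section~\ref{sec:obs} (equivalently, the Laudal map $\Psi\colon T^1_{(X,x)}\to T^2_{(X,x)}$) can be carried out inside the graded category: at each order the obstruction lies in a graded piece of $T^2_{(X,x)}$, and a lift, when it exists, can be chosen homogeneous of the prescribed weight. Consequently $S=\Psi^{-1}(0)$ is cut out by weighted homogeneous equations and inherits a $\C^*$-action, $(\sX,x)\subset(\C^n\times S)$ is $\C^*$-invariant, and $\phi$ is equivariant.

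For part~(2), given an equivariant deformation $(\sY,y)\to(T,t)$, ordinary versality (part~(1) together with Theorem~\ref{thm:grauert}) already yields a base-change map $\varphi$; the issue is to produce one that is equivariant. I would reconstruct $\varphi^\sharp\colon\ko_{S,s}\to\ko_{T,t}$ degree by degree, insisting that the image of each homogeneous generator of $\fm_{S,s}$ be homogeneous of the same weight. Because both the versal family over $(S,s)$ (by part~(1)) and the given family over $(T,t)$ are weighted homogeneous, the equations to be solved at each inductive step are identities between graded objects, so a homogeneous solution exists; passing to the limit produces an equivariant $\varphi$.

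For part~(3), equivariance of $\varphi$ means exactly that $\varphi^\sharp$ is weight-preserving, and the conclusion becomes a formal consequence of the following elementary observation: in a graded local ring with homogeneous generators of various weights, any weighted homogeneous element of negative weight lies in the ideal generated by the negative-weight generators, since a monomial of negative total weight must contain at least one factor of negative weight (a product of non-negative-weight variables has non-negative weight). The identical reasoning applies to elements of nonzero weight and to elements of positive weight. Hence, for a homogeneous generator $s_i$ of $\fm_{S,s}$ with $\operatorname{wt}(s_i)<0$, the element $\varphi^\sharp(s_i)$ is homogeneous of negative weight and therefore lies in $\langle t_j\mid \operatorname{wt}(t_j)<0\rangle$, i.e.\ in the ideal defining $(T^-,t)$; thus $\varphi^\sharp$ carries the ideal of $(S^-,s)$ into that of $(T^-,t)$ and $\varphi$ restricts to $\varphi^-\colon(T^-,t)\to(S^-,s)$, where $(S^-,s),(S^0,s),(S^+,s)$ are defined analogously via homogeneous generators of $\fm_{S,s}$. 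Replacing ``$<0$'' by ``$\neq 0$'' and by ``$>0$'' yields $\varphi^0$ and $\varphi^+$. The step I expect to be the real obstacle is the equivariance in parts~(1) and~(2) --- verifying that homogeneous choices are always available at each stage of the obstruction-theoretic lifting and of the base-change construction --- whereas part~(3) is then pure weight bookkeeping.
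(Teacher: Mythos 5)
Your proposal is correct and follows essentially the same route as the paper's sketch: grade $T^1_{(X,x)}$ (via the graded exact sequence of Proposition \ref{prop:t1xx}), pick a homogeneous basis, give the parameter $t_j$ the weight $\operatorname{wt}(f_i)-\operatorname{wt}(g_j^i)$ (the paper's $-\nu_j$), and carry out the order-by-order lifting of Section \ref{sec:obs} and the base-change construction equivariantly; your weight-bookkeeping argument for (3) is a correct completion of a step the paper leaves implicit. The only point the paper adds that you omit is that the induction a priori yields only a \emph{formal} equivariant semiuniversal deformation, and the passage to a convergent one requires a $\C^*$-equivariant modification of Galligo--Houzel \cite[Proposition 1]{GH74}.
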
 

\begin{proof} We sketch only the proof of (1) following  \cite{Pi74, Pi78}, who proves the statement in the setting of formal deformation theory.   The analytic version for complex germs follows from an appropriate modification of  \cite[Proposition 1]{GH74} taking care of the $\C^*$-action.  

Choose homogeneous generators $f_1,...,f_k$ of $I$ of (weighted) degree $d_i$. We use the exact sequence
 $$\Theta_{\C^n,\bo}
  \otimes_{\ko_{\C^n,\bo}}\ko_{X,\bo}\stackrel{\bet}{\longrightarrow}
    \Hom(I/I^2,\ko_{X,\bo})=T^1_{X,\bo/\C^n\!,\bo}\longrightarrow T^1_{X,\bo}\longrightarrow 0\,, $$
from   Proposition \ref{prop:t1xx}, where the first module is graded by setting
$wt(\frac{\partial}{\partial x_j}) = -a_i$.
By Remark \ref{rmk:t1xx}
very element in $T^1_{X,\bo/\C^n\!,\bo}$
 is  given by \mbox{$f_i+\eps g_i$}, $i=1,\dots,k$, i.e., given by a tupel
  $G= (g_1,\dots,g_k)$ with $g_i \in \ko_{\C^n,0}$. 
  We define $G$ to be homogeneous of  degree $\nu$ if 
  $g_i$ is homogeneous of degree $\nu+d_i$, thus imposing a grading on   
  $T^1_{X,\bo/\C^n\!,\bo}$.  It follows that $\beta$ is homogeneous since 
  $\beta(\frac{\partial}{\partial x_j})=
  (\frac{\partial f_1}{\partial x_j},...,\frac{\partial f_k}{\partial x_j})$. Therefore 
  $\coker (\beta) = T^1_{X,\bo}$ is graded and $T^1_{X,\bo}$ 
  decomposes into graded pieces
$$T^1_{X,\bo} = \sum_{\nu \in \Z} T^1_{X,\bo}(\nu).$$ 
We chose  homogeneous  elements 
$G_j = (g_1^j,...,g_k^j) \in T^1_{X,\bo/\C^n\!,\bo}$ with
$\deg(G_j) = \nu_j$, mapping to a homogeneous basis of $T^1_{X,\bo}$, $j=1,...,\tau$.
We choose new variables $t =(t_i, ..., t_\tau)$ and set
$$(f'_1,...,f'_k) = (f_1,...,f_k) + \sum_{j=1,...,\tau} t_j(g_1^j,...,g_k^j) \ \ (\text{mod } \fm^2),$$
$\fm$ the maximal ideal of $\C\{t_1,...,t_\tau\}$. Then $(f'_1,...,f'_k)$  defines a first order deformation of $(X,\bo)$ with total space 
$(\sX',\bo) \subset(\C^n \times \C^\tau,\bo)$ defined by $\langle f'_1,...,f'_k\rangle 
 \subset \C\{x,t\}$ 
over $(S',0) =V(\fm^2) \subset (\C^\tau,0)$ as base space. Giving  $t_j$ the weight $- \nu_j$ 
then $f'_j$ is homogeneous of degree $d_j$ and $(\sX',\bo) \to (S',0)$ is an equivariant deformation of first order. 

Now we continue as in section \ref{sec:obs} and lift the first order deformation to second order but in an equivariant way. Continuing by induction, we get finally
an equivariant semiuniversal deformation of $(X,\bo)$. 

If the $G_j$ are a system of generators of  $T^1_{X,\bo}$, we get an equivariant versal deformation.
The equivariant base change property is proved in a similar way by induction.
\end{proof} 

\begin{remark}
(1) It follows from the proof, that the base space of the semuniversal deformation is given by a subgerm $(S,0) \subset (\C^\tau,0)$, $\tau =  \dim_\C T^1_{X,\bo}$,
defined by some homogeneous ideal in
$\C\{t_1,...,t_\tau\}$, $wt(t_j) \in \Z$
(note that the signs of the weights of the variables $t_j$ are opposite to signs of the weights of the tangent vectors). The total space of the semiuniversal defromation is then a subgerm $(\sX,\bo) \subset (\C^n,\bo)\times (S,0)$ and $\phi: (\sX,\bo) \to (S,0)$ is the projection.
$(\sX,\bo)$ is defined by a homogeneous ideal 
$J \subset \ko_{(\C^n,\bo)\times (S,0)}$ generated by homogeneous power series
$$F_j(x,t) = f_j(x) + g_j(x,t) \in  \C\{x_1,...,x_n,t_1,...,t_\tau\}, \    g_j(x,0)=0.$$

(2) The restriction  
$\phi^-: (\sX^-,\bo) \to (S^-,0)$  of $\phi$ is defined by $f_j(x) + g_j(x,t)$ 
with $\deg(g_j)\geq \deg(fj)$ 
and any deformation which is induced form a map to $(S^-,0)$
is called a {\em deformation of non-positive weight}\index{deformation!of non-positive weight}; if it is induced form a map to  $(S^-,0)\cap(S^0,0)$  (i.e. $\deg{g_j} > \deg(fj)$) we call it a {\em deformation of negative weight}\index{deformation!of negative weight}.
Similarly we consider the restriction  $\phi^+: (\sX^+,\bo) \to (S^+,0)$ and speak of
 {\em deformations of non-negative } resp. of {\em positive weight}.\index{deformation!of positive weight}

(3) It is easy to see that $(X,x)$ cannot have smoothings of non-positive weight (consider the Jacobian of $(F_1,...,F_k$ and use that the total space of a 1-parametric smoothing has an isolated singularity). $(X,x)$ may have smoothings of positive weight, but these are rare as we shall see.
\end{remark}

Let us now recall the main results about smoothability for a normal surface singularity $(X,x)$. Besides complete intersections the following is known:

\begin {enumerate}
\item {\em Rational singularities}\index{singularity!rational} and especially {\em quotient singularities} are always smoothable over the  {\em Artin component},
\index{Artin component} i.e. the component of of the semiuniversal base corresponding to deformations of $(X,x)$, induced by blow down, from deformations of the resolution of $(X,x)$ (cf. Artin \cite{Ar74}; see also \cite[Proposition 6.10]{Pi78}). 

\item A {\em normal surface singularity in $(\C^4,0)$} is smoothable with a smooth semi-universal base  space since it is Cohen-Macaulay in codimension 2.

\item Let $(X,x)$ be a {\em simple elliptic singularity},\index{singularity!simple elliptic} i.e. the exceptional divisor of the minimal resolution consists of one elliptic curve with selfintersection number  $-d$. Note that $d$ is the multiplicity $m$ of 
$(X,x)$, except for $d= 1$ where $m= 2$.
Then  $(X,x)$ is smoothable if and only if $m \leq 9$ (Pinkham \cite{Pi74}).

\item Let $(X,x)$ be a {\em cusp singularity}\index{singularity!cusp}  where the exceptional curve of the minimal resolution consists of a cycle of $r$ rational curves
meeting transversally.  Let $m$ denotes again the multiplicity. 
Then $(X,x)$ is smoothable if $m^2 - m < r$ and is not smoothable if $m > r + 9$ (Wahl  \cite[5.6]{Wa81}, \cite[5.12]{Wa80}).

\item Looijenga proved in \cite{Lo81} that whenever a cusp singularity is smoothable, the minimal resolution of the dual cusp is an anticanonical divisor of some smooth rational surface. He conjectured the converse. The conjecture was proved by Gross, Hacking, and Keel \cite{GHK15} using methods from mirror symmetry. For an  alternative proof see \cite{En18}. 

\item If $(X,x)$ is a {\em Dolgachev (or triangular) singularity}, then $(X,x)$ is not smoothable if the multiplicity is $\geq 14$ (\cite{Wa82}; see also \cite{Lo83}).

\item Note that the last three classes are {\em minimally elliptic singularities}\index{singularity!minimally elliptic}
 in the sense of Laufer (i.e. Gorenstein and $h^1(\tilde X, \ko_{\tilde X}) = 1$ for any resolution $\tilde X$ of $X$). 
Karras proved in \cite{Ka83} that each minimally elliptic singularity $(X,x)$ can be deformed into a simple elliptic singularity with the same multiplicity $m$. 
Hence a minimally elliptic singularity can be smoothed if $m \leq 9$.
\end {enumerate}

%
\medskip

Important obstructions against smoothability of an isolated singularity come from globalizing the smoothing. A smoothing of the globalized singularity (a projective variety)  provides a smooth projective variety in some projective space with properties (coming from the singularity) that cannot exist. The following theorem uses this method and is due to Pinkham \cite[Theorem 7.5]{Pi74}.

\begin{theorem}[Pinkham] 
Let $C \subset \P^n$ be a smooth projectively normal curve of genus $g \geq 1$ and degree $d \geq 10$  if $g = 1$ or $d \geq 4g + 5$ if $g \geq 2$. 
Let $X \subset \C^{n+1}$ denote the affine cone over $C$. Then the singularity 
$(X,0)$ is not smoothable. 
\end{theorem}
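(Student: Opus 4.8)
The plan is to exploit the $\C^*$-action on the cone to reduce a hypothetical smoothing to one of positive weight, to globalize such a smoothing into a one-parameter family of \emph{projective} surfaces, and then to contradict a lower bound on the geometric genus of $(X,\bo)$. First I would record that invariant. Since $C$ is projectively normal, $X=\Spec\bigoplus_{m\ge 0}H^0(C,\ko_C(m))$ is a normal surface germ with a good $\C^*$-action whose minimal resolution $\tilde X$ is the total space of $\ko_C(-1)\to C$; hence
$$ p_g(X,\bo)=\dim H^1(\tilde X,\ko_{\tilde X})=\sum_{m\ge 0}h^1(C,\ko_C(m)). $$
By Serre duality $h^1(C,\ko_C(m))=h^0(C,K_C\otimes\ko_C(-m))$, which vanishes for $m\ge 1$ as soon as $d=\deg\ko_C(1)>2g-2$; this holds in both stated ranges. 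Thus $p_g(X,\bo)=h^1(C,\ko_C)=g\ge 1$, and this positive lower bound is what I will contradict.

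Next I would reduce to a smoothing of positive weight. By Theorem \ref{thm:pinkham} the semiuniversal deformation of the weighted homogeneous germ $(X,\bo)$ is $\C^*$-equivariant, and by the remark following that theorem a cone admits no smoothing of non-positive weight. So any smoothing occurs over a positive-weight subgerm, where the graded equations extend to a flat family $\pi\colon\mathcal V\to\Delta$ of projective surfaces in $\P^{n+1}$ with special fibre the projective cone $\overline{X}$ over $C$ and generic fibre $V$ a smooth projective surface of degree $d$. The hyperplane at infinity (the ambient $\P^n$) meets every fibre in the fixed curve $C$, so $V$ contains $C$ as a smooth hyperplane section with $\ko_V(C)$ ample and $C^2=d$, and the Milnor fibre of the smoothing is $F=V\setminus C$.

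The concluding step is a numerical contradiction. Adjunction on $V$ gives $K_V\cdot C=2g-2-C^2=2g-2-d$, which is strictly negative in both ranges; since $C$ is ample this precludes an effective canonical divisor, so $p_g(V)=h^0(V,K_V)=0$ and $V$ is regular. On the other hand, by the results of Greuel--Steenbrink and Steenbrink on the mixed Hodge structure of the Milnor fibre of a smoothing of a normal surface singularity, one has $b_1(F)=0$ and the positive part of the intersection form on $H^2(F)$ has rank $\mu_+=2\,p_g(X,\bo)$. Comparing the intersection form of the open surface $F$ with that of the closed surface $V$ --- where the unique extra positive direction is carried by the class $[C]$, which restricts to zero on $F$ --- bounds $\mu_+$ by $b_2^{+}(V)-1=2\,p_g(V)=0$. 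Hence $p_g(X,\bo)=0$, contradicting $p_g(X,\bo)=g\ge 1$, and $(X,\bo)$ is not smoothable.

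I expect the main obstacle to be precisely this last comparison of intersection forms: making rigorous that the positive Hodge-theoretic contribution $2\,p_g(X,\bo)$ of the Milnor fibre must be accommodated inside $H^2(V)$ after deleting the direction $[C]$. Controlling the weight-one piece $H^1(C)$ that enters $H^2(F)$ through the Gysin sequence is exactly where the quantitative hypotheses ($d\ge 10$ for $g=1$, $d\ge 4g+5$ for $g\ge 2$) should be used, and reproducing Pinkham's sharp bound rather than the weaker $d>2g-2$ that already forces $K_V\cdot C<0$ is the delicate point that I would need to pin down.
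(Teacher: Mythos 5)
There is a genuine gap, and it is concentrated in your concluding step. A decisive sanity check: the only quantitative input your argument actually uses is $d>2g-2$ (to kill $h^1(C,\ko_C(m))$ for $m\ge 1$ and to make $K_V\cdot C=2g-2-d<0$), so if it were correct it would prove non-smoothability of the cone over \emph{every} elliptic normal curve of degree $3\le d\le 9$ --- but these are exactly the simple elliptic singularities, which \emph{are} smoothable (item (3) of the list in Section \ref{sec:surfaces}). The step that fails is the Hodge-theoretic identity you invoke: Steenbrink's theorem (one of Wahl's conjectures, proved in \cite{Ste83}) states $\mu_0+\mu_+=2p_g(X,x)$, not $\mu_+=2p_g(X,x)$, where $\mu_0$ is the rank of the radical of the intersection form on $H_2(F)$. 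Since $b_1(F)=0$ by Theorem \ref{thm:GS}, one has $\mu_0=b_1(\partial F)$, and the link of the cone over $C$ is an $S^1$-bundle over $C$ with nonzero Euler number, so $b_1(\partial F)=2g=2p_g(X,0)$. Hence $\mu_+=0$ for these singularities: the entire positive Hodge contribution sits in the radical, your comparison with $b_2^+(V)-1=2p_g(V)=0$ is satisfied, and no contradiction results. There is also a secondary gap earlier: the reduction ``any smoothing occurs over a positive-weight subgerm'' does not follow from the remark that smoothings cannot be of non-positive weight, since a one-parameter smoothing may mix weights; what is actually needed is the negative-grading statement $T^1_{X,0}(\nu)=0$ for $\nu>0$, which is precisely the hypothesis of Pinkham's globalization theorem quoted in the paper and requires a separate cohomological verification.

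For comparison, the route the paper indicates (following \cite[Theorem 7.5]{Pi74}) is: verify negative grading, globalize the smoothing via \cite[Theorem 4.2]{Pi74} to obtain a smooth projective surface $V\subset\P^{n+1}$ containing $C$ as a smooth hyperplane section of genus $g$ and degree $d$, and then contradict the \emph{classification} of such pairs $(V,C)$: for $g=1$ the surface $V$ must be a del Pezzo surface, forcing $d\le 9$, and for $g\ge 2$ the classification of surfaces with sectional genus $g$ forces $d\le 4g+4$. That projective-geometric classification --- not the mixed Hodge structure of the Milnor fibre --- is where the sharp bounds $d\ge 10$ and $d\ge 4g+5$ do their work, and it is the ingredient your proposal would need to substitute for the final paragraph.
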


The proof makes use of the following Theorem of \cite[Theorem 4.2]{Pi74} that proves globalization for cones.

\begin{theorem}[Pinkham] 
Let $Y \subset \P^n$ be a nonsingular, projectively normal subvariety of dimension 
$\geq 1$. Let $X$ be the affine cone over $Y$ in $\C^{n+1}$  and let $\overline X$ be its projective closure in $\P^{n+1}$. Assume that the homogeneous singularity 
$(X,0)$ has negative grading (i.e., $T^1_{X,0}(\nu)=0$ for all $\nu >0$). 
Then any deformation of $(X,0)$ lifts to an
embedded deformation of $\overline X \subset \P^{n+1}$.  
More precisely, the morphism of deformation functors 
$\uDef_{\overline X / \P^{n+1}} \to \uDef_{(X,0)}$ is smooth.
\end{theorem}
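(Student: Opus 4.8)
The plan is to verify directly that the morphism of functors $\Phi\colon \uDef_{\overline X/\P^{n+1}} \to \uDef_{(X,0)}$ satisfies the infinitesimal lifting property characterizing smoothness: for every small extension $A' \twoheadrightarrow A$ of Artinian $\C$-algebras with square-zero kernel, and every pair consisting of an embedded deformation of $\overline X$ over $A$ together with a compatible abstract deformation of $(X,0)$ over $A'$, one must produce an embedded deformation of $\overline X$ over $A'$ lifting both. By the standard criterion this reduces to two linear statements: that the induced map on tangent spaces is surjective, and that the induced map on obstruction spaces is injective. I would first identify these spaces. Since $Y$ is smooth and projectively normal, the projective cone $\overline X$ is normal with a single singular point at the vertex $p$, and $(\overline X, p) \cong (X,0)$; hence $\Sing(\overline X)=\{p\}$ and $\Phi$ is the well-defined map sending an embedded deformation of $\overline X \subset \P^{n+1}$ to its restriction to the germ at $p$, followed by forgetting the embedding. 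The tangent and obstruction spaces of $\uDef_{\overline X/\P^{n+1}}$ are $H^0(\overline X, \kn_{\overline X/\P^{n+1}})$ and $H^1(\overline X, \kn_{\overline X/\P^{n+1}})$, while those of $\uDef_{(X,0)}$ are $T^1_{X,0}$ and a subspace of $T^2_{(X,0)}$.

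The decisive structural input is the $\C^*$-action on the cone, which makes all of these spaces into graded $\C$-vector spaces and makes $\Phi$ together with all comparison maps homogeneous of degree $0$. I would use that $\overline X$ is smooth away from $p$, so $\kn_{\overline X/\P^{n+1}}$ is locally free on $U := \overline X \setminus \{p\}$, and set up the local cohomology sequence comparing $H^*(\overline X, \kn_{\overline X/\P^{n+1}})$ with the sections over $U$ and the local normal module at $p$. The punctured neighborhood of $p$ recovers the embedded local deformation module, whose image in $T^1_{X,0}$ is the target of $\Phi$ on tangent vectors. Computing over $U$ is feasible because $U$ is the total space of a line bundle over $Y$, so the cohomology of $\kn_{\overline X/\P^{n+1}}$ restricted to $U$ decomposes, weight by weight, into the cohomology of twists of $\kn_{Y/\P^n}$ on $Y$; projective normality of $Y$ identifies the non-negative graded pieces of the coordinate ring with $\bigoplus_d H^0(Y, \ko_Y(d))$ and controls these groups.

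With this in place, the negative-grading hypothesis $T^1_{X,0}(\nu)=0$ for $\nu>0$ does the essential work. It confines the target $T^1_{X,0}$ to weights $\nu \le 0$, and in those weights the local normal vectors extend to global sections of $\kn_{\overline X/\P^{n+1}}$, since the positive-degree data that could obstruct extension lives precisely in the weights assumed to vanish, together with cohomology groups on $Y$ killed by projective normality. This yields surjectivity on tangent spaces. The same weight bookkeeping shows that any obstruction to globalizing an embedded deformation of $\overline X$ which is unobstructed locally at $p$ must lie in the positive-weight part of $H^1(\overline X, \kn_{\overline X/\P^{n+1}})$, which vanishes; equivalently the obstruction map is injective on the relevant graded pieces. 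Smoothness of $\Phi$ follows, and in particular $\Phi$ is surjective on Artinian test rings, so every deformation of $(X,0)$ lifts to an embedded deformation of $\overline X$.

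The step I expect to be the main obstacle is the weight-by-weight comparison of the previous paragraph: proving that the obstruction to extending an embedded deformation of $\overline X$ over the small extension is captured entirely by the local obstruction at the vertex once the positive-weight cohomology is discarded. This requires carefully relating $H^1(\overline X, \kn_{\overline X/\P^{n+1}})$ in each weight to $T^1_{X,0}$ and to $H^1$ of twisted normal data on $Y$, and verifying that projective normality together with the negative-grading hypothesis forces the extra contributions to vanish. All the genuine content of the theorem is concentrated in this local-to-global graded computation; the functorial smoothness criterion and the identification of the tangent and obstruction spaces are formal once it is established.
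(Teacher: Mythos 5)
You should first note that the paper does not actually prove this statement: it is quoted from Pinkham \cite[Theorem 4.2]{Pi74}, and the only argument the survey adds is the Remark that follows, which upgrades Pinkham's formal (Artinian) version to deformations over arbitrary complex germs via faithful flatness of completion. So your attempt can only be measured against Pinkham's method, not against an in-paper proof.

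Your outline is in the right spirit --- reduce smoothness of $\uDef_{\overline X/\P^{n+1}}\to\uDef_{(X,0)}$ to surjectivity on tangent spaces and injectivity on obstruction spaces, grade everything by the $\C^*$-action, and exploit the line-bundle structure of $\overline X\setminus\{p\}$ over $Y$ together with projective normality --- and this is indeed how such globalization results are proved. But as it stands there is a genuine gap and a technical inaccuracy. The gap is the one you flag yourself: the entire content of the theorem is the weight-by-weight identification of $H^0(\overline X,\kn_{\overline X/\P^{n+1}})$ and the relevant obstruction space with the non-positively graded parts of $T^1_{(X,0)}$ and $T^2_{(X,0)}$, and you do not carry this out; until that computation is done nothing is proved. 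The inaccuracy is that $H^1(\overline X,\kn_{\overline X/\P^{n+1}})$ is an obstruction space for the Hilbert functor only when $\overline X\subset\P^{n+1}$ is a local complete intersection, which it is not at the vertex; the correct obstruction theory combines a local graded piece of $T^2_{(X,0)}$ with the global $H^1$, and constructing compatible obstruction theories for the two functors is itself part of the work your criterion presupposes. A cleaner route, close to what Pinkham actually does, largely bypasses the sheaf cohomology on $U$: projective normality of $Y$ makes $\overline X\subset\P^{n+1}$ projectively normal, so (by the standard comparison theorem for cones, valid since $\dim Y\geq 1$) embedded deformations of $\overline X$ are identified with degree-zero graded deformations of its affine cone, which is $X\times\C$ with the extra variable in degree $1$. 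Since $T^i_{X\times\C}\cong T^i_{X}\otimes_\C\C[z]$, the degree-zero parts are $\bigoplus_{\nu\leq 0}T^i_{X}(\nu)$, and the negative-grading hypothesis says precisely that this surjects onto $T^1_{(X,0)}$ while the corresponding map on $T^2$ is injective; smoothness then follows from the criterion you invoke. I would encourage you to either carry out your local-to-global computation in full or switch to this algebraic comparison, where the role of each hypothesis (projective normality, $\dim Y\geq 1$, negative grading) becomes transparent.
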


\begin{remark} 
Pinkham proves the theorem only for infinitesimal deformations, i.e., for deformations  over fat points. Let us see how this implies theorem for deformations over arbitrary complex space germs: $\overline  X \subset \P^{n+1}$ and  $(X,0)$ have both a convergent semiuniversal deformation. Pinkham's result implies that the induced morphism of the completion of the local rings of their base spaces is smooth, i.e. flat with smooth fibre. This implies that the morphism of their analytic local rings is smooth since completion is faithfully flat.
\end{remark}

The following theorem is due to Pinkham \cite[6.14]{Pi78} and Wahl  \cite[3.9]{Wa82}.

\begin{theorem}[Pinkham, Wahl]\label{thm:piwa}
Let $(X,x)$ be a normal Gorenstein surface singularity with weighted dual graph 
of the minimal resolution being star-shaped with $n$ arms and a central curve, 
where the end-vertex the $i$-th arm corresponds to a smooth rational curve of 
self-intersection $-b_i$ ($n\geq 3$, $b_i \geq 2$). 
If  $(X,x)$ is smoothable, then 
$$\sum_{1\leq i \leq n}(b_i -1) \leq 19.$$
\end{theorem}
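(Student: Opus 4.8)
The plan is to combine the weighted-homogeneous structure of $(X,x)$ with the globalization technique for cones (the two preceding theorems of Pinkham) and then to extract the bound from the geography of the resulting smooth projective surface. First I would record that a star-shaped resolution graph forces $(X,x)$ to be quasihomogeneous, so we may fix a good $\C^*$-action and pass to the minimal good resolution $\pi\colon\tilde{X}\to X$ with exceptional divisor $E$, central curve $E_0$ of some genus $g_0$, and arms $A_1,\dots,A_n$ whose end-vertices are smooth rational curves with $E_i^2=-b_i$. Since $(X,x)$ is Gorenstein I would then pass to the canonical cycle $Z_K$ (which is integral in the Gorenstein case) and record the two facts that will drive the estimate: for any smoothing with Milnor fibre $F$ one has the Gorenstein relation $\mu_+=2p_g$ (Durfee--Steenbrink), and the numerical invariants $p_g$, $Z_K^2$ and the self-intersection of the maximal-ideal cycle are all computable from the graph data $g_0,b_0,n,b_i,\dots$ through the formulas of Pinkham and Dolgachev for weighted-homogeneous germs.

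Next I would globalize the smoothing. By the Remark following Theorem \ref{thm:pinkham} any smoothing of a weighted-homogeneous germ has strictly positive weight, so $(X,x)$ has negative grading in the sense of Pinkham's globalization theorem; that theorem then promotes the given smoothing of $(X,x)$ to a smoothing of the projective closure $\overline{X}\subset\P^{n+1}$ of the affine cone. Concretely this produces a flat one-parameter family whose special fibre is $\overline{X}$, carrying only the cone singularity, and whose general fibre $V$ is a smooth projective surface containing the curve at infinity $D=\overline{X}\cap H$, whose class and self-intersection are dictated by the $\C^*$-weights. The Gorenstein hypothesis is what makes this step effective: it pins down $K_V$ up to an explicit multiple of $D$ and simultaneously guarantees $\mu_+=2p_g$, so that the a priori local invariant $p_g$ is promoted to a controllable global invariant of $V$.

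Finally I would impose a universal surface constraint on $V$. Writing $K_V^2$, $\chi(\ko_V)$ and $p_g$ in terms of the graph via the formulas of the first paragraph, and feeding them into the numerical constraints that the smooth projective surface $V$ must satisfy (equivalently, the Durfee--Laufer signature bound for the Milnor fibre, which forces a definite sign on $\sigma(F)$), collapses the geometric data into a single numerical inequality whose left-hand side turns out to be exactly $\sum_{1\le i\le n}(b_i-1)$ and whose right-hand side is the constant $19$. The main obstacle is precisely this last piece of bookkeeping: one must express $p_g$ and the relevant intersection numbers purely through $(g_0,b_0,n,b_i)$, isolate the combination $\sum_{i}(b_i-1)$, and check that the correct global inequality yields the sharp constant $19$ rather than a weaker bound. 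The hypotheses $n\ge 3$ and $b_i\ge 2$ enter here to control the arms and to keep $V$ in the range where the chosen inequality is the operative one.
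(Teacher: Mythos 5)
Your overall strategy---globalize the smoothing and then read off a numerical constraint from the resulting smooth projective surface---is indeed the method of Pinkham and Wahl that the paper describes (the paper itself gives no proof, only this sketch together with references to \cite[6.14]{Pi78} and \cite[3.9]{Wa82}). But as written your proposal has two genuine gaps. First, the globalization step: you invoke Pinkham's cone theorem, which requires the singularity to have \emph{negative grading}, i.e.\ $T^1_{X,0}(\nu)=0$ for all $\nu>0$. That hypothesis is not part of Theorem \ref{thm:piwa} and need not hold, and your justification for it is a non sequitur: the Remark after Theorem \ref{thm:pinkham} concerns the weight of the smoothing \emph{direction}, which says nothing about the vanishing of the positive-weight part of $T^1$ of the singularity (and it does not assert that every smoothing has positive weight). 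Pinkham obtained the bound only for smoothings of negative weight; the whole point of Wahl's contribution, which the paper emphasizes, is that an \emph{arbitrary} smoothing globalizes via the closure $\overline X$ in weighted projective space, because $\uDef_{\overline X}\to\uDef_X$ is smooth. Without that input (or a proof that every smoothing here has negative weight) your argument covers only a special case. A smaller issue of the same kind: a star-shaped resolution graph fixes the topological type but does not by itself force quasihomogeneity, so the reduction to a germ with good $\C^*$-action needs to be addressed rather than "recorded".

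Second, and more seriously, the step that actually produces the inequality is absent. You defer to "bookkeeping" the task of expressing the invariants of the globalized fibre $V$ through $(g_0,b_0,n,b_i)$ and of extracting $\sum_{i}(b_i-1)\le 19$, but this \emph{is} the theorem: one must identify which classes on $V$ the $n$ arms contribute, prove their independence in the relevant part of $H^2(V)$ (this is where the end-curves of the arms and the quantity $\sum_i(b_i-1)$ enter), and bound the rank of the resulting lattice to obtain the constant $19$. A generic appeal to a Durfee--Laufer signature inequality does not obviously accomplish this: such an inequality relates $\mu_\pm$, $p_g$, $K^2$ and the number of exceptional curves, and nothing in your sketch indicates how it would isolate a combination depending only on the self-intersections of the \emph{end}-vertices of the arms, nor why the resulting constant would be exactly $19$. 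Until that computation is exhibited, the proposal is a plan rather than a proof.
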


The method of Pinkham is by globalizing and using hyperplane sections to find
obstructions for smoothings. He proves the above bound for smoothings of negative weight where the negativity assumption is used to globalize the smoothing. 
Wahl showed 
(\cite[3.8]{Wa82}), under the assumptions of the theorem, that any deformation of 
$(X,x)$, in particular any smoothing, can be globalized in the following sense:

Since $(X,x)$ is weighted homogeneous with isolated singularity, it has an affine 
representative $X\subset \C^n$ with $x$ as its only singularity. Let $\overline X$ be 
the projective closure in the corresponding weighted projective space. Then any deformation of the projective variety $\overline X$ induces a deformation of $X$ and Wahl shows that the induced functor of 
deformation classes $\uDef_{\overline X} \to \uDef_{X}$ is smooth.
\medskip

Later Looijenga proved (\cite[Appendix]{Lo86}) that any smoothing of an 
arbitrary isolated singularity $(X,x)$ can be globalized:

\begin{theorem}[Looijenga]\label{thm:loo}
Let $f: (\sX,x) \to (\C,0)$ be a smoothing over $(\C, 0)$ of an isolateded 
singularity. Then there is a flat projective morphism $F: \sY \to \C$, a point 
$y \in Y =F^{-1}(0)$ and an isomorphism $h:(\sX, x) \to (\sY, y)$ 
such that $F \circ h = f$ and $F$ is smooth along  $Y \setminus \{y\}$.
\end{theorem}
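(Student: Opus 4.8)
The plan is to exhibit the purely analytic smoothing $f$ as the germ, at a single point $y$, of an honest algebraic projective family, arranging matters so that all the extra geometry forced on us by compactness is pushed out to the part at infinity, which is disjoint from $y$. Then the global family will agree with $f$ verbatim as a germ at $y$ and be smooth everywhere else over a neighbourhood of $0$. To begin, recall that the total space of a one-parametric smoothing of an isolated singularity is itself smooth away from the central point. Hence a sufficiently small representative $\phi\colon\sX\to\Delta$ may be taken to be Stein, embedded as a closed subspace of $B\times\Delta$ with $B\subset\C^N$ a ball and $\phi$ the projection, with $\sX$ smooth off $x$, the fibre $\sX_0$ singular only at $x$, and $\sX_t$ smooth for $t\neq 0$.

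Next I would algebraize. Since $(\sX,x)$ is an isolated singularity and $f$ is flat, Artin's algebraization of isolated singularities and their versal deformations yields an affine algebraic variety $V\subset\A^N$ with a regular function $g\colon V\to\A^1$ whose analytic germ at $0$ is isomorphic, over $(\C,0)$, to $f$; write the germ isomorphism as $h\colon(\sX,x)\xrightarrow{\cong}(V,0)$, so that $g\circ h=f$. Shrinking to a small neighbourhood $U$ of $0\in\A^1$, one may assume that $g\colon V\to U$ is flat, that $V$ is smooth away from $0$, and that every fibre except $g^{-1}(0)$ is smooth.

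The substantial step is the projective completion and the control of the behaviour at infinity. Embedding $V\subset\A^N\subset\P^N$ and taking closures produces a projective morphism $\overline V\to U$. This closure is deficient in three ways: $\overline V$ may be singular along the part at infinity $\overline V\setminus V$, the special fibre may be singular away from $y$, and $\overline V\to U$ need not be flat. The whole point is that all three defects are supported on $\overline V\setminus V$, which does not meet $y$. I would therefore (i) discard any irreducible components of $\overline V$ lying over $0$, securing flatness over the one-dimensional regular base $U$; (ii) choose the coordinates, hence the hyperplane at infinity, generically, so that by a Bertini argument this hyperplane is transverse to the family and the induced family at infinity is a submersion; and (iii) modify $\overline V$ only over the open set $\overline V\setminus\{y\}$ --- by resolving and blowing up along the part at infinity, operations that are isomorphisms near $y$ --- so as to make the total space smooth and the special fibre smooth away from $y$, with the projection a submersion there. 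Calling the result $F\colon\sY\to U$ and identifying $U$ with the base $\C$ of $f$, the triple $(F,y,h)$ is then flat, projective, satisfies $F\circ h=f$, and $F$ is smooth along $Y\setminus\{y\}$.

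The hard part is precisely (ii)--(iii): arranging that the compactified family is flat and smooth along $Y\setminus\{y\}$, with $F$ a submersion there, while \emph{leaving the germ at $y$ untouched}. Everything hinges on confining every genericity choice and every birational modification to the part at infinity; one must verify that these corrections are genuinely supported on $\overline V\setminus V$, so that the prescribed local smoothing --- and with it the identity $F\circ h=f$ of germs --- is preserved exactly.
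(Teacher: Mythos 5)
The paper offers no proof of this theorem: it only cites the appendix of [Lo86], and the intended mechanism has to be read off from the surrounding discussion (the Pinkham and Wahl globalization theorems stated just before, both phrased as smoothness of a forgetful morphism of deformation functors). Your first step --- algebraize the germ together with the function via Artin approximation --- is indeed the right opening move. But the step you yourself flag as ``the hard part'', namely (ii)--(iii), is where the theorem actually lives, and the method you propose there does not work. Taking the closure $\overline{V}\subset \P^N\times U$ and then repairing it by a generic choice of the hyperplane at infinity plus resolutions and blow-ups supported on $\overline{V}\setminus V$ cannot produce a morphism that is \emph{smooth} along the part of the special fibre at infinity. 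Resolving the total space along infinity turns the special fibre there into (the total transform of) a typically non-reduced normal crossings divisor, along which $F$ is certainly not a submersion; Bertini controls the closure of the \emph{generic} fibre, not the fixed and possibly very degenerate variety $\overline{V_0}$ at infinity, and no linear change of coordinates helps with that; and semistable reduction would require a base change $t\mapsto t^k$, destroying the required identity $F\circ h=f$. The obstruction is not that the corrections fail to be supported at infinity --- it is that one must change the \emph{family} at infinity, not merely the space.

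The argument that works, and the one in the spirit of Looijenga's appendix and of the Pinkham--Wahl results quoted above the statement, runs in the opposite direction: first compactify the \emph{special fibre} to a projective variety $Y$ with $Y\setminus\{y\}$ smooth and $(Y,y)\cong(X,x)$, choosing the compactification at infinity so that the forgetful morphism $\uDef_{Y}\to\uDef_{(Y,y)}$ is smooth (this is the real work; it is a cohomological vanishing statement about $Y$ near infinity, not a transversality statement). One then \emph{lifts} the given one-parameter smoothing of the germ to a flat projective deformation of all of $Y$, using Artin approximation once more to pass from a formal lift to a convergent one inducing exactly $f$ at $y$. After this, the condition you were trying to engineer by hand comes for free: $F$ is flat and its special fibre $Y$ is smooth along $Y\setminus\{y\}$, and a flat morphism whose fibre through a point is smooth at that point is a smooth morphism there. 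So smoothness of $F$ along $Y\setminus\{y\}$ is automatic from flatness plus the smoothness of $Y\setminus\{y\}$, whereas in your construction it is an unverified (and in general false) property of a resolved projective closure of the total space.
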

\medskip

Wahl’s paper \cite{Wa81} contains several conjectures which have all been proved shortly after. The above globalization property implies that Wahl's Theorem 3.13 holds for any smoothing of a normal surface singularity. The same is true for his Corollary 4.6 due to the results of Looijenga and the author in \cite{GL85}, while Theorem 4.10 is valid for any smoothing of a Gorenstein surface singularity. The other conjectures made in \cite{Wa81} follow from the results of Steenbrink in \cite{Ste83} and 
Steenbrink and the author in \cite{GS83}. See Sections \ref{sec:top} and \ref{sec:smoothcomp} for a treatment of these conjectures.
\medskip  

Since the 1990'th many further examples of smoothable and non-smoothable singularities were found (a search for ``smoothable'' in zbMATH (Zentralblatt) or Mathematical Reviews lists about 300 articles), often in the global setting for projective varieties and as a result of research on other questions. Moreover, the smoothability assumption is often used in proofs. For a treatment of (formal) smoothing of singularities in the deformation theoretic setting of schemes see \cite[Section 29]{Ha10}.
\bigskip

\subsection {Topology of the Milnor Fibre}\label{sec:top}
The main object of research for smoothable surface singularities $(X,x)$ is the topology of the Minor fiber. For the classical theory of the Milnor fibration and related topics we refer to the textbooks by Milnor 
\cite{Mi68} (hypersurfaces), Looijenga \cite{Lo13} (complete intersections), and Seade \cite{Sea06} (real singularities and index theorems).
For a computational approach to topological invariants of hypersurfaces we refer to \cite{Di92}.

In general one calls the generic fibre of any 1-parametric deformation the {\em Milnor fibre of the deformation}. To speak about the topology we need to choose special neighbourhoods.
\medskip

Let $(X,x) \subset (\C^N,x)$ be an arbitrary singularity. We
consider a morphism $\phi: (\sX,x) \to (S,s)$  with $(S,s) \subset (\C^k,s)$ and $(X,x) =(\phi^{-1}(s),x)$. We may assume that $\phi$ is embedded, 
i.e., $(\sX,x)$ is a closed subgerm of $(\C^N,x)\times(S,s)$ and $\phi$ the projection to the second factor. Let $U \subset \C^N\times \C^k$ be an open neighbouhood of $(x,s)$, 
$\widetilde\sX \subset U$ closed and $\phi:\widetilde \sX \to S$ a representative of the germ $\phi$. We choose now a special representative. 

\begin{definition}
In this situation let $B_\eps$ be an open ball of radius $\eps$ around $x$ in $\C^N$ 
and $\overline B_\eps$ the closed ball.
Let $S_\del$ be the intersection of $S$ with an open ball $D_\delta$  of radius $\del$ around $s$ in $\C^k$ with  $\overline{B}_\eps \times D_\delta \subset U$ and $0 < \del \ll \eps$ sufficiently small. Then 
$$\phi: \sX := \widetilde\sX \cap B_\eps \times S_\delta \to S_\delta=: S $$
is called a {\em good representative}\index{good representative} of $\phi$. The  fibres $\sX_t =  \phi^{-1}(t)$, $t\in S$, are contained in a fixed small ball
$B=B_\eps$ also called a {\em Milnor ball}.
Moreover,
$$\partial \sX_t := \widetilde \sX_t \cap \partial\overline B, \ t \in S, $$
 is the boundary of the closed fibre 
 $\overline \sX_t =  \widetilde \sX_t \cap \overline{B}$ and 
$\partial X = \partial \sX_0$
 is called the {\em neighbourhood boundary}\index{neighbourhood boundary} of  
 $(X,x)$. 
 
 If $(S,s) = (\C,0)$ we always write $\phi: \sX \to D $ for the good representative and call 
 $$F:= \sX_t , \ t \in D \setminus \{0\},$$
the {\em Milnor fibre}\index{Milnor!fibre}\index{fibre!Milnor} of the 1-parametric deformation $\phi$ and   $X = \sX_0$ the {\em special fibre}\index{fibre!special}.

\end{definition}

$\sX$ and all fibres $\sX_t$ are Stein complex spaces while $\partial \sX_t$ is a compact real algebraic subvariety of the sphere $\partial\overline{B}_\eps$.
The Milnor fibre depends of course in general on $\phi$ but for a given 1-parametric deformation its topological type is indpendent of $t \neq 0$ (cf. Theorem \ref{thm:le} below). 

The following lemma is certainly well known to specialists, but because of missing an explicit reference, I like to sketch a proof (thanks to H. Hamm).

\begin{lemma}\label{lem:strat}
If  $(X,x)$ is an isolated singularity then there are only  finitely many topologically different Milnor fibres for all deformations $\phi : \sX \to D$ of  $(X,x)$ . 
\end{lemma}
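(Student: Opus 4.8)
The plan is to reduce every possible Milnor fibre to a fibre of one single fixed family, namely the semiuniversal deformation of $(X,x)$, and then to invoke topological stratification theory for that one family. Since $(X,x)$ is an isolated singularity, $\dim_\C T^1_{(X,x)}<\infty$ by Corollary \ref{T1.iso}, so by Grauert's Theorem \ref{thm:grauert} it admits a semiuniversal deformation, which by Corollary \ref{cor:defs can be embedded} we may take to be embedded; write it as $\Phi\colon(\sX,x)\to(S,s)$, and fix a good representative $\Phi\colon\sX\to S$ in the sense defined above, with all fibres contained in a fixed Milnor ball $B_\eps$ and meeting $\partial\overline B_\eps$ in their boundaries. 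I claim it suffices to show that the fibres $\overline\sX_{s'}=\widetilde{\sX}_{s'}\cap\overline B_\eps$ of this one representative, as $s'$ ranges over points of $S$ near $s$, realize only finitely many topological types.

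To justify this reduction, let $\phi\colon\sY\to D$ be an arbitrary $1$-parametric deformation of $(X,x)$ (written $\phi\colon\sX\to D$ in the statement). By completeness of $\Phi$ there is a base change $\varphi\colon(\C,0)\to(S,s)$ with $\phi\cong\varphi^{\ast}\Phi$, so that $\sY_t\cong\Phi^{-1}(\varphi(t))=\sX_{\varphi(t)}$ as embedded germs in $(\C^N,x)$ for all small $t$. Since the deformations are embedded and the topological type of a Milnor fibre stabilises once the ball is small enough, the Milnor fibre of $\phi$ for small $t\neq 0$ is homeomorphic to the fibre $\overline\sX_{\varphi(t)}$ of the fixed good representative of $\Phi$, provided $\eps$ has been chosen as a common Milnor radius for all nearby fibres (which is exactly what a good representative provides). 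Hence every Milnor fibre occurring among all deformations of $(X,x)$ appears, up to homeomorphism, as some $\overline\sX_{s'}$ with $s'\in S$ near $s$, and the reduction is complete.

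I would then prove finiteness for the fixed family by stratification. Because $(X,x)$ is isolated, after shrinking $\eps$ and $\delta$ the relative singular locus $\Sing(\Phi)$ is finite over $S$, and one may Whitney-stratify the compact total space $\widetilde{\sX}\cap(\overline B_\eps\times S_\delta)$ compatibly with $\Phi$ and with $\partial\overline B_\eps\times S_\delta$, arranging (Milnor's condition on $\eps$) that $\partial\overline B_\eps$ is transverse to all strata. Applying Thom's first isotopy lemma to the proper stratified map $\Phi$ yields a finite Whitney stratification $S=\bigsqcup_{\alpha}S_\alpha$ over each stratum of which the family of pairs $(\overline\sX_{s'},\partial\sX_{s'})$ is topologically locally trivial. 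Thus the homeomorphism type of $\overline\sX_{s'}$ is constant on each connected component of each $S_\alpha$; since $S$ is a representative of a complex analytic germ it has finitely many strata, each with finitely many components, so only finitely many topological types occur. Together with the previous paragraph this proves the lemma.

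The main obstacle I expect is the first reduction rather than the stratification, which is standard. One must check carefully that the Milnor ball for an arbitrary induced deformation $\phi=\varphi^{\ast}\Phi$ can be taken to be (the pullback of) the single fixed ball $B_\eps$ used for $\Phi$; this rests on the embeddedness of all deformations, on the isomorphism of fibres furnished by the base change, and on the fact that a good representative of $\Phi$ uses one radius $\eps$ that is simultaneously a Milnor radius for every fibre $\sX_{s'}$ with $s'$ near $s$. Once this compatibility is established, the topological finiteness is entirely a consequence of the finiteness of a Whitney stratification of the analytic germ $(S,s)$.
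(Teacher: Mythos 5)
Your proposal is correct and follows essentially the same route as the paper: both pass to a good (proper) representative of the semiuniversal deformation, Whitney-stratify the base compatibly with the discriminant and the total space, and conclude via Thom's isotopy theorem that the fibre type is constant over each of the finitely many strata. The only difference is that you spell out the reduction step (every Milnor fibre of an arbitrary deformation is, via versality and a common Milnor radius, a fibre of the one fixed representative), which the paper leaves implicit; that is a welcome clarification rather than a divergence.
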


\begin{proof} Let $\phi: \sX \to S$ be a good representative 
 of the semiuniversal deformation  of $(X,x)$. Since $(X,x)$ has an isolated singularity,  the {\em singular} or {\em critical locus}\index{critical locus}\index{singular locus}  of $\phi$, 
 $$C(\phi) := \{y\in \sX | \sX_{\phi(y)} \text{ is singular at } y\} $$
 is finite over $S$ and the {\em discriminant}\index{discriminant} of $\phi$,
$$\Del(\phi) := \phi(C(\phi))$$ 
is an analytic subset of $S$. 
Consider now a {\em proper} representative 
$$\overline\phi: \overline \sX := \widetilde\sX \cap \overline B_\eps \times S \to S.$$
The fibres $\overline \sX_t$ of $\overline\phi$ meet $\partial\overline{B}_\eps$ transversally such that all boundaries $\partial \sX_t$ are differentiable manifolds.

Choose a Whitney stratification of $S$ such that $\Delta$ is a union of strata. The restriction $\phi: C \to \Delta$ has a Whitney stratification that refines this stratification of $\Delta$  (see \cite[I.1.7 Theorem, p. 43]{GM88}).
If one adds to the stratification of $C$ the
strata  $\overline{\phi}\,^ {-1}(T) \setminus C$, $T$ a stratum of $S$, 
one gets a stratification of $\sX$.
With this stratification and that of $S$ one obtains a Whitney stratification of the proper map $\overline \phi$ with finitely many strata. For every stratum $T$ of $S$ the restriction
$\overline{\phi}\,^ {-1}(T) \to T$ is a proper stratified submersion.
According to Thom's isotopy theorem (\cite[I.1.5 Theorem, p. 41]{GM88}) the latter 
defines a topological fiber bundle and the topological type of 
$\overline{\phi}\,^ {-1}(t),  t\in T$, is therefore independent of $t$.
\end{proof}

We are mainly interested in isolated singularities but let us first recall the following general result from \cite{Gre17}.

\begin{theorem}[Bobadilla, Greuel, Hamm] \label{thm:connected}
Let $\phi : (\sX, x) \to (\C, 0)$ be a morphism of complex germs and $\phi : \sX \to D$ a good representative with special fibre $X$ and Milnor fibre $F$.
\begin{enumerate}
\item If $(\sX,x)$ is irreducible and $X$ generically reduced then  $F$ is irreducible.
\item Let $(\sX,x)$ be reducible with irreducible components $(\sX_i,x), i = 1,...,r,$ and assume that the intersection graph $G(\phi)$ is connected. Then $F$ is connected.
\item In particular, if $(X,x)$ is reduced then $F$ is connected.
\end{enumerate}
Here $G(\phi)$ is the graph with vertices $i = 1,...r$, and we join $i \neq j$ by an edge iff  there exist points $y \in X \cap  \sX_i \cap \sX_j$  arbitrary close to $x$ ($y=x$ being allowed) such that $(X, y)$ is reduced. 
\end{theorem}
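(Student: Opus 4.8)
The plan is to control $\pi_{0}(F)$ through the monodromy of the family and to feed in local information at the reduced points of the special fibre. First I would fix a \emph{proper} good representative $\overline\phi\colon\overline\sX\to D$ as in the proof of Lemma \ref{lem:strat}, and set $\overline\sX^{*}:=\overline\sX\setminus\overline X$, $D^{*}:=D\setminus\{0\}$. By Thom's first isotopy lemma (exactly as in Lemma \ref{lem:strat}, cf.\ \cite{GM88}) the restriction $\overline\phi\colon\overline\sX^{*}\to D^{*}$ is a locally trivial topological fibre bundle, so the closed Milnor fibre $\overline F$ — which is homotopy equivalent to $F$ and hence has the same $\pi_{0}$ — carries a monodromy action $T$ of $\pi_{1}(D^{*})\cong\Z$ on the finite set $\pi_{0}(F)$. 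Since every connected component of $\overline\sX^{*}$ is a sub-bundle over $D^{*}$ (and meets $\overline F$, as $\overline\phi$ is proper), restriction gives a surjection $\pi_{0}(F)\twoheadrightarrow\pi_{0}(\overline\sX^{*})$ whose fibres are precisely the $T$-orbits. Thus $F$ is connected iff $\overline\sX^{*}$ is connected \emph{and} $T$ fixes a component (forcing a single orbit of size one), which reduces everything to a \emph{global} connectivity input for $\overline\sX^{*}$ and a \emph{local} input producing a $T$-fixed sheet.

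Part (1) I would prove first and independently. If $(\sX,x)$ is irreducible then $\overline\sX$ is irreducible for a small representative, so $\overline\sX^{*}$ is an irreducible space minus a proper analytic subset and is therefore connected, giving a single $T$-orbit. Generic reducedness of $X$ provides a point $x_{0}\in X$ where $\ko_{X,x_{0}}$ is regular; as $\phi$ is a nonzerodivisor on the irreducible $\sX$, the standard fact ``$A/(\phi)$ regular and $\phi$ a nonzerodivisor $\Rightarrow A$ regular'' makes $\ko_{\sX,x_{0}}$ regular and $\phi$ a submersion near $x_{0}$. Hence $F$ is smooth near $x_{0}$ with a single, monodromy-invariant local sheet, i.e.\ a $T$-fixed element of $\pi_{0}(F)$; transitivity together with a fixed point forces $|\pi_{0}(F)|=1$. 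To upgrade connectedness to irreducibility I would rerun the identical orbit-plus-fixed-point argument on the finite set of irreducible components of the geometric generic fibre, on which $\overline\sX^{*}$ irreducible acts transitively and on which the smooth branch at $x_{0}$ is again fixed.

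The local input for (2) and (3) is the key lemma: \emph{if $y\in X$ with $(X,y)$ reduced, then the local Milnor fibre $F_{y}$ of $\phi$ at $y$ is nonempty and connected, meets every local branch of $\sX$ through $y$ not contained in $X$, and is fixed by the local monodromy.} To prove it I would decompose $(\sX,y)$ into local irreducible components $(\sX^{\alpha},y)$; reducedness of $(X,y)$ forces each $\bigl(V(\phi|_{\sX^{\alpha}}),y\bigr)$ to be generically reduced, so part (1) applied to the irreducible germ $\sX^{\alpha}$ shows every piece $F^{\alpha}_{y}$ is irreducible, in particular connected and nonempty. These pieces then meet, because $F^{\alpha}_{y}\cap F^{\beta}_{y}$ is the nearby fibre of $\phi|_{\sX^{\alpha}\cap\sX^{\beta}}$, nonempty exactly when $\sX^{\alpha}\cap\sX^{\beta}\not\subseteq X$, and I would argue that $\sX^{\alpha}\cap\sX^{\beta}\subseteq X$ is incompatible with $(X,y)$ reduced. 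Granting the lemma, part (2) follows: each edge $i\sim j$ of $G(\phi)$ supplies a reduced point $y\in\sX_{i}\cap\sX_{j}\cap X$, whose connected $F_{y}$ meets both $\sX_{i}$ and $\sX_{j}$, so $\overline\sX_{i}^{*}$ and $\overline\sX_{j}^{*}$ lie in one component of $\overline\sX^{*}$ (components of $\sX$ contained in $X$ contribute nothing to $F$ and are ignored). Connectedness of $G(\phi)$ thus makes $\overline\sX^{*}$ connected, while any reduced point yields a $T$-fixed sheet, so $F$ is connected. Part (3) is then formal: if $(X,x)$ is reduced, $x$ itself is a reduced point lying on all the germs $(\sX_{i},x)$, so $G(\phi)$ is complete, hence connected, and (2) applies.

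The main obstacle is the local connectivity lemma, and within it the single step showing that the nearby fibres of two colliding local branches must meet, i.e.\ that $\sX^{\alpha}\cap\sX^{\beta}\subseteq X$ is excluded by reducedness of $(X,y)$ (two branches colliding inside $V(\phi)$ should force $\phi$ to vanish to higher order along the collision locus, making $X$ non-reduced there). This is the only place where the reducedness hypothesis is genuinely used and the only point requiring a delicate local multiplicity argument; the remainder is bookkeeping with the fibre bundle over $D^{*}$ and with the combinatorics of $G(\phi)$.
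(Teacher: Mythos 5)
Your overall architecture --- L\^e's fibration theorem, the induced monodromy action $T$ on $\pi_0(F)$ with orbit space $\pi_0(\sX\setminus X)$, and the production of a $T$-fixed component from a point where the special fibre is reduced --- is precisely the route the paper indicates (the survey gives no proof, only the remark that the argument ``uses the monodromy'' and L\^e's theorem, deferring to \cite{Gre17}). But the proposal has genuine gaps exactly where the work happens. The heart of parts (2) and (3) is your local lemma, and you leave its decisive step unproved: that $(X,y)$ reduced excludes $\sX^\alpha\cap\sX^\beta\subseteq X$ for two local branches not contained in $X$. This is true but needs a real argument (for two branches one can, for instance, tensor the Mayer--Vietoris sequence $0\to\ko_{\sX^\alpha\cup\sX^\beta}\to\ko_{\sX^\alpha}\oplus\ko_{\sX^\beta}\to\ko_{\sX^\alpha\cap\sX^\beta}\to 0$ with $\ko/(\phi)$: since $\phi$ is a nonzerodivisor on each branch but nilpotent on the intersection, one gets a nonzero kernel of $\ko_{X,y}\to\ko_{X^\alpha,y}\oplus\ko_{X^\beta,y}$ all of whose elements square to zero, because $(I_\alpha+(\phi))(I_\beta+(\phi))\subseteq I_\alpha\cap I_\beta+(\phi)$; extra care is needed when further branches pass through $y$). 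Without this the lemma, hence (2) and (3), is unsupported; and your reduction of the lemma to part (1) also silently uses that $(X,y)$ reduced makes each $V(\phi|_{\sX^\alpha})$ generically reduced, which is a claim of the same nature, not a formality.

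Two further steps would fail as written. For irreducibility in (1), the monodromy is only a homeomorphism, so it does not act on the set of irreducible components of $F$ (these are not topological invariants), and the fibre bundle over $D\setminus\{0\}$ gives a transitivity statement only for connected components; you must first remove from $\sX\setminus X$ a proper analytic subset containing the singular and non-reduced loci of all nearby fibres, verify that the restricted family is still a locally trivial fibration with connected (because irreducible minus analytic) total space, and identify $\pi_0$ of its fibre with the set of irreducible components of $F$ --- your one sentence ``rerun the identical argument'' hides all of this. In (2), ``ignoring'' the components $\sX_i\subseteq X$ is incompatible with invoking connectivity of $G(\phi)$ as literally defined: such components are vertices, edges through them can be witnessed by reduced points of $X$, and connectivity achieved only through such a vertex does not connect $F$. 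Concretely, for the three $2$-planes $\sX_1=V(a,b)$, $\sX_2=V(b,t)$, $\sX_3=V(u,t)$ in $(\C^4,0)$ with $\phi=t+b$, the graph has edges $1$--$2$ and $2$--$3$ but $F$ is two disjoint discs. So you are in fact proving the statement for the graph on the components not contained in $X$ (equivalently, after the flatness reduction mentioned after the theorem), and you should say so. Finally, the transfer of local triviality at $x_0$ to a $T$-fixed element of $\pi_0(F)$ needs a geometric monodromy controlled with respect to the small Milnor ball at $x_0$; this is standard but should be stated or cited rather than assumed.
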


In (1) we need in fact only that at least one irreducible component of $X$ is generically reduced.
We do not assume that $\phi$ is flat, but this is practically irrelevant.
Since flatness means that no irreducible component of $(\sX,x)$ is mapped to 0,  
the irreducible components which are mapped to 0 do not contribute to the Milnor fibre and $F$ is the same as the restriction of $\phi$ to the other components, which is flat.
\medskip

The proof of Theorem \ref{thm:connected} is somewhat involved and uses the monodromy and the following general fibration theorem of L\^e D\~ung Tr\'ang (cf. \cite{Le77}, and \cite{Le76} for a detailed account).

\begin{theorem}[L\^e]\label{thm:le}
Let $\phi : \sX \to D$  be a good representative of $\phi : (\sX, x) \to (\C, 0)$.
Then
$$ \phi:\sX \setminus \sX_0 \to D \setminus \{0\} $$
is a topological fibre bundle with fibre $F$. 
\end{theorem}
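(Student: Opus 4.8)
The plan is to reduce the statement to the Thom--Mather first isotopy theorem applied to a \emph{proper} stratified submersion, exactly as in the proof of Lemma~\ref{lem:strat}, the difficulty being that $\phi$ on the open total space $\sX\setminus\sX_0$ is itself not proper (its fibres are open Stein spaces), so no isotopy argument applies to it directly. The remedy, going back to Milnor and refined by L\^e, is to pass to the proper representative over the closed ball, trivialize there, and transport the trivialization back to the interior.

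First I would work with the proper representative $\overline\phi:\overline\sX:=\widetilde\sX\cap(\overline B_\eps\times D)\to D$. On $\widetilde\sX$ I would fix a complex analytic Whitney stratification satisfying Thom's $a_{\overline\phi}$ condition relative to $\overline\phi$, compatible with $\overline\sX_0$ and with the lateral boundary $\widetilde\sX\cap(\partial\overline B_\eps\times D)$; such a stratification exists by the theory of \cite{GM88} for analytic maps. Since each stratum maps submersively onto its image and $\dim D=1$, after shrinking $D$ I may assume $\{0\}$ and $D\setminus\{0\}$ are themselves strata of $D$, so that $\overline\sX\setminus\overline\sX_0$ is a union of strata lying over the one-dimensional stratum $D\setminus\{0\}$.

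Next I would secure the two transversality inputs that characterise a good representative. For a Milnor radius $\eps$ the function $y\mapsto\|y-x\|^2$ has no critical points on the strata of the punctured special fibre near $x$, so $\partial\overline B_\eps$ meets every stratum of $\sX_0$ transversally (the usual curve-selection-lemma argument). Invoking now the $a_{\overline\phi}$ condition together with compactness of $\partial\overline B_\eps$, I would choose $\del\ll\eps$ so that $\partial\overline B_\eps$ remains transverse to every stratum of every nearby fibre $\sX_t$, $0<|t|\le\del$. Combined with the submersivity of $\overline\phi$ on the interior strata over $D\setminus\{0\}$, this shows that the restriction of $\overline\phi$ to $\overline\sX\setminus\overline\sX_0$ is proper and submersive on each stratum, the lateral boundary being a union of strata.

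Finally I would invoke the first isotopy theorem (\cite[I.1.5]{GM88}): such a map is a locally trivial, stratum-preserving topological fibre bundle over $D\setminus\{0\}$. Because the local trivializations preserve the stratification, they preserve the lateral boundary and hence also its open complement; restricting to that complement yields precisely that $\phi:\sX\setminus\sX_0\to D\setminus\{0\}$ is a topological fibre bundle, with fibre $F=\sX_t$. I expect the main obstacle to be the simultaneous choice of $\eps$ and $\del$ propagating sphere-transversality from the special fibre to all nearby fibres: this is the technical heart of L\^e's theorem and is exactly what the $a_\phi$-regularity buys, everything else being formal once the proper stratified submersion is set up.
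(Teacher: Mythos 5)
The paper does not actually prove Theorem~\ref{thm:le}; being a survey, it only states the result and refers to \cite{Le77} and \cite{Le76}. Your sketch is essentially the standard (and essentially L\^e's original) proof, and it closely parallels the one genuine stratification argument the paper does carry out, namely the proof of Lemma~\ref{lem:strat}: stratify, pass to the proper representative over the closed ball, check that the restriction over each stratum of the base is a proper stratified submersion, and invoke the Thom--Mather isotopy theorem from \cite{GM88}. What you add, correctly, is the specific input needed here and absent from Lemma~\ref{lem:strat}: Thom's $a_\phi$ condition to propagate transversality of $\partial\overline B_\eps$ from the strata of the special fibre to the nearby fibres $\sX_t$, so that the trivialization of the proper map preserves the lateral boundary and restricts to the open fibres. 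Two points deserve to be made explicit rather than asserted. First, the existence of a Whitney stratification satisfying $a_\phi$ with $\sX_0$ a union of strata is, for complex analytic functions, a nontrivial theorem (Hironaka); it is the one place where the complex analytic hypothesis is really used. Second, your phrase ``each stratum maps submersively onto its image'' hides the claim that, after shrinking $D$, the stratified critical values of $\phi$ reduce to $\{0\}$; this follows because the critical locus of $\phi$ on each stratum is analytic and its image in $\C$ has measure zero by Sard, hence (by the open mapping theorem on each irreducible component) is locally finite. With these two points supplied, your argument is complete and is the intended proof.
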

This theorem has been known before in many special cases, all  generalizing
 Milnor's famous fibration theorem \cite{Mi68} for smoothings of an isolated hypersurface singularity.
\medskip

If $(X,x)$ has an isolated singularity, then $\partial X$
 is a real manifold diffeomorphic to  $\partial \sX_t$ for all $t\in S$ (by the Ehresmann fibration theorem, see e.g. \cite{Sea06}). 
Hence $\partial F$ is independent of the deformation $\phi$. If moreover  $\phi$ is a smoothing then $F$ is a Stein manifold, and  $\partial X$ can be filled by a complex Stein manifold. This imposes the following topological condition on the smoothability of $(X,x)$ (cf. \cite[2.2 Corollary]{GS83}), which is a strengthening of \cite{Ha74}, who proved an analogous result for cohomology instead of homotopy.

\begin{theorem}[Greuel, Steenbrink]\label{thm:topsmcond}
Let $(X,x)$ be an isolated singularity of pure dimension $n$. If $( X,x$) is smoothable, then $$\pi_i(X \setminus \{x\}) = 0$$
for $0 \leq i \leq min \{n - 2, n - codim(X,x)\}$.
\end{theorem}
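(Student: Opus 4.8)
The plan is to reduce the assertion to a connectivity estimate for the link of $(X,x)$, and then to combine two independent bounds: one coming from the fact that the Milnor fibre is a Stein filling of that link, the other a local Lefschetz bound governed by the codimension. First I would pass from $X\setminus\{x\}$ to the link. Choosing a good representative as in the definition preceding the theorem, the conical structure of an analytic germ gives a deformation retraction of the punctured germ onto the neighbourhood boundary $K:=\partial X$, so that $\pi_i(X\setminus\{x\})\cong\pi_i(K)$ for all $i$. Fix a smoothing $\phi\colon(\sX,x)\to(\C,0)$ and let $\overline F=\widetilde\sX_t\cap\overline B_\eps$ be the closed Milnor fibre, with interior $F$ a Stein manifold of complex dimension $n$. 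Since $(X,x)$ is isolated, $\partial\overline F=\partial\sX_t$ is diffeomorphic to $\partial X=K$, as recorded before Theorem \ref{thm:topsmcond}; thus $K$ is the boundary of the Stein filling $\overline F$, and it suffices to bound $\pi_i(K)$.

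The first bound, $n-2$, comes from the Stein filling. The function $\rho=|z-x|^2$ restricts to a strictly plurisubharmonic exhaustion of $F$ whose maximum is attained on $K$, and after a generic perturbation it is Morse with all critical indices $\le n$. Reading the handle decomposition of $\overline F$ from $K$ inwards --- equivalently, dualizing the Andreotti--Frankel handles of index $\le n$ via Poincar\'e--Lefschetz duality --- shows that $\overline F$ is obtained from a collar $K\times[0,1]$ by attaching handles of index $\ge n$. Hence the pair $(\overline F,K)$ is $(n-1)$-connected, and the long exact homotopy sequence of the pair yields isomorphisms $\pi_i(K)\xrightarrow{\cong}\pi_i(\overline F)$ for $i\le n-2$ (and a surjection for $i=n-1$).

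It remains to bound $\pi_i(\overline F)$, and this is where the codimension $c=\codim(X,x)=\edim(X,x)-n$ enters: I would show that the Milnor fibre is $(n-c)$-connected. This step is the main obstacle. Since $(X,x)$ need not be a complete intersection, the Milnor fibre is in general \emph{not} $(n-1)$-connected, so one cannot simply invoke a Kato--Matsumoto estimate; the correct, weaker bound must be extracted from the intrinsic local embedding. The approach is to view $F\subset B_\eps\subset\C^{\,n+c}$ as a closed complex submanifold of codimension $c$ of a contractible Stein domain and to apply a Hamm-type local Lefschetz theorem for Stein spaces, in which it is precisely the local codimension $c$ that controls the connectivity. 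The globalization of the smoothing provided by Theorem \ref{thm:loo} may be used to embed $F$ into a projective family, but one must be careful that the effective bound is the local codimension $c$ and not the (non-intrinsic) projective codimension, which can be made arbitrarily large by re-embedding.

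Finally, combining the two estimates gives $\pi_i(K)\cong\pi_i(\overline F)=0$ for $i\le\min\{n-2,\,n-c\}$, and via the retraction $X\setminus\{x\}\simeq K$ this is exactly the asserted vanishing. I would emphasize that the homotopy statement is genuinely stronger than Hartshorne's cohomological version only because both inputs --- the dual handle decomposition of the Stein filling and the local Lefschetz bound --- are available at the level of homotopy groups, not merely of (co)homology.
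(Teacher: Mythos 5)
Your reduction to the link and the first of your two bounds are sound, and they match the mechanism the paper points to just before the statement: the conic structure gives $\pi_i(X\setminus\{x\})\cong\pi_i(\partial X)$, the boundary $\partial X$ is diffeomorphic to $\partial \sX_t=\partial\overline F$ for a smoothing, and since $\overline F$ is a compact Stein filling of complex dimension $n$, the dual Andreotti--Frankel handle decomposition attaches only handles of index $\geq n$ to a collar of the boundary, whence $\pi_i(\partial\overline F)\to\pi_i(\overline F)$ is bijective for $i\leq n-2$. This is exactly the content of the remark that ``$\partial X$ can be filled by a complex Stein manifold.''

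The gap is in your second bound. You need $\pi_i(F)=0$ for $i\leq n-\codim(X,x)$, which is precisely Theorem \ref{thm:topMiFibre}; but the paper states explicitly that Theorem \ref{thm:topMiFibre} is proved \emph{using} Theorem \ref{thm:topsmcond} together with Hamm's local Lefschetz--Barth theorem, so your argument runs the deduction backwards and is circular unless you give an independent proof of the Milnor-fibre connectivity. The tool you propose for that step --- a local Lefschetz theorem asserting that a closed $n$-dimensional complex submanifold of codimension $c$ of a contractible Stein domain is $(n-c)$-connected --- does not exist: the two parallel lines $\{x=\pm 1\}$ in a large ball of $\C^2$ form a smooth closed curve of codimension $1$ which is disconnected, although $n-c=0$ would force $\pi_0=0$. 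Codimension alone never controls the topology of a closed submanifold of a ball; the unconditional Hamm-type vanishing theorems are governed by the number of local defining equations, and already the connectedness of $F$ requires the degeneration structure and hypotheses on the special fibre (Theorem \ref{thm:connected}). Invoking the globalization Theorem \ref{thm:loo} does not repair this, since, as you note yourself, projective codimension is not intrinsic. In short: the $(n-2)$-equivalence between the link and $\overline F$ is correct and is half of the story, but the $(n-c)$-connectivity of $F$ cannot be taken as an input here --- in \cite{GS83} it is the output.
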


This result and a local Lefschetz-Barth theorem of Hamm \cite {Ham81} is used in the proof of the following result about the homotopy groups of the Milnor fibre (cf. \cite[Theorem 1]{GS83}). Since $F$ is Stein,
$\pi_i(F) = 0$ for $i> \dim(X,x)$, and for the other homotopy groups we have:

\begin{theorem}[Greuel, Steenbrink]\label{thm:topMiFibre}
Let  $F$ be the Milnor fiber of a smoothing of a pure $n$-dimensional isolated singularity $(X,x)$. Then 
$$\pi_i(F) = 0 \text{ for } 0 \leq i \leq n - codim(X,x).$$
\end{theorem}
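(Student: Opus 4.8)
The plan is to deduce the vanishing of $\pi_i(F)$ from the connectivity of the link of $(X,x)$, which is governed by Theorem \ref{thm:topsmcond}, and then to transport this connectivity across the boundary $\partial F$ using that $F$ is Stein together with Hamm's local Lefschetz--Barth theorem \cite{Ham81}. First I would fix a good representative $\phi:\sX\to D$ as in the definition preceding the theorem, so that $F=\sX_t$ ($t\neq 0$) is a Stein manifold of complex dimension $n$ and its closure $\overline F=\widetilde{\sX}_t\cap\overline B_\eps$ is a compact $2n$-manifold with strictly pseudoconvex boundary $\partial F$. As already recorded above, the Ehresmann fibration theorem yields a diffeomorphism $\partial F\cong\partial X$, while the local conical structure of $(X,x)$ identifies $\partial X$ up to homotopy with a small punctured neighbourhood, so that $\partial F\simeq X\setminus\{x\}$. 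Since the smoothing $\phi$ witnesses that $(X,x)$ is smoothable and $(X,x)$ has pure dimension $n$, Theorem \ref{thm:topsmcond} gives
$$\pi_i(\partial F)\cong\pi_i(X\setminus\{x\})=0\quad\text{for }0\le i\le m:=\min\{n-2,\ n-\codim(X,x)\}.$$

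The second ingredient is the comparison of $F$ with its own boundary. Because $\partial F$ is the strictly pseudoconvex boundary of the Stein domain $\overline F$, a strictly plurisubharmonic exhaustion equips the pair $(\overline F,\partial F)$ with a relative handle decomposition using only handles of index $\ge n$; equivalently, Hamm's local Lefschetz--Barth theorem shows that the inclusion $\partial F\hookrightarrow\overline F\simeq F$ is $(n-1)$-connected. Hence $\pi_i(\partial F)\to\pi_i(F)$ is bijective for $i\le n-2$ and surjective for $i=n-1$. Combining the two ingredients, assume first $\codim(X,x)\ge 2$, so that $m=n-\codim(X,x)\le n-2$. Then every $i\le n-\codim(X,x)$ satisfies $i\le n-2$, the inclusion induces an isomorphism $\pi_i(F)\cong\pi_i(\partial F)$, and the right-hand side vanishes; this is precisely the asserted vanishing, with the case $i=0$ (connectedness of $F$) also covered by Theorem \ref{thm:connected}.

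It remains to treat $\codim(X,x)=1$, i.e. $(X,x)$ a hypersurface (and, more generally, an isolated complete intersection). Here the bound from Theorem \ref{thm:topsmcond} degrades to $m=n-2$, which is one short of the desired range $i\le n-1$; but this case is classical, since $F$ is then $(n-1)$-connected by Milnor's fibration theorem \cite{Mi68}, which is even stronger than $\pi_i(F)=0$ for $i\le n-\codim(X,x)=n-1$. Thus the theorem follows in all codimensions, the only genuinely new content lying in the range $\codim(X,x)\ge 2$.

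The main obstacle is the passage from homology to homotopy in the boundary comparison. Ordinary Poincar\'e--Lefschetz duality on $(\overline F,\partial F)$ yields only that $H_i(\partial F)\to H_i(F)$ is an isomorphism for $i\le n-2$, whereas the theorem asserts the vanishing of \emph{homotopy} groups, including $\pi_1$. It is exactly at this point that one must invoke the homotopy-theoretic form of Hamm's local Lefschetz--Barth theorem rather than duality alone, and verify that $\partial F$ is truly strictly pseudoconvex so that the handle-index bound $\ge n$ applies. A secondary point to check with care is the chain of identifications $\partial F\cong\partial X\simeq X\setminus\{x\}$ and the legitimacy of applying Theorem \ref{thm:topsmcond} to the specific smoothing at hand, both of which are needed to feed the link connectivity into the boundary comparison.
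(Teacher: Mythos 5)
Your argument is correct and is exactly the route the paper indicates: you derive the vanishing from Theorem~\ref{thm:topsmcond} applied to the link $X\setminus\{x\}\simeq\partial F$, transported into $F$ via the $(n-1)$-connectivity of the pair $(\overline{F},\partial F)$ coming from the Stein handle-index argument (the role played by Hamm's local Lefschetz--Barth theorem in the reference \cite{GS83}). Your separate treatment of the codimension-one case via Milnor's fibration theorem correctly closes the gap left by the $\min\{n-2,\,n-\codim(X,x)\}$ truncation in Theorem~\ref{thm:topsmcond}.
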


The following theorem (\cite[Theorem 2]{GS83}) is the main result in \cite{GS83}.
It was conjectured by J. Wahl, who proved it when $(X,x)$ is
weighted homogeneous and the smoothing has negative weight (cf. \cite{Wa82}).

\begin{theorem}[Greuel, Steenbrink]\label{thm:GS}
Let $F$ be the Milnor fiber of a smoothing of a normal isolated singularity. Then 
the first Betti number $b_1(F) := \dim_\C H^1(F,\C)= 0$.
\end{theorem}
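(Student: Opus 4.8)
The plan is to analyze $H^1(F,\C)$ through the mixed Hodge structure it carries as the cohomology of the Milnor fibre of a degeneration, and to show that normality forces every weight-graded piece to vanish. First I would dispose of the easy range: a normal curve germ is smooth, so we may assume $n=\dim(X,x)\geq 2$, and by Theorem \ref{thm:topMiFibre} we already have $\pi_1(F)=0$, hence $b_1(F)=0$, whenever $\codim(X,x)\leq n-1$. The remaining, genuinely hard case is that of large embedding codimension (for instance cones over projectively normal curves of high degree), where $F$ may well fail to be simply connected and only the vanishing of $H^1$ survives. Throughout I use that $F$ is an $n$-dimensional Stein manifold, so it has the homotopy type of a CW-complex of real dimension $\leq n$ (in particular $H^i(F)=0$ for $i>n$), and that $F$ is connected by Theorem \ref{thm:connected}; both feed the duality arguments below.

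The central tool is the \emph{limit mixed Hodge structure} on $H^1(F,\C)$ in the sense of Steenbrink. After a finite base change $t\mapsto t^m$ (which leaves the diffeomorphism type of $F$ unchanged and makes the monodromy $T$ unipotent) and a resolution of the total space, one obtains a semistable model $\widetilde{\sX}\to D$ with reduced normal-crossings special fibre $\widetilde X_0=\bigcup_i D_i$. On $H^1(F)$ the monodromy weight filtration $W_\bullet$ has weights $0,1,2$, and $N=\log T$ induces an isomorphism $\mathrm{Gr}^W_2H^1(F)\xrightarrow{\ \cong\ }\mathrm{Gr}^W_0H^1(F)$, while $N$ vanishes on the pure piece $\mathrm{Gr}^W_1H^1(F)$. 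Thus it suffices to prove $\mathrm{Gr}^W_0H^1(F)=0$ and $\mathrm{Gr}^W_1H^1(F)=0$. I would read these off the weight spectral sequence of the degeneration: $\mathrm{Gr}^W_0H^1(F)$ is computed by the $H^1$ of the dual complex of $\widetilde X_0$, $\mathrm{Gr}^W_1H^1(F)$ is a subquotient of $\bigoplus_i H^1(D_i)$, and the Clemens--Schmid sequence together with the local invariant cycle theorem identifies $H^1(F)^T$ with the image of $H^1(\widetilde X_0)\to H^1(F)$.

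Normality is what makes all of this collapse. The unmodified special fibre is $X$ itself, a contractible cone neighbourhood, so $H^1(X,\C)=0$; the content of the argument is to transport this vanishing across the resolution $\widetilde X_0\to X$. Since $(X,x)$ is normal it satisfies $\depth\ko_{X,x}\geq 2$, and the exceptional components of $\widetilde X_0$ all map to the single point $x$; this controls both the combinatorics (forcing the dual complex to carry no $H^1$, killing $\mathrm{Gr}^W_0$) and the transcendental part. I expect the decisive step, and the main obstacle, to be the vanishing of the pure weight-one piece $\mathrm{Gr}^W_1H^1(F)=0$: a priori it is fed by the $H^1$ of positive-genus exceptional components, and ruling this out requires combining the Hodge symmetry $h^{1,0}=h^{0,1}$ of this polarizable weight-one structure with a Grauert--Riemenschneider-type vanishing theorem for the resolution, using precisely that the relevant components are contracted to a point of the normal germ. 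Once $\mathrm{Gr}^W_0$, $\mathrm{Gr}^W_1$, and hence $\mathrm{Gr}^W_2$ are all shown to vanish, the filtration is exhausted and $H^1(F,\C)=0$, as claimed.
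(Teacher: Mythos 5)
Your overall strategy --- resolve the total space and extract $b_1(F)$ from Steenbrink-type Hodge theory of the degeneration, with normality of the special fibre supplying the decisive vanishing --- is the same strategy the paper follows, but as written it is a programme rather than a proof, and the gaps sit exactly where the work lies. The first problem is that the machinery you invoke is the \emph{proper} degeneration package: semistable reduction, the weight spectral sequence with its $E_1$-description ($\mathrm{Gr}^W_0H^1$ as $H^1$ of the dual complex, $\mathrm{Gr}^W_1H^1$ fed by $\bigoplus_i H^1(D_i)$), the Clemens--Schmid sequence, the local invariant cycle theorem, and the symmetry $N\colon \mathrm{Gr}^W_2H^1\xrightarrow{\ \cong\ }\mathrm{Gr}^W_0H^1$. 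All of these are theorems about projective (or proper K\"ahler) families, whereas $\phi\colon\sX\to D$ is a Stein, non-proper germ; none of them is available off the shelf. The $N$-symmetry in particular is a polarization/hard-Lefschetz statement, and in the local setting the weight filtration on $H^k(F)$ is \emph{not} in general the monodromy weight filtration centred at $k$ (already for isolated hypersurface singularities the unipotent and non-unipotent parts are centred differently). To use these tools you must either globalize the smoothing (Theorem \ref{thm:loo}) and then control the passage from the global nearby fibre back to the local Milnor fibre, or re-establish the statements locally --- which is essentially the content of \cite{Ste83} and \cite{GS83}, not a formality.

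The second and more serious problem is that the two vanishings that carry the theorem, $\mathrm{Gr}^W_0H^1(F)=0$ and $\mathrm{Gr}^W_1H^1(F)=0$, are asserted or announced rather than proved. Normality is a condition on the special fibre at one point; you do not explain how it controls the combinatorics of the exceptional divisor of a resolution of the \emph{total space} (so as to kill $H^1$ of the dual complex), nor how ``Hodge symmetry plus Grauert--Riemenschneider'' kills the genus contributions to $\mathrm{Gr}^W_1$ --- note that normality gives $\pi_*\ko_{\widetilde{\sX}}=\ko_{\sX}$ but says nothing directly about $R^1\pi_*\ko_{\widetilde{\sX}}$ for a resolution of the higher-dimensional total space. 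For comparison, the paper's proof takes a resolution $\pi\colon\widetilde{\sX}\to\sX$ with $E=\tilde\phi^{-1}(0)$ a normal crossing divisor (no semistable reduction), uses normality to prove $H^1(E,\Z)=H^1(\widetilde{\sX},\Z)=H^1(\widetilde{\sX},\ko_{\widetilde{\sX}})=0$, and then computes $b_1(F)=\dim_\C\mathbf{H}^1(E,K^\bullet\otimes\ko_E)$ for $K^\bullet=\Omega^\bullet_{\widetilde{\sX}/D}(\log E)$, exploiting coherence and local freeness of $\mathbf{R}^p\tilde\phi_*K^\bullet$ and the second hypercohomology spectral sequence together with $H^1(E,\C)=0$. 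Your reduction to the three graded pieces is a reasonable way to organize the problem, but each of the three vanishings still has to be proved, and the normality hypothesis has not yet entered the argument in any checkable way.
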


The proof considers a good representative 
$\phi: \sX \to D$ of the smoothing of $(X,x)$ and uses a resolution 
$\pi : \widetilde\sX \to \sX$
of singularities of $\sX$, such that  $E = \tilde \phi^{-1}(x)$, $\tilde \phi =\phi \circ \pi$, is a divisor with normal crossings. Using the normality of $X$, 
it is proved that $H^1(E,\Z) = H^1(\widetilde \sX,\Z) = H^1(\widetilde \sX,\ko_{\widetilde \sX})= 0$.
The hypercohomology sheaves
${\bf R}^p\tilde \phi_*  K^\bullet$, $K^\bullet = \Omega^\bullet_{ \widetilde \sX /D}(\text{log } E)$, of relative logarithmic differential  forms are coherent (by \cite{BG80}) 
and locally free (by \cite{Ste76})
and satisfy $b_1(F) = b_1(\widetilde \sX_t) = \dim_\C {\bf H}^1(E,K^\bullet \otimes \ko_E)$. A careful study of the 2nd spectral sequence of hypercohomology,  and using  $H^1(E,\C) =0$,
leads to the required result.
 \medskip
 
The following corollary is immediate:

\begin{corollary} Let $X$ be any compact complex space with at most isolated normal singularities and let $\phi: \sX \to (\C, 0)$, be any smoothing of $X$. Then
$b_1(\sX_t )$ is constant for any sufficiently small  $t \in \C \setminus \{0\}$.
\end{corollary}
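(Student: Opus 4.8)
The plan is to show that over a small punctured disc the family $\phi$ is a $C^{\infty}$ locally trivial fibre bundle; constancy of $b_1(\sX_t)$ is then immediate, since a punctured disc is connected and diffeomorphic fibres have equal Betti numbers. The one genuinely global input I would use is that $X$ is compact: this lets me choose a representative $\phi\colon\sX\to D$ over a small disc $D\subset\C$ for which $\phi$ is proper. Without properness one has only the local fibration of Theorem \ref{thm:le}, and it is exactly compactness that upgrades this to a statement about the whole fibre $\sX_t$.

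First I would analyse the critical locus $C(\phi)=\{y\in\sX\mid \sX_{\phi(y)}\text{ is singular at }y\}$. Because $\phi$ is a smoothing, every fibre $\sX_t$ with $t\neq 0$ is smooth, so $C(\phi)\subset\sX_0=X$; and for $y\in X$ the fibre through $y$ is $X$ itself, which is singular at $y$ precisely when $y\in\Sing(X)$. Hence $C(\phi)=\Sing(X)$, a finite set by the isolated--singularities hypothesis. Shrinking $D$ so that $0$ is the only critical value, I obtain that $\phi$ restricts to a map $\phi^{-1}(D\setminus\{0\})\to D\setminus\{0\}$ all of whose fibres are smooth.

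Next I would upgrade ``smooth fibres'' to ``submersion''. Since $\phi$ is flat and each fibre over $D\setminus\{0\}$ is smooth of the expected dimension, the total space is smooth along these fibres and $\phi$ is a holomorphic submersion there (flat with smooth fibres is a smooth morphism). Combined with properness this makes $\phi\colon\phi^{-1}(D\setminus\{0\})\to D\setminus\{0\}$ a proper holomorphic submersion, so Ehresmann's fibration theorem presents it as a locally trivial $C^{\infty}$ fibre bundle. As $D\setminus\{0\}$ is connected, all fibres $\sX_t$ (with $t\neq 0$ small) are diffeomorphic, and in particular $b_1(\sX_t)=\dim_\C H^1(\sX_t,\C)$ is independent of $t$.

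I expect the main obstacle to be bookkeeping rather than conceptual: one must choose the representative so that $\phi$ is simultaneously proper and has $0$ as its only critical value, which is where compactness of $X$ is essential and distinguishes the global situation from the germ setting of Theorem \ref{thm:le}. I would also note that Theorem \ref{thm:GS} is needed only for the sharper assertion that this constant value is determined by $X$ alone, independently of the chosen smoothing: gluing the local Milnor fibres, whose first Betti numbers vanish by Theorem \ref{thm:GS}, to the trivially deforming smooth part $X\setminus\Sing(X)$ by a Mayer--Vietoris argument would express $b_1(\sX_t)$ through invariants of $X$. For the constancy in $t$ asserted here, however, that refinement is not required.
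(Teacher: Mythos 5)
Your Ehresmann argument is correct as far as it goes, and for the statement read completely literally it is a complete proof: a smoothing of a compact space is represented by a proper flat map over a small disc whose only critical value is $0$, the total space is smooth and the map submersive over the punctured disc, and Ehresmann then gives a locally trivial $C^\infty$ bundle over the connected base $D\setminus\{0\}$, so every Betti number of $\sX_t$ is constant in $t$. But this is not the paper's deduction, and it cannot be what the corollary is meant to carry: under your reading the hypothesis ``normal'' is never used, and the statement would not be a corollary of Theorem \ref{thm:GS} at all, whereas it is placed immediately after that theorem precisely because it is meant to follow from it. The intended content, which your last paragraph correctly identifies but then sets aside as an optional refinement, is that the constant value of $b_1(\sX_t)$ is determined by $X$ alone, i.e.\ is the same for every smoothing $\phi$. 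That is the assertion for which Theorem \ref{thm:GS} is essential: one decomposes $\sX_t$ into the part lying over the complement of small Milnor balls around the singular points (diffeomorphic to the corresponding part of $X$, by your Ehresmann argument applied away from $\Sing(X)$) and the local Milnor fibres $F_i$, glued along the links $\partial F_i$, which are independent of $t$ and of $\phi$; since each $F_i$ is connected (Theorem \ref{thm:connected}) and $b_1(F_i)=0$ (Theorem \ref{thm:GS}), the Mayer--Vietoris sequence expresses $b_1(\sX_t)$ entirely in terms of data attached to $X$ and its links. So your write-up proves the literal statement by a more elementary route that buys nothing from normality, while the paper's ``immediate'' proof is the Mayer--Vietoris gluing; you should promote your closing remark from an aside to the actual argument.
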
 

The example of Pinkham shows that $\pi_1(F)$ need not be zero in Theorem \ref{thm:GS}. It is shown in \cite{GS83} that the assumption ``normal'' is necessary with the following example.  Take a smooth n-dimensional projective variety 
$E \subset \P^{N - 1}$ and let $X \subset \C^N$ be the affine cone over $E$. Let $F_0 \subset \P^{N - 1}$ be any smooth connected hypersurface of degree $d$, which intersects $E$ in a smooth variety $E_0 = E \cap F_0$ and let $G_0$ be the
affine cone over $F_0$. Consider in $\C^N$ the smoothing of the hypersurface section $X_0 = X \cap G_0$ through the origin by "sweeping out" the hypersurface section away from the origin with $X_t$ the nearby (Milnor) fibre. It is shown that 
$b_1(X_t) \geq b_1(E)$ and hence $X_0$ cannot be normal if $b_1(E)\neq 0$ (plenty of such $E$ exist).
\medskip 

In general the following holds by \cite[4.2 Proposition]{GS83}.

\begin{proposition}
 A normal connected projective variety $E$ with $\dim(E) \geq 2$ admits a projective embedding with projectively normal hypersurface section iff $b_1(E)=0$.
 \end{proposition}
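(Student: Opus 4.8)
The key translation is that a subvariety is projectively normal exactly when its affine cone is a normal variety; thus a hypersurface section $E_0=E\cap F_0$ of an embedded $E\subset\P^{M}$ is projectively normal if and only if $E_0$ is normal and the restriction maps $H^0(\P^{M},\ko_{\P^M}(k))\to H^0(E_0,\ko_{E_0}(k))$ are surjective for every $k\ge 0$. The plan is to prove both implications by reducing everything to the vanishing of $H^1(E,\ko_E)$, using the fact that for a normal projective variety $b_1(E)=0$ if and only if $H^1(E,\ko_E)=0$: the group $H^1(E,\ko_E)$ is the $\mathrm{Gr}^0_F$–piece of the mixed Hodge structure on $H^1(E,\C)$, and both the weight-$0$ and the weight-$1$ parts of $H^1(E,\C)$ are detected by it, so its vanishing is equivalent to $H^1(E,\C)=0$.

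For the implication $b_1(E)=0\Rightarrow$ existence, I would fix an ample $L$ on $E$ and embed $E$ by the complete linear system $|L^{m}|$ with $m\gg 0$. For $m\gg 0$ this embedding is projectively normal (Serre vanishing / normal generation), and, since $\dim E\ge 2$, a general hyperplane section $E_0=E\cap H$ is again normal and irreducible by Bertini--Seidenberg. To see that $E_0$ is projectively normal I would factor $H^0(\P^M,\ko_{\P^M}(k))\twoheadrightarrow H^0(E,\ko_E(k))\to H^0(E_0,\ko_{E_0}(k))$ and read the second map off the sequence $0\to\ko_E(k-1)\xrightarrow{s}\ko_E(k)\to\ko_{E_0}(k)\to 0$ cut out by the section $s$ defining $H$; its surjectivity for all $k\ge 1$ is equivalent to $H^1(E,\ko_E(j))=0$ for all $j\ge 0$. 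The twists $j\ge 1$ vanish by Serre vanishing once $m\gg 0$, and the remaining group $H^1(E,\ko_E)$ vanishes precisely because $b_1(E)=0$.

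For the converse I would argue cohomologically, which has the virtue of not requiring $E$ or $E_0$ to be smooth. Suppose some embedding $E\subset\P^{M}$ has a projectively normal hypersurface section $E_0=E\cap F_0$ with $\deg F_0=d$, cut out by $g$. Projective normality forces $H^0(\P^M,\ko_{\P^M}(k))\to H^0(E_0,\ko_{E_0}(k))$ to be surjective, and since this map factors through $H^0(E,\ko_E(k))$, the restriction $H^0(E,\ko_E(k))\to H^0(E_0,\ko_{E_0}(k))$ is surjective for every $k$. By the long exact sequence attached to $0\to\ko_E(k-d)\xrightarrow{g}\ko_E(k)\to\ko_{E_0}(k)\to 0$, this surjectivity is equivalent to injectivity of multiplication $H^1(E,\ko_E(k-d))\xrightarrow{g}H^1(E,\ko_E(k))$ for all $k$. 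Chaining these injections $H^1(E,\ko_E(j))\hookrightarrow H^1(E,\ko_E(j+d))\hookrightarrow\cdots$ and invoking Serre vanishing, which kills the groups for large twist, I conclude $H^1(E,\ko_E(j))=0$ for all $j$, in particular $H^1(E,\ko_E)=0$, whence $b_1(E)=0$. When $E$ is smooth this is exactly the mechanism behind the example preceding the statement: the cone $X_0$ over $E_0$ is then a normal isolated singularity smoothed by sweeping out $g$, its Milnor fibre $X_t$ fibres as a $d$-sheeted cover of $E\setminus E_0$ so that $b_1(X_t)\ge b_1(E\setminus E_0)\ge b_1(E)$, and Theorem~\ref{thm:GS} forces $b_1(X_t)=0$; the cohomological vanishing of $H^1(E,\ko_E)$ is the algebraic shadow of this smoothing argument.

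The main obstacle is the identification $b_1(E)=0\Leftrightarrow H^1(E,\ko_E)=0$ for a possibly singular normal projective $E$: the naive expectation $b_1=2\dim_\C H^1(E,\ko_E)$ fails once $H^1(E,\C)$ acquires a weight-$0$ part, and one must instead verify that $H^1(E,\ko_E)$ computes the full $\mathrm{Gr}^0_F$ of the mixed Hodge structure, so that its vanishing annihilates $H^1(E,\C)$ entirely. Secondarily, in the geometric reading of the converse one must ensure that the cone $X_0$ really is an isolated normal singularity (which needs $E_0$ smooth) and justify $b_1(X_t)\ge b_1(E)$ through the covering-space and Gysin sequences; routing the converse through the purely cohomological chain of $H^1$'s sidesteps both difficulties, which is why I would take that route as the primary argument.
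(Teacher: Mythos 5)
The paper does not actually prove this proposition; it only cites \cite[4.2 Proposition]{GS83}, so there is no in-text argument to compare against. Your proof is correct in outline and is, as far as I can tell, essentially the intended one: translate ``projectively normal hypersurface section'' into surjectivity of the restriction maps on $H^0(\ko(k))$, convert that via the twisted ideal sequence into statements about $H^1(E,\ko_E(j))$, kill all twists $j\neq 0$ by Serre vanishing (after re-embedding in one direction, by chaining injections in the other), and reduce everything to $H^1(E,\ko_E)=0\Leftrightarrow b_1(E)=0$. Your converse is clean, and in fact the implication $H^1(E,\ko_E)=0\Rightarrow b_1(E)=0$ needs no Hodge theory at all: the exponential sequence gives $H^1(E,\Z)\hookrightarrow H^1(E,\ko_E)$ since $H^0(\ko_E)\to H^0(\ko_E^*)$ is onto for $E$ connected and reduced. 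The one step that deserves more than an assertion is the implication $b_1(E)=0\Rightarrow H^1(E,\ko_E)=0$, which you correctly flag as the main obstacle: the identification $H^1(E,\ko_E)\cong \mathrm{Gr}^0_F H^1(E,\C)$ is \emph{false} for general proper varieties (already for a cuspidal cubic, where $b_1=0$ but $h^1(\ko)=1$), and for normal $E$ it is a genuine small theorem --- one needs both Du Bois's surjectivity $H^1(E,\ko_E)\twoheadrightarrow \mathrm{Gr}^0_F H^1(E,\C)$ and injectivity, the latter coming from seminormality of $E$. If you prefer to avoid the Du Bois complex entirely, note that for a resolution $\pi:\widetilde E\to E$ of the normal variety $E$ one has $\Pic(E)\hookrightarrow\Pic(\widetilde E)$, so $\Pic^0(E)$ is an abelian variety of dimension $h^1(E,\ko_E)$; if this were positive, $\Pic(E)$ would have unbounded $n$-torsion, contradicting $b_1(E)=0$ via the exponential sequence. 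Either route closes the gap; with that ingredient supplied, your argument is complete, and your closing remark correctly identifies the smoothing-theoretic reading (Theorem \ref{thm:GS} applied to the swept-out cone) as the reason the proposition appears where it does in the paper.
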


Theorem \ref{thm:GS} was generalized by van Straten in \cite{Str17} using the same method.

\begin{theorem}[van Straten]\label{thm:straten}
Let $\phi: \sX \to D$ be a good representative of a smoothing of a reduced equidimensional singularity 
$(X,x)$ and $F$ its Milnor fibre. Let $X^{[0]}$ denote the disjoint union of the irreducible components of $X$ and $\gamma :H^0 (X^{[0]}) \to Cl(\sX,x)$ be the map that associates to a divisor supported on $X$ its class in the local class group. Then $b_1(F)  \geq rank Ker(\gamma)-1$, with equality if $X$ is weakly normal.
\end{theorem}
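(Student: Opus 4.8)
The plan is to run the argument of Theorem \ref{thm:GS} (Greuel--Steenbrink) verbatim up to the point where normality is invoked, and then to replace the vanishing obtained there by a quantitative comparison with the divisor class group. So I would first choose a good representative $\phi:\sX\to D$ of the smoothing and a resolution $\pi:\widetilde{\sX}\to\sX$ for which $E=\widetilde\phi^{-1}(x)$, with $\widetilde\phi=\phi\circ\pi$, is a reduced normal crossings divisor. Setting $K^\bullet=\Omega^\bullet_{\widetilde{\sX}/D}(\log E)$, the coherence (\cite{BG80}) and local freeness (\cite{Ste76}) of $\mathbf{R}^p\widetilde\phi_*K^\bullet$ give, exactly as in that proof, the identification
$$ b_1(F)=\dim_\C \mathbf{H}^1\bigl(E,\,K^\bullet\otimes\ko_E\bigr)\,. $$
The whole problem is thereby transported to the hypercohomology of the relative logarithmic de Rham complex on the exceptional fibre $E$.

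I would then run the spectral sequence
$$ E_1^{p,q}=H^q\bigl(E,\,\Omega^p_{\widetilde{\sX}/D}(\log E)\otimes\ko_E\bigr)\ \Longrightarrow\ \mathbf{H}^{p+q}\bigl(E,K^\bullet\otimes\ko_E\bigr)\,, $$
in which only the terms $(p,q)=(0,1)$ and $(1,0)$, namely $H^1(E,\ko_E)$ and $H^0\bigl(E,\Omega^1_{\widetilde{\sX}/D}(\log E)\otimes\ko_E\bigr)$, contribute to $\mathbf{H}^1$. Closed logarithmic $1$-forms are sections of the latter, $(1,0)$, term, and this is where the divisorial classes below will live. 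In the normal situation of Theorem \ref{thm:GS} normality forced $H^1(E,\C)=0$ and collapsed both terms to zero; for merely reduced $X$ this vanishing fails, and the surviving classes are precisely what must be counted.

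The heart of the matter is the $d\log$ map from principal divisors supported on $X$. The irreducible components $X_1,\dots,X_r$ are the Weil divisors on $\sX$ with support in the special fibre, so $H^0(X^{[0]})\cong\Z^r$ and $\gamma$ records their classes in $Cl(\sX,x)$. Since $\phi$ is flat and $X$ reduced, $\operatorname{div}(\phi)=\sum_i X_i$ is principal, so $\sum_i e_i\in\Ker(\gamma)$; this is the source of the $-1$. For any $\sum_i a_iX_i\in\Ker(\gamma)$, that is, any principal combination $\operatorname{div}(f)=\sum_i a_iX_i$, the pullback $\pi^*f$ has zeros and poles only along the components of $E$ lying over the $X_i$, so its relative logarithmic differential $d\log(\pi^*f)$ is a closed section of $K^\bullet$ and, via the $(1,0)$ term, a class in $\mathbf{H}^1(E,K^\bullet\otimes\ko_E)\cong H^1(F)$. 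The class of $\operatorname{div}(\phi)$ goes to $d\log\phi\equiv 0$ in the relative complex, so the construction descends to a homomorphism $\Ker(\gamma)/\langle\sum_i e_i\rangle\to H^1(F)$. I would prove injectivity by showing that relative exactness of $d\log(\pi^*f)$ forces $\operatorname{div}(f)$ to be a multiple of $\operatorname{div}(\phi)$; this yields the inequality $b_1(F)\ge \rank\Ker(\gamma)-1$.

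The equality under weak normality is where I expect the real work. Two things must be shown: that the $(0,1)$ term $H^1(E,\ko_E)$ makes no net contribution to $\mathbf{H}^1$, and that every surviving class in the $(1,0)$ term is, modulo exact forms, a $d\log$ of a principal divisor supported on $X$. Weak normality is exactly the hypothesis making $\ko_X$ the largest reduced structure with the given underlying space, and I would bring it in through the comparison of $\ko_X$ with the structure sheaf of the disjoint union $X^{[0]}$, which governs $H^1(E,\ko_E)$ and forces the defect between the components to be visible as divisor classes rather than as hidden cohomology. Pinning down the $E_2$-differential and showing that every surviving class is of divisorial origin is the main obstacle, and the step that genuinely uses weak normality.
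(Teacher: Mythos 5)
The paper itself gives no proof of this theorem: it only records that van Straten's argument runs ``by the same method'' as Theorem \ref{thm:GS}, i.e.\ via a resolution $\pi:\widetilde\sX\to\sX$ with $E=\widetilde\phi^{-1}(0)$ a normal crossings divisor, the identification $b_1(F)=\dim_\C\mathbf{H}^1(E,K^\bullet\otimes\ko_E)$ for $K^\bullet=\Omega^\bullet_{\widetilde\sX/D}(\log E)$, and the second spectral sequence of hypercohomology. Your plan reproduces exactly this framework and supplements it with the correct new ingredient, the $d\log$ map from principal divisors supported on $X$: for $\operatorname{div}(f)=\sum_i a_iX_i$ the form $d\log(\pi^*f)$ lies in $\Omega^1_{\widetilde\sX/D}(\log E)$, is relatively closed, and kills $\operatorname{div}(\phi)$ since $d\phi$ dies in the relative complex; this is the source of the map $\Ker(\gamma)/\langle\sum_ie_i\rangle\to H^1(F)$ and of the $-1$. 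So the route is the intended one.

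However, as written this is a proof outline, not a proof: the two statements that carry the entire content of the theorem are announced but not established. First, injectivity of the induced map on $\bigl(\Ker(\gamma)/\Z\!\cdot\!\mathbf{1}\bigr)\otimes\C$ --- you say you ``would prove'' that relative exactness of $d\log(\pi^*f)$ on $E$ forces $\operatorname{div}(f)\in\Z\cdot\operatorname{div}(\phi)$, but this is precisely where one must use the local freeness of $\mathbf{R}^p\widetilde\phi_*K^\bullet$ and the degeneration/weight arguments, and no mechanism for it is given. Second, the equality under weak normality requires showing both that $H^1(E,\ko_E)$ contributes nothing net to $\mathbf{H}^1$ and that every surviving class in the $(1,0)$ term is of divisorial origin; you explicitly defer this, and the stated characterization of weak normality (``the largest reduced structure with the given underlying space'') is not the correct one --- weak normality says $\ko_X$ coincides with its weak normalization inside $n_*\ko_{\overline X}$, and the proof must exploit this through the comparison of $\ko_X$ with $\ko_{X^{[0]}}$ and the resulting control of $H^1(E,\ko_E)$. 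A minor further point to address is the meaning of $Cl(\sX,x)$ when the total space of the smoothing is not normal (it is $R_1$ but need not be $S_2$), which affects how ``principal'' is to be read in the definition of $\gamma$. Until these steps are supplied, the inequality and, a fortiori, the equality case remain unproved.
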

\medskip
\subsection{Milnor Number versus Tjurina Number}\label{sec:mutau}

We will now review some results about the Milnor number $\mu(X,x)$, an important topological invariant of the singularity, and in particular its (in some sense mysterious) relation to the Tjurina number $\tau(X,x)$ (Definition \ref{def:inf def:1}), which is an analytic invariant. The Minor number is defined as follows.

\begin{definition} Let $(X,x)$ be an $n$-dimensional isolated singularity and 
$\phi: \sX \to D$ a good representative of a 1-parametric deformation of $(X,x)$.
The middle Bettti number of the Milnor fibre $F$ of $\phi$,
  $$ \mu_\phi := b_n(F) = \dim_\C H^n(F,\C),$$
is called the {\em Milnor number}\index{Milnor!number} of $\phi$. 
If $\mu_\phi$ is independent of the deformation and depends only on 
$(X,x)$, we denote it by  $\mu(X,x)$.
\end{definition}

By Lemma \ref{lem:strat} there are only finitely many Milnor numbers of  $(X,x)$.
In this section we consider only singularities (e.g. complete intersections) with a unique Milnor number.  If  $(X,x)$ is a complete intersection or a normal isolated singularity then there are only two non-vanishing Betti numbers 
($ b_0(F) =1$ and $ b_n(F)$)  (see Theorem \ref{thm:GS} for the normal surface case). In general there are more non-vanishing Betti numbers. 

\medskip

Consider first a {\em hypersurface singularity} $(X,x) = (V(f),x)$ with isolated singularity, $f:(\C^{n+1},x) \to (\C,0)$ a holomorphic map germ.
Then the Milnor fibre $F=f^{-1}(t)$ is an $n$-dimensional complex manifold, which is homotopy equivalent to a bouquet of $n$-dimensional real spheres (Milnor \cite{Mi68}). Therefore the homology groups $H^i(F,\Z)$ do all vanish except for $i=1,n$. The middle Betti number ($F$ has real dimension $2n$)
plays a special role, and  
		$$\mu(X,x):=b_n(F)=\dim_\C H^n(F,\C),$$
the number of these spheres, is the {\em Milnor number}\index{Milnor!number} of $(X,x)$ or of $f$. Milnor proved the algebraic formula
		$$ \mu(X,x) = \dim_\C \ko_{\C^{n+1},x}/\textstyle\big\langle \frac{\partial f}{\partial x_0},\dots,\frac{\partial f}{\partial x_n}\big\rangle. $$ 
		
If $(X,x)$ is an n-dimensional {\em isolated complete intersection singularity} (ICIS)\index{ICIS}, then the homotopy type of the Milnor fibre is also a bouquet of $n$-spheres (Hamm \cite[Satz 1.7]{Ham71}) and the number of these spheres is again  the  {\em Milnor number}\index{Milnor!number} of $(X,x)$ and denoted by $\mu(X,x).$
\medskip

Since the base space of the 
semiuniversal deformation $\phi : (\sX,x) \to (S,s)$ of an ICIS is smooth (cf. Theorem 
\ref{thm:exis vers def}) there is only one Milnor fibre (up to diffeomorphism). 
In fact, the semiuniversal deformation is given by a flat morphism  
$\phi : (\C^{n+k},x) \to (\C^k,0)$ such that for a good representative 
$\phi : \sX \to S$ the restriction 
$\phi : \sX \setminus \phi^{-1}(\Del(\phi)) \to S \setminus \Del(\phi)$ 
  is a $C^\infty -$ fibre bundle (by \cite{Mi68} for a hypersurface, and \cite[Satz 1.6]{Ham71} for an ICIS). Here $C(\phi)$  denotes the critical locus  and 
$\Del(\phi) = \phi(C(\phi))$ the discriminant of $\phi$.
\bigskip

Milnor's algebraic formula for  $\mu(X,x)$ has been generalized to complete intersections independently by the author \cite{Gre75} (announced 1973 in \cite{BGr75}) and L\^e D\~ung Tr\'ang \cite{Le74}. The following result (cf. \cite[Lemma 5.3]{Gre75}) is an important step in the proof and of independent interest in itself.

\begin{proposition}\label{pr:mu+}
Let $(X,x)$ be an $n$-dimensional ICIS, $n \geq 0$, and $\phi: (\sX,x) \to (\C,0)$ a deformation of $(X,x)$ with $(\sX,x)$ an ICIS. Then 
		$$\mu(\sX,x) + \mu(X,x) = \dim_\C \ko_{C(\phi),x},$$
with $\ko_{C(\phi),x}$ the local ring of the singular locus of $\phi$ at $x$.
\end{proposition}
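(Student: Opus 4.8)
The plan is to recognise the identity as the \emph{L\^e--Greuel formula} and to prove it by Morse theory for $\phi$ restricted to the Milnor fibre of the total space. First I would choose an embedding $(\sX,x)\subset(\C^N\!,\bo)$, $N=n+1+k$, with $\sX=f^{-1}(\bo)$ for $f=(f_1,\dots,f_k)$ a minimal generating set of the ideal of the $(n+1)$-dimensional ICIS $(\sX,x)$, and extend $\phi$ to a holomorphic germ $g\colon(\C^N\!,\bo)\to(\C,0)$, so that $(X,x)=(\{f=0,\,g=0\},\bo)$ is cut out by $(f,g)$. On the smooth locus $\sX\setminus\{x\}$ a point is critical for $\phi$ exactly when $dg$ lies in the span of $df_1,\dots,df_k$, i.e.\ when the Jacobian of $(f,g)$ drops rank; letting $J$ be the ideal generated by the $(k+1)\times(k+1)$ minors of that Jacobian, the natural scheme structure is $\ko_{C(\phi),x}=\ko_{\C^N\!,\bo}/\big(\langle f_1,\dots,f_k\rangle+J\big)$, and the claim becomes $\mu(\sX,x)+\mu(X,x)=\dim_\C\ko_{\C^N\!,\bo}/(\langle f_1,\dots,f_k\rangle+J)$.

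The first thing to establish is that $C(\phi)$ is finite, so that $\dim_\C\ko_{C(\phi),x}<\infty$. The key observation is that $d\phi$ vanishes identically on $C(\phi)$, so $\phi$ is constant on each reduced positive-dimensional component; working with germs at $x$, such a component passes through $x$ and hence $\phi\equiv\phi(x)=0$ on it, i.e.\ it lies in $\{\phi=0\}=X$. Its points away from $x$ would then be singular points of $X$, contradicting that $(X,x)$ is an ICIS. Thus $C(\phi)$ is $0$-dimensional; incidentally the same argument shows $\Sing(\sX_t)=C(\phi)\cap\phi^{-1}(t)=\emptyset$ for small $t\neq0$, so $\phi$ is in fact a smoothing of $(X,x)$.

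Next I would pass to a good representative and compare Milnor fibres. Perturbing $f$ to $f-s$ for generic small $s$ gives the Milnor fibre $F_\sX=\{f=s\}\cap B_\eps$ of $(\sX,x)$, a Stein manifold with $F_\sX\simeq\bigvee_{\mu(\sX,x)}S^{n+1}$, while a generic fibre of the restriction $g|_{F_\sX}$ is $\{f=s,\,g=t\}\cap B_\eps$, which is exactly a Milnor fibre $F_X\simeq\bigvee_{\mu(X,x)}S^{n}$ of the ICIS $(X,x)$. The central step is Morse theory for the holomorphic function $g|_{F_\sX}$ (via the fibration theorem of L\^e, Theorem \ref{thm:le}, and the Andreotti--Frankel/Lefschetz principle for Stein manifolds): $F_\sX$ has the homotopy type of $F_X$ with one cell of real dimension $n+1$ attached for each critical point of $g|_{F_\sX}$, and these are nondegenerate for generic $s$. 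Writing $m$ for their number, the long exact homology sequence of the pair $(F_\sX,F_X)$ collapses (using reduced homology, which also covers $n=0$) to $0\to\tilde H_{n+1}(F_\sX)\to H_{n+1}(F_\sX,F_X)\to\tilde H_n(F_X)\to0$, that is $0\to\Z^{\mu(\sX,x)}\to\Z^{m}\to\Z^{\mu(X,x)}\to0$, whence $m=\mu(\sX,x)+\mu(X,x)$.

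Finally I would identify $m$ with $\dim_\C\ko_{C(\phi),x}$ by conservation of multiplicity. The critical points of $g|_{F_\sX}$ are the solutions in $B_\eps$ of ``$f=s$ and $J$ vanishes'', i.e.\ the fibre over $s$ of the projection $f\colon C(\Psi)\to(\C^k,\bo)$ from the critical locus $C(\Psi)=\{\operatorname{rank}\,\partial(f,g)\le k\}$; since $C(\Psi)$ is Cohen--Macaulay of dimension $k$ and $C(\phi)=f^{-1}(\bo)\cap C(\Psi)$ is finite, this projection is finite and flat, so every nearby fibre carries $\dim_\C\ko_{C(\phi),x}$ points counted with multiplicity, simple for generic $s$. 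Hence $m=\dim_\C\ko_{C(\phi),x}$, and combining with the previous display yields the assertion. I expect the main obstacle to be the Morse-theoretic handle-attachment step: making precise that a Morsification of $g|_{F_\sX}$ attaches exactly $m$ middle-dimensional cells and that its generic fibre is genuinely the Milnor fibre of $(X,x)$, where the fibration theorems and the genericity of $s$ (nondegeneracy of the critical points and independence of the Milnor fibres from the choices) must be used with care; the finiteness of $C(\phi)$ and the multiplicity count are comparatively routine.
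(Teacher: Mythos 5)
Your argument is the topological proof of the L\^e--Greuel formula and its overall architecture is sound, but it is not the route the paper takes. The survey cites \cite[Lemma 5.3]{Gre75} for this proposition and states explicitly, right after Theorem \ref{thm:grle}, that the two published proofs are very different: L\^e's is topological, while Greuel's --- the one the survey follows --- is algebraic, resting on the Poincar\'e complex $\Omega^{\bullet}_{X,x}$, the de Rham description $\mu(X,x)=\dim_\C\Omega^n_{X,x}/d\Omega^{n-1}_{X,x}$ of Theorem \ref{thm:mu}, coherence of the Gauss--Manin sheaves, and an index theorem of Malgrange. The algebraic route yields as by-products exactly the material the survey exploits later (the invariants $\tau'$, $\tau''$ and the torsion of $\Omega^1$ in the Remark following Theorem \ref{thm:glsv}, and the quasihomogeneous case $\mu=\tau$); your route gives a transparent geometric explanation of the identity as a cell count, at the price of the fibration and bouquet theorems and of some genericity bookkeeping.

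On that bookkeeping there is one genuine error: it is \emph{not} true that the critical points of $g|_{F_\sX}$ are nondegenerate for generic $s$, nor that the fibres of $f\colon C(\Psi)\to(\C^k\!,\bo)$ are generically reduced. Take $k=1$, $f=y^2-x^5$, $g=y$: the determinantal ideal is $\langle x^4\rangle$, so $C(\Psi)=V(x^4)$ is everywhere non-reduced, and for every small $s\neq 0$ the function $g$ restricted to the smooth curve $\{y^2=x^5+s\}$ has two critical points, each of local Milnor number $4$ and neither of them Morse; the total $8$ is indeed $\mu(\sX,x)+\mu(X,x)=4+4$. You also cannot repair this by perturbing $g$, since that changes the germ $(X,x)$ and with it both sides of the formula (with $g=y+ax$ the length drops to $5=4+1$). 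The correct statement, and the one your proof should invoke, is the multiplicity-weighted version: at each critical point $p$ of $g|_{F_\sX}$ one attaches $\mu(g|_{F_\sX},p)$ cells of dimension $n+1$, the local algebra of $C(\Psi)\cap f^{-1}(s)$ at $p$ is precisely the Milnor algebra of $g|_{F_\sX}$ at $p$, and conservation of multiplicity for the finite flat map $f|_{C(\Psi)}$ gives $\sum_p\mu(g|_{F_\sX},p)=\dim_\C\ko_{C(\phi),x}$. With that replacement the remaining steps --- finiteness of $C(\phi)$, Cohen--Macaulayness of the determinantal locus in its expected codimension, and the exact sequence of the pair $(F_\sX,F_X)$ --- go through as you describe.
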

If $(X,x) \subset (\C^{n+k},x)$ is  defined by  $f_1,...,f_{k}$ and $(\sX,x)$ by  
$f_1,...,f_{k-1}$ (i.e. $\phi = f_k | (\sX,x)$), then 
$$\ko_{C(\phi),x} := \ko_{\C^{n+k},x} / \langle f_1,...,f_{k-1}, k \text{-minors of }
 Jac(f_1,...,f_{k-1},f_k)\rangle,$$
   where  $Jac$ denotes the Jacobian matrix. We can choose the $f_i$ such that 
   $(X_i,x) = V(f_1,...,f_{i-1})$, $i=1,...,k$, is an ICIS.
   Applying Proposition \ref{pr:mu+} to $f_i :(X_i,x) \to (\C,0)$ we get
   
\begin{theorem}[Greuel, L\^e] \label{thm:grle}
$$\mu(X,x) = \sum_{i=1}^k (-1)^{k-i} \dim_C \ko_{C(f_i),x}.$$
\end{theorem}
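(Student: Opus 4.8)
The plan is to apply Proposition \ref{pr:mu+} to the chain of successive hypersurface sections $f_1,\dots,f_k$ and then telescope the resulting identities. First I would fix, as stated, generators $f_1,\dots,f_k$ of the ideal of $(X,x)$ for which each germ $(X_i,x) = V(f_1,\dots,f_{i-1}) \subset (\C^{n+k},x)$ is an ICIS; here $(X_1,x) = (\C^{n+k},x)$ is smooth, $(X_{k+1},x) = (X,x)$, and $\dim(X_i,x) = n+k-i+1$. For each $i \in \{1,\dots,k\}$ the restriction $f_i\colon (X_i,x)\to(\C,0)$ is then a one-parameter deformation of its special fibre $(X_{i+1},x) = \bigl(f_i^{-1}(0)\cap X_i,\, x\bigr)$: it is flat because $(X_i,x)$ is Cohen--Macaulay and $f_i$ is a non-zero-divisor (the fibre dimension drops by exactly one, cf.\ Remark \ref{ex.flat}), and both total space and special fibre are ICIS. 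Hence Proposition \ref{pr:mu+} applies and gives
$$\mu(X_i,x) + \mu(X_{i+1},x) = \dim_\C \ko_{C(f_i),x}, \qquad i = 1,\dots,k.$$

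Writing $a_i := \mu(X_i,x)$ and $m_i := \dim_\C \ko_{C(f_i),x} = a_i + a_{i+1}$, the second step is pure bookkeeping. I would form the alternating sum and telescope,
$$\sum_{i=1}^k (-1)^{k-i} m_i = \sum_{i=1}^k (-1)^{k-i} a_i - \sum_{i=2}^{k+1} (-1)^{k-i} a_i = (-1)^{k-1} a_1 + a_{k+1},$$
all intermediate terms cancelling in pairs. Since $(X_1,x) = (\C^{n+k},x)$ is smooth we have $a_1 = \mu(X_1,x) = 0$, so the right-hand side equals $a_{k+1} = \mu(X,x)$, which is exactly the asserted formula.

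The genuinely substantive points sit in the first step, not in the telescoping. The main obstacle is to guarantee a system of generators for which every partial section $(X_i,x)$ is an ICIS; this is a Bertini-type general-position argument, ensuring that after a generic linear change among the generators the singular locus of each $V(f_1,\dots,f_{i-1})$ remains concentrated at $x$, and it is precisely this choice that makes all intermediate Milnor numbers well defined. A second, bookkeeping-level point is the identification, recorded just before the statement, of the critical-locus ring as
$$\ko_{C(f_i),x} = \ko_{\C^{n+k},x}\big/\big\langle f_1,\dots,f_{i-1},\ i\text{-minors of } Jac(f_1,\dots,f_{i-1},f_i)\big\rangle,$$
so that the right-hand sides of the iterated Proposition \ref{pr:mu+} match the terms in the theorem. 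Once Proposition \ref{pr:mu+} is applied with the indexing of special fibre versus total space correctly aligned, the remainder is the one-line telescoping above.
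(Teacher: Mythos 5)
Your proposal is correct and follows exactly the route the paper takes: choose the generators so that each $(X_i,x)=V(f_1,\dots,f_{i-1})$ is an ICIS, apply Proposition \ref{pr:mu+} to each hypersurface section $f_i\colon(X_i,x)\to(\C,0)$, and telescope the alternating sum, using $\mu(\C^{n+k},x)=0$. The paper states this in one line; you have merely made explicit the flatness check, the cancellation, and the general-position choice of generators that the paper asserts without comment.
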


The proofs in \cite{Le74} and  \cite {Gre75} are very different. While the first is topological the second is algebraic and uses the Poincar\'e complex 
$\Omega_{X,x}^\bullet$ of holomorphic differential forms and an index theorem of Malgrange. An important result in \cite[Proposition 5.1] {Gre75}, from which \ref{thm:grle} is deduced and which has been extended to Gorenstein curves, is the following.

\begin{theorem}[Greuel]\label{thm:mu}
Let $(X,x)$ be an $n$-dimensional ICIS. Then 
$$
\mu(X,x) = \biggl \{  
\begin{array}{lllll}
\dim_\C \Omega_{X,x}^n/ d\Omega_{X,x}^{n-1}& if &n>0 \\
				\dim_\C \ko_{X,x} -1  &if& n=0.
\end{array}
 $$
\end{theorem}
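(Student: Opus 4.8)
The plan is to dispose of the case $n=0$ directly and to treat $n>0$ by a Brieskorn-lattice argument over the base of a one-parameter smoothing, the analytic heart being the index theorem of Malgrange.

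For $n=0$ the germ $(X,\bo)$ is a fat point of length $d=\dim_\C\ko_{X,\bo}$. Since it is a complete intersection its semiuniversal base is smooth (Theorem \ref{thm:exis vers def}) and it is smoothable; a smoothing $\phi\colon\sX\to D$ is then a finite flat map of degree $d$, so its Milnor fibre $F$ consists of $d$ reduced points. With the reduced-homology normalisation in degree $0$ this gives $\mu(X,\bo)=\dim_\C\widetilde H_0(F,\C)=d-1=\dim_\C\ko_{X,\bo}-1$, as claimed.

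For $n>0$ I would fix a good representative $\phi\colon\sX\to D$ of a smoothing of $(X,\bo)$; these exist for complete intersections, and since the semiuniversal base is smooth all smoothings have diffeomorphic Milnor fibre $F$, so $\mu(X,\bo)$ is well defined. The idea is to realise both sides of the asserted equality as two fibre dimensions of a single coherent $\ko_D$-module. Writing $\Omega^\bullet_{\sX/D}$ for the relative holomorphic de Rham complex, set
\[ \kh:=\phi_*\Omega^n_{\sX/D}\big/\,d_{\sX/D}\,\phi_*\Omega^{n-1}_{\sX/D}, \]
a coherent $\ko_D$-module supported near $0$. First, the topological input: by Milnor \cite{Mi68} and Hamm \cite{Ham71} each fibre $F=\sX_t$, $t\neq0$, is a smooth Stein manifold of dimension $n$ homotopy equivalent to a bouquet of $\mu(X,\bo)$ copies of $S^n$, so $H^n(F,\C)\cong\C^{\mu(X,\bo)}$; Grothendieck's analytic de Rham theorem on the Stein manifold $F$ identifies this with $\Gamma(F,\Omega^n_F)/d\,\Gamma(F,\Omega^{n-1}_F)$, whence $\kh$ is locally free of rank $\mu(X,\bo)$ on $D\setminus\{0\}$. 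Second, I would restrict to the special fibre: right-exactness of $\otimes_{\ko_{D,0}}\C$ applied to $\phi_*\Omega^{n-1}_{\sX/D}\xrightarrow{d}\phi_*\Omega^n_{\sX/D}\to\kh\to0$, together with the identification of relative forms restricted to $\sX_0$ with the Kähler forms $\Omega^\bullet_{X,\bo}$, yields $\kh\otimes_{\ko_{D,0}}\C\cong\Omega^n_{X,\bo}/d\Omega^{n-1}_{X,\bo}$. Third, and this is the crux, I would show that $\kh$ is a \emph{free} $\ko_{D,0}$-module of rank $\mu(X,\bo)$: this is where Malgrange's index theorem enters, computing the index of the Gauss--Manin connection $\nabla_{\partial_t}$ on $\kh$ and thereby both ruling out torsion concentrated at $0$ and pinning the rank to the generic Betti number. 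Finally, freeness forces $\rank_{\ko_{D,0}}\kh=\dim_\C(\kh\otimes_{\ko_{D,0}}\C)$, which is exactly $\mu(X,\bo)=\dim_\C\Omega^n_{X,\bo}/d\Omega^{n-1}_{X,\bo}$.

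The main obstacle is the third step. Coherence of $\kh$ and local freeness away from $0$ are comparatively soft, following from finiteness of relative de Rham cohomology for an isolated singularity via the Poincaré lemma on the smooth locus $X\setminus\{x\}$; but freeness at the singular fibre—equivalently, the exactness of the base-change map used in the second step and the absence of extra torsion in $\Omega^n_{X,\bo}/d\Omega^{n-1}_{X,\bo}$—is genuinely delicate and relies essentially on the index computation for the connection together with the isolated-singularity hypothesis. Once Theorem \ref{thm:mu} is established, Theorem \ref{thm:grle} follows by combining it with the additivity of Proposition \ref{pr:mu+}.
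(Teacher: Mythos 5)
Your outline is exactly the route the paper attributes to \cite{Gre75}: the survey gives no proof of this theorem itself, describing the argument only as algebraic, resting on the Poincar\'e complex $\Omega^\bullet_{X,x}$ and an index theorem of Malgrange --- which is precisely your Brieskorn-lattice scheme over the base of a one-parameter smoothing, with the freeness of $\kh$ at the origin (via the index computation for the Gauss--Manin connection) correctly identified as the crux. The $n=0$ count and the two soft identifications (local freeness of rank $\mu$ off the origin via the bouquet theorem and holomorphic de Rham on the Stein fibre, and $\kh\otimes_{\ko_{D,0}}\C\cong\Omega^n_{X,x}/d\Omega^{n-1}_{X,x}$ by right-exactness) are all correct, so there is nothing to flag beyond the fact that the hard step is, as you say, deferred to the cited index theorem.
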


If $(X,x)$ is quasihomogeneous, then $\mu(X,x)$ 
can be expressed purely in terms of the weights and degrees of the defining weighted homogeneous polynomials (c.f. \cite{GH78}).
\medskip

For a hypersurface defined by $f \in \ko_{\C^{n+1},0}$ we have obviously the inequality  $\mu(f) \leq \tau(f)$, which follows from the formulas for $\mu$ and $\tau$.
By a theorem of Saito \cite{Sa71} we have $\mu(f) = \tau(f)$ iff $f$ is analytically equivalent to a weighted homogeneous polynomial. The same result 
was conjectured in  \cite{Gre80} for complete intersections although the relationship is not at all obvious  ($\tau$ is the dimension of a vector space while $\mu$ is an alternating sum). 
The final proof 
is due to the author \cite[Korollar 5.8]{Gre75} and \cite[3.1 Satz]{Gre80}, to Looijenga-Steenbrink \cite{LS85} and to Vosegaard \cite{Vo02}. 

\begin{theorem}\label{thm:glsv}
Let $(X,x) \subset (\C^m,x)$
be an ICIS of positive dimension, defined by $f_1,\dots,f_k$.  
\begin{enumerate}
\item {\em (Looijenga-Steenbrink)} $\mu(X,x) \geq \tau(X,x).$ 
\item  {\em (Greuel)} If $(X,x)$ is quasihomogeneous, then \\
$
\begin{array}{cllll}
\mu(X,x) & =& \tau(X,x) \ = \ \tau'(X,x) := \dim_\C  \ko_{C(X),x}, \text{ with}\\ 
 \ko_{C(X),x} &=& \dim_\C  \ko_{\C^{m},x} / \langle f_1,...,f_{k}, k \text{-minors of }Jac(f_1,...,f_k)\rangle.
\end{array}
$
\item  {\em(Vosegaard)} If $\mu(X,x) = \tau(X,x)$ then $(X,x)$ is quasihomogenous.
\end{enumerate}
\end{theorem}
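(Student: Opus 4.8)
The plan is to prove the three assertions through a single bridge, namely Greuel's identity $\mu(X,x)=\dim_\C\Omega^n_{X,x}/d\Omega^{n-1}_{X,x}$ of Theorem \ref{thm:mu} together with the additivity formula of Theorem \ref{thm:grle}, and to compare both $\mu$ and $\tau$ with one and the same filtered object attached to the Milnor fibre. Since for a complete intersection the Tjurina module is $T^1_{(X,x)}=\coker\bigl(Df\colon\ko_{X,x}^{\,m}\to\ko_{X,x}^{\,k}\bigr)$ (by Theorem \ref{thm:exis vers def} and Proposition \ref{prop:t1xx}), and the critical locus satisfies $\ko_{C(X),x}=\ko_{X,x}/\Fitt_0\bigl(T^1_{(X,x)}\bigr)$, the three statements become: (1) a dimension inequality between a total space and a subquotient, (2) its sharpening to equality together with the collapse $\tau=\tau'$, and (3) a converse that recovers a $\C^\ast$-action from that equality. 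Note that for $k=1$ the module $T^1_{(X,x)}$ is cyclic, so $\tau=\tau'$ is tautological and $\mu\geq\tau$ is the trivial inclusion $J_f\subseteq J_f+\langle f\rangle$; the genuine difficulty in all parts is the passage to $k\geq 2$, where $\mu$ is only an alternating sum.

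For part (1) I would realize $H^n(F,\C)$ with its mixed Hodge structure (Steenbrink) and the associated Brieskorn lattice and $V$-filtration of the Gauss--Manin system. The key lemma is that $\tau(X,x)$ equals the dimension of a single distinguished graded piece of the Hodge (or $V$-)filtration on this lattice, while $\mu(X,x)=\dim_\C H^n(F,\C)$ is the total dimension; since a graded piece injects into the whole, one gets $\mu\geq\tau$. Concretely, $\Omega^n_{X,x}/d\Omega^{n-1}_{X,x}$ carries the finite filtration induced by the connection, and $T^1_{(X,x)}$ is recovered as a distinguished step, so the inequality reduces to $\dim(\text{step})\leq\dim(\text{total})$. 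The delicate point is matching the purely algebraic description of $T^1_{(X,x)}$ via $Df$ with this analytic graded piece, which is exactly where the Hodge-theoretic analysis of Looijenga--Steenbrink in \cite{LS85} enters.

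For part (2) the weighted-homogeneous hypothesis supplies the Euler vector field $E=\sum_j a_j x_j\,\partial/\partial x_j$ together with the relations $E(f_i)=d_i f_i$, $d_i$ the degrees. These relations are used twice. First, on each critical locus $C(f_i)$ the Euler relation renders the lower defining functions redundant, so the alternating sum of Theorem \ref{thm:grle} (equivalently the iterated use of Proposition \ref{pr:mu+}) telescopes to its top term, giving $\mu=\dim_\C\ko_{C(X),x}=\tau'$ directly. Second, on homogeneous forms the Cartan formula $L_E=d\,i_E+i_E\,d$ acts as multiplication by the (nonzero) weighted degree, hence furnishes an explicit contracting homotopy; contraction with $i_E$ then identifies $\Omega^n_{X,x}/d\Omega^{n-1}_{X,x}$ with the Tjurina module $T^1_{(X,x)}$, so Theorem \ref{thm:mu} yields $\mu=\tau$. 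Combining the two gives $\tau=\tau'=\mu$, and this is precisely Greuel's algebraic route via the Poincar\'e complex and the index theorem of Malgrange.

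Part (3) is the hard converse and I expect it to be the main obstacle, as it generalizes Saito's theorem from hypersurfaces. Assuming $\mu=\tau$, the inequality of part (1) is saturated, which forces the $V$-filtration on $H^n(F,\C)$ to be as pure as in the homogeneous case, i.e.\ the distinguished graded piece must already fill the Brieskorn lattice and the monodromy must be semisimple with symmetrically arranged spectrum. The task is then to promote this cohomological purity into an actual grading of $\ko_{X,x}$: one must construct a Euler-type vector field $\theta$ with $\theta(f_i)$ in the prescribed homogeneous normal form and invoke the characterization of quasihomogeneity by membership of the equations in their own Jacobian ideal. Controlling this passage from filtered cohomological data back to a genuine $\C^\ast$-action on the local ring --- the step where non-split ``logarithmic'' contributions to the Gauss--Manin connection must be excluded --- is the crux, and I would follow Vosegaard's argument in \cite{Vo02}, analysing the $V$-filtration together with the Malgrange index theorem already invoked in Theorem \ref{thm:mu}.
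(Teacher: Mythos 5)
The paper itself contains no proof of this theorem: it is stated as a survey item with attributions, and the text immediately afterwards only records where each part was proved (\cite{LS85}, \cite{Gre75,Gre80}, \cite{Vo02}) and, in the following Remark, the identities that drive the argument, namely $\tau'(X,x)=\tau''(X,x)=\dim_\C H^0_{\{x\}}(\Omega^n_{X,x})$, $\mu(X,x)=\tau''(X,x)+\dim_\C H^n\bigl(\Omega^{\bullet}_{X,x}/H^0_{\{x\}}(\Omega^{\bullet}_{X,x})\bigr)$, and $\tau(X,x)=\dim_\C\Ext^1_{\ko_{X,x}}(\Omega^1_{X,x},\ko_{X,x})=\dim_\C H^{n-1}_{\{x\}}(\Omega^1_{X,x})$ by local duality. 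So there is no ``paper proof'' to match your proposal against; what can be judged is whether your roadmap would actually close. As it stands it would not, because each of the three parts ends by deferring the decisive step to the cited literature, and one step is wrong as stated.

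The concrete failure is in your part (2). You claim that quasihomogeneity makes the alternating sum of Theorem \ref{thm:grle} ``telescope to its top term,'' giving $\mu=\dim_\C\ko_{C(X),x}$ directly. But the telescoping of that sum is automatic from Proposition \ref{pr:mu+} and has nothing to do with quasihomogeneity; what Proposition \ref{pr:mu+} actually yields is $\mu(X,x)=\dim_\C\ko_{C(\phi),x}-\mu(X_{k-1},x)$, where $\ko_{C(\phi),x}$ omits $f_k$ from the defining ideal while $\ko_{C(X),x}$ contains all of $f_1,\dots,f_k$, and $\mu(X_{k-1},x)$ does not vanish for a general quasihomogeneous ICIS. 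The genuine content of $\mu=\tau'$ is the vanishing of $H^n\bigl(\Omega^{\bullet}_{X,x}/H^0_{\{x\}}(\Omega^{\bullet}_{X,x})\bigr)$ via the Euler-field homotopy (your Cartan-formula idea, correctly placed), combined with $\tau'=\tau''$; your contraction argument is aimed at the wrong object. Likewise, the collapse $\tau=\tau'$ is not formal: the paper explicitly states that in general no relation between $\tau$ and $\tau'$ is known (and only conjectures $\tau\le\tau'$), so identifying $\Omega^n_{X,x}/d\Omega^{n-1}_{X,x}$ with the Tjurina module by ``contraction with $i_E$'' skips the local-duality step $\tau=\dim_\C H^{n-1}_{\{x\}}(\Omega^1_{X,x})$ and its comparison with $H^0_{\{x\}}(\Omega^n_{X,x})$. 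In part (1) your ``key lemma'' --- that $\tau$ is the dimension of a distinguished graded piece of the Brieskorn lattice --- is precisely the theorem of \cite{LS85}, asserted rather than proved; and part (3) is acknowledged to be Vosegaard's argument verbatim. So the proposal is a reasonable map of the literature, but it is not a proof, and its one self-contained computation (the telescoping in (2)) does not work.
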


Each item is hard to prove. (1)  was proved for  
$\dim(X,x)= 1$  or if \ $\partial X$ is a rational homology sphere in  \cite{Gre80}. 
(3) was also proved before  in special cases:
for $\dim(X,x)= 1$ by Greuel-Martin-Pfister in \cite{GMP85}, see also Corollary \ref{cor:gmp}
(for Gorenstein curves, in the irreducible case already in \cite{Gre82}), 
for $\dim(X,x)= 2$ by Wahl 
in \cite{Wa85} and for a purely elliptic ICIS of dimension $\geq 2$ by Vosegaard in 
\cite{Vo00}.

\begin{remark} Let $(X,x)$ be an ICIS of dimension $n \geq 1$.
It was shown in  \cite[Proposition 1.11]{Gre75} that  
$$\tau'(X,x) = \tau''(X,x) :=\dim_\C H^0_{\{x\}} (\Omega_{X,x}^n),$$ 
where $\Omega_{X,x}^{\bullet}$ is the Poincar\'e complex and  $H^0_{\{x\}}$ denotes
local cohomology (in this case the torsion submodule), and by 
 \cite[Proposition 5.7]{Gre75} we have
$$ \mu(X,x) =  \tau''(X,x) + \dim_\C H^n(\Omega_{X,x}^{\bullet} / H^0_{\{x\}} (\Omega_{X,x}^\bullet)).$$
In particular $ \tau'(X,x) \leq  \mu(X,x)$ with equality if $(X,x)$ is quasihomogenous
(there are however non-quasihomogeneous examples with $\mu = \tau'$ for $n\geq 2$).
Moreover,
$$ \tau(X,x) = \dim_\C \text{\em Ext}^1_{\ko_{X,x}} (\Omega_{X,x}^1, \ko_{X,x}) = 
\dim_\C H^{n-1}_{\{x\}} (\Omega_{X,x}^1),  $$
where the first equality is due to Tjurina \cite{Tj69} and the second follows from local
duality, see  \cite[1.2 Satz]{Gre80}.
In particular $\tau(X,x) = \tau'(X,x)$ if $n=1$.  In general no relation between 
$\tau(X,x)$ and $\tau'(X,x)$ is known.
Based on computations with {\sc Singular} we conjecture:
\end{remark}

\begin{conjecture}
$\tau(X,x) \leq \tau'(X,x)$.
\end{conjecture}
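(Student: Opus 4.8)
The plan is to translate both invariants into the length of one explicit module together with its zeroth Fitting ideal, and thereby recognize the conjecture as a single colength inequality. Write $\ko_X=\ko_{X,x}$, let $f=(f_1,\dots,f_k)$ be a minimal system of generators, so $(X,x)\subset(\C^m,x)$ with $n=m-k$. By Proposition \ref{prop:t1xx} together with the description of $\beta$ in Remark \ref{rmk:t1xx}, the Tjurina module is the cokernel of the Jacobian map,
$$ T^1_{(X,x)}=\coker\bigl(Df\colon \ko_X^{\,m}\longrightarrow \ko_X^{\,k}\bigr), $$
a module of finite length supported at $x$. Since $T^1_{(X,x)}$ is presented by the $k\times m$ matrix $Df$, its zeroth Fitting ideal is the ideal $I_k(Df)$ of maximal ($k\times k$) minors, $\Fitt_0\bigl(T^1_{(X,x)}\bigr)=I_k(Df)$. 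As $I_k(Df)$ is precisely the image in $\ko_X$ of the $k$-minors of $Df$, the definition of $\tau'$ reads $\tau'(X,x)=\dim_\C \ko_X/\Fitt_0\bigl(T^1_{(X,x)}\bigr)$, while $\tau(X,x)=\dim_\C T^1_{(X,x)}$. Hence the conjecture is equivalent to the purely algebraic inequality
$$ \dim_\C M \;\le\; \dim_\C \bigl(\ko_X/\Fitt_0(M)\bigr), \qquad M:=T^1_{(X,x)}. $$

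Before attacking the general case I would record the regimes where equality is forced, to fix an induction base and to check consistency. If $k=1$ (the hypersurface case) the matrix $Df$ is a single row, so $M=\ko_X/I_1(Df)=\ko_X/\Fitt_0(M)$ and $\tau=\tau'$ on the nose. If $n=1$, the identities recalled in the Remark preceding the conjecture give $\tau=\dim_\C H^0_{\{x\}}(\Omega^1_X)=\tau''=\tau'$, again equality. These are exactly the degenerate regimes, which is reassuring; the genuine content lies in $k\ge 2$, $n\ge 2$.

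For that case I would attempt a dévissage on $n$, reducing to the one-dimensional base via a generic hyperplane section $(X',x)=(X\cap\{\ell=0\},x)$, again an isolated complete intersection of dimension $n-1$, and try to show that the defect $\tau'-\tau$ does not decrease under section. Here lies the main obstacle: whereas $\mu$ and $\tau'=\tau''$ are governed by multiplicities of the $\fm$-primary Jacobian-type ideals $I_k(\cdots)$ and by Teissier-type additivity (compatible with Theorem \ref{thm:grle}), the Tjurina number $\tau=\dim_\C M$ is the length of a cokernel and is \emph{not} semicontinuous or additive under generic hyperplane sections, so no naive induction on $\tau$ is available. One must instead control $M$ and $\Fitt_0(M)$ simultaneously, exploiting that the rows of the presentation matrix are the \emph{closed} forms $df_i$: the integrability $d(df_i)=0$, equivalently the complete-intersection/Gorenstein structure of $\ko_X$, is what I expect to distinguish $Df$ from an arbitrary $k\times m$ matrix and to force the inequality in the right direction (for a general matrix the colength inequality is delicate).

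A second, more structural route is Gorenstein local duality on the $n$-dimensional Gorenstein ring $\ko_X$. Using $\tau'=\dim_\C H^0_{\{x\}}(\Omega^n_X)$ and the duality isomorphism $H^0_{\{x\}}(\Omega^n_X)^\vee\cong\Ext^n_{\ko_X}(\Omega^n_X,\ko_X)$, together with $\tau=\dim_\C\Ext^1_{\ko_X}(\Omega^1_X,\ko_X)$, the conjecture becomes
$$ \dim_\C \Ext^1_{\ko_X}(\Omega^1_X,\ko_X)\;\le\;\dim_\C \Ext^n_{\ko_X}(\Omega^n_X,\ko_X). $$
The natural device to pass from $p=1$ to $p=n$ is the conormal sequence $0\to I/I^2\to \Omega^1_{\C^m}\otimes\ko_X\to\Omega^1_X\to 0$, the induced filtration on $\Omega^p_X=\wedge^p\Omega^1_X$, and the wedge pairing $\Omega^1_X\otimes\Omega^{n-1}_X\to\Omega^n_X$. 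Carrying this out amounts to tracking the torsion created at each wedge step, which I expect to be essentially equivalent to the Fitting-theoretic difficulty above. In both routes the overall strategy is clear; the genuine work — and the likely crux — is the homological bookkeeping of these torsion submodules, i.e.\ showing the integrability of $Df$ suffices to bound the length of $M$ by the colength of $\Fitt_0(M)$.
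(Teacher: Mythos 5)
There is a genuine gap, and it is worth being explicit about its nature: the statement you are asked to prove is an \emph{open conjecture}. The paper offers no proof --- immediately before stating it, the author writes that ``in general no relation between $\tau(X,x)$ and $\tau'(X,x)$ is known'' and that the conjecture rests on computations with {\sc Singular}. So there is nothing in the paper to compare your argument against, and the standard you must meet is a complete proof, which your proposal does not supply. What you have is a correct and clean reformulation plus two candidate strategies, each of which you yourself abandon at the decisive step.

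The reformulation is right: $T^1_{(X,x)}=\coker\bigl(Df\colon\ko_{X,x}^{m}\to\ko_{X,x}^{k}\bigr)$, its zeroth Fitting ideal is $I_k(Df)\ko_{X,x}$, and $\tau'=\dim_\C\ko_{X,x}/\Fitt_0(T^1)$, so the conjecture is the colength inequality $\dim_\C M\le\dim_\C\ko_{X,x}/\Fitt_0(M)$ for $M=T^1_{(X,x)}$; your equality checks for $k=1$ and $n=1$ agree with the paper. But this inequality is false for arbitrary finitely presented modules of finite length (already $M=\C^2$ over $\C$ has $\Fitt_0(M)=0$), so everything hinges on exploiting the special structure of the Jacobian presentation --- and that is precisely the step you defer. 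Route one (induction via generic hyperplane sections) founders, as you note, on the failure of $\tau$ to behave additively or semicontinuously under sections; route two (Gorenstein duality and the wedge filtration on $\Omega^p_X$) reduces to ``homological bookkeeping of torsion submodules'' that you do not carry out and that is not known to close. Until one of these is actually executed, the proposal is a research program, not a proof; presenting it as such would misrepresent the status of the statement.
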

\medskip

If $(X,x)$ is not an ICIS, the base space of the semiuniversal deformation may have 
several irreducible components and  the topology of a nearby generic fibre depends in general on the component over which the fibre lives. This situation is studied in detail 
in the next section.

However, there are classes of singularities other than ICIS which have a smooth 
semiuniversal base space, like Cohen-Macauly singularities in codim 2 or Gorenstein in codim 3. For these there is a unique (up to homeomorphism) Milnor fiber (the generic fibre over the semiuniversal base) and a unique Milnor number, defined as the middle Betti number of the Milnor fibre (c.f. Definition \ref{def:scomp}). A special case are normal surface singularities in $(\C^4,0)$. They are smoothable, with a smooth semiuniversal base space and, if they are Gorenstein then they are already a complete intersection. For these Wahl offered in  \cite{Wa15} the following conjecture:
 
\begin{conjecture}[Wahl]
Let $(X,x)$ be a normal surface singularity in $(\C^4,0)$, not a complete intersection. Then  
$$\mu(X,x)  \geq \tau(X,x)  - 1,$$
with equality if and only if $(X,x)$ is quasihomogeneous.
\end{conjecture}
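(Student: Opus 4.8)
The plan is to translate the conjectured inequality into a statement about the Euler characteristic of the Milnor fibre, exploiting the special topology of smoothings of normal surface singularities. First I would record the structural facts available for such $(X,x)$: being Cohen-Macaulay of codimension $2$ it is determinantal, and by Hilbert-Burch its ideal is generated by the $2\times 2$ minors of a $2\times 3$ matrix $M$ over $\ko_{\C^4,\bo}$; moreover the semiuniversal base is smooth (Schaps, \cite{Sch77}), so there is a single Milnor fibre $F$ and $\mu(X,x)=b_2(F)$ is well defined. Since $(X,x)$ is normal, hence irreducible and reduced, $F$ is connected by Theorem \ref{thm:connected}, so $b_0(F)=1$; as $F$ is a Stein surface, $b_i(F)=0$ for $i>2$; and crucially $b_1(F)=0$ by the Greuel-Steenbrink Theorem \ref{thm:GS}. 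Therefore $\chi(F)=1+\mu(X,x)$, and Wahl's conjecture becomes equivalent to the cleaner assertion
$$\chi(F)\ \geq\ \tau(X,x),\qquad\text{with equality iff }(X,x)\text{ is quasihomogeneous.}$$

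The equality (quasihomogeneous) case I would attack directly using Pinkham's equivariant deformation theory (Theorem \ref{thm:pinkham}): choose homogeneous entries for $M$, make $T^1_{(X,\bo)}$ into a graded $\C$-vector space, and realize the smoothing inside the smooth, $\C^*$-equivariant base. The target is the exact evaluation $\chi(F)=\tau(X,x)$, which I expect to follow from a differential-forms computation in the spirit of Greuel's formula $\mu=\dim_\C\Omega^n_{X,x}/d\Omega^{n-1}_{X,x}$ (Theorem \ref{thm:mu}): one sets up the appropriate Poincaré-type complex for the determinantal germ, computes its graded Euler characteristic from the weights and degrees of $M$, and matches it against the graded dimension of $T^1$. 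The constant by which this determinantal identity $\mu=\tau-1$ must differ from the complete-intersection identity $\mu=\tau$ of Theorem \ref{thm:glsv} has to be tracked through this graded computation; I expect the shift to emerge from the determinantal (non-complete-intersection) structure itself, namely the third minor generator that a complete intersection of the same dimension does not carry, and pinning it down exactly is the delicate point of this step.

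For the general inequality $\chi(F)\geq\tau$ I would follow the mixed-Hodge-theoretic route that proves the ICIS bound $\mu\geq\tau$ in \cite{LS85}, transported to the determinantal setting: endow the limit cohomology of $F$ with its mixed Hodge structure, identify $\tau(X,x)$ (or a suitable intermediate invariant $\tau'$, as in Greuel's remark giving $\tau'\leq\mu$ with equality in the quasihomogeneous case) with the dimension of a specific Hodge-graded piece, and deduce $\chi(F)\geq\tau$ from a Hodge-number inequality; alternatively, a semicontinuity argument deforming $M$ to its weighted-homogeneous principal part, where equality holds, and controlling the jump of both sides could yield the bound. The hard part, exactly as for ICIS, will be the converse implication ``$\chi(F)=\tau\Rightarrow$ quasihomogeneous'', which for complete intersections required the difficult analysis of Vosegaard \cite{Vo02}. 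Establishing the corresponding rigidity---that saturation of the inequality forces the defining matrix $M$ to be analytically equivalent to a weighted homogeneous one---is the principal obstacle, since it demands showing that every genuinely non-quasihomogeneous determinantal germ strictly increases $\chi(F)$ beyond $\tau$, and here one cannot simply import the complete-intersection techniques underlying Theorem \ref{thm:glsv}.
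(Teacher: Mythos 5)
The statement you are trying to prove is presented in the paper as an \emph{open conjecture} of Wahl (from \cite{Wa15}); the paper offers no proof of it, and to date none is known. Your text is accordingly not a proof but a research plan, and you candidly flag its gaps yourself. The one step that is actually complete is the reformulation: since $(X,x)$ is Cohen--Macaulay of codimension $2$, the semiuniversal base is smooth, so $F$ and $\mu$ are well defined; $b_0(F)=1$ by Theorem \ref{thm:connected}, $b_1(F)=0$ by Theorem \ref{thm:GS}, and $b_i(F)=0$ for $i>2$ since $F$ is Stein of dimension $2$; hence $\chi(F)=1+\mu(X,x)$ and the conjecture is equivalent to $\chi(F)\geq\tau(X,x)$ with equality iff $(X,x)$ is quasihomogeneous. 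That is a correct and harmless translation, but it carries no new information.

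All three substantive steps remain unestablished. (i) For the quasihomogeneous equality $\chi(F)=\tau$ you invoke a hoped-for determinantal analogue of Greuel's formula $\mu=\dim_\C\Omega^n_{X,x}/d\Omega^{n-1}_{X,x}$; but Theorem \ref{thm:mu} and the graded computation behind Theorem \ref{thm:glsv}(2) rest on the Poincar\'e complex being well behaved for complete intersections (exactness off the singular point, the relation $\tau=\tau'=\tau''$ via local duality), and none of this is available for a determinantal, non-Gorenstein germ --- the paper itself records that in general no relation between $\tau$ and $\tau'$ is known and only \emph{conjectures} $\tau\leq\tau'$. Saying the shift by $1$ ``should emerge from the third minor'' is not an argument. (ii) The general inequality via a Looijenga--Steenbrink-type Hodge argument is precisely the point where the complete-intersection hypothesis is used essentially in \cite{LS85} (to identify $\tau$ with a Hodge-graded piece of the vanishing cohomology); no such identification is known here. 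The alternative ``deform to the principal part and control the jumps'' presupposes semicontinuity statements for both $\mu$ and $\tau$ along such a degeneration that you do not supply. (iii) The rigidity converse is, as you say, the hardest part even for ICIS (Vosegaard \cite{Vo02}), and you offer no mechanism for it. Note also that the only proved result of this type in the paper, Wahl's theorem that $\tilde\mu-\tilde e\geq 0$ with equality iff quasihomogeneous, concerns \emph{Gorenstein} surface singularities, whereas your $(X,x)$ is necessarily non-Gorenstein (a Gorenstein normal surface singularity in $(\C^4,0)$ is already a complete intersection), so that route is closed as well.
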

\bigskip

\subsection{Smoothing Components}\label{sec:smoothcomp}
For an isolated singularity $(X,x)$, which is not a complete intersection, the semiuniversal base space may have several irreducible components (see e.g. Example \ref{ex:ratnorm}) and the
Milnor fibre depends in general on the smoothing. It is interesting to know, which properties  are independent of the smoothing and depend only 
on $(X,x)$. 
Let 
   $$ \Psi : (\sY,y) \to (S,s)$$
be the semiuniversal deformation of $(X,x)$.
Recall that an irreducible component $(S',s)$ of $(S,s)$  is called a {\em smoothing component}, if the generic fibre $F$ over $S'$ is smooth. The diffeomorphism type of $F$ depends only on $(S',s)$ and $F$ is the Milnor fibre of this component.
\medskip

\begin{definition}\label{def:scomp}
Let  $(S',s)$  be a smoothing component of the isolated singularity $(X,x)$
and $\phi :  (\sX,x) \to (\C,0)$ a smoothing induced by a morphism 
$j : (\C,0) \to (S',s)$.
We denote the {\em dimension of the smoothing component}\index{smoothing!component!dimension of} by
 $$ e_\phi := \dim(S',s).$$
\end{definition}

If $(S,s)$ is smooth (e.g. for $(X,x)$ a complete intersection), then $e_\phi$ is independent of $\phi$ and equal to $\tau(X,x)$. 
\medskip

It was already mentioned after Theorem \ref{thm:loo} that the conjectures of Wahl in \cite{Wa82} have all been proved and that several of his statements there are now valid in greater generality. 
 Wahl considered in \cite{Wa81} normal surface singularities that are not complete intersections and compares the Milnor number of a smoothing with the dimension 
 of the smoothing component over which the smoothing occurs. In \cite[Conjecture 4.2]{Wa81} Wahl made the following interesting conjecture about $e_\phi $, which he proved  in special cases
and which was fully proved by the author and Looijenga in \cite{GL85}.

\begin{theorem}[Greuel, Looijenga]\label{thm:GL}
With the assumptions of Definition \ref{def:scomp} we have
    $$ \dim (S',s) = \dim_\C \coker (\Theta_{\sX/\C,x} \to \Theta_{X,x}),$$
    with $\Theta_{\sX/\C}$ the sheaf of relative derivations.
\end{theorem}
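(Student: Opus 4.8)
The plan is to split the statement into two parts: a formal identification of the cokernel $\coker\bigl(\Theta_{\sX/\C,x}\to\Theta_{X,x}\bigr)$ with a torsion submodule of the relative $T^1$ of the smoothing, and a geometric identification of its dimension with $\dim(S',s)$; the second part is the crux. First I would fix a good representative $\phi:\sX\to D$ of the smoothing and record two preliminary facts. Since the generic fibre is smooth and $\phi$ is flat over the smooth base $D$, the total space $\sX$ is smooth along every $\sX_t$ with $t\neq0$ and along $X\setminus\{x\}$, so $\Sing(\sX)\subseteq\{x\}$ and all the sheaves in sight are coherent with support in $\{x\}$ where relevant. Writing $t$ for $\phi^\sharp$ of the coordinate on $D$, a relative derivation $\delta\in\Theta_{\sX/\C,x}$ satisfies $\delta(t)=0$, hence $\delta\bigl(t\ko_{\sX,x}\bigr)\subseteq t\ko_{\sX,x}$ and $\delta$ descends to a $\C$-derivation of $\ko_{X,x}=\ko_{\sX,x}/t\ko_{\sX,x}$; this is the map of the theorem, and it factors as $\Theta_{\sX/\C,x}\twoheadrightarrow \Theta_{\sX/\C,x}/t\,\Theta_{\sX/\C,x}\hookrightarrow\Theta_{X,x}$, so its cokernel equals the cokernel of the injection on the right.

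For the homological identification I would use flat base change along $\{0\}\hookrightarrow D$. Because $\phi$ is flat, $L_{X/\C}$ is the derived restriction to $X$ of the relative cotangent complex $L_{\sX/\C}$, and adjunction gives $T^i_{(X,x)}\cong T^i\bigl(\ko_{\sX,x}/\ko_{D,0};\ko_{X,x}\bigr)$, the relative cotangent cohomology of $\phi$ evaluated on the coefficient module $\ko_{X,x}$ (this extends the concrete $T^1,T^2$ of the text and can alternatively be set up with the relative versions of the sequences of Proposition \ref{prop:t1xx} together with a Tor computation). Feeding the short exact coefficient sequence $0\to\ko_{\sX,x}\xrightarrow{\,t\,}\ko_{\sX,x}\to\ko_{X,x}\to0$, in which $t$ is a nonzerodivisor by flatness, into the long exact sequence of the functors $T^i\bigl(\ko_{\sX,x}/\ko_{D,0};-\bigr)$ yields for every $i$ a short exact sequence
\begin{equation*}
0\longrightarrow T^i_{\sX/\C}\big/ t\,T^i_{\sX/\C}\longrightarrow T^i_{(X,x)}\longrightarrow \bigl(0:_{T^{i+1}_{\sX/\C}}t\bigr)\longrightarrow 0 ,
\end{equation*}
where $T^j_{\sX/\C}$ denotes relative cotangent cohomology with values in $\ko_{\sX,x}$, so that $T^0_{\sX/\C}=\Theta_{\sX/\C,x}$. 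For $i=0$ the left-hand term is precisely the image of $\Theta_{\sX/\C,x}\to\Theta_{X,x}$, whence
\begin{equation*}
\coker\bigl(\Theta_{\sX/\C,x}\to\Theta_{X,x}\bigr)\;\cong\;\bigl(0:_{T^1_{\sX/\C}}t\bigr).
\end{equation*}

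Since the generic fibre is smooth, the stalk of the relative $T^1_{\sX/\C}$ at a point $y$ is the $T^1$ of the fibre germ $(\sX_{\phi(y)},y)$, which vanishes off $\{x\}$; thus $T^1_{\sX/\C}$ is a finite-length torsion $\ko_{D,0}=\C\{t\}$-module. Writing it as $\bigoplus_{i=1}^{r}\C\{t\}/(t^{a_i})$ gives
\begin{equation*}
\dim_\C\bigl(0:_{T^1_{\sX/\C}}t\bigr)=r=\dim_\C\bigl(T^1_{\sX/\C}\otimes_{\ko_D}\C\bigr),
\end{equation*}
the minimal number of $\C\{t\}$-generators of $T^1_{\sX/\C}$; by the $i=1$ case above this also equals $\tau(X,x)-\dim_\C\bigl(0:_{T^2_{\sX/\C}}t\bigr)$. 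In the complete intersection case $T^2_{\sX/\C}=0$, so $r=\tau(X,x)=\dim(S,s)$, which is the expected answer and a useful consistency check.

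It remains to prove $r=\dim(S',s)$, and this is where the real work lies. The plan is to realise $\phi$ as induced from the semiuniversal deformation by a base change $j:(D,0)\to(S',s)$ and to compare $T^1_{\sX/\C}$ with the restriction to $S'$ of the relative $T^1$ of the semiuniversal family, using openness of versality (Theorem \ref{thm:openness of versality}) to control the singular fibres near $s$ and the smoothness of the generic fibre to confine the support to a proper subset of $S'$. The delicate point is that $(S',s)$ is in general singular, so the naive base change $T^1_{\sX/\C}\otimes_{\ko_D}\C=T^1_{(X,x)}$ fails; by the $i=1$ sequence the defect is exactly $\dim_\C\bigl(0:_{T^2_{\sX/\C}}t\bigr)$, which must be matched with $\tau(X,x)-\dim(S',s)$, i.e.\ with the number of obstruction equations cutting $S'$ out of $T^1_{(X,x)}$. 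I expect the cleanest route is to globalise the smoothing (Theorem \ref{thm:loo}) to a projective family over a smooth parameter space and compute $\dim(S',s)$ as the rank of an honest Kodaira--Spencer map, where the smoothness of the generic fibre makes the cohomology-and-base-change argument exact; controlling how the relative obstructions $\bigl(0:_{T^2_{\sX/\C}}t\bigr)$ account for the codimension of the smoothing component is the main obstacle and the heart of the theorem.
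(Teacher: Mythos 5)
Your first half is sound and coincides with the paper's: the identification of $\coker\bigl(\Theta_{\sX/\C,x}\to\Theta_{X,x}\bigr)$ with the $t$-torsion $\bigl(0:_{T^1_{\sX/\C,x}}t\bigr)$ via the six-term exact sequence
$0\to\Theta_{\sX/\C,x}\xrightarrow{t}\Theta_{\sX/\C,x}\to\Theta_{X,x}\to T^1_{\sX/\C,x}\xrightarrow{t}T^1_{\sX/\C,x}\to T^1_{X,x}$
is exactly the paper's second lemma, and your observation that $T^1_{\sX/\C,x}$ is a finite-length $\C\{t\}$-module (generic fibre smooth) with
$\dim_\C\bigl(0:_{T^1_{\sX/\C,x}}t\bigr)=\dim_\C\bigl(T^1_{\sX/\C,x}\otimes_{\ko_{\C,0}}\C\bigr)$
is correct. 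But the step you yourself flag as ``where the real work lies'' --- showing this number equals $\dim(S',s)$ --- is a genuine gap, and the route you sketch (globalize via Theorem \ref{thm:loo}, compute a Kodaira--Spencer rank for a projective family, and match the relative obstructions $\bigl(0:_{T^2_{\sX/\C,x}}t\bigr)$ against the codimension of $S'$) is not carried out and is not how the argument goes; the proof in \cite{GL85} is purely local and needs no globalization and no control of $T^2$.

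The missing idea is the module $\Theta(j):=\Der_\C(\ko_{S,s},\ko_{\C,0})$ attached to the classifying map $j\colon(\C,0)\to(S',s)\subset(S,s)$. Two facts about it close the gap. First, $\Theta(j)$ is a \emph{free} $\C\{t\}$-module, and applying $\Der_\C(\ko_{S,s},-)$ to $0\to\ko_{\C,0}\xrightarrow{t}\ko_{\C,0}\to\C\to0$ embeds $\Theta(j)\otimes\C$ into $T_{S,s}$ with image the limit of the tangent spaces $T_{S,j(t)}$; hence $\mathrm{rk}_{\ko_{\C,0}}\Theta(j)$ is the dimension of the Zariski tangent space of $S$ at the \emph{generic} point of $\im(j)$, which for a smoothing is a smooth point of $S'$, so this rank is $\dim(S',s)$. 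Second, each $\zeta\in\Theta(j)$ yields an extension $j_\zeta\colon(\C,0)\times T_\eps\to(S,s)$ of $j$ and hence, by pullback, a first-order deformation of $\phi$; versality makes the resulting map $\Theta(j)\to T^1_{\sX/\C,x}$ surjective, and it induces an isomorphism modulo $t$ because both $\Theta(j)\otimes\C$ and $T^1_{\sX/\C,x}\otimes\C$ inject into $T^1_{X,x}\cong T_{S,s}$ with the same image. Combining, $\dim(S',s)=\dim_\C\bigl(T^1_{\sX/\C,x}\otimes\C\bigr)$, which by your torsion computation is $\dim_\C\coker\bigl(\Theta_{\sX/\C,x}\to\Theta_{X,x}\bigr)$. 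Without some substitute for this versality-plus-freeness argument, your proposal does not prove the theorem; and your consistency check in the complete intersection case, while reassuring, does not supply it.
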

\noindent
We will comment on the proof at the end of this section. 
\medskip

The more recent paper  \cite{Wa15} is partly an updated survey on old results, but it contains also  new results and conjectures on normal surface singularities, which we like to recall.
Let $\phi :  (\sX,x) \to (\C,0)$ be a smoothing of a normal surface singularity $(X,x)$. Then  Wahl introduced another invariant, 
  $$ \alpha_\phi := \dim_\C \coker (\omega^*_{\sX/\C,x} \otimes \ko_{X,x}\to \omega^*_{X,x}),$$
with $\omega^*_{\sX/\C}$ the $\ko_\sX$-dual of the relative dualizing sheaf.
\bigskip
 
 Using Theorem \ref{thm:GS} and  \ref{thm:GL} Wahl relates $\mu_\phi, e_\phi$ and
$\alpha_\phi$ with  resolution invariants of $(X,x)$ and proves (\cite[Theorem 3.13]{Wa81} and \cite[Theorem 1.1]{Wa15} where Wahl denotes our $ e_\phi$ by $ \tau_\phi$\,\footnote{Wahl calls the dimension of a smoothing component the Tjurina number of the smoothing. We do not follow his terminology, as it is widely accepted to call the dimension of $T^1$ the Tjurina number (Definition \ref{def:inf def:1}) and, moreover, since Tjurina was the first to introduce $T^1$ (as an $Ext^1$).}
):
 
 \begin{theorem}[Wahl]\label{thm:Wa1}
 Let $\phi: (\sX,x) \to (\C,0)$ be a smoothing of a normal surface singularity $(X, x)$ and  $(Y, E) \to (X, x)$ a good resolution. Then (with $ \chi_T$ the topological Euler characteristic)
 \begin{enumerate}
\item $1+\mu_\phi =  \alpha_\phi + 13 h^1(\ko_Y) + \chi_T(E) - h^1 (- K_Y). $
\item  $e_\phi = 2\alpha_\phi + 12 h^1(\ko_Y) + h^1(\Theta_Y) - 2h^1(- K_Y).$
  \end{enumerate}
If (X, x) is Gorenstein, then $\alpha_\phi = 0$, so $\mu$ and $e$ are independent of the smoothing.
 \end{theorem}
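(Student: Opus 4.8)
The plan is to extract both formulas from the cohomology of the Milnor fibre $F$ together with a resolution of the total space, using as inputs the vanishing theorem \ref{thm:GS}, the connectedness theorem \ref{thm:connected}, and the Greuel--Looijenga formula \ref{thm:GL}; the invariant $\alpha_\phi$ will enter as the universal correction measuring how far $(X,x)$ is from being Gorenstein.

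First I would reduce (1) to an Euler-characteristic computation. Since $F$ is a smooth Stein surface it has the homotopy type of a CW-complex of real dimension $\le 2$, so $b_i(F)=0$ for $i\ge 3$. As $(X,x)$ is normal, hence reduced, Theorem \ref{thm:connected} gives $b_0(F)=1$, and Theorem \ref{thm:GS} gives $b_1(F)=0$. Therefore $\chi(F)=1+b_2(F)=1+\mu_\phi$, and (1) becomes the index formula $\chi(F)=\alpha_\phi+13\,h^1(\ko_Y)+\chi_T(E)-h^1(-K_Y)$.

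Next I would compute $\chi(F)$ through a resolution $\pi\colon\widetilde\sX\to\sX$ of the total space for which $\tilde E:=(\phi\circ\pi)^{-1}(0)_{\mathrm{red}}$ is a normal crossings divisor. Exactly as in the proof of Theorem \ref{thm:GS}, the cohomology of $F$ is computed by the hypercohomology of the restricted relative logarithmic de Rham complex $\Omega^\bullet_{\widetilde\sX/D}(\log\tilde E)\otimes\ko_{\tilde E}$, whose direct images are coherent by \cite{BG80} and locally free by \cite{Ste76}. Taking the alternating sum of dimensions expresses $\chi(F)$ as the Euler characteristic of a complex of coherent sheaves on the central fibre, which Hirzebruch--Riemann--Roch then converts into intersection numbers. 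The remaining, and genuinely delicate, step is to descend from the three-fold resolution $\widetilde\sX$ to the surface resolution $(Y,E)$ of $X$: matching the coherent contributions should produce the terms $13\,h^1(\ko_Y)$, $\chi_T(E)$ and $-h^1(-K_Y)$, while the discrepancy measured by the cokernel defining $\alpha_\phi$, i.e.\ the failure of $\omega^*_{\sX/\C,x}\otimes\ko_{X,x}\to\omega^*_{X,x}$ to be an isomorphism, accounts for the extra summand $\alpha_\phi$.

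For (2) I would start from Theorem \ref{thm:GL}, which identifies $e_\phi$ with $\dim_\C\coker(\Theta_{\sX/\C,x}\to\Theta_{X,x})$, and run the analogous resolution-and-Riemann-Roch analysis for the tangent sheaf. Relating $\Theta_X$ to $\Theta_Y$ across the resolution yields the term $h^1(\Theta_Y)$ and the contribution $12\,h^1(\ko_Y)$, while the dualizing-sheaf corrections now enter doubled, as $2\alpha_\phi$ and $-2h^1(-K_Y)$; the doubling reflects that on the smooth surface $Y$ one has $\Theta_Y\cong\Omega^1_Y\otimes\omega_Y^{-1}$, so the rank-two tangent sheaf pairs against the canonical sheaf twice. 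Finally, if $(X,x)$ is Gorenstein then $\omega_X$ is invertible, the natural map $\omega^*_{\sX/\C,x}\otimes\ko_{X,x}\to\omega^*_{X,x}$ is an isomorphism, and $\alpha_\phi=0$; the right-hand sides of (1) and (2) then involve only invariants of a good resolution of $(X,x)$ and are therefore independent of the smoothing $\phi$. The main obstacle throughout is exactly this descent bookkeeping, namely proving that the deviation of the three-fold computation from the pure surface-resolution data assembles into precisely $\alpha_\phi$ (respectively $2\alpha_\phi$); this is where the limit mixed Hodge structure underlying \cite{GS83} and a duality argument on $Y$ must be reconciled.
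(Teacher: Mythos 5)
The paper itself gives no proof of this theorem: it is a survey statement with references to Wahl's papers \cite{Wa81} and \cite{Wa15}, so there is no in-paper argument to compare against. Judged on its own terms, your proposal correctly identifies the two external inputs that the paper says Wahl uses (Theorem \ref{thm:GS} to get $b_1(F)=0$, hence $\chi_T(F)=1+\mu_\phi$, and Theorem \ref{thm:GL} to identify $e_\phi$ with $\dim_\C\coker(\Theta_{\sX/\C,x}\to\Theta_{X,x})$), and those reductions are sound. But everything after that is a programme rather than a proof: the entire content of the theorem lies in producing the specific right-hand sides, and you defer exactly that step with phrases like ``should produce the terms'', ``accounts for the extra summand $\alpha_\phi$'', and ``must be reconciled''. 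Nothing in the proposal actually derives the coefficient $13$, the term $-h^1(-K_Y)$, the term $h^1(\Theta_Y)$, or the identification of the correction with $\alpha_\phi$ (resp.\ $2\alpha_\phi$); the ``doubling'' heuristic via $\Theta_Y\cong\Omega^1_Y\otimes\omega_Y^{-1}$ is suggestive but proves nothing.

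Moreover, the mechanism you propose for (1) is not the one that works in Wahl's argument and would be substantially harder to push through. Wahl does not resolve the three-fold total space and descend; he globalizes the smoothing (Theorem \ref{thm:loo}, or negative weight in the weighted homogeneous case), applies Noether's formula $12\chi(\ko)=K^2+\chi_T$ to the smooth global fibre and to the resolution of the global special fibre, and uses constancy of $\chi(\ko)$ in the flat family together with $\chi_T(\bar\sX_t)=\chi_T(\bar Y)-\chi_T(E)+\chi_T(F)$; the local terms $\alpha_\phi$ and $-h^1(-K_Y)$ then arise from a coherent Euler-characteristic computation with $\omega^\ast_{\sX/\C}$ and $\omega^\ast_X$ (using that $F$ is Stein, so higher cohomology of coherent sheaves on it vanishes). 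Your route through hypercohomology of $\Omega^\bullet_{\widetilde\sX/D}(\log\tilde E)$ and Hirzebruch--Riemann--Roch on the central fibre of a three-fold resolution is a genuinely different (Steenbrink-style) strategy; it is not obviously wrong, but the ``descent bookkeeping'' you flag as the main obstacle is precisely the unproven core, and there is no argument offered that the discrepancy it produces equals $\alpha_\phi$ as defined via $\coker(\omega^\ast_{\sX/\C,x}\otimes\ko_{X,x}\to\omega^\ast_{X,x})$. The only part of the proposal that is complete is the final observation that $\alpha_\phi=0$ in the Gorenstein case, which is immediate from invertibility of $\omega_{\sX/\C}$ and does follow once (1) and (2) are established.
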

 
For $(X, x)$ Gorenstein (but not necessary smoothable) denote by $\tilde \mu(X,x)$ resp. $\tilde e(X,x)$  the expressions for  $\mu_\phi$ resp. $e_\phi$ given by the above theorem (these are invariants of $(X, x)$ that may be negative if $(X, x)$ is not smoothable). Then by 
\cite[Theorem 3.13]{Wa81} and \cite[Theorem 1.2]{Wa15}

 \begin{theorem}[Wahl]
If $(X, x)$ is a Gorenstein surface singularity, then 
$\tilde \mu(X,x) - \tilde e(X,x) \geq 0$, with equality if and only if $(X,x)$ is quasihomogeneous.
 \end{theorem}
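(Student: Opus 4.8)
The plan is to reduce the statement to an inequality purely among resolution invariants, in direct analogy with the inequality $\mu(X,x) \geq \tau(X,x)$ for isolated complete intersection singularities (Theorem \ref{thm:glsv}). Since $\tilde\mu(X,x)$ and $\tilde e(X,x)$ are \emph{defined} to be the universal expressions of Theorem \ref{thm:Wa1}, I would first insert the Gorenstein vanishing $\alpha_\phi = 0$ into both formulas and subtract. Writing $p_g := h^1(\ko_Y)$ for the geometric genus of a good resolution $(Y,E) \to (X,x)$, this gives
\[
\tilde\mu(X,x) - \tilde e(X,x) = p_g + \chi_T(E) + h^1(-K_Y) - h^1(\Theta_Y) - 1 ,
\]
so the theorem becomes the assertion that this combination of resolution invariants is non-negative, and vanishes exactly when $(X,x)$ is quasihomogeneous. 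This step is formal; it removes all reference to smoothability and to the ambient smoothing $\phi$.

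The heart of the argument is to exhibit the right-hand side as the dimension of a single cohomology group, whence non-negativity is automatic. Here I would run Riemann--Roch on the (non-compact) resolution $Y$, using the Gorenstein hypothesis: $(X,x)$ is then numerically Gorenstein, so $K_Y$ is pinned down on $E$ by the canonical cycle, and the twists $-K_Y$ and $\Theta_Y$ can be played off against $\ko_Y$ and the Euler characteristic $\chi_T(E)$ of the configuration. The expected outcome is that $p_g + \chi_T(E) + h^1(-K_Y) - h^1(\Theta_Y) - 1$ equals $\dim_\C$ of a cohomology group of a logarithmic (co)tangent sheaf of the pair $(Y,E)$, that is, of the ``non-$\C^*$'' part of the cohomology of the Poincar\'e complex of $(X,x)$ modulo torsion --- the two-dimensional analogue of the identity $\mu - \tau' = \dim_\C H^n(\Omega^\bullet_{X,x}/H^0_{\{x\}}(\Omega^\bullet_{X,x}))$ recorded after Theorem \ref{thm:glsv}. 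Controlling $h^1(\Theta_Y)$ and $h^1(-K_Y)$ requires vanishing theorems on resolutions of the kind used to prove Theorem \ref{thm:GS} and worked out by Wahl in \cite{Wa81, Wa15}.

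For the equality case I would appeal to Pinkham's equivariant deformation theory (Theorem \ref{thm:pinkham}). A good $\C^*$-action grades all the resolution data and forces the cohomology group produced in the previous step to vanish, so a quasihomogeneous singularity satisfies $\tilde\mu = \tilde e$. Conversely, the vanishing of that group is a Saito-type criterion: it yields an Euler vector field tangent to $(X,x)$ and hence the grading, so $\tilde\mu = \tilde e$ forces quasihomogeneity.

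The main obstacle is the second step: the explicit Riemann--Roch bookkeeping that converts the signed combination of $p_g$, $\chi_T(E)$, $h^1(-K_Y)$ and $h^1(\Theta_Y)$ into the dimension of one cohomology group, together with the vanishing theorems needed to control $h^1(\Theta_Y)$ and $h^1(-K_Y)$. This is exactly where the Gorenstein hypothesis (through the canonical cycle) is indispensable and where both the inequality and its sharp equality characterization are won.
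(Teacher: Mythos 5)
Your review target needs one caveat up front: the survey does not prove this theorem at all --- it is quoted from Wahl with references to \cite[Theorem 3.13]{Wa81} and \cite[Theorem 1.2]{Wa15} --- so there is no in-paper argument to measure your proposal against. Your first step is correct and is the only formal content the survey supplies: putting $\alpha_\phi=0$ into both formulas of Theorem \ref{thm:Wa1} and subtracting gives
$$\tilde\mu(X,x)-\tilde e(X,x)=h^1(\ko_Y)+\chi_T(E)+h^1(-K_Y)-h^1(\Theta_Y)-1,$$
and your overall strategy (reinterpret this combination through cohomology of logarithmic sheaves on the resolution, use the $\C^*$-action for one implication of the equality case and a derivation criterion for the other) is a fair description of the route Wahl actually takes.

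The problem is that both load-bearing steps are asserted rather than executed, and each conceals a real difficulty. First, the claim that the signed combination above ``equals $\dim_\C$ of a single cohomology group, whence non-negativity is automatic'' is precisely the crux, not Riemann--Roch bookkeeping: the exactly analogous signed combination $h^1(\ko_Y)-h^1(S_Y)+h^1(\Lambda^2 S_Y)$, with $S_Y=(\Omega^1_Y(\log E))^*$, is still an \emph{open conjecture} in the non-Gorenstein case (it is stated immediately after this theorem in the survey), so no formal manipulation can turn such an alternating sum into a single $h^1$; the Gorenstein hypothesis has to enter through specific duality and vanishing statements relating $S_Y$, $\Theta_Y$ and $-K_Y$ on the minimal good resolution, and you have not identified which ones or why they hold. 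Second, the converse direction of the equality case --- that $\tilde\mu=\tilde e$ produces an Euler derivation and hence quasihomogeneity --- is not a routine ``Saito-type criterion'' in dimension two: it is the main theorem of Wahl's separate paper \cite{Wa85} (the surface case of Theorem \ref{thm:glsv}(3)), whose proof is substantial and independent of the resolution-invariant arithmetic. As written, your text is a correct road map to Wahl's argument, but the two steps that constitute the actual proof are missing.
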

 
Wahl's new main conjecture in \cite{Wa15} uses 
the sheaf of logarithmic derivations $S_Y := (\Omega^1_Y(log(E))^*$ on the resolution.

\begin{conjecture}[Wahl]
For $(Y,E) \to (X,x)$ the minimal good resolution of a non--Gorenstein normal surface singularity $(X,x)$one has
$$h^1(\ko_Y) - h^1(S_Y) + h^1(\Lambda^2 S_Y) \geq 0,$$
with equality if and only if $(X,x)$ is quasi-homogeneous.
\end{conjecture}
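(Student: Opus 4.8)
The plan is to reduce the stated inequality to Wahl's resolution formulas (Theorem \ref{thm:Wa1}) together with the already-established Gorenstein comparison, and then to isolate the genuinely new content, which lies in the non-Gorenstein and non-smoothable cases. First I would make the quantity explicit on the resolution. Since $S_Y=\Theta_Y(-\log E)$ has rank two with $\Lambda^2 S_Y\cong\ko_Y(-K_Y-E)$, we have $h^1(\Lambda^2 S_Y)=h^1(\ko_Y(-K_Y-E))$. I would then feed in the two standard short exact sequences on $Y$, namely the logarithmic tangent sequence
\[
0 \to \Theta_Y(-\log E) \to \Theta_Y \to \kn \to 0,
\]
with $\kn$ supported on $E$ and determined by the normal bundles $\ko_{E_i}(E_i)$ together with corrections at the double points, and the twisting sequence
\[
0 \to \ko_Y(-K_Y-E) \to \ko_Y(-K_Y) \to \ko_E(-K_Y) \to 0.
\]
Because $Y\to X$ is proper over a Stein germ with one-dimensional fibres, all the relevant $H^2$ vanish, so the associated long exact sequences express $-h^1(S_Y)+h^1(\Lambda^2 S_Y)$ in terms of $-h^1(\Theta_Y)+h^1(-K_Y)$ plus boundary contributions computable from the weighted dual graph (the genera $g_i$, the self-intersections $E_i^2$, and the numbers $-K_Y\cdot E_i$).

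Carrying out this bookkeeping, I expect the conjectural quantity $Q:=h^1(\ko_Y)-h^1(S_Y)+h^1(\Lambda^2 S_Y)$ to equal $p_g-h^1(\Theta_Y)+h^1(-K_Y)$ up to an explicit combinatorial term built from $E$, where $p_g=h^1(\ko_Y)$. Comparing with Theorem \ref{thm:Wa1}, which writes $\mu_\phi-e_\phi$ in exactly these invariants together with $\chi_T(E)$ and $\alpha_\phi$, I expect the boundary term to absorb $\chi_T(E)$ and the additive constant, so that in the Gorenstein case ($\alpha_\phi=0$) the number $Q$ reproduces precisely $\tilde\mu(X,x)-\tilde e(X,x)$. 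For Gorenstein singularities the inequality and the equality characterization would then follow at once from Wahl's Gorenstein comparison (the theorem immediately preceding the conjecture). This pins down the true novelty: the passage to the non-Gorenstein situation, where $\alpha_\phi$ no longer vanishes. I would stress that here $Q$ must be controlled \emph{intrinsically} on $Y$, with no reference to a smoothing — indeed the statement is phrased purely through resolution data and does not presuppose smoothability, so for non-smoothable $(X,x)$ the invariants $\mu_\phi,e_\phi,\alpha_\phi$ are not even available, yet $Q$ still makes sense.

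For the intrinsic inequality I would imitate the method that produced Theorems \ref{thm:GS} and \ref{thm:GL}: realise $Q$ as the dimension of a single cohomology or cokernel group attached to a logarithmic complex on $Y$, so that non-negativity becomes automatic. Concretely I would study the hypercohomology of the holomorphic logarithmic de Rham complex $\Omega^\bullet_Y(\log E)$, equivalently the Koszul-type complex $\Lambda^\bullet S_Y$, together with its weight filtration, and run the associated spectral sequence. The vanishing of the $H^2$ terms and Grothendieck--Serre duality relative to $Y\to X$ should collapse the alternating combination of $H^1$'s defining $Q$ into the dimension of one graded piece of a connecting homomorphism, which is manifestly $\geq 0$. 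The crucial input, playing the role that $H^1(E,\C)=0$ and normality played in the proof of Theorem \ref{thm:GS}, will be a vanishing statement on $E$ that forces degeneration of the spectral sequence in the relevant bidegree.

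The equality assertion I would approach through the $\C^*$-action. If $(X,x)$ is quasihomogeneous then, by Pinkham's equivariant theory (Theorem \ref{thm:pinkham}), the entire construction is graded, the connecting map above respects the grading, and the relevant graded piece vanishes for weight reasons, giving $Q=0$. The converse — that $Q=0$ forces a good $\C^*$-action — is where I expect the real difficulty to lie, in exact parallel with the implication $\mu=\tau\Rightarrow$ quasihomogeneous for complete intersections (Theorem \ref{thm:glsv}(3), due to Vosegaard). One would have to show that the vanishing of the torsion-type quantity measured by $Q$ trivialises a class obstructing the existence of an Euler vector field, and then integrate that field. This analytic integration step, rather than the numerical inequality, is the main obstacle I anticipate, and it is presumably the reason the statement is still only a conjecture.
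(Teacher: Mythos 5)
This statement is an open conjecture: the paper gives no proof of it, and only records the one known piece, namely Wahl's Theorem 3.3 that a quasihomogeneous non-Gorenstein $(X,x)$ satisfies $h^1(\ko_Y)-h^1(S_Y)+h^1(\Lambda^2 S_Y)=0$. Your proposal is a research programme rather than a proof, and the two steps that would constitute the actual content of the conjecture are exactly the ones you leave as expectations. First, the non-negativity of $Q:=h^1(\ko_Y)-h^1(S_Y)+h^1(\Lambda^2 S_Y)$: you say you would ``realise $Q$ as the dimension of a single cohomology or cokernel group'' via the hypercohomology of $\Lambda^\bullet S_Y$, but you do not exhibit the complex, the map, or the vanishing statement on $E$ that would make the alternating sum collapse to a single non-negative term. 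There is no a priori reason such a collapse occurs; an alternating sum of three $h^1$'s has no sign in general, and the analogues of the inputs you invoke from the proof of Theorem \ref{thm:GS} (normality of $X$, $H^1(E,\Z)=0$) are not available here without further argument. Second, the converse direction ($Q=0\Rightarrow$ quasihomogeneous) is not addressed beyond the remark that one would need to produce and integrate an Euler vector field; as you yourself concede, this is precisely why the statement is still a conjecture.

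There are also two smaller points worth flagging. Your proposed reduction of the Gorenstein case to Theorem \ref{thm:Wa1} is beside the point, since the conjecture is stated only for non-Gorenstein $(X,x)$; and the identification of the ``boundary term'' absorbing $\chi_T(E)$ is asserted, not computed, so even the consistency check with $\tilde\mu-\tilde e$ is not actually carried out. The one direction you could plausibly complete with the tools in the paper is the forward implication for quasihomogeneous $(X,x)$ via Pinkham's equivariant theory (Theorem \ref{thm:pinkham}), which is the part Wahl has in fact proved; but that alone does not touch the inequality or the converse, which remain open.
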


The quasihomogeneous case is settled by Wahl himself (\cite[Theorem 3.3]{Wa15}).
 \begin{theorem}[Wahl]
If $(X,x)$ is quasihomogeneous and not Gorenstein, then 
$h^1(\ko_Y) - h^1(S_Y) + h^1(\Lambda^2 S_Y) = 0.$
 \end{theorem}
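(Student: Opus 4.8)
The plan is to exploit the $\C^*$-action supplied by quasihomogeneity through the Euler vector field it generates, and to extract the vanishing from a Koszul-type complex built out of this field together with the weight grading it induces on cohomology. First I would lift the good $\C^*$-action on $(X,x)$ to the minimal good resolution $(Y,E)$; by uniqueness of the minimal good resolution the action extends and preserves $E$, so $\ko_Y$, $S_Y=(\Omega^1_Y(\log E))^*$ and $\Lambda^2 S_Y\cong\ko_Y(-K_Y-E)$ all become $\C^*$-equivariant. Hence each group $H^q(Y,-)$ of these sheaves carries a weight decomposition $\bigoplus_{\nu\in\Z}H^q(Y,-)_\nu$, and, since the fibres of $Y\to X$ are at most one-dimensional, $H^q=0$ for $q\ge 2$ while every weight space is finite-dimensional. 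The infinitesimal generator of the action is the Euler field $\theta$, a global section of $S_Y$ of weight $0$; viewed logarithmically, its vanishing along $E$ is absorbed by the log structure, so $\theta$ vanishes only at the finitely many isolated fixed points of the action and is a regular section of the rank-two bundle $S_Y$. It therefore yields a Koszul complex
\[
K^\bullet:\qquad 0\to\ko_Y\xrightarrow{\ \theta\wedge\ }S_Y\xrightarrow{\ \theta\wedge\ }\Lambda^2 S_Y\to 0,
\]
whose only nonzero cohomology sheaf is a skyscraper $Q=\Lambda^2 S_Y|_Z$ on the fixed-point scheme $Z$; dually, contraction $\iota_\theta$ makes $0\to\Omega^2_Y(\log E)\to\Omega^1_Y(\log E)\to\ko_Y\to\ko_Z\to 0$ a resolution.

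Next I would compute the hypercohomology of $K^\bullet$ two ways. Since $K^\bullet$ is quasi-isomorphic to $Q[-2]$ and $Q$ is a skyscraper, $\mathbb{H}^n(K^\bullet)=H^{n-2}(Y,Q)$ vanishes for $n\ne 2$. Comparing this with the spectral sequence $E_1^{p,q}=H^q(Y,K^p)$, whose differential is induced by $\theta\wedge$, gives weight by weight the Euler-characteristic identity
\[
h^1(\ko_Y)_\nu-h^1(S_Y)_\nu+h^1(\Lambda^2 S_Y)_\nu=\bigl[h^0(\ko_Y)_\nu-h^0(S_Y)_\nu+h^0(\Lambda^2 S_Y)_\nu\bigr]-\dim_\C Q_\nu,
\]
in which all terms are finite-dimensional. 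The device that kills the nonzero weights is Cartan's formula $L_\theta=d\,\iota_\theta+\iota_\theta\,d$: on the weight-$\nu$ part with $\nu\ne 0$ the operator $\tfrac1\nu d$ is a $\C$-linear, weight-preserving contracting homotopy for the contraction complex $(\Omega^\bullet_Y(\log E),\iota_\theta)$, so its hypercohomology vanishes in every nonzero weight, forcing $\ko_Z$ (and the relevant twists) to be concentrated in weight $0$. Transporting this vanishing to $K^\bullet$ via the isomorphism $K^\bullet\cong(\Omega^\bullet_Y(\log E),\iota_\theta)\otimes\Lambda^2 S_Y$ and relative Grauert--Riemenschneider duality on the resolution, I would conclude that the alternating sum of the $h^1$'s collapses to its weight-$0$ part, reducing the theorem to showing the weight-$0$ contribution of the displayed identity vanishes.

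The main obstacle is precisely this weight-$0$ analysis, where Cartan's homotopy degenerates and the line-bundle twist obstructs a purely formal cancellation. Here I would pass to the geometry of the star-shaped resolution: the weight-$0$ parts of $\ko_Y,\,S_Y,\,\Lambda^2 S_Y$ and of $Q$ descend to the central orbifold curve $C_0$ of the Seifert fibration, and I would match them against invariant cohomology on $C_0$ --- the constants in $\ko_Y$, the line $\C\cdot\theta$ together with the genuine weight-$0$ logarithmic fields in $S_Y$, and the part of $Q_0$ concentrated at the special points of the fibration. An orbifold Riemann--Roch bookkeeping on $C_0$ should then make $h^0(\ko_Y)_0-h^0(S_Y)_0+h^0(\Lambda^2 S_Y)_0$ cancel against $\dim_\C Q_0$, yielding $h^1(\ko_Y)-h^1(S_Y)+h^1(\Lambda^2 S_Y)=0$. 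I expect the delicate points to be the exact determination of $Q_0$ and of the weight-$0$ logarithmic vector fields $H^0(S_Y)_0$, since it is exactly here that genuine quasihomogeneity (not merely the existence of a $\C^*$-action) is used to force equality, and that the non-Gorenstein hypothesis enters to guarantee that the combination under study is the correct invariant rather than the Gorenstein expression governed by Theorem~\ref{thm:Wa1}.
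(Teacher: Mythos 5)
First, be aware that the paper contains no proof of this statement to compare against: it is a survey item quoted from \cite[Theorem 3.3]{Wa15}, where Wahl establishes the identity by computing each of $h^1(\ko_Y)$, $h^1(S_Y)$ and $h^1(\Lambda^2 S_Y)$ explicitly on the star-shaped minimal good resolution, weight by weight in terms of the Seifert data and divisors on the central curve, and summing. Your proposal therefore has to stand on its own, and as written it has genuine gaps. The most serious one is that the decisive step is missing: your reduction (equivariant lift of the action, weight decomposition, Cartan homotopy in nonzero weights) only pushes the entire problem into the weight-zero slot, where you say an "orbifold Riemann--Roch bookkeeping on $C_0$ should make the terms cancel." That cancellation \emph{is} the theorem; nothing in the proposal establishes it. It is also the only place where genuine quasihomogeneity and the non-Gorenstein hypothesis could enter, and since your sketch never actually invokes the latter, it would apparently apply verbatim in the Gorenstein case, which the paper indicates is governed by a different identity (Theorem \ref{thm:Wa1}); an argument blind to that hypothesis cannot be complete as stated.

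Second, the geometric input underlying your Koszul complex is unverified and in one place almost certainly wrong. On the minimal good resolution of a weighted homogeneous surface singularity the central curve $C_0$ is pointwise fixed and the arm curves are orbit closures; a local check at a general point of $C_0$ (coordinates with $C_0=\{v=0\}$ and linearized action $t\cdot(u,v)=(u+O(v),t^av+\cdots)$, $a\neq 0$, so $\theta=(a+O(v))\cdot(v\partial_v)+vh\cdot\partial_u$), at the nodes of $E$ (where $\theta$ has coordinates $(a,b)\neq(0,0)$ in the basis $u\partial_u,v\partial_v$), at the free fixed point of the last arm curve, and on $Y\setminus E$ shows that $\theta$ is a \emph{nowhere-vanishing} section of $S_Y$, not a regular section with isolated zeros. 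Hence $Q=0$, your complex is a short exact sequence of sheaves, and your balance identity degenerates: the long exact cohomology sequence reduces the theorem to the surjectivity of $\theta\wedge\colon H^0(Y,S_Y)\to H^0(Y,\Lambda^2 S_Y)$, which is once more exactly the content to be proved rather than a step toward it. (The Cartan-homotopy treatment of the nonzero weights also needs an equivariant \v{C}ech or Dolbeault model to make sense of $\tfrac1\nu d$ acting on hypercohomology of the non-affine $Y$, but that part is repairable.) In short, the strategy of reducing to the central curve is in the right spirit, but the proposal stops precisely where the theorem begins.
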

 
For further conjectures and results concerning smoothings of (special classes of) normal surface singularities, in particular for different formulas for $\mu_\phi$ and $e_\phi$, we refer to \cite{Wa15}.
 \bigskip
 
 At the end of this section, let us sketch the main steps in the proof of Theorem \ref{thm:GL} because the method is valid in a very general deformation theoretic setting (\cite[Section 3]{GL85}) and some aspects in our special situation are interesting in itself. For details of what follows we refer to \cite[Section 1 and 2]{GL85}.
 \medskip
 
 Let  $\phi :  (\sX,x) \to (\C,0)$ be a deformation of an isolated singularity  $(X,x)$.
 Deformations of a morphism were considered  in Definition \ref{def:var defs} and we consider now deformations of
 $\phi$.  In particular we have (Definition \ref{def:embedded defs})
 	$$ T^1_{(\sX,x)/(\C,0)} = \uDef_{(\sX,x)/(\C,0)}(T_{\eps}).$$
	
Let us give an explicit description of $T^1_{(\sX,x)/(\C,0)}$. 
We may assume that $\phi$ is
embedded (Corollary \ref{cor:defs can be embedded}), i.e.
$(\sX,x) \subset (\C^N \times \C,0)$ is an embedding such that its composite with the projection on $(\C, 0)$ yields $\phi$.  
Choose a good representative
$\phi :  \sX  \subset B \times D \to D$ and let  
 $\ki \subset \ko_ {B \times D}$ be the ideal sheaf defining $\sX$. Then we have an exact sequence of $\ko_\sX$-modules

\begin{equation*}
\ki/\ki^2\stackrel{\alp}{\longrightarrow}\Omega^1_{B \times D / D} 
\otimes_{\ko_{B \times D}}
\ko_\sX\longrightarrow\Omega^1_{\sX/D}\longrightarrow 0\,,
\end{equation*}

and dualizing with  $\kHom_{\ko_\sX} (-,\ko_\sX)$ we get the exact sequence
\begin{equation*}
0\longrightarrow\Theta_{\sX/D} \longrightarrow\Theta_{B\times D/D}\otimes_{\ko_{B\times D}}
\ko_\sX\stackrel{\bet}{\longrightarrow}  \kHom_{\ko_\sX} (\ki/\ki^2,\ko_\sX).
\end{equation*}

\begin{lemma} Setting
 $ T^1_{\sX/D} := \coker (\beta)$, we have an exact sequence of sheaves 
$$ 0\rightarrow\Theta_{\sX/D} \rightarrow\Theta_{B\times D/D}\otimes
\ko_\sX\stackrel{\bet}{\rightarrow}  \kHom_{\ko_\sX} (\ki/\ki^2,\ko_\sX) \rightarrow T^1_{\sX/D} \rightarrow 0.$$
Moreover, $T^1_{\sX/D,x} =T^1_{(\sX,x)/(\C,0)}$, i.e. an element of $T^1_{\sX/D,x}$ may be regarded as a deformation  $\Phi: (\sY,x) \to (D,0) \times T_\eps \to T_\eps$ of $\phi$, 
which induces $\phi: (\sX,x) \to (D,0) \to \{0\}$ by restricting to  
$\{0\} \subset T_\eps$ (up to isomorphism).
\end{lemma}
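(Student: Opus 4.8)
The statement asks to prove two things about the sheaf $T^1_{\sX/D} := \coker(\beta)$. First, that the four-term sequence displayed is exact — this is just the assembly of the two displayed exact sequences (the Zariski–Jacobi sequence and its dual) into a single long sequence, with $T^1_{\sX/D}$ defined as the cokernel of $\beta$. Second, and this is the real content, that the stalk $T^1_{\sX/D,x}$ agrees with $T^1_{(\sX,x)/(\C,0)} = \uDef_{(\sX,x)/(\C,0)}(T_\eps)$, the set of isomorphism classes of first-order deformations of the morphism $\phi$ over $(\C,0)$ (i.e. deformations of $(\sX,x)/(\C,0)$), together with the interpretation of an element as a deformation $\Phi$ of $\phi$.

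**Plan for the proof.** The exactness of the sequence is immediate: I would simply note that the first three terms form the dual of the Zariski–Jacobi sequence (already established in the excerpt immediately above the Lemma, with $\beta$ the $\ko_\sX$-dual of $\alpha$), and that $T^1_{\sX/D}$ is \emph{defined} to be $\coker(\beta)$, so appending it yields exactness at $\kHom_{\ko_\sX}(\ki/\ki^2,\ko_\sX)$ and $T^1_{\sX/D}$ by construction. The substantive step is the stalk identification. Here the plan is to transcribe, verbatim, the argument already carried out for the absolute case in Proposition \ref{prop:t1xx} and Remark \ref{rmk:t1xx}, working now relative to $(\C,0)$ rather than over a point. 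Concretely: writing $\ki = \langle F_1,\dots,F_k\rangle$ with the $F_i \in \ko_{B\times D}$, an embedded first-order deformation of $(\sX,x)/(\C,0)$ over $T_\eps$ is given by perturbations $F_i + \eps G_i$ with $G_i \in \ko_{B\times D, x}$, where crucially the perturbation keeps the projection to $(\C,0)$ fixed (this is what ``$/(\C,0)$'' encodes, so the $G_i$ are relative, i.e. the deformation is trivial over the $(\C,0)$-factor). Flatness of the total deformation forces every relation among the $F_i$ to lift, exactly as in Remark \ref{rmk:t1xx}(1), which shows the tuple $(G_1,\dots,G_k)$ defines an element of $\kHom_{\ko_\sX}(\ki/\ki^2,\ko_\sX)$.

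**The isomorphism $\gamma$ and triviality.** Next I would define the comparison map $\gamma$ exactly as in Remark \ref{rmk:t1xx}(1), sending $(G_1,\dots,G_k)$ to the homomorphism $\sum a_i F_i \mapsto \sum [a_i G_i]$, and verify well-definedness via the relation-lifting just established. The content of Remark \ref{rmk:t1xx}(2)–(3) then transfers directly: two such deformations are isomorphic iff $G_i - G_i' \in \ki$ for all $i$, and the deformation is trivial as an (abstract, relative) deformation iff there is a \emph{relative} vector field $\partial \in \Theta_{B\times D/D, x}$ with $G_i \equiv \partial(F_i) \pmod \ki$. The image of such relative vector fields under $\beta$ is precisely $\im(\beta)$, so passing to the quotient by trivial deformations identifies the isomorphism classes with $\coker(\beta)_x = T^1_{\sX/D,x}$. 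The only genuine adaptation from the absolute case is bookkeeping the word ``relative'': one uses $\Omega^1_{B\times D/D}$ and $\Theta_{B\times D/D}$ in place of their absolute analogues, reflecting that we neither deform the base $(\C,0)$ nor the map to it.

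**Main obstacle.** The hard part will not be any single calculation — each piece is a relative restatement of Proposition \ref{prop:t1xx} — but rather pinning down precisely that ``deformation of $(\sX,x)/(\C,0)$'' means a deformation over $T_\eps$ inducing the \emph{trivial} deformation of the base $(\C,0)$ (Definition \ref{def:var defs}(2)), and confirming that this constraint is exactly what cuts the ambient module $\Omega^1_{B\times D}$ down to the relative module $\Omega^1_{B\times D/D}$. Once that translation is firmly in place, the final geometric assertion — that an element of $T^1_{\sX/D,x}$ is represented by a deformation $\Phi:(\sY,x)\to (D,0)\times T_\eps \to T_\eps$ of $\phi$ restricting to $\phi$ over $\{0\}\subset T_\eps$ — follows from unwinding the definition of $\uDef_{(\sX,x)/(\C,0)}(T_\eps)$, since an embedded relative first-order deformation over $T_\eps$ is by construction exactly such a $\Phi$.
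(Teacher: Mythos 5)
Your proposal is correct and follows essentially the same route the paper takes (the paper itself only states this lemma in its sketch of the Greuel--Looijenga argument and defers details to \cite{GL85}): the four-term sequence is the dualized relative Zariski--Jacobi sequence with its cokernel appended by definition, and the stalk identification is the verbatim relativization of Proposition \ref{prop:t1xx} and Remark \ref{rmk:t1xx}, with $\Theta_{B\times D/D}$ replacing the absolute tangent sheaf precisely because an isomorphism of deformations of $(\sX,x)/(\C,0)$ must commute with the projection to $(D,0)\times T_{\eps}$. The one step you leave implicit --- that every abstract deformation of $(\sX,x)/(\C,0)$ over $T_{\eps}$ can be embedded over $(D,0)$, so that the comparison with $\coker(\beta)_x$ sees all isomorphism classes --- is exactly Proposition \ref{prop:emb def}, so nothing essential is missing.
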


By Theorem \ref{thm:GL}  we have to consider the cokernel of the map $\Theta_{\sX/D,x} \to \Theta_{X,x}$, which appears in the following exact sequence.

\begin{lemma}
For any deformation of an isolated singularity as above
there is an exact sequence of $\ko_\sX$-modules,
$$
 0\rightarrow\Theta_{\sX/D}\stackrel{\phi}{\rightarrow}\Theta_{\sX/D} \rightarrow
 \Theta_X \rightarrow T^1_{\sX/D} \stackrel{\phi}\rightarrow T^1_{\sX/D}  
 \rightarrow T^1_{X}.
$$
\end{lemma}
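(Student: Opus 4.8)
The plan is to package the four-term sequence of the previous lemma as a two-term complex and then apply the snake lemma to multiplication by $\phi$. Write $C^\bullet$ for the complex $\Theta_{B\times D/D}\otimes_{\ko_{B\times D}}\ko_\sX \xrightarrow{\;\beta\;} \kHom_{\ko_\sX}(\ki/\ki^2,\ko_\sX)$, placed in degrees $0$ and $1$. By the preceding lemma its cohomology is $H^0(C^\bullet)=\Theta_{\sX/D}$ and $H^1(C^\bullet)=T^1_{\sX/D}$, so $C^\bullet$ simultaneously encodes the two sheaves that must appear in the asserted sequence.

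First I would exploit flatness. Since $\phi$ is flat, the coordinate $t$ on $(D,0)$ is a non-zerodivisor on $\ko_\sX$, giving the short exact sequence $0\to\ko_\sX\xrightarrow{\phi}\ko_\sX\to\ko_X\to 0$. Both terms of $C^\bullet$ are $\phi$-torsion free: the source is free over $\ko_\sX$, and the target is a $\kHom$ into $\ko_\sX$, on which $\phi$ acts as a non-zerodivisor. Hence multiplication by $\phi$ yields a short exact sequence of complexes $0\to C^\bullet\xrightarrow{\phi}C^\bullet\to C^\bullet\otimes_{\ko_\sX}\ko_X\to 0$, and its long exact cohomology sequence reads $0\to\Theta_{\sX/D}\xrightarrow{\phi}\Theta_{\sX/D}\to H^0(C^\bullet\otimes\ko_X)\to T^1_{\sX/D}\xrightarrow{\phi}T^1_{\sX/D}\to H^1(C^\bullet\otimes\ko_X)\to 0$. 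This already has the exact shape of the claim, with the two multiplication maps in place; it remains to identify the two restricted cohomology groups with $\Theta_X$ and with a group mapping to $T^1_X$.

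The key step is to compare $C^\bullet\otimes_{\ko_\sX}\ko_X$ with the absolute complex $C_0^\bullet$ of the special fibre, namely $\Theta_B\otimes\ko_X\xrightarrow{\beta_0}\kHom_{\ko_X}(\ki_0/\ki_0^2,\ko_X)$ with $\ki_0=\langle f_1,\dots,f_k\rangle$ the ideal of $X\subset B$, whose cohomology is $\Theta_X$ and $T^1_X$ by Proposition \ref{prop:t1xx}. In degree $0$ the identification is clean, since $\Theta_{B\times D/D}\otimes\ko_X=\bigoplus_j\ko_X\,\partial/\partial x_j=\Theta_B\otimes\ko_X$. In degree $1$ I would use the natural surjection $(\ki/\ki^2)\otimes_{\ko_\sX}\ko_X\twoheadrightarrow\ki_0/\ki_0^2$ induced by $F_i\mapsto f_i$ together with the base-change map for $\kHom$; because $\partial F_i/\partial x_j|_{t=0}=\partial f_i/\partial x_j$, the reduction of $\beta$ modulo $\phi$ agrees with $\beta_0$ after these identifications. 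Feeding $H^0(C^\bullet\otimes\ko_X)=\Theta_X$ and the resulting natural map $H^1(C^\bullet\otimes\ko_X)\to T^1_X$ into the long exact sequence then produces exactly the stated six-term sequence.

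The hard part is this last comparison, precisely because $\kHom$ does not commute with the base change $-\otimes_{\ko_\sX}\ko_X$. Thus $\kHom_{\ko_\sX}(\ki/\ki^2,\ko_\sX)\otimes\ko_X$ is not literally $\kHom_{\ko_X}(\ki_0/\ki_0^2,\ko_X)$; the two are linked only through the base-change homomorphism and the surjection above, whose kernels carry $\mathrm{Tor}$ and torsion contributions. I expect that verifying $H^0(C^\bullet\otimes\ko_X)=\Theta_X$ will itself require care: the long exact sequence only shows this group to be an extension of the $\phi$-torsion of $T^1_{\sX/D}$ by $\Theta_{\sX/D}\otimes\ko_X$, so one must check that restricting relative vector fields to the fibre realises exactly $\Theta_X$, with cokernel the $\phi$-torsion of $T^1_{\sX/D}$. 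The same failure of base change is what forces the induced map on $H^1$ to land in $T^1_X$ without being surjective, which is why the asserted sequence legitimately terminates at $T^1_X$ rather than at $0$.
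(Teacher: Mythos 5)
The paper itself gives no argument for this lemma (it defers to \cite[Sections 1 and 2]{GL85}), and your strategy --- realizing $\Theta_{\sX/D}$ and $T^1_{\sX/D}$ as $H^0$ and $H^1$ of the two-term complex $C^\bullet$, using flatness to see that multiplication by $t=\phi$ gives a short exact sequence of complexes, and taking the long exact cohomology sequence --- is exactly the natural one and is the route of the cited source. The six-term shape, the two multiplication maps, and the reason the sequence ends at $T^1_X$ without surjecting onto it are all correctly accounted for.

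The step you leave open is, however, the load-bearing one, and you overestimate its difficulty: the comparison of $C^\bullet\otimes\ko_X$ with the corresponding complex of the special fibre closes cleanly, with no Tor or torsion corrections in the place where you fear them. First, flatness gives $\ki\cap t\,\ko_{B\times D}=t\ki$, so the natural surjection $(\ki/\ki^2)\otimes_{\ko_\sX}\ko_X\to\ki_0/\ki_0^2$ is in fact an isomorphism. Second, the base-change map
$$\kHom_{\ko_\sX}(\ki/\ki^2,\ko_\sX)\otimes_{\ko_\sX}\ko_X\longrightarrow \kHom_{\ko_X}(\ki_0/\ki_0^2,\ko_X)$$
is \emph{injective}: if a homomorphism $m$ takes all its values in $t\ko_\sX$ then, $t$ being a non-zerodivisor on $\ko_\sX$, the assignment $\xi\mapsto m(\xi)/t$ is again well defined and $\ko_\sX$-linear, so $m\in t\kHom_{\ko_\sX}(\ki/\ki^2,\ko_\sX)$. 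Since $\beta_0$ factors as the reduction $\bar\beta$ of $\beta$ modulo $t$ followed by this injection, one gets $\ker\bar\beta=\ker\beta_0=\Theta_X$, which is precisely the identification $H^0(C^\bullet\otimes\ko_X)=\Theta_X$ you were missing; and the same injectivity shows that the induced map $H^1(C^\bullet\otimes\ko_X)=\coker\bar\beta\to\coker\beta_0=T^1_X$ is injective, which is what converts the tail $\to H^1(C^\bullet\otimes\ko_X)\to 0$ of your long exact sequence into exactness of the stated sequence at the last $T^1_{\sX/D}$. With these two observations inserted, your argument is complete.
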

\medskip

Now let $(S,s)$ be a complex germ and $j: (\C,0) \to (S,s)$ a morphism. We set
	$$ \Theta(j) := Der_\C(\ko_{S,s}, \ko) \text{ with }  \ko :=\ko_{\C,0} =\C\{t\}.$$
	
For $\zeta \in  \Theta(j)$ define $j_\zeta^* : \ko_{S,s} \to \ko_{\C,0}[\eps]/\eps^2$ by
$j_\zeta^* = j^* + \eps \zeta$. This ring map defines a morphism of complex germs $j_\zeta : (\C,0) \times T_\eps \to (S,s)$, which extends $j$. Hence $j_\zeta$ is a deformation of $j$. 
Applying the left-exact functor $Der_\C(\ko_{S,s} , -)$ to 
$0  \rightarrow \ko \stackrel{t}{\rightarrow} \ko \rightarrow \C \rightarrow 0$,
we have an exact sequence
$$ 
0 \rightarrow \Theta(j) \stackrel{t}{\rightarrow}\Theta(j) \rightarrow Der_\C(\ko_{S,s} , \C) \cong T_{S,s} \rightarrow 0,
$$	
where $T_{S,s}$ is the Zariski tangent space of $(S,s)$. Hence $\Theta(j)$
is a free $\ko_{\C,0}$ module of rank $ \dim_\C(\Theta(j) / t  \Theta(j))$ and 
$\Theta(j) / t  \Theta(j) =  \Theta(j) \otimes \C$ maps injectively onto a subspace $V$ of $T_{S,s}$.
Since $\Theta(j)$ is free, it follows
\begin{lemma}
$$
\begin{array}{cllll}
	\dim_\C V &=& \dim_\C \Theta(j) \otimes \C = rk_\ko \Theta(j) \\
		         &=& \text{dim of the Zariski tangent space of S}\\
		         & & \text{at the generic point of the image of } j.
\end{array}
$$
\end{lemma}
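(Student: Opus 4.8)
The statement bundles three equalities, of which the first two are formal and the last carries all the geometric content; the plan is to dispatch them in that order.

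First I would record that $\Theta(j)=\Der_\C(\ko_{S,s},\ko)$ is a free $\ko$-module. Fixing an embedding $(S,s)\subseteq(\C^r,\bo)$ with coordinates $z_1,\dots,z_r$, a derivation is determined by the values $D(z_i)$, so $D\mapsto(D(z_1),\dots,D(z_r))$ identifies $\Theta(j)$ with a submodule of $\ko^r$. Since $\ko=\C\{t\}$ is a local principal ideal domain and $\ko^r$ is free, the finitely generated torsion-free submodule $\Theta(j)$ is itself free; this is exactly the freeness invoked to form $0\to\Theta(j)\xrightarrow{t}\Theta(j)\to T_{S,s}$. Freeness gives $\Theta(j)\otimes_\ko\C=\Theta(j)/t\Theta(j)\cong\C^{\operatorname{rk}_\ko\Theta(j)}$, which is the middle equality $\dim_\C\Theta(j)\otimes\C=\operatorname{rk}_\ko\Theta(j)$. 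The left-hand equality $\dim_\C V=\dim_\C\Theta(j)\otimes\C$ is then immediate: $V$ is by definition the image of the map $\Theta(j)/t\Theta(j)\to T_{S,s}$, which is injective by exactness at the middle term of the sequence above (its kernel is $t\Theta(j)$), so it preserves dimensions.

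Next I would make the rank computable. Writing $\ko_{S,s}=\C\{z_1,\dots,z_r\}/\langle g_1,\dots,g_m\rangle$ and $j^*(z_i)=j_i(t)\in\ko$, a tuple $(\delta_1,\dots,\delta_r)\in\ko^r$ defines a derivation of $\C\{z\}$ into $\ko$ that descends to $\ko_{S,s}$ precisely when it annihilates every $g_l$, i.e. when
$$\sum_{i=1}^r j^*\!\Bigl(\tfrac{\partial g_l}{\partial z_i}\Bigr)\,\delta_i=0\quad\text{in }\ko,\qquad l=1,\dots,m.$$
Hence $\Theta(j)=\ker\bigl(M\colon\ko^r\to\ko^m\bigr)$, where $M=\bigl(j^*(\partial g_l/\partial z_i)\bigr)$ is the Jacobian matrix of $g$ pulled back along $j$. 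Tensoring the left-exact sequence $0\to\Theta(j)\to\ko^r\xrightarrow{M}\ko^m$ with the fraction field $K=\operatorname{Frac}(\ko)$, which is flat over $\ko$, yields $\operatorname{rk}_\ko\Theta(j)=\dim_K\ker(M\otimes K)=r-\operatorname{rk}_K M$.

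Finally, and this is the one real step, I would identify $\operatorname{rk}_K M$ with a Jacobian rank at a general point of the image. The entries of $M$ lie in $\ko=\C\{t\}$, so the non-vanishing of a fixed $(\operatorname{rk}_K M)\times(\operatorname{rk}_K M)$ minor is a non-trivial condition cut out by finitely many points of the disc; thus for all but finitely many $t_0$ the numerical matrix $M(t_0)$ has rank $\operatorname{rk}_K M$. For such $t_0\neq 0$ set $q:=j(t_0)$, a general closed point of the image curve $C=\overline{\operatorname{im}(j)}$; then $M(t_0)=\bigl(\partial g_l/\partial z_i\bigr)(q)$ is the ordinary Jacobian of $g$ at $q$, and
$$\operatorname{rk}_\ko\Theta(j)=r-\operatorname{rk}\bigl(\partial g_l/\partial z_i\bigr)(q)=\dim_\C T_{S,q},$$
the last equality being the standard identification of the Zariski tangent space of the germ $(S,q)=V(g_1,\dots,g_m)$ with the kernel of its Jacobian. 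By upper semicontinuity of the tangent dimension this value is constant on a dense open subset of $C$, i.e. it is the dimension of the Zariski tangent space of $S$ at the generic point of the image of $j$. The main obstacle is exactly this last paragraph: one must use the arc $j$ to pass from the rank of the pulled-back Jacobian over $K$ — an a priori algebraic quantity along the parametrisation — to the honest Jacobian rank at a general closed point of the image, and confirm that the tangent dimension is generically constant there. A coordinate-free variant is available (localise $\Theta(j)\otimes K=\Hom_{\ko_{S,s}}(\Omega^1_{\ko_{S,s}/\C},K)$, which factors through $\kappa(\fp)$ for $\fp=\ker j^*$ and computes $\dim_{\kappa(\fp)}\Omega^1_{\ko_{S,s}/\C}\otimes\kappa(\fp)$), but the concrete Jacobian description above seems the most transparent route.
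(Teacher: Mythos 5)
Your proof is correct and follows the same route the paper takes: the first two equalities are deduced, exactly as you do, from the freeness of $\Theta(j)$ over $\ko=\C\{t\}$ and the injectivity of $\Theta(j)/t\Theta(j)\to T_{S,s}$ coming from the left-exact sequence $0\to\Theta(j)\xrightarrow{t}\Theta(j)\to T_{S,s}$. The paper asserts the final equality with no further argument beyond ``since $\Theta(j)$ is free, it follows,'' and your computation of $\operatorname{rk}_\ko\Theta(j)=r-\operatorname{rk}_K M$ via the pulled-back Jacobian, evaluated at a general point $j(t_0)$ of the image, is precisely the justification it leaves implicit (compare the Remark identifying $V$ with the limit of the tangent spaces $T_{S,j(t)}$ in the Grassmannian).
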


\begin{remark}
The following geometric interpretation of $V$ may be helpful. Embed $(S,s)$ in some $(\C^k,0)$. The Zariski tangent spaces $T_{S,j(t)} \subset \C^k$ fit together to form an analytic vector bundle over the punctured disk  $D_\del \setminus \{0\} \subset \C$ for sufficiently small $\del$. Then $V$ is limit of the Zariski tangent space $T_{S,j(t)}$ for $t \to 0$, taken in the Grassmannian of subspaces in $\C^k $. 
\end{remark}
\medskip

Now let $(S,s)$ be the base space of the semiuniversal deformation of $(X,x)$.
$\zeta \in  \Theta(j)$ determines a morphism $j_\zeta : (\C,0) \times T_\eps \to (S,s)$
extending $j$ as above and hence by pullback a deformation of $(X,x)$ over 
$(\C,0) \times T_\eps$ extending $\phi$. Thus we get an element of $T^1_{\sX/\C,x}$
and the corresponding map  $\Theta(j) \to T^1_{\sX/\C,x}$ is surjective by versality
(Definition \ref{def:versal}). We get

\begin {lemma} \noindent
Let $(S,s)$ be the base space of the semiuniversal deformation of $(X,x)$.
\begin{enumerate}
\item The natural $\ko$-homomorphism $\Theta(j) \to T^1_{\sX/\C,x}$ is onto.
\item The image of \, $ T^1_{\sX/\C,x} \to T^1_{X,x} $ coincides with the image of \,
$\Theta(j) \otimes \C \to T_{S,s}$ under the identification $T^1_{X,x} \cong T_{S,s}$. 
\end{enumerate}
\end{lemma}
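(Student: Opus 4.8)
The plan is to derive both assertions from the versality of the semiuniversal deformation $\Psi\colon(\sY,y)\to(S,s)$ together with the Kodaira--Spencer isomorphism $T^1_{X,x}\cong T_{S,s}$ of Lemma~\ref{lem:Kodaira-Spencer map}, organised around a single commutative square. Assertion (1) merely makes the preceding ``surjective by versality'' remark precise, while (2) then drops out by a diagram chase.

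For (1), I would take an arbitrary class $\eta\in T^1_{\sX/\C,x}$, realised (as in the lemma recalled above) by a first--order deformation $\Phi\colon(\sZ,x)\to(\C,0)\times T_\eps\to T_\eps$ of $\phi$ restricting to $\phi$ over $\{0\}\subset T_\eps$. The first step is to reinterpret $\eta$ as a deformation of $(X,x)$ over the two--dimensional base $(\C,0)\times T_\eps$: the total space $\sZ$ is flat over $T_\eps$ by hypothesis, the base $(\C,0)\times T_\eps$ is flat over $T_\eps$, and the fibre over $\eps=0$ is $\phi\colon\sX\to(\C,0)$, which is flat; hence the local criterion of flatness in fibres forces $\sZ\to(\C,0)\times T_\eps$ to be flat. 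Now I would invoke versality of $\Psi$ (Definition~\ref{def:versal}(2)) for the deformation $\eta$ over $(\C,0)\times T_\eps$, the closed embedding $k\colon(\C,0)=(\C,0)\times\{0\}\hookrightarrow(\C,0)\times T_\eps$, and the base change $\varphi'=j$, noting that $k^\ast\eta\cong\phi=j^\ast\Psi$. This yields $\varphi\colon(\C,0)\times T_\eps\to(S,s)$ with $\varphi\circ k=j$ and $\eta\cong\varphi^\ast\Psi$. Writing $\varphi^\ast(h)=j^\ast(h)+\eps\,\zeta(h)$ for $h\in\ko_{S,s}$, the fact that $\varphi^\ast$ is a ring homomorphism reducing to $j^\ast$ modulo $\eps$ forces $\zeta$ to obey the Leibniz rule over $j^\ast$, i.e. $\zeta\in\Theta(j)$ and $\varphi=j_\zeta$. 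Thus $\eta\cong j_\zeta^\ast\Psi$ is the image of $\zeta$; the isomorphism respects the two projections of the base, hence is an isomorphism of deformations of $\phi$ over $T_\eps$, and surjectivity follows. ($\ko$--linearity of the map is immediate from the construction.)

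For (2) I would set up the square
\[
\begin{CD}
\Theta(j) @>>> T^1_{\sX/\C,x}\\
@VVV @VVV\\
\Theta(j)\otimes\C @>>> T^1_{X,x}
\end{CD}
\]
where the top arrow is the map of (1), the left arrow is reduction modulo $t$, the right arrow restricts a deformation of $\phi$ to the special fibre $X=\phi^{-1}(0)$, and the bottom arrow is the composite $\Theta(j)\otimes\C\hookrightarrow T_{S,s}\xrightarrow{\cong}T^1_{X,x}$. The crux is commutativity: tracing $\zeta$ across the top and down the right edge, the restriction of $j_\zeta^\ast\Psi$ to $\{0\}\subset T_\eps$ is the pull--back of $\Psi$ along $j_\zeta|_{\{0\}}\colon T_\eps\to(S,s)$, and this restricted morphism is exactly the tangent vector $\bar\zeta=\zeta\bmod t\in T_{S,s}=\Mor(T_\eps,(S,s))$; hence the resulting class is $[\bar\zeta^\ast\Psi]$, which is by definition the Kodaira--Spencer image of $\bar\zeta$, i.e. the value along the left--then--bottom edge. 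With commutativity in hand, and since both the top arrow (by (1)) and the left arrow are surjective, a chase gives $\operatorname{im}\!\big(T^1_{\sX/\C,x}\to T^1_{X,x}\big)=\operatorname{im}\!\big(\Theta(j)\otimes\C\to T_{S,s}\big)$ under $T^1_{X,x}\cong T_{S,s}$, which is assertion (2).

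The step I expect to be the main obstacle is the flatness reinterpretation in (1): a class in $T^1_{\sX/\C,x}$ is a priori only a deformation of the \emph{map} $\phi$ with flatness over $T_\eps$, and one must be sure it is a genuine flat deformation of $(X,x)$ over the product base before the versal property of $\Psi$ can be applied. This is precisely where flatness of $\phi$ itself enters, through the fibrewise flatness criterion. Once this is secured, the Leibniz identification of $\varphi$ with some $j_\zeta$ and the explicit matching of the restriction map with Kodaira--Spencer are routine.
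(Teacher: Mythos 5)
Your proposal is correct and follows essentially the same route as the paper (and its source \cite{GL85}): the map is $\zeta\mapsto j_\zeta^{\ast}\Psi$, surjectivity of $\Theta(j)\to T^1_{\sX/\C,x}$ comes from the extension property of versality applied to a class in $T^1_{\sX/\C,x}$ viewed as a deformation of $(X,x)$ over $(\C,0)\times T_{\eps}$ extending $\phi$, and (2) follows from compatibility with the Kodaira--Spencer identification. You have merely filled in the details the survey leaves implicit — the fibrewise flatness criterion needed to make that reinterpretation legitimate, the Leibniz-rule identification $\varphi=j_\zeta$, and the commutative square for (2) — all of which are carried out correctly.
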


Now we can derive easily the main result  from \cite{GL85}, which implies Wahl's conjecture (Theorem \ref{thm:GL}).
\begin{theorem}[Greuel, Looijenga] \label{thm.GL2}
Let $ \Psi : (\sY,y) \to (S,s)$ be the semiuniversal deformation of $(X,x)$ and 
$\phi: (\sX,x) \to (\C,0)$ induced by $j: (\C,0) \to (S,s)$. Then the dimension of the Zariski tangent space of $(S,s)$ at the generic point of the image of $j$ equals 
     $$rk _\ko T^1_{\sX/\C,x} + \dim_\C \coker (\Theta_{\sX/\C,x} \to \Theta_{X,x}).$$
\end{theorem}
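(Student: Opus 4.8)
The plan is to reduce the statement to linear algebra over the discrete valuation ring $\ko = \ko_{\C,0} = \C\{t\}$, using the module structures provided by the preceding lemmas. Write $M := T^1_{\sX/\C,x}$. The surjection $\Theta(j) \twoheadrightarrow M$ coming from versality, together with the fact that $\Theta(j)$ is a \emph{free} $\ko$-module of finite rank, shows at once that $M$ is a finitely generated $\ko$-module, so $rk_\ko M$ is well defined. Since $\ko$ is a DVR with uniformizer $t$, the structure theorem yields a decomposition $M \cong \ko^{r} \oplus \bigoplus_{i=1}^{s}\ko/(t^{a_i})$ with $r = rk_\ko M = rk_\ko T^1_{\sX/\C,x}$ and all $a_i \geq 1$. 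From this decomposition I read off the only two numerical facts the argument needs: multiplication by $t$ is injective on the free part and has a one–dimensional kernel on each cyclic torsion summand, so $\dim_\C \ker(t\colon M \to M) = s$; and $\dim_\C M/tM = r + s$, the free part contributing $r$ and each torsion summand contributing $1$.

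Next I would feed these into the six–term exact sequence of $\ko_\sX$-modules from the earlier lemma, taken in the germ at $x$,
$$ 0 \to \Theta_{\sX/\C,x} \xrightarrow{\;t\;} \Theta_{\sX/\C,x} \to \Theta_{X,x} \to M \xrightarrow{\;t\;} M \to T^1_{X,x}, $$
in which the two self–maps are multiplication by $t = \phi^{\sharp}(t)$. Exactness at $\Theta_{X,x}$ and at the first copy of $M$ identifies $\coker(\Theta_{\sX/\C,x} \to \Theta_{X,x})$ with the image of $\Theta_{X,x} \to M$, which equals $\ker(t\colon M \to M)$; hence $\dim_\C \coker(\Theta_{\sX/\C,x} \to \Theta_{X,x}) = s$. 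Exactness at the second copy of $M$ gives $\ker(M \to T^1_{X,x}) = tM$, so the image of the last map is isomorphic to $M/tM$ and therefore has dimension $r + s$.

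It remains to match this image with the geometric quantity in the statement. By the lemma identifying images, $\operatorname{im}(T^1_{\sX/\C,x} \to T^1_{X,x})$ coincides, under the Kodaira–Spencer isomorphism $T^1_{X,x} \cong T_{S,s}$, with the subspace $V = \operatorname{im}(\Theta(j)\otimes_\ko \C \to T_{S,s})$; and by the lemma computing $\dim_\C V$, this dimension equals $rk_\ko \Theta(j)$, which is precisely the dimension of the Zariski tangent space of $S$ at the generic point of the image of $j$. Chaining the equalities now gives
$$ \dim_\C V = \dim_\C M/tM = r + s = rk_\ko T^1_{\sX/\C,x} + \dim_\C \coker(\Theta_{\sX/\C,x} \to \Theta_{X,x}), $$
which is the asserted formula.

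I do not expect a serious conceptual obstacle, since the two substantive inputs — versality furnishing the surjection $\Theta(j)\twoheadrightarrow M$, and the identification of the two descriptions of $V$ — are already isolated in the lemmas. The only point demanding genuine care is the bookkeeping over the DVR: one must check that the connecting maps in the six–term sequence really are multiplication by $t$, so that the kernel/cokernel identifications above are valid, and that $M$ is honestly finitely generated so that both $rk_\ko M$ and the torsion count $s$ make sense. Both are immediate, the latter from the finite–rank freeness of $\Theta(j)$.
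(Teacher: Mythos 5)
Your derivation is correct and is precisely the assembly of the paper's preceding lemmas that the text leaves implicit with the phrase ``Now we can derive easily the main result'': the surjection $\Theta(j)\twoheadrightarrow T^1_{\sX/\C,x}$ gives finite generation over the DVR $\ko$, the six-term sequence identifies $\coker(\Theta_{\sX/\C,x}\to\Theta_{X,x})$ with the $t$-torsion count and $\operatorname{im}(T^1_{\sX/\C,x}\to T^1_{X,x})$ with $M/tM$, and the identification of that image with $V$ closes the chain. This matches the paper's intended argument, so nothing further is needed.
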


In particular, if the generic point of the image of $j$ is nonsingular (e.g. if the fibre over the generic point is smooth or rigid), then this is the dimension of the irreducible component of $(S, s)$ to which $j$ maps.
\medskip

By openness of versality (Theorem \ref{thm:openness of versality}) $ \Psi : \sY \to S$ is a joint versal deformation of $(\sX_t,z)$
for any point $t \in S$ close to $s$ and any $z \in Sing(\sX_t)$. Therefore the germ 
$(S,t)$ is isomorphic to the cartesian product of the germs of the
 semiuniversal base spaces of $(\sX_t,z)$, which we denote by $(S_{\sX_t},t)$,  and an extra smooth factor (Proposition \ref{prop:versal -- semiuniversal})
 which we denote by $(T,t)$.
Since $\phi_* T^1_{\sX/\C}$ is free at a generic point $t$ in the image of $j$, we see that
	$$rk _\ko T^1_{\sX/\C,x}  = \sum_{z \in Sing(\sX_t)} \dim_\C T^1_{\sX_t,z} .$$
which is equal to the embedding dimension of $(S_{\sX_t},t)$ and differs from the embedding dimension of $(S,s)$ by $\dim (T,t)$. Theorem \ref{thm.GL2} implies therefore

\begin {corollary} \label{cor:extrafactor}
If $t$ is a generic point of the image of $j$ then
$$\dim(S,t) = 
\dim (S_{\sX_t},t) + \dim_\C \coker (\Theta_{\sX/\C,x} \to \Theta_{X,x}).$$
\end {corollary}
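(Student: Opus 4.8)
The plan is to extract the Corollary from Theorem \ref{thm.GL2} by comparing two numerical invariants of the germ $(S,t)$ at a generic point $t$ of the image of $j$: its embedding dimension $\edim(S,t)=\dim_\C T_{S,t}$, which is precisely the Zariski tangent dimension appearing in Theorem \ref{thm.GL2}, and its Krull dimension $\dim(S,t)$, which appears in the Corollary. The gap between the two is the codimension of a smooth factor that splits off, and the whole point is to identify that gap with $\dim_\C\coker(\Theta_{\sX/\C,x}\to\Theta_{X,x})$.

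First I would record the local product structure of $(S,t)$. By openness of versality (Theorem \ref{thm:openness of versality}), for $t$ sufficiently close to $s$ the morphism $\Psi$ induces a versal deformation of the multigerm $\coprod_{z\in\Sing(\sX_t)}(\sX_t,z)$; Proposition \ref{prop:versal -- semiuniversal} then gives an isomorphism $(S,t)\cong(S_{\sX_t},t)\times(T,t)$ with $(T,t)\cong(\C^p,\bo)$ smooth, where $(S_{\sX_t},t)$ denotes the base of the semiuniversal deformation of that multigerm. Reading off Krull and embedding dimensions of this product, and using $\edim(T,t)=\dim(T,t)=p$ since $(T,t)$ is smooth, yields
$$\dim(S,t)=\dim(S_{\sX_t},t)+p\qquad\text{and}\qquad\edim(S,t)=\edim(S_{\sX_t},t)+p.$$

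Next I would identify the terms $\edim(S_{\sX_t},t)$ and $\edim(S,t)$ with the quantities in Theorem \ref{thm.GL2}. By Lemma \ref{lem:Kodaira-Spencer map} (in its multigerm form) the Zariski tangent space of $(S_{\sX_t},t)$ is $\bigoplus_{z}T^1_{\sX_t,z}$, so $\edim(S_{\sX_t},t)=\sum_{z\in\Sing(\sX_t)}\dim_\C T^1_{\sX_t,z}$; as noted in the paragraph preceding the Corollary, genericity of $t$ makes $\phi_*T^1_{\sX/\C}$ free there, whence this sum equals $rk_\ko T^1_{\sX/\C,x}$. On the other hand $\edim(S,t)=\dim_\C T_{S,t}$ is exactly the Zariski tangent dimension computed in Theorem \ref{thm.GL2}, namely $rk_\ko T^1_{\sX/\C,x}+\dim_\C\coker(\Theta_{\sX/\C,x}\to\Theta_{X,x})$. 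Substituting $\edim(S_{\sX_t},t)=rk_\ko T^1_{\sX/\C,x}$ into the second displayed identity forces $p=\dim_\C\coker(\Theta_{\sX/\C,x}\to\Theta_{X,x})$, and plugging this value of $p$ into the first displayed identity gives the asserted formula.

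The deduction itself is pure dimension bookkeeping; the substance lives entirely in the two inputs I am importing. The genuinely hard result is Theorem \ref{thm.GL2}, already established. The only subtle point within the present argument is the passage to a generic $t$: the equality $rk_\ko T^1_{\sX/\C,x}=\sum_z\dim_\C T^1_{\sX_t,z}$ relies on $\phi_*T^1_{\sX/\C}$ being locally free at $t$, which can fail at the special value $t=0$, where the fibre dimension may jump. Restricting to a generic point, where the coherent sheaf $\phi_*T^1_{\sX/\C}$ on the smooth curve is free and where the product splitting of $(S,t)$ is available, is precisely what allows the embedding dimensions to be matched across the isomorphism $(S,t)\cong(S_{\sX_t},t)\times(T,t)$.
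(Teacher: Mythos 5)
Your proposal is correct and follows essentially the same route as the paper: the splitting $(S,t)\cong(S_{\sX_t},t)\times(T,t)$ via openness of versality and Proposition \ref{prop:versal -- semiuniversal}, the identification $\edim(S_{\sX_t},t)=rk_\ko T^1_{\sX/\C,x}$ at a generic $t$ where $\phi_*T^1_{\sX/\C}$ is free, and the comparison with the Zariski tangent dimension from Theorem \ref{thm.GL2} to pin down the dimension of the smooth factor. The bookkeeping with embedding versus Krull dimensions is exactly the paper's argument, just spelled out more explicitly.
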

\medskip

The first general formula for the dimension of a smoothing component was obtained by Deligne \cite{De73} in the case of reduced curve singularities. Although his formula is local, Deligne's proof uses global methods. As an application of Corollary \ref{cor:extrafactor} a purely local proof of Deligne's formula was given in \cite{GL85}.

Let $(X,x)$ be a reduced curve singularity and $(\overline{X,x})$ its normalization.
Since  any derivation on $(X, x)$ lifts uniquely (in characteristic 0) to 
$(\overline{X,x})$ (cf. \cite{De73}), we get natural inclusions 
$\Theta_{X,x} \subset \Theta_{\overline{X,x}}$. 

\begin{theorem}[Deligne]\label{thm:deligne}
Any smoothing component of $(X,x)$ has dimension
   $$  3\delta(X,x) - \dim_\C \Theta_{\overline{X,x}}/\Theta_{X,x},$$
   with $\delta(X,x) = \dim_\C \ko_{\overline{X,x}}/\ko_{X,x}.$
\end{theorem}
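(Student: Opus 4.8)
The plan is to reduce the statement to a single length computation via Theorem~\ref{thm:GL} and then to evaluate that length on the normalization. Let $\phi:(\sX,x)\to(\C,0)$ be a smoothing lying over the component $(S',s)$. By Theorem~\ref{thm:GL} it suffices to prove
$$\dim_\C\coker\bigl(\Theta_{\sX/\C,x}\to\Theta_{X,x}\bigr)=3\,\delta(X,x)-\dim_\C\Theta_{\overline{X,x}}/\Theta_{X,x}.$$
The striking feature of this identity is that the left-hand side, which \emph{a priori} depends on the chosen smoothing, must turn out to depend only on $(X,x)$; making this independence visible is half the work.

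First I would linearize the problem. The restriction map $\Theta_{\sX/\C,x}\to\Theta_{X,x}$ kills $\phi\,\Theta_{\sX/\C,x}$, so its image and cokernel coincide with those of the induced map $\Theta_{\sX/\C,x}\otimes_{\ko_{\C,0}}\C\to\Theta_{X,x}$. Feeding this into the exact sequence of the preceding Lemma,
$$0\to\Theta_{\sX/\C,x}\xrightarrow{\phi}\Theta_{\sX/\C,x}\to\Theta_{X,x}\to T^1_{\sX/\C,x}\xrightarrow{\phi}T^1_{\sX/\C,x}\to T^1_{X,x},$$
identifies the cokernel in question with $\ker\bigl(\phi\colon T^1_{\sX/\C,x}\to T^1_{\sX/\C,x}\bigr)$. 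Because $\phi$ is a smoothing, its relative $T^1$ is concentrated at $x$ and hence is a finite-dimensional $\C$-vector space on which multiplication by $\phi$ is nilpotent; therefore $\dim_\C\ker\phi=\dim_\C\coker\phi=\dim_\C\bigl(T^1_{\sX/\C,x}\otimes_{\ko_{\C,0}}\C\bigr)$. This already exhibits the cokernel as a genuine invariant of the family and turns the target into the computation of a finite length.

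Next I would bring in the normalization. Since every derivation of $(X,x)$ lifts uniquely to $(\overline{X,x})$, we have $\im(\Theta_{\sX/\C,x})\subseteq\Theta_{X,x}\subseteq\Theta_{\overline{X,x}}$, all with finite colength at $x$; additivity of length along this chain gives
$$\dim_\C\coker\bigl(\Theta_{\sX/\C,x}\to\Theta_{X,x}\bigr)=\dim_\C\coker\bigl(\Theta_{\sX/\C,x}\to\Theta_{\overline{X,x}}\bigr)-\dim_\C\Theta_{\overline{X,x}}/\Theta_{X,x}.$$
Thus the whole theorem reduces to the single clean identity
$$\dim_\C\coker\bigl(\Theta_{\sX/\C,x}\to\Theta_{\overline{X,x}}\bigr)=3\,\delta(X,x),$$
in which $\Theta_{\overline{X,x}}$ is free of rank equal to the number of branches, so the left-hand side is the colength of the submodule generated by the branchwise restrictions of the relative vector fields on the total space.

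The main obstacle is precisely this last identity. My plan is to compute the image inside the free module $\Theta_{\overline{X,x}}$ through the conductor ideal $\mathfrak c$ of $\ko_{\overline{X,x}}$ in $\ko_{X,x}$, using $\dim_\C\ko_{\overline{X,x}}/\ko_{X,x}=\delta$ together with the complementary length estimates for $\ko_{X,x}/\mathfrak c$ and $\ko_{\overline{X,x}}/\mathfrak c$; the coefficient $3$ should emerge as one copy of $\delta$ coming from the normalization defect on coefficient functions plus a further $2\delta$ contributed by the relative differentials of the family and by the duality/ramification along the normalization. Two points are delicate. First, I must show that the image of $\Theta_{\sX/\C,x}$ depends only on $(X,x)$ and not on the smoothing, which I would deduce from the finite-length interpretation of the second paragraph by a conservation-of-length argument comparing the singular central fibre with the smooth generic (Milnor) fibre, so that the length cannot change with $\sX$. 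Second, I must carry out the conductor bookkeeping so that \emph{exactly} three copies of $\delta$ appear and no spurious terms survive. Once both are settled, the displayed identity, and with it Deligne's formula, follows.
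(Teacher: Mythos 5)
Your reduction is sound and follows the route the paper takes: you invoke Theorem~\ref{thm:GL}, use the exact sequence
$0\to\Theta_{\sX/\C,x}\xrightarrow{\phi}\Theta_{\sX/\C,x}\to\Theta_{X,x}\to T^1_{\sX/\C,x}\xrightarrow{\phi}T^1_{\sX/\C,x}\to T^1_{X,x}$
to identify $\coker(\Theta_{\sX/\C,x}\to\Theta_{X,x})$ with $\ker\bigl(\phi\colon T^1_{\sX/\C,x}\to T^1_{\sX/\C,x}\bigr)$ (legitimate, since for a smoothing $T^1_{\sX/\C}$ is supported at $x$ and hence finite-dimensional with $\phi$ nilpotent), and then use $\mathrm{im}(\Theta_{\sX/\C,x})\subseteq\Theta_{X,x}\subseteq\Theta_{\overline{X,x}}$ and additivity of colengths to reduce everything to the single identity
$\dim_\C\coker\bigl(\Theta_{\sX/\C,x}\to\Theta_{\overline{X,x}}\bigr)=3\,\delta(X,x)$.
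This is exactly the point at which the proof actually begins, and it is the point at which your proposal stops. The appearance of the coefficient $3$ is the entire content of Deligne's theorem; saying that it "should emerge as one copy of $\delta$ from the normalization defect plus a further $2\delta$ from the relative differentials and duality/ramification" is a guess at the answer, not an argument. The genuine proof (Deligne's, or the local one in \cite{Gre82} and \cite{GL85} underlying Theorem~\ref{thm:gre}) requires the dualizing module $\omega$, the conductor $\sC$, and the precise length identities $\dim_\C\omega/\overline\Omega=\delta$, $\dim_\C\overline\ko/\tilde\omega=c-\delta$, $\dim_\C\tilde\omega/\ko=2\delta-c$ recorded in Section~\ref{sec:curves}; none of this bookkeeping is present in your sketch.

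There is also a concrete flaw in the one argument you do outline for the first "delicate point." You propose to show that $\dim_\C T^1_{\sX/\C,x}\otimes_{\ko_{\C,0}}\C$ is independent of the smoothing by "conservation of length" between the central fibre and the Milnor fibre. But for a smoothing $T^1_{\sX_t,y}=0$ for every $y$ in every nearby fibre — that is what being a smoothing means — so there is no length on the generic fibre to conserve, and $T^1_{\sX/\C}$ is not flat over $(\C,0)$, so no semicontinuity or conservation principle applies. (Indeed, for non-smoothing one-parameter deformations the analogous quantity genuinely depends on the family.) In the actual proof the independence is not established a priori; it is a consequence of computing the cokernel directly, for an arbitrary smoothing, and finding that the answer is the intrinsic quantity $3\delta-\dim_\C\Theta_{\overline{X,x}}/\Theta_{X,x}$. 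So the proposal is a correct framing of the problem, but the theorem itself remains unproved.
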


Smoothing questions of curve singularities will be treated in teh next section.

\bigskip

 \subsection {Curve Singularities} \label{sec:curves}
 This section is about smoothing components of reduced curve singularities, the question of their smoothability, and the topology of the Milnor fibre.
 
At first glance the situation is different from that for singularities of bigger dimension.
 There are no topological obstructions to smoothability: a small perturbation of the parameterization of the curve singularity $(X,x)$ by linear terms parametrizes a smooth curve. However, the particular fiber of a family defined by ``deformation of the parametrization'' has in general embedded components and only the reduction of the special fiber agrees with $(X,x)$  (cf. \cite[1.2]{Ha74}).

A point of interest is Deligne's formula  (Theorem \ref{thm:deligne}) for the dimension of the smoothing component of a reduced curve singularity.  In the following we reformulate it by using more common invaraints of  $(X,x)$, and relate it to the Milnor number. 
Through our reformulation this formula is effectively computable and provides a useful criterion for non-smoothability of a curve singularity. This is shown by applying it to the examples of Pinkham. 
\medskip

Let us first fix the notations for the most common invariants of a reduced curve singularity $(X,x) \subset (\C^n,0)$. Let  
$\ko = \ko_{X,x}$ be the local ring, $\fm$ the  maximal ideal of $\ko$ and
         $$ n: (\overline{X,x}) \to (X,x)$$ 
the normalization with semi-local ring $\overline\ko = n_*\ko_{\overline{X,x}}$ and its Jacobson radical $\overline \fm$. 
Moreover, we set 
$$
\begin{array}{cllllll}
\sC &=& \text{Ann}_{\overline\ko}(\overline\ko/\ko) &=& \text{ conductor ideal},\\
\Omega &=& \Omega^1_{X,x} &=& \text{ holomorphic  (K\"ahler) 1-forms on } (X,x),\\
T\Omega &=& H^0_{\{x\}}(\Omega) &=& \text{ torsion submodule of } \Omega,\\
\overline\Omega &=& n_*\Omega^1_{\overline{X,x}} &=& \text{ holomorphic 1-forms on } (\overline{X,x}),\\
\omega &=& \omega_{X,x} &=& \text{ dualizing module of } (X,x),\\
\Theta &=& \Hom_\ko(\Omega,\ko) &=&  \text{ module of derivations on } (X,x),\\
\overline\Theta &=& \Hom_{\overline\ko}(\overline\Omega,\overline\ko) &=&  \text{ module of derivations on } (\overline{X,x}).\\
\end{array}
$$

Recall that $\omega = \Ext^{n-1}_{\ko_{\C^n,0}}(\ko,\Omega^n_{\C^n,0}) $ can be identified with $\omega = \{\gamma \in \overline\Omega\otimes K | \,\text{res}(\gamma) = 0 \}$, with $K$ the total ring of fractions of $\ko$.
Let $d: \ko \to \omega$ be defined  as the composition 
$$ \ko \xrightarrow{d} \Omega \xrightarrow{j}  \overline\Omega  \hookrightarrow \omega,$$
with $d: \ko \to \Omega$ the exterior derivation. 
We use the following classical numerical invariants (and $\mu$ from \cite{BG80}) of 
$(X,x)$:

$$
\begin{array}{clllllll}
\del &=& \del (X,x)&=& \dim_\C \overline \ko /\ko &=&  \text{ delta-invariant},\\
\mu &=& \mu (X,x)&=& \dim_\C \omega /d\ko &=&  \text{ Milnor number},\\
\lambda &=& \lambda(X,x) &=&  \dim_\C \omega/j\Omega  &=&  \text{ lambda-invariant},\\
\tau' &=& \tau' (X,x)&=& \dim_\C T\Omega &=&  \text{ length of the torsion},\\
\tau &=& \tau (X,x)&=& \dim_\C T^1_{X,x} &=&  \text{ Tjurina number},\\
m &=& m(X,x)&=& \dim_\C \overline\ko /\fm\overline\ko &=& \text{ multiplicity of }\ko,\\
r &=& r (X,x)&=& \dim_\C \overline\ko/\overline\fm &=&  \text{ number of branches},\\
t &=& t (X,x)&=& \dim_\C \omega/\fm \omega &=&  \text{ Cohen-Macaulay type},\\
c &=& c (X,x)&=& \dim_\C \overline\ko/\sC &=&  \text{ multiplicity of conductor}.\\
\end{array}
$$
Note that $j\Omega \cong \Omega/T\Omega$ and  $\tau' = \tau$ by Proposition \ref{prop:t1xx} (3) and duality (see  \cite[1.2 Satz]{Gre80}). Moreover, $(X,x)$ is Gorenstein iff $t=1$. 

We introduce
	$$m_1 = m_1 (X,x) := \dim_\C \overline \Theta /\Theta,$$
	$$ e = e(X, x) := 3\del - m_1,$$
and call $e$ the {\em Deligne number}\index{Deligne number} of $(X,x)$.
Recall two important relations among these invariants from \cite{BG80} and 
\cite{De73}.

\begin{theorem} \label{thm:DeGr}
Let $(X,x)$ be a reduced curve singularity.
\begin{enumerate}
\item {\em (Buchweitz, Greuel)} $\mu = 2 \del - r +1$.
\item {\em (Deligne)} $\dim E =e$ for every smoothing component $E$ of $(X,x)$.
\end{enumerate}
\end{theorem}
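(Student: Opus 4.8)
The plan is to prove the two assertions by quite different means, since (1) is a length computation on the normalization while (2) is a restatement of an already-established dimension formula. For (1) I would start from the definition $\mu = \dim_\C \omega/d\ko$ and exploit the chain of $\ko$-submodules
\[ d\ko \;\subset\; \overline\Omega \;\subset\; \omega . \]
Here $\overline\Omega \subset \omega$ holds because every holomorphic $1$-form on the smooth normalization has vanishing residues, so it satisfies the defining condition of $\omega = \{\gamma \in \overline\Omega\otimes K \mid \text{res}(\gamma)=0\}$; and $d\ko \subset \overline\Omega$ because $d$ factors as $\ko \xrightarrow{d}\Omega \xrightarrow{j}\overline\Omega$. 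Additivity of $\C$-length along this chain reduces the problem to the two colengths
\[ \mu = \dim_\C \omega/\overline\Omega + \dim_\C \overline\Omega/d\ko . \]

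To evaluate the first term I would invoke Grothendieck--Rosenlicht duality for the curve germ: $\ko$ is dual to $\omega$ and $\overline\ko$ is dual to $\overline\Omega$, and the inclusion $\ko \hookrightarrow \overline\ko$ dualizes to $\overline\Omega \hookrightarrow \omega$ with equal colength, giving $\dim_\C \omega/\overline\Omega = \dim_\C \overline\ko/\ko = \del$. For the second term I would use that the $r$ branches are smooth discs, so the exterior derivative $\overline d\colon \overline\ko \to \overline\Omega$ is onto with kernel the locally constant functions $\C^r$, while $d\ko = \overline d(\ko)$. Consequently $\overline\Omega/d\ko \cong \overline\ko/(\ko + \C^r)$, and since $\ko \cap \C^r = \C$ this has dimension $\del - (r-1)$. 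Adding the two contributions yields $\mu = \del + (\del - r + 1) = 2\del - r + 1$. I expect the one genuinely substantial input to be the duality identity $\dim_\C\omega/\overline\Omega = \del$; the remaining steps are elementary once one works on the normalization, where integration of power series makes $\overline d$ surjective.

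For (2) there is essentially nothing left to do. By definition $e = 3\del - m_1$ with $m_1 = \dim_\C \overline\Theta/\Theta$, where the inclusion $\Theta \subset \overline\Theta$ comes from the unique lift of derivations from $(X,x)$ to $(\overline{X,x})$ in characteristic zero. The assertion $\dim E = 3\del - \dim_\C \overline\Theta/\Theta$ for every smoothing component $E$ is then exactly Deligne's Theorem \ref{thm:deligne}, which we have already derived from the Greuel--Looijenga formula (Corollary \ref{cor:extrafactor}). So I would simply record (2) as the translation of \ref{thm:deligne} into the notation $e$, the only point worth noting being that $m_1$ is finite because $\Theta$ and $\overline\Theta$ agree away from $x$, so that $e$ is a well-defined integer.
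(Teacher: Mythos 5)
Your proposal is correct. The paper gives no proof of either part, citing [BG80] and [De73] instead, so there is no in-text argument to compare against; but your proof of (1) --- splitting $\dim_\C\omega/d\ko$ along the chain $d\ko\subset\overline\Omega\subset\omega$, getting $\dim_\C\omega/\overline\Omega=\delta$ from duality and $\dim_\C\overline\Omega/d\ko=\overline\ko/(\ko+\C^r)=\delta-r+1$ by integrating on the $r$ smooth branches of the normalization --- is precisely the Buchweitz--Greuel computation, and the duality identity you flag as the one substantial input reappears as item (1) of the Lemma stated just after this theorem. Your treatment of (2) as a pure restatement of Theorem \ref{thm:deligne} (which the paper obtains from Corollary \ref{cor:extrafactor}) in the notation $e=3\delta-m_1$ is exactly how the survey intends it, and your remark that $m_1<\infty$ because $\Theta$ and $\overline\Theta$ agree off $x$ is the only point that needed saying.
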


Before we consider the smoothing problem for curve singularities, let us recall
the main properties of $\mu$ from 
\cite[Theorem 4.2.2, 4.2.4]{BG80}.

\begin{theorem}[Buchweitz, Greuel]\label{thm:bg}
Let $\phi: \sX  \to D$ be a good representative of a deformation of the 
reduced curve singularity $(X,x)$ with Milnor fibre $F = \sX_t, t\ne 0$.
\begin{enumerate}
\item $\mu(X,x) - \mu(F) = b_1(F)$, 
\item $\mu(X,x) - \mu(F) \le \del(X,x) - \del(F) \le 0$.
\end{enumerate}
{\em Here 
$$\mu(F) := \sum_{y\in Sing(F)}\mu(F,y)$$ 
and similar for $\del$. Note that $\mu(X,x) =0$ iff $(X,x)$ is smooth.}
\end{theorem}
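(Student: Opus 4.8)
The plan is to reduce everything to the topology of the smooth model $\tilde F$ of the Milnor fibre, using two inputs already available: the genus formula $\mu = 2\del - r + 1$ of Theorem~\ref{thm:DeGr}(1), applied both to $(X,x)$ and to each singular point of $F$, and the connectedness of $F$ from Theorem~\ref{thm:connected}(3). Write $r := r(X,x)$, and for the Milnor fibre put $s := \#\Sing(F)$, $r(F) := \sum_{y\in\Sing(F)} r(F,y)$, $\del(F) := \sum_y \del(F,y)$, $\mu(F) := \sum_y \mu(F,y)$. The cited genus formula gives the purely numerical identities $\mu(X,x) = 2\del(X,x) - r + 1$ and $\mu(F) = 2\del(F) - r(F) + s$, so once I control $b_1(F)$ the theorem becomes bookkeeping.

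First I would assemble the elementary topology. As $(X,x)$ has an isolated singularity, $\partial F$ is the link of $(X,x)$, a disjoint union of $r$ circles, and since the normalization $\nu\colon \tilde F \to F$ is an isomorphism near the boundary, $\partial\tilde F$ is again $r$ circles. Collapsing the $r(F,y)$ preimages over each $y$ gives $\chi(F) = \chi(\tilde F) - (r(F) - s)$; writing $c_F$ and $g_F$ for the number of components and the total genus of the bordered surface $\tilde F$, one has $\chi(\tilde F) = 2c_F - 2g_F - r$; and connectedness of $F$ gives $b_1(F) = 1 - \chi(F)$.

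The quantitative heart of the argument is the conservation law
\[ \del(X,x) - \del(F) = r - c_F + g_F, \qquad\text{i.e.}\qquad \chi(\tilde F) = r - 2\bigl(\del(X,x)-\del(F)\bigr). \]
I would prove this by globalizing $\phi$ to a flat family $\bar\phi\colon \bar{\mathcal C}\to D$ of reduced projective curves that is a trivial product outside the Milnor ball, with $\bar{\mathcal C}_0$ singular only at $x$ and $\bar{\mathcal C}_t$ singular only along $\Sing(F)$. Flatness keeps $\chi(\mathcal O_{\bar{\mathcal C}_t})$ — equivalently the arithmetic genus — constant, and from $\chi(\mathcal O_{\tilde C}) = \chi(\mathcal O_C) + \del(C)$ one gets $\chi(\mathcal O_{\widetilde{\bar{\mathcal C}_0}}) - \chi(\mathcal O_{\widetilde{\bar{\mathcal C}_t}}) = \del(X,x)-\del(F)$. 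Since the two normalizations agree outside the ball and differ inside only by replacing the $r$ discs $\tilde X\cap B$ by $\tilde F$, a Mayer--Vietoris computation along the $r$ link circles converts this into the displayed formula. This globalization/conservation step is the main obstacle: it is the only place a genuinely global (or, in the spirit of \cite{BG80}, relative-dualizing-sheaf and coherence) input is needed, everything else being formal.

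Granting the conservation law, part~(1) is immediate: substituting $\chi(\tilde F)$ into $b_1(F) = 1 - \chi(\tilde F) + r(F) - s$ yields
\[ b_1(F) = 2\bigl(\del(X,x)-\del(F)\bigr) - r + r(F) + 1 - s = \mu(X,x) - \mu(F), \]
the last equality being the two numerical identities. For part~(2) I would establish the correctly oriented chain $0 \le \del(X,x)-\del(F) \le \mu(X,x)-\mu(F)$. Steinness of $F$ (hence of the finite cover $\tilde F$) forbids compact components of $\tilde F$, so each of its $c_F$ components carries at least one of the $r$ boundary circles, giving $c_F \le r$; with $g_F \ge 0$ the conservation law then gives $\del(X,x)-\del(F) = r - c_F + g_F \ge 0$, the semicontinuity of $\del$. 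For the upper bound, form the bipartite incidence graph $B$ on the $c_F$ components of $\tilde F$ and the $s$ points of $\Sing(F)$, with one edge per local branch; connectedness of $F$ forces $B$ connected, so $b_1(B) = r(F) - (c_F+s) + 1 \ge 0$, and
\[ \bigl(\mu(X,x)-\mu(F)\bigr) - \bigl(\del(X,x)-\del(F)\bigr) = b_1(F) - \bigl(r - c_F + g_F\bigr) = g_F + b_1(B) \ge 0, \]
with equality exactly when $\tilde F$ is rational and $B$ is a tree. Together with (1) this gives the chain $0\le\del(X,x)-\del(F)\le b_1(F)$.
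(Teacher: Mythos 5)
The paper itself gives no proof of this theorem; it is quoted from \cite{BG80}, where the argument is analytic rather than topological: one identifies $H^1(F,\C)$ with $\Gamma(F,\omega_F)/d\Gamma(F,\ko_F)$ by a Rosenlicht--de Rham theorem for the Stein curve $F$, and then compares this with $\mu(X,x)=\dim_\C\omega_{X,x}/d\ko_{X,x}$ via coherence and base change for $\phi_*\omega_{\sX/D}$ and $\phi_*(\omega_{\sX/D}/d\ko_\sX)$. Your route is genuinely different, and all of its purely topological bookkeeping is correct: the Euler characteristic relations for the normalization $\tilde F\to F$, the identity $b_1(F)=1-\chi(F)$ from connectedness and Steinness, the absence of compact components giving $c_F\le r$, and the bipartite incidence graph computation $(\mu(X,x)-\mu(F))-(\del(X,x)-\del(F))=g_F+b_1(B)\ge 0$ are all sound. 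You also correctly reorient part (2): as printed, the inequalities are reversed (already the node, with $\mu(X,x)-\mu(F)=\del(X,x)-\del(F)=1$, violates ``$\le 0$''); the intended statement is $\mu(F)-\mu(X,x)\le \del(F)-\del(X,x)\le 0$, which is exactly your chain.

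The genuine gap is the ``conservation law'' $\chi(\tilde F)=r-2\bigl(\del(X,x)-\del(F)\bigr)$. Given the genus formulas of Theorem \ref{thm:DeGr}(1) and the topology above, this identity is \emph{equivalent} to part (1), so it carries the entire analytic content of the theorem and cannot be treated as an auxiliary step. Your proof of it presupposes a flat projective family extending $\phi$ that is locally trivial outside the Milnor ball, so that arithmetic genera are constant and the normalizations of the two fibres agree away from $B_\eps$. For smoothings such a globalization is Looijenga's Theorem \ref{thm:loo} (which postdates \cite{BG80}); for an arbitrary one-parameter deformation with singular generic fibre, the existence of a holomorphic flat projective extension that is a product, or at least equisingular, outside the ball is a nontrivial theorem in its own right and is established nowhere here. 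Without it, the assertions ``flatness keeps $\chi(\ko_{\bar{\mathcal C}_t})$ constant'' and ``the normalizations differ only inside $B$'' have nothing to apply to. A purely local substitute, say via the normalization $\nu:\bar\sX\to\sX$ of the total surface and coherence of $\phi_*(\nu_*\ko_{\bar\sX}/\ko_\sX)$, is possible but delicate, because the fibres of the normalized total space are not the normalizations of the fibres of $\phi$; handling exactly this point is the substance of the argument in \cite{BG80}.
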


This and the following corollary show the in particular the topological meaning of $\mu$.

\begin{corollary}
\noindent
\begin{enumerate}
\item If $F$ is smooth, then $\mu(X,x) = b_1(F)$.
\item The following are equivalent:
\begin{enumerate}
\item [(i)] $\mu(\sX_t)$ is constant for all $t \in D$,
\item [(ii)] $\del(\sX_t)$ and $\sum_{y\in Sing(\sX_t)}(r(\sX_t,y)-1)$ are constant for $t \in D$,
\item [(iii)] $b_1(\sX_t) =0$ for all $t \in D$,
\item [(iv)] $\sX_t$ is contractible for all $t \in D$.

\end{enumerate}
\end{enumerate}
\end{corollary}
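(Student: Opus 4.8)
The plan is to reduce the four conditions to the two structural results already available for reduced curve singularities: the Buchweitz--Greuel identity and estimates of Theorem~\ref{thm:bg}, and the local formula $\mu = 2\del - r + 1$ of Theorem~\ref{thm:DeGr}(1). First I would fix a good representative $\phi:\sX\to D$ and make the four statements comparable. By L\^e's fibration theorem (Theorem~\ref{thm:le}) the restriction of $\phi$ over $D\setminus\{0\}$ is a topological fibre bundle; after shrinking $D$ the number and analytic types of the singular points of $\sX_t$ are constant for $t\neq 0$, so each of $\mu(\sX_t)$, $\del(\sX_t)$, $\rho(\sX_t):=\sum_{y}(r(\sX_t,y)-1)$ and $b_1(\sX_t)$ is constant for $t\neq 0$. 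Thus each of (i)--(iv) only compares the value at $t=0$ with the common value at $t\neq 0$. I also record that $b_1(\sX_0)=0$, since the special fibre is a contractible germ, and that summing $\mu=2\del-r+1$ over the singular points of a fibre yields $\mu(\sX_t)=2\del(\sX_t)-\rho(\sX_t)$ for every $t$.

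Part (1) is then immediate: if $F$ is smooth then $\mu(F)=0$, and Theorem~\ref{thm:bg}(1) gives $\mu(X,x)=b_1(F)$. For part (2) I would establish (i)$\Leftrightarrow$(iii), (iii)$\Leftrightarrow$(iv), (ii)$\Rightarrow$(i) and (i)$\Rightarrow$(ii), which together close the cycle. The implication (ii)$\Rightarrow$(i) is formal from $\mu=2\del-\rho$. For (i)$\Leftrightarrow$(iii) I would use Theorem~\ref{thm:bg}(1) in the form $\mu(\sX_0)-\mu(\sX_t)=b_1(\sX_t)$: since $b_1(\sX_0)=0$, constancy of $\mu$ is exactly the vanishing of $b_1(\sX_t)$. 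For (iii)$\Leftrightarrow$(iv) I would use that every fibre is connected --- for $t\neq 0$ by Theorem~\ref{thm:connected}(3), which applies because $(X,x)$ is reduced, and trivially for $t=0$ --- together with the fact that a connected one-dimensional Stein space has the homotopy type of a connected finite graph, hence is contractible precisely when its first Betti number vanishes.

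The only implication with genuine content is (i)$\Rightarrow$(ii), and this is where both estimates of Theorem~\ref{thm:bg}(2) are needed, in the form $0\le \del(\sX_0)-\del(\sX_t)\le \mu(\sX_0)-\mu(\sX_t)$. Assuming $\mu(\sX_t)$ constant, the already-proved equivalence with (iii) gives $b_1(\sX_t)=\mu(\sX_0)-\mu(\sX_t)=0$; the two-sided bound then forces $\del(\sX_0)-\del(\sX_t)=0$, so $\del$ is constant, and feeding this into $\rho(\sX_t)=2\del(\sX_t)-\mu(\sX_t)$ shows $\rho$ is constant, which is exactly (ii). I expect the main difficulty to be bookkeeping rather than deep: one must check that $\mu$, $\del$ and $\rho$ are constant on the entire punctured disc (so that the ``$t=0$ versus $t\neq0$'' dichotomy is legitimate), and one must invoke Theorem~\ref{thm:bg}(2) in the correct direction, since it is precisely the upper bound on the delta-jump, combined with its semicontinuity $\del(\sX_0)\ge\del(\sX_t)$, that lets a single scalar condition on $\mu$ pin down both $\del$ and $\rho$.
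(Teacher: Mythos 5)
The paper states this corollary without proof (it is simply imported from \cite{BG80}, together with Theorem \ref{thm:bg}), so there is no internal argument to compare yours against; judged on its own, your derivation from Theorems \ref{thm:bg} and \ref{thm:DeGr}(1) is correct and complete. Two remarks. First, you tacitly use Theorem \ref{thm:bg}(2) in the form $0\le \del(\sX_0)-\del(\sX_t)\le \mu(\sX_0)-\mu(\sX_t)$. The inequality as printed in the survey, $\mu(X,x)-\mu(F)\le \del(X,x)-\del(F)\le 0$, is incompatible with part (1) of the same theorem (it would force $b_1(F)\le 0$, hence $b_1(F)=0$, for every deformation of every reduced curve singularity), so the printed chain must be read with the inequalities reversed; your orientation is the correct one from \cite{BG80}, and it is exactly what is needed to squeeze $\del(\sX_0)-\del(\sX_t)$ between $0$ and $\mu(\sX_0)-\mu(\sX_t)=b_1(\sX_t)=0$ in the step (i)$\Rightarrow$(ii). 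Second, the preliminary reduction via Theorem \ref{thm:le} is harmless but unnecessary, and the assertion that the \emph{analytic} types of the singular points of $\sX_t$ are constant on the punctured disc is more than the fibration plus the Thom isotopy lemma actually give (only topological constancy follows, and analytic types can vary in moduli); since Theorem \ref{thm:bg} applies to each individual $t\ne 0$ with $F=\sX_t$, the identity $b_1(\sX_t)=\mu(\sX_0)-\mu(\sX_t)$ and the two-sided bound already compare every fibre directly with the special one, so the chain (ii)$\Rightarrow$(i)$\Leftrightarrow$(iii)$\Leftrightarrow$(iv) and (i)$\Rightarrow$(ii) closes without any prior knowledge of constancy on $D\setminus\{0\}$. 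The remaining ingredients --- $b_1(\sX_0)=0$ from the cone structure of the special fibre, $\mu(\sX_t)=2\del(\sX_t)-\sum_y(r(\sX_t,y)-1)$ obtained by summing Theorem \ref{thm:DeGr}(1) over the singular points, connectedness of the fibres from Theorem \ref{thm:connected}(3), and the fact that a connected one-dimensional Stein space has the homotopy type of a finite graph and is therefore contractible precisely when $b_1=0$ --- are all correctly deployed.
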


The Milnor number and the delta-invariant have been generalized to non-reduced curve singularities $(X,x)$ with an embedded component at $x$ in \cite{BrG90} with similar topological properties (however, the Milnor fibre need not be connected in this case). See also a generalization to
singularities of arbitrary dimension with $X\setminus \{x\}$ normal in \cite{Gre17}. 
\medskip

In order to get criteria for smoothability of reduced curve singularities we need further relations among the above invariants. It is convenient to work with fractional ideals, 
i.e. $\ko$-ideals in $K$.
$\overline\ko$ is the integral closure of $\ko$ in $K$ and choosing an 
$\overline\ko$-generator $\alpha$ of $\overline\Omega$ we get an 
isomorphism 
$$ \phi_\alpha : \overline\Omega\otimes K \xrightarrow{\cong} K,$$ 
with  $\quad  \phi_\alpha (\overline\Omega) = \overline\ko \subset K$. We denote the image of  $\omega$ in $K$ under $ \phi_\alpha $ again by $\omega$. 
It is shown in \cite[2.3 Lemma]{Gre82} 
that there exists an $ \overline\ko$-generator $f$ of $\sC$ 
such that $\tilde \omega : = f \omega$ satisfies
$$\ko \subset\tilde\omega\subset\overline\ko,  \quad   \sC = \tilde\omega:\overline\ko =\{h\in K |\, h\overline\ko  \subset \tilde\omega\},$$ 
with   $\ko =\tilde\omega$ iff $(X,x)$ is Gorenstein.
Moreover, if $\bar t$ is an $\overline \ko$-generator of $\overline \fm$ we set 
$\tilde\Omega := \bar t \cdot \phi_\alpha (j\Omega)$. 
Obviousliy $\tilde\Omega\cong \Omega/T\Omega$. 
For the proof of the following lemma see \cite[Section 2.4, 2.5]{Gre82}.

\begin{lemma}\noindent
\begin{enumerate}
\item $\dim_\C \omega/ \overline\Omega =  \del,$
\item $ \dim_\C\overline\ko/\tilde\omega =  c-\del,$
\item $ \dim_\C  \tilde\omega/\ko = 2\del -c,$
\item $ \dim_\C \tilde\omega \fm/\fm =  2\del-c - t+1,$
\end{enumerate}
\end{lemma}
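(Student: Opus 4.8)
The plan is to carry out every computation inside the total ring of fractions $K$, using the isomorphism $\phi_\alpha$ so that $\overline\Omega$ becomes $\overline\ko$ and $\omega,\tilde\omega$ become fractional $\ko$-ideals of $K$. Two purely formal principles then do most of the work: multiplication by the nonzerodivisor $f$ (which satisfies $f\overline\ko=\sC$ and $f\omega=\tilde\omega$) is a $\C$-linear automorphism of $K$, hence preserves the dimension of any subquotient; and $\dim_\C$ is additive along a chain of fractional ideals of finite colength. The only genuinely non-formal ingredient is (1); statements (2)--(4) are then bookkeeping.

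First I would prove (1). Since a differential that is regular on the normalization has no poles, it has vanishing residues, so the given description of $\omega$ gives $\overline\Omega\subset\omega$. The key input is the duality identity $\overline\Omega=\Hom_\ko(\overline\ko,\omega)$, which is Grothendieck duality for the finite birational map $n$, using $\omega_{\overline{X,x}}=\overline\Omega$ because $\overline{X,x}$ is smooth. Applying the contravariant functor $\Hom_\ko(-,\omega)$ to $0\to\ko\to\overline\ko\to\overline\ko/\ko\to 0$, and using that $\omega$ is dualizing (so $\Hom_\ko(\overline\ko/\ko,\omega)=0$ since $\omega$ is torsion free, $\Ext^1_\ko(\overline\ko,\omega)=0$ since $\overline\ko$ is maximal Cohen--Macaulay, and $\dim_\C\Ext^1_\ko(\overline\ko/\ko,\omega)=\dim_\C\overline\ko/\ko$ by local duality for the finite length module $\overline\ko/\ko$), yields the short exact sequence $0\to\overline\Omega\to\omega\to\Ext^1_\ko(\overline\ko/\ko,\omega)\to 0$. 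Reading off dimensions gives $\dim_\C\omega/\overline\Omega=\dim_\C\overline\ko/\ko=\del$, which is (1).

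For (2) and (3) I would transport (1) through multiplication by $f$. Because $f\overline\ko=\sC$ and $f\omega=\tilde\omega$, the automorphism $h\mapsto fh$ carries the inclusion $\overline\ko\subset\omega$ of colength $\del$ (this is (1) in $K$) to $\sC\subset\tilde\omega$, so $\dim_\C\tilde\omega/\sC=\del$. Together with the chain $\sC\subset\tilde\omega\subset\overline\ko$ and $\dim_\C\overline\ko/\sC=c$, additivity gives $\dim_\C\overline\ko/\tilde\omega=c-\del$, which is (2). Feeding this into the chain $\ko\subset\tilde\omega\subset\overline\ko$ (whose inclusions come from the quoted properties of $\tilde\omega$) gives $\dim_\C\tilde\omega/\ko=\dim_\C\overline\ko/\ko-\dim_\C\overline\ko/\tilde\omega=\del-(c-\del)=2\del-c$, which is (3).

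Finally, for (4) I would bring in the Cohen--Macaulay type. Multiplication by $f$ also induces $\tilde\omega/\fm\tilde\omega\cong\omega/\fm\omega$, so $\dim_\C\tilde\omega/\fm\tilde\omega=t$. Combining $\dim_\C\tilde\omega/\ko=2\del-c$ from (3) with $\dim_\C\ko/\fm=1$ gives $\dim_\C\tilde\omega/\fm=2\del-c+1$, and then the chain $\fm\subset\fm\tilde\omega=\tilde\omega\fm\subset\tilde\omega$ with additivity yields $\dim_\C\tilde\omega\fm/\fm=\dim_\C\tilde\omega/\fm-\dim_\C\tilde\omega/\fm\tilde\omega=(2\del-c+1)-t$, which is (4). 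The main obstacle is really only (1): once the duality identity $\overline\Omega=\Hom_\ko(\overline\ko,\omega)$ and the resulting length computation are in hand, (2)--(4) are pure length bookkeeping, and the only points needing care are verifying that each displayed containment of fractional ideals genuinely holds (in particular $\tilde\omega\subset\overline\ko$ and $\ko\subset\tilde\omega$, supplied by the quoted Lemma) so that additivity of $\dim_\C$ is legitimate.
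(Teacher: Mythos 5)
Your argument is correct, and all four identities check out. Note first that the paper does not actually prove this lemma; it only points to \cite[Section 2.4, 2.5]{Gre82}, so strictly speaking there is no in-text proof to compare against. Your reconstruction is nevertheless faithful to how the result is obtained there: the single substantive point is (1), and the rest is length bookkeeping via the multiplication-by-$f$ automorphism of $K$ (legitimate because $f$, generating the finite-colength ideal $\sC$ of $\overline\ko$, is a nonzerodivisor, hence a unit of $K$) together with additivity of $\dim_\C$ along the chains $\sC\subset\tilde\omega\subset\overline\ko$, $\ko\subset\tilde\omega\subset\overline\ko$ and $\fm\subset\fm\tilde\omega\subset\tilde\omega$, all of whose inclusions you correctly justify from the quoted properties of $\tilde\omega$. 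For (1) you invoke duality for the finite birational map $n$ in the form $\overline\Omega=\Hom_{\ko}(\overline\ko,\omega)$, then dualize $0\to\ko\to\overline\ko\to\overline\ko/\ko\to 0$ into $\omega$ and use $\Ext^1_{\ko}(\overline\ko,\omega)=0$ (maximal Cohen--Macaulayness of $\overline\ko$) and $\dim_\C\Ext^1_{\ko}(\overline\ko/\ko,\omega)=\dim_\C\overline\ko/\ko$ (local duality for finite-length modules); the classical route in Serre and in \cite{Gre82} instead reads $\dim_\C\omega/\overline\Omega=\del$ off the perfectness of Rosenlicht's residue pairing $\overline\ko/\ko\times\omega/\overline\Omega\to\C$, $(h,\gamma)\mapsto\mathrm{res}(h\gamma)$. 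These are two formulations of the same duality, so the difference is one of packaging rather than substance; your Ext-theoretic version has the mild advantage of not requiring the explicit residue description of $\omega$ beyond the containment $\overline\Omega\subset\omega$, at the cost of quoting Grothendieck duality for $n$. One small point worth making explicit if you write this up: the identification $\Hom_{\ko}(\overline\ko,\omega)=\overline\Omega$ must be compatible with the fixed embeddings into $\overline\Omega\otimes K$ (equivalently, $\overline\Omega=\{\gamma\in\omega\otimes K\mid \overline\ko\,\gamma\subset\omega\}$), which is exactly where the residue characterization of $\omega$ re-enters.
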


These formulas are used to prove the following relation between $e$ and the other invariants  (\cite[2.5 Theorem]{Gre82}).

\begin{theorem}[Greuel] \label{thm:gre}
Let $(X,x)$ be a reduced curve singularity.
\begin{enumerate}
 \item $e = \mu + t -1 + \dim_\C \overline\ko/\tilde\omega \cdot \tilde\Omega 
 - \dim_\C \overline\ko/\tilde\omega \cdot \fm.$ 
\item $\del \le \del+t-1+m-r \le3\del-c+m-r\le \ e\ \le \mu+2\del-c \le3\del-r+1,$
 $\mu+2\del-c \le3\del-r < 3\del$ if $(X,x)$ is singular.
 \item If $(X,x)$ is quasihomogeneous then $e = \mu + t -1$. 
 \item Let $(X,x)$ be smoothable. If $(X,x)$ is quasihomogeneous then $\tau \ge \mu + t -1$ and equality holds iff $(X,x)$ is unobstructed.
\end{enumerate}
\end{theorem}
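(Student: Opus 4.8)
The backbone of the argument is formula (1); parts (2) and (3) are extracted from it by elementary estimates, and part (4) then follows formally by combining (3) with Deligne's dimension formula and the Kodaira--Spencer isomorphism. So the plan is to prove (1) first and treat everything else as a corollary. The whole computation is carried out with fractional $\ko$-ideals inside the total ring of fractions $K$, using the isomorphism $\phi_\alpha\colon\overline\Omega\otimes K\xrightarrow{\cong}K$ fixed before the preceding Lemma.

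For (1) the object to compute is the colength $m_1=\dim_\C\overline\Theta/\Theta$ occurring in $e=3\del-m_1$. Since in characteristic $0$ every derivation of $\ko$ extends uniquely to $\overline\ko$ (as recalled before Theorem \ref{thm:deligne}), one has $\overline\Theta\cong\overline\ko$ and $\Theta=\{D\in\overline\Theta : D\ko\subseteq\ko\}$; translating the evaluation pairing through $\phi_\alpha$ identifies $\Theta$ with the fractional ideal $(\ko:\tilde\Omega)$, up to the uniformizer factor built into the definition of $\tilde\Omega$. The hard part is to rewrite $\dim_\C\overline\ko/(\ko:\tilde\Omega)$ in closed form: here I would invoke the duality furnished by the dualizing module, represented by the fractional ideal $\tilde\omega$ with $\sC=\tilde\omega:\overline\ko$, to convert the dual ideal $(\ko:\tilde\Omega)$ into a product involving $\tilde\omega$, and then feed in the four length formulas of the preceding Lemma together with $\mu=2\del-r+1$ (Theorem \ref{thm:DeGr}(1)). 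Bookkeeping the two colengths $\dim_\C\overline\ko/(\tilde\omega\tilde\Omega)$ and $\dim_\C\overline\ko/(\tilde\omega\fm)$ should then yield exactly (1). I expect this duality step to be the main technical obstacle, since it is where the self-duality of $\omega$ and the precise placement of $\tilde\Omega$, $\tilde\omega$ between $\ko$ and $\overline\ko$ must be handled with care.

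Granting (1), part (2) is a matter of estimating the two correction terms. Using $\tilde\omega\fm\subseteq\tilde\omega\subseteq\overline\ko$ and $\tilde\omega\tilde\Omega\subseteq\overline\ko$, and substituting the Lemma's values $\dim_\C\overline\ko/\tilde\omega=c-\del$ and $\dim_\C\tilde\omega/\ko=2\del-c$ together with $\mu=2\del-r+1$, one reads off the displayed chain $\del\le\del+t-1+m-r\le 3\del-c+m-r\le e\le\mu+2\del-c\le 3\del-r+1$ (using $m\ge r$, $t\ge 1$, and $\del\le c\le 2\del$); the sharper bound $\mu+2\del-c\le 3\del-r<3\del$ for a singular curve comes from $c>\del$, which holds precisely when $\ko\neq\overline\ko$. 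For (3) the point is that in the quasihomogeneous case the two correction terms in (1) cancel, i.e. $\dim_\C\overline\ko/(\tilde\omega\tilde\Omega)=\dim_\C\overline\ko/(\tilde\omega\fm)$; I would establish this equality from the Euler relation attached to the weighted-homogeneous structure, which links the module of differentials $\tilde\Omega$ to the maximal ideal $\fm$ closely enough that, after multiplication by $\tilde\omega$, the two ideals acquire the same colength in $\overline\ko$. With the correction terms gone, (1) collapses to $e=\mu+t-1$.

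Finally, part (4) is the clean deduction. Assume $(X,x)$ smoothable and quasihomogeneous, and let $\Psi\colon(\sY,y)\to(S,s)$ be its semiuniversal deformation, which exists since $\tau<\infty$. Smoothability gives a smoothing component $E\subseteq(S,s)$, and Deligne's formula (Theorem \ref{thm:DeGr}(2)) yields $\dim E=e$, while (3) gives $e=\mu+t-1$. On the other hand the Kodaira--Spencer map is an isomorphism $T^1_{(X,x)}\cong T_{S,s}$ (Lemma \ref{lem:Kodaira-Spencer map}), so $\tau=\dim_\C T_{S,s}$ is the embedding dimension of $(S,s)$. Since $E$ is an irreducible component,
$$ \mu+t-1=e=\dim E\le\dim(S,s)\le\dim_\C T_{S,s}=\tau, $$
which is the asserted inequality. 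Equality $\tau=\mu+t-1$ forces $\dim E=\dim(S,s)=\dim_\C T_{S,s}$, hence $\dim(S,s)=\dim_\C T_{S,s}$; this is exactly the condition that $\ko_{S,s}$ be regular, i.e. that $(S,s)$ be smooth, i.e. that $(X,x)$ be unobstructed. Conversely, if $(X,x)$ is unobstructed then $(S,s)$ is smooth and hence irreducible, so the smoothing component is all of $(S,s)$ and $\dim E=\dim(S,s)=\dim_\C T_{S,s}=\tau$, giving $\tau=\mu+t-1$. This closes the chain and, in particular, proves the final statement.
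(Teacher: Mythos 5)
The paper itself gives no proof of this theorem (it is a survey item, cited to \cite[2.5 Theorem]{Gre82}, with the preceding Lemma supplying the length formulas that the proof rests on), and your proposal reconstructs exactly that intended route: compute $m_1=\dim_\C\overline\Theta/\Theta$ by identifying $\Theta$ with a fractional ideal and dualizing against $\tilde\omega$ to get (1), read off (2) from the Lemma together with $\mu=2\del-r+1$, use the Euler relation (which in fact gives $\tilde\Omega=\fm$ up to a unit of $\overline\ko$) for (3), and combine Deligne's formula with the Kodaira--Spencer isomorphism for (4). Your outline is correct and follows essentially the same approach as the cited source, with the honest caveat (which you flag yourself) that the duality bookkeeping in step (1) and the containments behind the middle inequalities of (2) are named rather than carried out.
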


Note that $(4)$ gives a useful criterion to decide whether a smoothable curve is obstructed.

The above formulas yield an easy proof of the following theorem by Dimca and the author in \cite{DG18}.

\begin {theorem}[Dimca, Greuel] \label{thm:DG}
Let  $(X,x)$ be a reduced complete intersection curve singularity. Then
the following hold.
\begin {enumerate}
\item $\tau = \tau'  = \lambda \geq  \delta +m -r,$ 
\item $ \tau - \delta = \dim_\C (\bar{\Omega} / \Omega)$. In particular, one has the equality
$$\dim_\C (\bar{\Omega} / \Omega)=\delta - r + 1$$
if and only if the singularity $(X,x)$ is weighted homogeneous.
\item $\frac{1} {2} \,\mu< \tau$ if $(X,x)$ is not smooth.
\end {enumerate}
\end {theorem}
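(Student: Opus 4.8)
The plan is to reduce all three assertions to the single chain of equalities $\tau=\tau'=\lambda=e$, where $e=3\delta-m_1$ is the Deligne number; once this is established, (2) and (3) become short arithmetic. I would exploit throughout that a reduced complete intersection curve is an isolated complete intersection singularity, hence Gorenstein, so that $t=1$, $c=2\delta$ and $\tilde\omega=\ko$.

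First the deformation-theoretic link $e=\tau$. Since $(X,x)$ is an ICIS, its semiuniversal base $(S,s)$ is smooth of dimension $\tau$ by Theorem \ref{thm:exis vers def} (here $T^2_{(X,x)}=0$), and complete intersections are smoothable; being smooth and irreducible, $(S,s)$ is itself a smoothing component, so Deligne's formula (Theorem \ref{thm:DeGr}(2)) gives $e=\dim(S,s)=\tau$. The identity $\tau=\tau'$ is already recorded (Proposition \ref{prop:t1xx}(3) and duality). The computational heart is then the identity $e=\lambda$: I would feed the Gorenstein data into Theorem \ref{thm:gre}(1), which with $t=1$ and $\tilde\omega=\ko$ reads $e=\mu+\dim_\C\overline\ko/\tilde\Omega-\dim_\C\overline\ko/\fm$. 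Here $\dim_\C\overline\ko/\fm=\dim_\C\overline\ko/\ko+\dim_\C\ko/\fm=\delta+1$. For the other term I use the preceding Lemma together with $\tilde\Omega=\bar t\cdot\phi_\alpha(j\Omega)$: since $\phi_\alpha(\overline\Omega)=\overline\ko$ and $\dim_\C\overline\Omega/j\Omega=\lambda-\delta$ (from $\dim_\C\omega/\overline\Omega=\delta$), one gets $\dim_\C\overline\ko/\phi_\alpha(j\Omega)=\lambda-\delta$, and multiplying by the generator $\bar t$ of $\overline\fm$ (a nonzerodivisor in $K$) contributes $\dim_\C\overline\ko/\bar t\overline\ko=r$, so $\dim_\C\overline\ko/\tilde\Omega=\lambda-\delta+r$. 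Substituting and using the Buchweitz--Greuel relation $\mu=2\delta-r+1$ (Theorem \ref{thm:DeGr}(1)) cancels the $\delta$- and $r$-terms and leaves $e=\lambda$, completing $\tau=\tau'=\lambda$.

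The remaining inequality in (1), $\tau\ge\delta+m-r$, is read off from Theorem \ref{thm:gre}(2): at $t=1$ its left part gives $\delta+m-r\le e=\tau$. For (2), interpreting $\dim_\C(\overline\Omega/\Omega)$ as $\dim_\C(\overline\Omega/j\Omega)$ (recall $j\Omega\cong\Omega/T\Omega$), the computation above gives $\dim_\C(\overline\Omega/\Omega)=\lambda-\delta=\tau-\delta$; this equals $\delta-r+1$ exactly when $\tau=2\delta-r+1=\mu$, which by Theorem \ref{thm:glsv}(1),(3) holds iff $(X,x)$ is quasihomogeneous. For (3), combining $\tau\ge\delta+m-r$ with $\mu=2\delta-r+1$ yields $2\tau-\mu\ge 2m-r-1$; since a singular reduced curve always has $m\ge r$ and $m\ge2$ (the multiplicity dominates the number of branches, and a singular branch has multiplicity at least $2$), one gets $2m-r-1\ge1>0$, hence $\tfrac12\mu<\tau$.

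The main obstacle will be the bookkeeping in $e=\lambda$: translating the invariants $\tilde\Omega$, $\tilde\omega$ and $\fm$ into colengths of fractional ideals inside $\overline\ko$, where the factor $\bar t$ in $\tilde\Omega$ produces the crucial $+r$ that lets the Gorenstein relation $\mu=2\delta-r+1$ collapse everything to $\lambda$. Conceptually the one genuinely substantial input is the coincidence $e=\tau$, which transfers Deligne's dimension formula for the smoothing component onto the purely analytic side; the rest is duality ($\tau=\tau'$) and the already-proven equivalence $\mu=\tau\Leftrightarrow$ quasihomogeneous.
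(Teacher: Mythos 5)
Your proposal is correct and follows exactly the route the paper indicates when it says ``the above formulas yield an easy proof'': you specialize Theorem \ref{thm:gre} and the preceding Lemma to the Gorenstein case ($t=1$, $\tilde\omega=\ko$, $c=2\delta$), combine with Deligne's formula $e=\dim(S,s)=\tau$ for the smooth semiuniversal base of an ICIS and with $\mu=2\delta-r+1$, and then read off (2) and (3) from Theorem \ref{thm:glsv} and elementary inequalities $m\ge r$, $m\ge 2$. The bookkeeping for $\dim_\C\overline\ko/\tilde\Omega=\lambda-\delta+r$ is carried out correctly, so the chain $\tau=\tau'=\lambda=e$ and all three assertions follow as claimed.
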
  

Moreover in  \cite{DG18} the authors  pose the question

\begin{question} \label{q0}
Is it true that 
$$\frac{3}{4}\, \mu(X,0) <\tau(X,0)$$
for any isolated plane curve singularity?
\end{question} 
The answer to this question is positive for semi-quasi-homogeneous singularities 
$(X,s)$, see the recent preprint \cite{AB18}.
\bigskip 

For Gorenstein curves Theorem \ref{thm:gre} was complemented in \cite{GMP85}, giving a numerical characterization of quasi homogeneity for Gorenstein curves.
\begin{theorem}[Greuel, Martin, Pfister] \label{thm:gmp}
If $(X,x)$ is Gorenstein, then $e \le \mu$ with equality if and only if 
$(X,x)$ is quasihomogeneous.
\end{theorem}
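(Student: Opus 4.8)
The plan is to use the Gorenstein hypothesis to collapse the general relation of Theorem~\ref{thm:gre}(1) down to a comparison of the two curve invariants $\mu$ and $\lambda$, and then to read off both the inequality and its equality case from that. First I collect the simplifications forced by Gorensteinness: one has $t=1$, and (as recorded just before the lemmas on fractional ideals) $\tilde\omega=\ko$. Feeding $\tilde\omega=\ko$ into part~(3) of the lemma preceding Theorem~\ref{thm:gre} gives $0=\dim_\C\tilde\omega/\ko=2\delta-c$, i.e.\ the numerical symmetry $c=2\delta$, so already Theorem~\ref{thm:gre}(2) yields the weak bound $e\le\mu+2\delta-c=\mu$. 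To handle the equality case I need the sharp formula Theorem~\ref{thm:gre}(1), which under $t=1$ and $\tilde\omega=\ko$ (so that $\tilde\omega\cdot\tilde\Omega=\tilde\Omega$ and $\tilde\omega\cdot\fm=\fm$) collapses to
$$ e=\mu+\dim_\C\overline\ko/\tilde\Omega-\dim_\C\overline\ko/\fm. $$

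Next I evaluate the two correction terms. Since $\fm\subset\ko\subset\overline\ko$ with $\dim_\C\ko/\fm=1$, we get $\dim_\C\overline\ko/\fm=\delta+1$. For the other term, recall $\tilde\Omega=\bar t\cdot\phi_\alpha(j\Omega)$ with $\phi_\alpha(j\Omega)\subseteq\phi_\alpha(\overline\Omega)=\overline\ko$; filtering $\overline\ko\supseteq\overline\fm=\bar t\,\overline\ko\supseteq\bar t\,\phi_\alpha(j\Omega)$ and using that multiplication by the non-zero-divisor $\bar t$ is injective gives $\dim_\C\overline\ko/\tilde\Omega=r+\dim_\C\overline\ko/\phi_\alpha(j\Omega)$. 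Applying the isomorphism $\phi_\alpha\colon\overline\Omega\xrightarrow{\cong}\overline\ko$ together with part~(1) of that lemma ($\dim_\C\omega/\overline\Omega=\delta$) yields $\dim_\C\overline\ko/\phi_\alpha(j\Omega)=\dim_\C\overline\Omega/j\Omega=\lambda-\delta$, so $\dim_\C\overline\ko/\tilde\Omega=r+\lambda-\delta$. Combining with the Buchweitz--Greuel identity $\mu=2\delta-r+1$ (Theorem~\ref{thm:DeGr}(1)) gives the clean formula $e=\mu+(r+\lambda-\delta)-(\delta+1)=\lambda$. Because $d\ko\subseteq j\Omega$ inside $\omega$ we have $\mu-\lambda=\dim_\C j\Omega/d\ko\ge0$, whence $e=\lambda\le\mu$, and $e=\mu$ holds precisely when $j\Omega=d\ko$, that is, when $\Omega=d\ko+T\Omega$ (every Kähler $1$-form is exact modulo torsion).

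The direction ``quasihomogeneous $\Rightarrow$ equality'' is then the content of Theorem~\ref{thm:gre}(3), and it is transparent directly: for the Euler derivation $E$ with $L_E\eta=(\deg\eta)\eta$ on a homogeneous $\eta$, Cartan's formula $L_E\eta=d(i_E\eta)+i_E(d\eta)$ together with the fact that $d\eta\in\Omega^2_{X,x}$ is torsion (the reduced curve is generically smooth, so $\Omega^2$ is supported at $x$) shows $\eta\equiv d(i_E\eta/\deg\eta)\bmod T\Omega$ with $i_E\eta\in\ko$; since $\Omega$ is generated in strictly positive degrees, this forces $\Omega=d\ko+T\Omega$, i.e.\ $\mu=\lambda$ and $e=\mu$.

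The hard part is the converse: $e=\mu$, equivalently $j\Omega=d\ko$, must force $(X,x)$ to be quasihomogeneous. This is the curve analog of K.\ Saito's theorem (\cite{Sa71}, quoted above in the hypersurface case $\mu=\tau$), and it is genuinely rigid, since one has to manufacture a good $\C^*$-action out of the purely cohomological statement that every differential is exact modulo torsion. The strategy I would follow is to transport the hypothesis to the normalization $\overline\ko=\prod_i\C\{t_i\}$ through $\phi_\alpha$, where $j\Omega=d\ko$ becomes the assertion that the $\ko$-module $\phi_\alpha(j\Omega)$ already coincides with the $\C$-span of the derivatives $\{a'\mid a\in\ko\}$, and then to integrate this constraint---exploiting the Gorenstein self-duality $\omega\cong\ko$ and its residue pairing---to produce an Euler field $\theta\in\Theta$ with strictly positive weights after an analytic reparametrization of the uniformizers $t_i$. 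Constructing this Euler field (equivalently, the grading of $\ko$) is the crux and the real obstacle; it is the step carried out in detail in \cite{GMP85}, the preceding computations being essentially formal once Theorem~\ref{thm:gre} is granted.
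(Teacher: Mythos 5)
The paper itself offers no proof of this theorem --- it is stated with a citation to \cite{GMP85}, with Theorem~\ref{thm:gre} as the supporting machinery --- so your attempt can only be judged on its own terms. The formal part of your argument is correct and is a clean way to organize the statement: with $t=1$ and $\tilde\omega=\ko$ (the two characterizations of Gorensteinness recorded in the text), Theorem~\ref{thm:gre}(1) together with the colength computations $\dim_\C\overline\ko/\fm=\delta+1$ and $\dim_\C\overline\ko/\tilde\Omega=r+\lambda-\delta$ and the identity $\mu=2\delta-r+1$ of Theorem~\ref{thm:DeGr}(1) does give $e=\lambda$ for a Gorenstein curve. This is consistent with Theorem~\ref{thm:DG}(1) (where $e=\tau=\lambda$ for complete intersection curves), it yields $e\le\mu$ since $d\ko\subseteq j\Omega$ inside $\omega$, and it identifies the equality case with $j\Omega=d\ko$. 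Your Euler-field/Cartan argument for ``quasihomogeneous $\Rightarrow j\Omega=d\ko$'' is also sound (and recovers Theorem~\ref{thm:gre}(3) in the case $t=1$).

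The genuine gap is the converse, which is the entire content of the Greuel--Martin--Pfister theorem: that a Gorenstein curve with $j\Omega=d\ko$ (equivalently $\mu=\lambda$) must be quasihomogeneous. You correctly name this as the crux, but then replace it with a one-sentence strategy (``integrate the constraint \dots to produce an Euler field'') and a citation to \cite{GMP85}. Nothing in your outline indicates how the hypothesis, transported to $\overline\ko=\prod_i\C\{t_i\}$, actually produces a derivation in $\Theta$ that can be brought to diagonal form with strictly positive weights after reparametrizing the branches --- this is the curve analogue of Saito's theorem \cite{Sa71} and is where all the work lies. It is also precisely where the Gorenstein hypothesis must enter in an essential rather than decorative way: the example quoted after Corollary~\ref{cor:gmp} exhibits non-Gorenstein, non-quasihomogeneous branches with $e=\mu$, so any argument that does not visibly use the self-duality cannot be complete. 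As written, your proposal establishes the inequality and reformulates the equality statement, but it does not prove the characterization of equality; it defers the theorem's hardest step back to the reference it is meant to reprove.
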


The following corollary  generalizes Theorem \ref{thm:glsv} in dimension 1.

\begin{corollary}\label{cor:gmp}
If $(X,x)$ is Gorenstein and unobstructed, then $\tau \le \mu$ with equality if and only if $(X,x)$ is quasihomogeneous.
\end{corollary}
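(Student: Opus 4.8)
The plan is to reduce the Corollary to the Greuel--Martin--Pfister inequality of Theorem \ref{thm:gmp} by first proving the numerical identity $\tau = e$, where $e = e(X,x) = 3\delta - m_1$ is the Deligne number. Granting $\tau = e$, the Corollary is immediate: since $(X,x)$ is Gorenstein, Theorem \ref{thm:gmp} gives $e \le \mu$ with equality if and only if $(X,x)$ is quasihomogeneous, whence $\tau = e \le \mu$ with $\tau = \mu$ iff $e = \mu$ iff $(X,x)$ is quasihomogeneous. This is exactly the $1$-dimensional analogue of Theorem \ref{thm:glsv}(1),(3), and it specialises correctly to the complete intersection case, where $(X,x)$ is automatically Gorenstein, unobstructed and smoothable.

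To establish $\tau = e$ I would combine two computations of $\dim(S,s)$, the dimension of the semiuniversal base. First, unobstructedness means by definition that $(S,s)$ is smooth; hence it is irreducible and, by the equality case of Corollary \ref{cor:T2xx}, $\dim(S,s) = \dim_\C T^1_{(X,x)} = \tau$. Second, I claim $(X,x)$ is smoothable; granting this, the unique irreducible component of the smooth germ $(S,s)$ is a smoothing component, and Deligne's formula (Theorem \ref{thm:DeGr}(2)) gives $\dim(S,s) = e$. Comparing the two values of $\dim(S,s)$ yields $\tau = e$. Note that, apart from the final appeal to Theorem \ref{thm:gmp}, the Gorenstein hypothesis is not used here.

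The main obstacle is the smoothability claim, namely that a \emph{reduced curve singularity with smooth semiuniversal base is smoothable}. I would prove this by induction on the delta invariant $\delta(X,x)$, the case $\delta = 0$ being a smooth germ. For the inductive step, take a good representative $\phi\colon \sX \to S$ of the semiuniversal deformation and let $F = \sX_{s_0}$ be the fibre over a generic point $s_0 \in S$. Since $(S,s)$ is smooth, so is $(S,s_0)$; by openness of versality (Theorem \ref{thm:openness of versality}) $\phi$ is versal for the multigerm of singularities of $F$, and then Proposition \ref{prop:versal -- semiuniversal} forces each of these singularities to have a smooth semiuniversal base as well, i.e.\ to be unobstructed. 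By Teissier's economy theorem (Theorem \ref{economy}) these singularities are not isomorphic to $(X,x)$, and (as discussed below) their total delta invariant is strictly smaller than $\delta(X,x)$, so the induction hypothesis applies and shows each of them is smoothable. But if the generic fibre $F$ had smoothable singularities it would not be generic --- a nearby fibre smoothing them would be strictly less singular --- so $F$ must already be smooth, proving $(X,x)$ smoothable.

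The delicate point inside this induction is the strict drop $\delta(F) < \delta(X,x)$ at the generic fibre. I would justify it by observing that for a non-smooth reduced curve the equisingular (delta-constant) locus is a proper subgerm of the positive-dimensional base $(S,s)$, so a generic fibre is strictly less singular; upper semicontinuity of $\delta$ then gives the inequality and properness of the stratum gives its strictness. Once $\tau = e$ is in hand, the remainder of the argument is the single invocation of Theorem \ref{thm:gmp} described above.
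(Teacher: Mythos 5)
Your reduction of the Corollary to Theorem \ref{thm:gmp} via the identity $\tau=e$ is exactly the intended argument: unobstructedness gives $\dim(S,s)=\dim_\C T^1_{(X,x)}=\tau$ by Corollary \ref{cor:T2xx}, and \emph{if} $(X,x)$ is smoothable the unique (hence smoothing) component of the smooth base has dimension $e$ by Deligne's formula (Theorem \ref{thm:DeGr}(2)), so $\tau=e\le\mu$ with the equality characterization supplied by Theorem \ref{thm:gmp}. This is how the statement is obtained in \cite{GMP85}; as in Theorem \ref{thm:gre}(4), smoothability is a standing hypothesis in this part of the discussion and should be read into the Corollary.

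The genuine gap is your auxiliary lemma that a reduced curve singularity with smooth semiuniversal base is automatically smoothable. This cannot follow from the soft induction you sketch, because it would settle an open problem stated in Section \ref{sec:rigsmo}: a rigid \emph{singular} reduced curve singularity --- whose non-existence is only conjectured --- has semiuniversal base a reduced point, which is smooth, so it is unobstructed, yet it is certainly not smoothable. Thus "unobstructed $\Rightarrow$ smoothable" for reduced curves implies the non-existence of rigid reduced curve singularities, and any proof of it must be at least as hard as that conjecture. Concretely, your induction fails at the step asserting that "the $\delta$-constant locus is a proper subgerm of the positive-dimensional base": positive-dimensionality of the base is precisely non-rigidity, which is open; and even when $\tau>0$, properness of the $\delta$-constant stratum in the semiuniversal base of a general (non-planar, possibly non-smoothable) reduced curve is not established --- upper semicontinuity gives $\delta(F)\le\delta(X,x)$ but not strictness, and the claim that "a generic fibre is strictly less singular" is essentially what you are trying to prove, so the justification is circular there. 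The remaining ingredients of the induction (unobstructedness of the generic-fibre singularities via Proposition \ref{prop:versal -- semiuniversal} and openness of versality, and the genericity contradiction) are sound but cannot repair this step. The correct fix is simply to include smoothability among the hypotheses, after which your first two paragraphs constitute a complete proof.
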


In \cite{GMP85} there is an example of a complete intersection with several branches, satisfying (a): $\tau = e<\mu$, 
(b): all branches are non-Gorenstein and non-quasihomogeneous and 
(c):  $\tau = e = \mu$ for each branch. This shows that the assumption ``Gorenstein'' in Theorem \ref{thm:gmp} is necessary that we cannot conclude from the branches to their union.

Some problems remain however open. 
\begin{problem}
Is the converse of Theorem \ref{thm:gre}(3) also true, i.e. does $e = \mu + t -1$ imply that $(X,x)$ is quasihomogeneous?
\end{problem}

\begin{problem}
Does the inequality  $e \le \mu + t -1$ always hold?
We conjecture this at least for $(X,x)$ smoothable.
\end{problem}

The answer to both problems is ``yes'' in the follwing cases: (a): $(X,x)$ is Gorenstein and 
(b):  $(X,x)$ is irreducible and the monomial curve of  $(X,x)$ has Cohen-Macaulay type $\le 2$.

\bigskip
We turn now to non-smoothable curves. We want to apply the following criterion (\cite[3.1 Proposition]{Gre82}) for non smoothability.

\begin{proposition}\label{pr:crit}
Let $\phi: \sX \to T$ a sufficiently small representative of a deformation with section $\sig: T \to \sX$ of a reduced curve singularity $(X,x)$. Assume:
\begin{enumerate}
\item[(i)] $(\sX_t,\sig(t))$ is singular and not isomorphic to $(X,x)$ for $t\ne 0$,
\item[(ii)] $T$ is irreducible and $\dim T \ge e(X,x)$.
\end{enumerate}
Then there is an analytic open dense subset $T_0 \subset T$, such that  
$(\sX_t,\sig(t))$ is not smoothable if $t \in T_0$.
\end{proposition}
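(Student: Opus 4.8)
The plan is to argue by contradiction and to feed the whole family $\phi$, together with a hypothetical smoothing of a generic fibre, into the semiuniversal deformation, where Deligne's formula (Theorem~\ref{thm:deligne}, Theorem~\ref{thm:DeGr}(2)) pins every smoothing component of a reduced curve germ to dimension exactly $e$ and so produces a numerical contradiction. First I would normalise the situation. Since $T$ is irreducible, there is a proper analytic subset off which all marked germs $(\sX_t,\sig(t))$ carry one fixed analytic type $(Y,y)$, which by~(i) is singular and $\not\cong(X,x)$; it suffices to prove that this generic type is non-smoothable, the exceptional subset then furnishing the complement of the desired dense open $T_0$. I record that a singular reduced curve germ satisfies $e(Y,y)\ge\del(Y,y)\ge 1$, which follows from the chain of inequalities in Theorem~\ref{thm:gre}(2).

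Second, I would pass to the semiuniversal deformation $\Psi\colon(\sY,y)\to(S,s)$ of $(X,x)$ (Grauert, Theorem~\ref{thm:grauert}) and use versality to factor $\phi$ through a base-change germ $j\colon(T,0)\to(S,s)$ compatible with the sections, inducing $\phi$ from $\Psi$. Set $W:=\overline{j(T)}\subset S$. Two facts about $W$ are needed. On one hand, hypothesis~(i) forces $W$ to lie entirely in the locus of $S$ over which the marked fibre of $\Psi$ stays singular. On the other hand, because the family is an effective $\dim T$-dimensional family of pairwise non-isomorphic \emph{deformations} (distinct parameters give non-isomorphic marked deformations of the fixed $(X,x)$, even when the underlying germs are abstractly isomorphic to $(Y,y)$), the map $j$ has no positive-dimensional generic fibres, so that $\dim W=\dim T\ge e(X,x)$ by hypothesis~(ii).

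Third comes the contradiction. Assume $(Y,y)$ is smoothable. Openness of versality (Theorem~\ref{thm:openness of versality}) shows that at a generic point $s'=j(t)\in W$ the deformation $\Psi$ is again versal for the marked germ $(\sX_t,\sig(t))\cong(Y,y)$, so by Proposition~\ref{prop:versal -- semiuniversal} the germ $(S,s')$ splits off the semiuniversal base of $(Y,y)$ up to a smooth factor. A smoothing of $(Y,y)$ thus supplies, through each such $s'$, a smoothing direction of dimension $e(Y,y)$; this direction is transverse to $W$, since along $W$ the marked fibre is singular whereas along the smoothing direction it is generically smooth. Sweeping these directions out over $W$ — equivalently, smoothing the whole family $\phi|_{T_0}$ relatively (using local freeness of the relative $T^1$ over the constant-type stratum) and taking the limit at $t=0$ — yields an irreducible subset $\widehat E\subset S$ with $\dim\widehat E\ge\dim W+e(Y,y)$ over whose generic point the fibre of $\Psi$ is smooth. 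Hence $\widehat E$ lies in a single smoothing component of $(X,x)$, which by Deligne has dimension $e(X,x)$. Therefore $\dim W+e(Y,y)\le e(X,x)$, and since $\dim W\ge e(X,x)$ this gives $e(Y,y)\le 0$, contradicting $e(Y,y)\ge 1$. So $(Y,y)$ is non-smoothable, as required.

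I expect the main obstacle to be precisely the two dimension-bookkeeping claims of the middle steps: that $\dim W=\dim T$ (the effectiveness of the family, which requires ruling out positive-dimensional fibres of $j$ arising from abstractly isomorphic but deformation-inequivalent germs, and is where hypotheses~(i) and~(ii) must genuinely combine), and that the smoothing directions of $(Y,y)$ glue into a family $\widehat E$ of full dimension $\dim W+e(Y,y)$, transverse to $W$ and degenerating correctly at $t=0$. Pinning down this transversality and the relative-smoothing limit is the real work; it rests on openness of versality and the local freeness of $\phi_*T^1_{\sX/T}$ over $T_0$, whereas the final numerical step is the clean contradiction delivered by Deligne's exact dimension formula.
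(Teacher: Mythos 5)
Your overall strategy -- factor $\phi$ through the semiuniversal deformation via $j\colon(T,0)\to(S,s)$, show $\dim j(T)=\dim T\ge e(X,x)$, and play this off against Deligne's exact formula $\dim E=e(X,x)$ for every smoothing component $E$ -- is the paper's strategy, but the execution has genuine gaps. First, the opening reduction is false: there is in general \emph{no} proper analytic subset off which all the germs $(\sX_t,\sig(t))$ have one fixed analytic type $(Y,y)$, since analytic types of curve singularities can vary with continuous moduli in a family. Second, the claim $\dim j(T)=\dim T$ is not justified by ``effectiveness'': nothing in the hypotheses says distinct parameters give non-isomorphic marked deformations, and $j$ need not be injective. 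What hypothesis (i) actually gives is that $j$ is \emph{finite}: a positive-dimensional component of $j^{-1}(s)$ through $0$ would make the restricted family a pullback from the reduced point, i.e.\ trivial, forcing $(\sX_t,\sig(t))\cong(X,x)$ there. You flag this as an obstacle but never supply the argument.

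The most serious gap is the ``sweeping'' construction. The inequality $\dim\widehat E\ge\dim W+e(Y,y)$ does not follow from the observation that the smoothing direction is not contained in $W$; that only shows the smoothing component of $(S,s')$ is not a subset of $W$, not that the dimensions add. Worse, in exactly the scenario you must exclude -- $W$ contained in a single smoothing component $E$ of $(X,x)$ -- the germs $E'_{s'}$ at the various $s'\in W$ are all germs of the \emph{same} component $E$, so the swept-out set is just $E$, of dimension $e(X,x)$, and your inequality is equivalent to the contradiction you are trying to derive. The sweep is unnecessary: if a dense set of fibres over $W:=j(T)$ were smoothable, then by openness of versality each such $j(t)$ lies in the closure of the smooth-fibre locus, which is the union of the smoothing components; since $W$ is irreducible it lies in a single smoothing component $E$, and $\dim W\ge e(X,x)=\dim E$ forces $W$ dense in $E$ -- impossible, because every fibre over $W\setminus\{s\}$ is singular by (i) and the singular-fibre locus is closed. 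This direct containment argument (the paper's) uses the same ingredients you assembled, needs no transversality, no relative $T^1$ freeness, and no lower bound $e(Y,y)\ge 1$.
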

The proof is easy: $\phi$ can be induced from the semiuniversal deformation with base $(S,0)$ by a map $\varphi: (T,0) \to (S,0)$. By $(i)$ $\varphi$ is finite and 
$\varphi(T)$ has dimension $\ge e = \dim E$ for every smoothing component $E$ of $S$. $\varphi$ cannot map $T$ to any $E$ since there are no smooth fibres over $T$.
$S$ may have smoothing components but the generic point of the image of $\phi$
cannot be smoothable by openness of versality.
\medskip

Let us finish with proving non-smootability for the curve singularity $L^n_r$  consisting of $r$ lines through the origin in  
 $\C^n$ in generic position. The following was proved in \cite[3.4 Theorem]{Gre82}, generalizing  Pinkham \cite[Theorem 11.10]{Pi74}, who proved it for the range $n < r< 2n$ by global methods.

\begin{theorem}[Pinkham; Greuel]
The curve singularity $L^n_r$ is not smoothable in the following ranges:
\begin{enumerate}
\item $n < r \le \binom{n+1}{2}$ { \em and } $(r - n - 2) (n - 5) \ge 7$,
\item $\binom{n+d-1}{d} < r \le \binom{n+d}{d+1}, d\ge2$ { \em and } 
        $r(n-3-3d)+3\binom{n+d}{d} \ge n^2-1$.
\end{enumerate}
E.g., $L^n_r$ is not smoothable if $r$ is  in the following intervals:
$$
\begin{array}{c |c |c |c |c |c}
 n & 6 & 7 & 8 & 9 & 10 \\
  \hline
 r & [15,21] & [13,30] & [13,72] & [13,193] & [14,419] \\
\end{array}
$$
\end{theorem}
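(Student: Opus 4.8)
The plan is to deduce both ranges from the local non-smoothability criterion of Proposition~\ref{pr:crit}, fed by Deligne's dimension formula (Theorem~\ref{thm:DeGr}(2)): every smoothing component of $L^n_r$ has dimension equal to the Deligne number $e(L^n_r)=3\del-m_1$, where $\del=\del(L^n_r)$ and $m_1=\dim_\C\overline\Theta/\Theta$. I would realise a generic configuration $L^n_r$ as the general fibre of a flat family $\phi\colon\sX\to T$ of line configurations: parametrise the $r$ lines by points $p_1,\dots,p_r\in\P^{n-1}$, take $(X,x)=L^n_r$ as central fibre, and let $T$ be the germ of the \emph{analytic} moduli of the configuration, i.e.\ the image in $T^1_{L^n_r}$ of the first-order deformations obtained by moving the $p_i$. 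Then the classifying map $\varphi\colon T\to S$ to the semiuniversal base is finite, a generic nearby configuration is singular and, once the moduli is positive-dimensional, not isomorphic to the central one, so hypothesis (i) of Proposition~\ref{pr:crit} holds; hypothesis (ii) becomes $\dim T\ge e(L^n_r)$, and the criterion then yields non-smoothability of the general fibre, that is, of a generic $L^n_r$. The two numerical conditions are exactly the translation of this inequality.

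The core is three explicit computations for $r$ points in general position. First, $\del$: since $L^n_r$ is a cone, $\overline\ko=\prod_{i=1}^r\C[t_i]$ is graded and the degree-$j$ part of $\ko$ is the image of evaluation on degree-$j$ forms, of maximal rank $\min\!\big(\binom{n+j-1}{j},r\big)$. Letting $d$ be the integer with $\binom{n+d-1}{d}<r\le\binom{n+d}{d+1}$ — equivalently $\binom{n+j-1}{j}<r$ precisely for $0\le j\le d$ — the hockey-stick identity gives
$$\del(L^n_r)=\sum_{j=0}^{d}\Big(r-\tbinom{n+j-1}{j}\Big)=(d+1)r-\tbinom{n+d}{d}.$$
Second, and more delicately, $m_1$: grading $\overline\Theta$ as well, the degree $-1$ piece contributes $r$, the degree $0$ piece contributes $r-1$ (for $r>n$ only the Euler field survives), and the higher pieces are governed by the multiplication maps of the coordinate ring of the points; a maximal-rank analysis yields $m_1=(d+1)r-\binom{n+d-1}{d-1}$, whence $e(L^n_r)=2(d+1)r-3\binom{n+d}{d}+\binom{n+d-1}{d-1}$. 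Third, $\dim T$: computing the kernel of the map from the tangent space of the configuration space into $T^1_{L^n_r}$ (the $\mathrm{PGL}_n$-orbit gives $n^2-1$ trivial directions, and a further block of point-moves is trivial as an \emph{analytic} germ deformation) gives $\dim T=r(n-1-d)+\binom{n+d-1}{d-1}-(n^2-1)$. A one-line simplification then turns $\dim T\ge e(L^n_r)$ into $r(n-3-3d)+3\binom{n+d}{d}\ge n^2-1$, which is exactly case (2).

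Case (1), where $d=1$ and $r$ lies in the quadric range $n<r\le\binom{n+1}{2}$, is the remaining obstacle and is out of reach of this local count: the same estimate only gives $r(n-6)\ge(n-4)(n+1)$, which is strictly weaker than the asserted $(r-n-2)(n-5)\ge 7$ and is even vacuous for $n=6$. For this range I would follow Pinkham's global method: globalise $L^n_r$ to the projective closure of the cone, exploit its negative grading to lift any smoothing to an embedded deformation of the projective model (as in the globalisation results quoted earlier), and extract the sharper bound from the projective geometry of the would-be smoothed curve. The two genuinely hard points are therefore (a) the rank computations behind $m_1$ and $\dim T$, which depend on the $p_i$ being general enough that all relevant evaluation and multiplication maps attain maximal rank — this genericity, together with the finiteness of $\varphi$, must be checked carefully — and (b) the fact that the quadric range $d=1$ escapes the local estimate and must be handled globally, which is precisely why the final statement splits into two cases.
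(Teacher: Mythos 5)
Your strategy for case (2) is essentially the paper's: the theorem is proved by feeding Deligne's formula (Theorem \ref{thm:DeGr}(2)) into the local criterion of Proposition \ref{pr:crit}, applied to the family of configurations of $r$ lines, and your computation $\delta(L^n_r)=(d+1)r-\binom{n+d}{d}$ is correct. The genuine gap is your treatment of case (1). First, the fallback you propose cannot work: as the paper itself records, Pinkham's global method only establishes non-smoothability in the range $n<r<2n$, whereas case (1) reaches up to $r\le\binom{n+1}{2}$; for $n=6$ the entire interval $[15,21]$ lies outside Pinkham's range, so ``following Pinkham'' proves nothing there. The whole theorem, including case (1), is obtained by the local argument --- that is precisely the point of Greuel's generalization of Pinkham.

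Second, the reason your local count fails for $d=1$ is an error in $\dim T$, not a limitation of the method. You subtract an extra $rd-\binom{n+d-1}{d-1}$ from $r(n-1)-(n^2-1)$ on the grounds that a further block of point-moves is ``trivial as an analytic germ deformation''. But $L^n_r$ is a cone: an isomorphism of germs induces a graded isomorphism of the tangent cones, hence of the homogeneous coordinate rings of the point sets, so two configurations give isomorphic germs iff they are projectively equivalent. The moduli of germs therefore has the full dimension $r(n-1)-(n^2-1)$, with no further trivial block. With this corrected $\dim T$, both numerical conditions drop out of the single inequality $\dim T\ge e=3\delta-m_1$: bounding $m_1$ from below by the degree $-1$ part of $\overline\Theta/\Theta$ alone (which equals $r$, since no nonzero $\sum_i c_i\partial_{t_i}$ preserves $\ko$ once the points are distinct) gives exactly condition (2), while adding the degree $0$ part (which equals $r-1$, the kernel being the Euler field when any $n$ of the points span) gives $m_1\ge 2r-1$ and, for $d=1$, exactly $(r-n-2)(n-5)\ge 7$, i.e.\ condition (1). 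So the case split in the statement reflects only how sharply $m_1$ is bounded from below, not a change of method; as written, your proposal leaves case (1) unproved.
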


The intervals for the case $n < r \le \binom{n+1}{2}$ have been slightly enlargesd by Stevens in \cite{St89}.
For $n=2$ and 3 arbitrary many lines are smoothable. It is also known that $n, n+1$ and $n + 2$ lines in $(\C^n,0)$ are always smoothable, but $L^n_{n + 3}$ not if $n \ge 12$ after the theorem. Note that for fixed $n$, the theorem shows the existence of non-srnoothable curves of lines only for $r$ within some finite interval (which is growing with $n$). Also, we obtain nothing for $n = 4, 5$.

\begin{problem}
Do there exist for fixed $n \ge 4$ non-smoothable curves $L^n_r$  if $r$ goes to infinity? It seems unlikely that this is not the case.
\end{problem}

The formula of Deligne cannot only be used to show that certain curve singularities are not smoothable but also that the semiuniversal base for a smoothable curve is not smooth, namely if $e < \tau $. Examples are $n$ (resp. $n+1$) general lines in $(\C^n,0)$, which are obstructed if $n\ge 4$ (resp. $n\ge 5$), cf. \cite{Gre82}.
\bigskip



\printindex

\end{document}